\crefname{subsection}{Subsection}{Subsections}
\crefname{enumitem}{item}{items}
\theoremstyle{plain}
\newtheorem{theorem}{Theorem}[section]
\newtheorem{lemma}[theorem]{Lemma}
\newtheorem{prop}[theorem]{Proposition}
\newtheorem{cor}[theorem]{Corollary}
\newtheorem{setting}[theorem]{Setting}
\theoremstyle{remark}
\theoremstyle{definition}
\newcommand{\A}{\mathbb{A}}
\newcommand{\E}{\mathbb{E}}
\newcommand{\I}{\mathbb{I}}
\renewcommand{\P}{\mathbb{P}}
\newcommand{\R}{\mathbb{R}}
\newcommand{\N}{\mathbb{N}}
\newcommand{\U}{\mathbb{U}}
\newcommand{\V}{\mathbb{V}}
\newcommand{\Y}{\mathbb{Y}}
\newcommand{\Z}{\mathbb{Z}}
\newcommand{\Borel}{\mathcal{B}}
\newcommand{\smallsum}{\textstyle\sum}
\newcommand{\Exp}[1]{ \E \! \left[ #1 \right]}
\newcommand{\EXP}[1]{ \E  [ #1 ]}
\newcommand{\EXPP}[1]{ \E \big[ #1 \big]}
\newcommand{\EXPPP}[1]{ \E \Big[ #1 \Big]}
\newcommand{\EXPPPP}[1]{ \E \bigg[ #1 \bigg]}
\newcommand{\norm}[1]{ \left\| #1 \right\| }
\newcommand{\qandq}{\qquad\text{and}\qquad}
\newcommand{\var}[1]{ \operatorname{Var}\!\left[ #1 \right]}
\newcommand{\VAR}[1]{ \operatorname{Var} \!\big[ #1 \big]}
\newcommand{\VARRR}[1]{ \operatorname{Var} \!\Big[ #1 \Big]}
\newcommand{\VARRRR}[1]{ \operatorname{Var} \!\bigg[ #1 \bigg]}
\newcommand{\VARRRRR}[1]{ \operatorname{Var} \!\Bigg[ #1 \Bigg]}
\newcommand{\Forall}{\forall\,}
\newcommand{\mc}{\mathcal}
\newcommand{\mf}{\mathfrak}
\newcommand{\is}{\curvearrowleft}
\newcommand{\vast}{\bBigg@{3.5}}
\newcommand{\Vast}{\bBigg@{4}}
\newcounter{AuthorCount}
\begin{document}


\title{
	Nonlinear Monte Carlo methods with \\
	polynomial runtime for high-dimensional \\
	iterated nested expectations
}

\author{
	Christian Beck$^{\arabic{AuthorCount}}	\stepcounter{AuthorCount}$,
	Arnulf Jentzen$^{\arabic{AuthorCount}\stepcounter{AuthorCount},\arabic{AuthorCount}\stepcounter{AuthorCount}}$, 
	and 
	Thomas Kruse$^{\arabic{AuthorCount}\stepcounter{AuthorCount}}$
	\bigskip
	\setcounter{AuthorCount}{1}
	\\
	\small{$^{\arabic{AuthorCount}
			\stepcounter{AuthorCount}}$ 
		Department of Mathematics, 
		ETH Zurich, 
		Z\"urich,}\\
	\small{Switzerland, 
		e-mail: christian.beck@math.ethz.ch}
	\\
	\small{$^{\arabic{AuthorCount}
			\stepcounter{AuthorCount}}$ 
		Department of Mathematics, 
		ETH Zurich, 
		Z\"urich,}\\
	\small{Switzerland, 
		e-mail: arnulf.jentzen@sam.math.ethz.ch} 
	\\
	\small{$^{\arabic{AuthorCount}\stepcounter{AuthorCount}}$ 
		Faculty of Mathematics and Computer Science, 
		University of M\"unster, }\\
	\small{M\"unster, Germany, e-mail: ajentzen@uni-muenster.de}\\
	\small{$^{\arabic{AuthorCount}\stepcounter{AuthorCount}}$ Institute of Mathematics, University of Gie{\ss}en, Gie{\ss}en,} \\
	\small{Germany, e-mail: thomas.kruse@math.uni-giessen.de}	
}

\maketitle

\begin{abstract}
The approximative calculation of iterated nested expectations is a recurring challenging problem in applications. 
Nested expectations appear, for example, in the numerical approximation of solutions of backward stochastic differential equations (BSDEs), in the numerical approximation of solutions of semilinear parabolic partial differential equations (PDEs), in statistical physics, in optimal stopping problems such as the approximative pricing of American or Bermudan options, in risk measure estimation in mathematical finance, or in decision-making under uncertainty. 
Nested expectations which arise in the above named applications often consist of a large number of nestings.  
However, the computational effort of standard nested Monte Carlo approximations for iterated nested expectations grows exponentially in the number of nestings and it remained an open question whether it is possible to approximately calculate multiply iterated high-dimensional nested expectations in polynomial time.
In this article we tackle this problem by proposing and studying a new class of full-history recursive multilevel Picard (MLP) approximation schemes for iterated nested expectations.
In particular, we prove under suitable assumptions that these MLP approximation schemes can approximately calculate multiply iterated nested expectations with a computational effort growing at most polynomially in the number of nestings $ K \in \N = \{1, 2, 3, \ldots \} $, in the problem dimension $ d \in \N $, and in the reciprocal $\nicefrac{1}{\varepsilon}$ of the desired approximation accuracy $ \varepsilon \in (0, \infty) $. 

\end{abstract}
 
\newpage

\tableofcontents

\section{Introduction}
\label{sect:intro}

The approximative calculation of iterated nested expectations is a recurring challenging problem in applications. 
Iterated nested expectations can, for instance, arise in the numerical approximation of solutions of backward stochastic differential equations (BSDEs) (see, e.g., \cite{BenderGaertnerSchweizer2018PathwiseDynamicProgramming,BouchardTouzi2004ApproximationOfBSDEs,GobetLemorWarin2005_ARegressionBasedMonteCarloMethodForBSDEs,PardouxPeng1990AdaptionSolutionOfBSDEs}), in the numerical approximation of solutons of semilinear parabolic partial differential equations (PDEs) (see, e.g., \cite{ElKarouiKapoudjianPardouxPeng1997ReflectedBSDEsAndObstacleProblems,PardouxPeng1992}), 
in  statistical physics (see, e.g., \cite{DimitsEtAl2013CoulombCollisions,HajiAliEtAl2018MCMcKeanVlasov,RosinRicketsonDimits2014MultilevelMCForCoulombCollisions,SzpruchTanTse2017IterativeParticleApproximationForMcKeanVlasov}), 
in optimal stopping problems such as the approximative pricing of American or Bermudan options (see, e.g., \cite{Andersen1999SimpleApproachToPricingBermudanSwaptions,Belomestny2011PricingBermudanOptions,BenderSchweizerZhuo2017PrimalDualAlgorithmForBSDEs,BroadieGlasserman1997PricingAmericans,Carriere1996ValuationEarlyExercise,Egloff2005_MonteCarloForOptimalStoppingAndStatisticalLearning,GoldbergChen2018BeatingTheCurseOfDimensionalityInOptionsPricingAndOptimalStopping,LongstaffSchwartz2001ValuingAmericanOptions,TsitsiklisVanRoy1999OptimalStoppingOfMarkovProcesses,TsitsiklisVanRoy2001RegressionMethodForAmericanOptions}), 
in risk measure estimation in mathematical finance (see, e.g., \cite{BroadieDuMoallemi2011EfficientRiskEstimation,BroadieDuMoallemi2015RiskRegression,GlassermanHeidelbergerShahabudding2000VarianceReductionForVaR,GlassermanHeidelbergerShahabuddin2002PortfolioVaRHeavyTailedRisk,GordyJuneja2010NestedSimulationPortfolioRisk,KornKornKroisandt2010MonteCarloMethodsAndModels,Peng2004NonlinearExpectations}), 
or in decision-making under uncertainty (see, e.g., \cite{AdesLuClaxton2004ExpectedValueOfSampleInfo,BratvoldBickelLohne2009ValueOfInformationOilGas,BrennanEtAl2007CalculatingPartialEVPI,NakayasuGodaTanaka2016EvaluatingValueSinglePointData}). 
In general, explicit solutions in closed form are not available for iterated nested expectations in the above mentioned problems and this entails a high demand for numerical approximation methods for the approximative calculation of iterated nested expectations. 
In the scientific literature we refer to \cite{BujokHamblyReisinger2015MultilevelSimulation,Giles2018MLMCNestedExpectations,GilesGoda2018DecisionMakingUnderUncertainty,GilesHaji-Ali2018MultilevelNestedSimulationForRiskEstimation,GodaHironakaIwamoto2020MultilevelMonteCarloForExpectedInformationGain,GodaMurakamiTanakaSato2018DecisionTheoreticSensitivityAnalysis,hironaka2019multilevel} for results on the numerical approximation of two nested expectations. 
Nested expectations which appear in the numerical approximation of solutions of BSDEs, in the numerical approximation of semilinear PDEs, or in the approximative pricing of American or Bermudan options usually do not only consist of two nested expectations but do instead consist of a large number of nestings. 
However, the computational effort of standard nested Monte Carlo approximations for iterated nested expectations grows exponentially in the number of nestings and it remained an open question whether it is possible to approximately calculate multiply iterated high-dimensional nested expectations in polynomial time.

It is the subject of this article to attack this problem by proposing and studying a new class of full-history recursive multilevel Picard approximation schemes for iterated nested expectations (in the following we abbreviate full-history recursive multilevel Picard by MLP). 
MLP approximation schemes were introduced in \cite{hutzenthaler2016multilevel,Overcoming} for the numerical approximation of solutions of semilinear PDEs and have been shown in \cite{MLPElliptic,AllenCahnApproximation2019,Overcoming,hutzenthaler2019overcoming} to overcome the curse of dimensionality in the numerical approximation of certain semilinear PDEs (cf.\ also \cite{BeckerEtAl2020MLPSimulations,EHutzenthalerJentzenKruse2019MLP,GilesWeltiJentzen2019GeneralisedMLP,HutzenthalerJentzenKruse2019MLPGradient,HutzenthalerKruse2020MLPApproximationsOfHighDimensionalSemilinearPDEsWithGradientDependentNonlinearities} for numerical simulations and further mathematical results for MLP approximation schemes). 
In this article we propose a new class of time-discrete MLP approximation schemes and we prove under suitable assumptions that these MLP approximation schemes can approximately calculate multiply iterated nested expectations with a computational effort growing at most polynomially in the number of nestings $ K \in \N =\{1, 2, 3, \ldots \} $, in the problem dimension $ d \in \N $, and in the reciprocal $ \nicefrac{1}{\varepsilon} $ of the desired approximation accuracy $ \varepsilon \in (0, \infty) $. 
To illustrate this article's main result, \cref{cor:complexity_general_dynamics} in \cref{subsec:complexity} below, we now present in \cref{intro_thm} a special case of \cref{cor:complexity_general_dynamics}. 

\begin{theorem} \label{intro_thm} 
	Let	$ L,T,\delta \in ( 0, \infty ) $, 
		$ \Theta = \bigcup_{ k \in \N } \! \Z^{ k } $, 
		$ f \in C( \R, \R ) $ 
	satisfy for all 
		$ a,b \in \R $ 
	that 
		$| f ( a ) - f ( b ) | \leq L | a - b |$, 
	let $( \Omega, \mc F, \P ) $ be a probability space, 
	let $ R^{ \theta } \colon \Omega \to ( 0, 1 ) $, $ \theta \in \Theta $, be i.i.d.\,random variables, 
	assume for all 
		$ t \in (0, 1) $
	that 
		$ \P ( R^{ 0 } \leq t ) = t $, 
	let $ W^{ d, \theta } \colon [0,T] \times \Omega \to \R^d $, $ d \in \N $, $ \theta \in \Theta $, be i.i.d.\ standard Brownian motions, 
	assume that 
	$ ( R^{ \theta } )_{ \theta \in \Theta } $ 
	and 
	$ ( W^{ d, \theta } )_{ ( d, \theta ) \in \N \times \Theta } $ 
	are independent, 
	let $ v^{ d, K }_{ k } \in C( \R^d, \R ) $, $ k \in \{ 0, 1, \ldots, K \} $, $ d, K \in \N $, satisfy for all 
		$ d, K \in \N $, 
		$ k \in \{ 1, 2, \ldots, K \} $, 
		$ x \in \R^d $ 
	that  
		$ | v^{ d, K }_{ 0 } ( x ) | \leq L $
	and 
		\begin{equation} \label{intro_thm:exponential_Euler_scheme}
		v^{ d, K }_{ k } ( x ) 
		= \EXPP{ v^{ d, K }_{ k - 1 } ( x + W^{ d, 0 }_{ T \slash K } ) + \tfrac{T}{K} f \big( v^{ d, K }_{ k - 1 } ( x + W^{ d, 0 }_{ T \slash K } ) \big) }, 
		\end{equation}
	let $ V^{ d, K, \theta }_{ k, n, M } \colon \R^d \times \Omega \to \R $, $ \theta \in \Theta $, $ k \in \{ 0, 1, \ldots, K \} $, $ n \in \N_0 $, $ d,K,M \in \N $, 
	satisfy\footnote{Note that for all $ t \in \R $ it holds that $ \lfloor t \rfloor = \max( (-\infty, t] \cap \Z ) $.}  for all
		$ d,K,M \in \N $, 
		$ n \in \N_0 $, 
		$ k \in \{ 0, 1, \ldots, K \} $, 
		$ \theta \in \Theta $, 
		$ x \in \R^d $ 
	that
	\begin{multline}\label{intro_thm:mlp_scheme}
		V^{ d, K, \theta }_{ k, n, M } ( x ) 
		= 
		\frac{ \mathbbm{1}_{\N}(n) }{ M^{ n } } \sum_{ m = 1 }^{ M^{ n } } 
		\left[ v^{ d, K }_{ 0 } ( x + W^{ d, ( \theta, 0, -m ) }_{ k T \slash K } ) + \tfrac{ k T }{ K } f ( 0 ) \right]  
		\\
		+ 
		\sum_{ j = 1 }^{ n - 1 } 
		\frac{ kT }{ K M^{ n - j } }
		\sum_{ m = 1 }^{ M^{ n - j } } \bigg[  
		f \big( V^{ d, K, ( \theta, j, m ) }_{ \lfloor k R^{ ( \theta, j, m ) } \rfloor, j, M } ( x + W^{ d, ( \theta, j, m ) }_{ k T \slash K } - W^{ d, ( \theta, j, m ) }_{ \lfloor k R^{ ( \theta, j, m ) } \rfloor T \slash K } ) \big) 
		\\
		- f \big( V^{ d, K, ( \theta, j, -m ) }_{ \lfloor k R^{ ( \theta, j, m ) } \rfloor, j - 1, M } ( x + W^{ d, ( \theta, j, m ) }_{ kT \slash K } - W^{ d, ( \theta, j, m ) }_{ \lfloor k R^{ ( \theta, j, m ) } \rfloor T \slash K } ) \big) \bigg], 
		\end{multline}
	and for every 
		$ d, K, M \in \N $, 
		$ n \in \N_0 $ 
	let $ \mf C^{ d, K }_{ M, n } \in \N_0 $ be the number of realizations of scalar standard normal random variables which are used to compute one realization of $V^{d, K, 0}_{K, n, M}(0) \colon \Omega \to \R$ (cf.\ \eqref{cor:complexity_mlp_exponential_euler:cost_inequality} for a precise definition). 
	Then there exist $ \mf N = ( \mf N_{ \varepsilon } )_{ \varepsilon \in (0,1] } \colon ( 0, 1 ] \to \N $ and $ \mf c \in \R $ such that for all 
		$ \varepsilon \in (0,1] $,
		$ d,K \in \N $
	it holds that 
		$ \mf C^{ d, K }_{ \mf N_{ \varepsilon }, \mf N_{ \varepsilon } } \leq \mf c d \varepsilon^{ -(2+\delta) } $ 
	and 
	$ \big( \EXP{ | V^{ d, K, 0 }_{ K, \mf N_{ \varepsilon }, \mf N_{ \varepsilon } } ( 0 ) - v^{ d, K }_{ K } ( 0 ) |^2 } \big)^{\!\nicefrac12 } \leq \varepsilon $. 
\end{theorem}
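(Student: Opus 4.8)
The plan is to obtain \cref{intro_thm} as the Brownian-motion instance of the article's general MLP complexity theorem: I would (i) rewrite the exponential-Euler recursion as a single randomised nested expectation, (ii) prove a recursive mean-square error estimate that is uniform in the number of nestings and decays super-geometrically in the number of levels, (iii) prove the companion cost recursion, and (iv) balance the two by taking $M=n=\mathfrak N_\varepsilon$. As a preliminary I would check, using only $|v^{d,K}_0|\le L$, the $L$-Lipschitz continuity of $f$, and translation invariance of Brownian motion, that the mean-square errors $(\EXP{|V^{d,K,0}_{k,n,M}(x)-v^{d,K}_k(x)|^2})^{1/2}$ are finite and bounded by a quantity $\mathcal E_{k,n,M}\in[0,\infty)$ that does not depend on $d$, $K$ or $x$; a discrete Gronwall estimate applied to \eqref{intro_thm:exponential_Euler_scheme} also shows $\sup_{d,K}\sup_{k\le K}\sup_x|v^{d,K}_k(x)|\le(L+T|f(0)|)e^{TL}<\infty$, again uniformly in the number of nestings.

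First I would unfold \eqref{intro_thm:exponential_Euler_scheme}. Writing $(P_jg)(x)=\EXP{g(x+W^{d,0}_{jT/K})}$ it reads $v^{d,K}_k=P_1v^{d,K}_{k-1}+\tfrac TKP_1(f\circ v^{d,K}_{k-1})$, hence telescopes to $v^{d,K}_k=P_kv^{d,K}_0+\tfrac TK\sum_{i=0}^{k-1}P_{k-i}(f\circ v^{d,K}_i)$; since $\lfloor kR^0\rfloor$ is uniformly distributed on $\{0,1,\dots,k-1\}$, is independent of $W^{d,0}$, and $W^{d,0}_{kT/K}-W^{d,0}_{\lfloor kR^0\rfloor T/K}$ is distributed as $W^{d,0}_{(k-\lfloor kR^0\rfloor)T/K}$, this becomes the single nested expectation $v^{d,K}_k(x)=\EXPP{v^{d,K}_0(x+W^{d,0}_{kT/K})+\tfrac{kT}{K}f(v^{d,K}_{\lfloor kR^0\rfloor}(x+W^{d,0}_{kT/K}-W^{d,0}_{\lfloor kR^0\rfloor T/K}))}$, and \eqref{intro_thm:mlp_scheme} is precisely the full-history recursive multilevel Picard approximation associated with this fixed-point identity, the term $\tfrac{kT}{K}f(0)$ having been pulled out of the telescoping sum into the level-$0$ contribution and all samples and randomised time indices being mutually independent.

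The heart of the argument — and the step I expect to be the main obstacle — is the error recursion. Conditioning on the randomised indices $\lfloor kR^{(\theta,j,m)}\rfloor$ and the Brownian increments, the error $V^{d,K,0}_{k,n,M}(x)-v^{d,K}_k(x)$ decomposes into the level-$0$ Monte Carlo fluctuation (variance $\le L^2M^{-n}$, by $|v^{d,K}_0|\le L$), into $n-1$ further conditionally centred Monte Carlo fluctuations that are mutually independent because they are built from disjoint blocks of the index set $\Theta$ (the variance of the $j$-th being $\le\tfrac{2L^2}{M^{n-j}}(\tfrac{kT}{K})^2\tfrac1k\sum_{i<k}(\mathcal E_{i,j,M}^2+\mathcal E_{i,j-1,M}^2)$, by the Lipschitz property of $f$), and into a bias term of size $\le L\tfrac{kT}{K}\tfrac1k\sum_{i<k}\mathcal E_{i,n-1,M}$. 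The decisive feature is that $\tfrac{kT}{K}\cdot\tfrac1k=\tfrac TK$, so all of these contributions enter through the \emph{discretised Volterra operator} $\tfrac TK\sum_{i<k}(\,\cdot\,)$: iterating it $n$ times over strictly decreasing indices $K\ge k>i_1>\dots>i_n\ge0$ produces at most $\binom kn\le K^n/n!$ summands, so a crude per-step factor of size $TL$ (which may well exceed $1$) is converted into the super-geometrically small factor $(TL)^n/n!$ — this is exactly the mechanism that defeats the exponential-in-$K$ blow-up of naive nested Monte Carlo. Carrying this bookkeeping through the recursion, along the lines of the error lemmas for MLP schemes in the references of \cref{sect:intro}, I would obtain constants $\mathfrak a,\mathfrak b\in(0,\infty)$ depending only on $L$, $T$ and $f(0)$ (and not on $d$, $K$, $M$ or $n$) with $\mathcal E_{K,n,M}\le\mathfrak b\,\mathfrak a^{\,n}(n!)^{-1/2}+\mathfrak b\,\mathfrak a^{\,n}M^{-n/2}$ for all $n,M\in\N$; in particular the diagonal choice $M=n$ yields $\mathcal E_{K,n,n}\le(\mathfrak c_0/\sqrt n)^{n}$ for a constant $\mathfrak c_0=\mathfrak c_0(L,T,f(0))$ and all sufficiently large $n$. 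I expect the genuine work to lie here: setting up the two-index $(k,n)$ recursion correctly and extracting the $1/n!$-type decay uniformly in $K$.

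For the cost, simulating any Brownian increment $W^{d,\cdot}_{jT/K}-W^{d,\cdot}_{j'T/K}$ uses exactly $d$ realisations of scalar standard normal random variables, irrespective of $j$, $j'$ and $K$, so each node of the recursion tree of \eqref{intro_thm:mlp_scheme} consumes at most $\mathfrak c_1 d$ such realisations beyond those consumed by its children, for some absolute $\mathfrak c_1\in\N$; reading off the recursion tree from \eqref{intro_thm:mlp_scheme} therefore gives $\mathfrak C^{d,K}_{M,n}\le\mathfrak c_1 d\,M^n+\sum_{j=1}^{n-1}M^{n-j}(\mathfrak c_1 d+\mathfrak C^{d,K}_{M,j}+\mathfrak C^{d,K}_{M,j-1})$ with $\mathfrak C^{d,K}_{M,0}=0$, whence an elementary induction yields $\mathfrak C^{d,K}_{M,n}\le\mathfrak c_1 d\,(\mathfrak c_3M)^n$ for an absolute constant $\mathfrak c_3\in\N$, uniformly in $K$ and linearly in $d$, so in particular $\mathfrak C^{d,K}_{n,n}\le\mathfrak c_1 d\,(\mathfrak c_3n)^n$. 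Finally I would balance: given $\varepsilon\in(0,1]$, let $\mathfrak N_\varepsilon\in\N$ be large enough that $\mathcal E_{K,\mathfrak N_\varepsilon,\mathfrak N_\varepsilon}\le\varepsilon$ (possible since $\mathcal E_{K,n,n}\to0$), which by Step (ii) gives $(\EXP{|V^{d,K,0}_{K,\mathfrak N_\varepsilon,\mathfrak N_\varepsilon}(0)-v^{d,K}_K(0)|^2})^{1/2}\le\varepsilon$ for all $d,K\in\N$; since $\mathcal E_{K,n,n}\le(\mathfrak c_0/\sqrt n)^n$, one may take $\mathfrak N_\varepsilon$ to grow at most like $\ln(1/\varepsilon)/\ln\ln(1/\varepsilon)$, and then $\mathfrak N_\varepsilon\ln(\mathfrak c_3\mathfrak N_\varepsilon)\le(2+\delta)\ln(1/\varepsilon)+\mathfrak c_2$ for a constant $\mathfrak c_2=\mathfrak c_2(L,T,f(0),\delta,\mathfrak c_3)$ — the exponent $2+\delta$ originating from the interplay of the Monte Carlo rate $M^{-n/2}$ (which contributes the $2$) with the $\sim M^n$ cost, the $n!$ and the logarithmic corrections being absorbed into $\delta$. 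Combining this with Step (iii) gives $\mathfrak C^{d,K}_{\mathfrak N_\varepsilon,\mathfrak N_\varepsilon}\le\mathfrak c_1 d\,(\mathfrak c_3\mathfrak N_\varepsilon)^{\mathfrak N_\varepsilon}\le\mathfrak c_1 e^{\mathfrak c_2}d\,\varepsilon^{-(2+\delta)}$ for all $\varepsilon\in(0,1]$ and $d,K\in\N$, which is the assertion with $\mathfrak N:=(\mathfrak N_\varepsilon)_{\varepsilon\in(0,1]}$ and $\mathfrak c:=\mathfrak c_1 e^{\mathfrak c_2}$.
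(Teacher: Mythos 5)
Your proposal is correct and follows essentially the same route as the paper: Theorem~\ref{intro_thm} is deduced there from the general complexity result \cref{cor:complexity_general_dynamics} via \cref{cor:complexity_mlp_exponential_euler}, and your steps (i)--(iv) mirror its ingredients --- the telescoped/randomised representation of \eqref{intro_thm:exponential_Euler_scheme} (\cref{lem:exact_solution_alternative_representation}), the bias--variance recursion (\cref{lem:bias_estimate,lem:variance_estimate,lem:error_recursion}), the $1/n!$ mechanism from iterating the discrete Volterra operator (packaged in the paper as the weights $\alpha^{(q)}_k$ of \cref{lem:special_alphas} together with the Gronwall-type \cref{lem:gronwall_type_inequality}), the cost recursion solved to $\mathfrak C^{d,K}_{M,n}\lesssim d\,(3M)^n$, and the final balancing with $M=n=\mathfrak N_\varepsilon$.
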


\cref{intro_thm} is an immediate consequence of \cref{cor:complexity_mlp_exponential_euler} in \cref{subsec:application} below. \cref{cor:complexity_mlp_exponential_euler}, in turn, follows from \cref{cor:complexity_general_dynamics}, the main result of this article. 
Roughly speaking, \cref{intro_thm} shows for every arbitrarily small real number $\delta \in (0,\infty)$ that MLP approximations can approximately calculate the function values $v^{d,K}_K(0)$, $d,K \in \N$, which are given recursively by the iterated nested expectations in \eqref{intro_thm:exponential_Euler_scheme} above with the root mean square error bounded by $ \varepsilon \in (0,\infty)$ and the computational cost bounded by a multiple of the product of $d\in\N$ and $\varepsilon^{-(2+\delta)}$. 
The real number $ T \in (0, \infty) $ and the function $f \in C(\R,\R)$ in \cref{intro_thm} above describe the time horizon and the nonlinearity in the nested expectations in \eqref{intro_thm:exponential_Euler_scheme} in \cref{intro_thm}. 
The nonlinearity $f$ in \cref{intro_thm} is assumed to be Lipschitz continuous with the Lipschitz constant bounded by $L\in (0,\infty)$. 
The real number $L\in (0,\infty)$ is also used to express a uniform boundedness assumption for the functions $v^{d,K}_0\colon \R^d\to\R$, $d,K\in\N$, in \cref{intro_thm}. 
The functions $v^{d,K}_K\colon \R^d \to \R$, $d,K\in\N$, are calculated approximately by means of the MLP approximation scheme in \eqref{intro_thm:mlp_scheme} in \cref{intro_thm}. 
The computational effort for evaluating the MLP approximations in \eqref{intro_thm:mlp_scheme} is measured by means of the natural numbers $\mf C^{d,K}_{M,n} \in \N$, $d,K,M\in\N$, $n\in\N_0$, in \cref{intro_thm} above. 
The specific recursion of the iterated nested expectations in \eqref{intro_thm:exponential_Euler_scheme} in \cref{intro_thm} above describes just one class of examples which can be calculated approximately by the MLP approximations proposed in this paper and we refer to  \cref{cor:complexity_general_dynamics} in \cref{subsec:complexity} below for our more general approximation result for iterated nested expectations.

The remainder of this article is organized as follows. 
In \cref{sec:MLP_approximations} we introduce in \cref{subsec:MLP_scheme} MLP approximation schemes for the approximation of iterated nested expectations (see \eqref{setting:mlp_scheme} in \cref{setting} in \cref{subsec:MLP_scheme}) and we establish in \cref{subsec:measurability_properties,subsec:distribution_properties,subsec:integrability_properties} basic measurability, integrability, and distribution properties for the proposed MLP approximations. 
In \cref{sec:analysis_of_MLP_approximations} we establish in \cref{subsec:bias_and_variance_estimates} bias and variance estimates for MLP approximations  and we provide in \cref{subsec:full_error_analysis} a full error analysis for the proposed MLP approximation schemes. 
In \cref{sec:complexity_analysis} we combine the error analysis for the proposed MLP approximation schemes in \cref{subsec:full_error_analysis} with a computational cost analysis for the proposed MLP approximation schemes to obtain a complexity analysis for the proposed MLP approximation schemes. 

\section[Full-history recursive multilevel Picard (MLP) approximations]{Full-history recursive multilevel Picard (MLP) approximations for iterated nested expectations}
\label{sec:MLP_approximations}

In this section we introduce in \cref{setting} in \cref{subsec:MLP_scheme} below MLP approximations for iterated nested expectations, we establish in \cref{lem:measurability} in \cref{subsec:measurability_properties} basic measurability properties for MLP approximations, we establish in \cref{lem:equidistribution,cor:equidistribution_for_mean_lemma} in \cref{subsec:distribution_properties} basic distribution properties for MLP approximations, and we establish in \cref{lem:integrability} in \cref{subsec:integrability_properties} below basic integrability properties for MLP approximations. 
Our proofs of \cref{lem:equidistribution}, \cref{cor:equidistribution_for_mean_lemma}, and \cref{lem:integrability} are based on some general and essentially well-known distribution properties for random fields which we establish in \cref{lem:elementary_measurability}, \cref{lem:equidistribution_random_fields}, \cref{lem:equidistributed_vectors}, and \cref{cor:equidistributed_sums} in \cref{subsec:random_fields} below (cf., e.g., also Hutzenthaler et al.~\cite[Subsection 2.2]{Overcoming} and Hutzenthaler et al.~\cite[Subsection 3.3]{hutzenthaler2019overcoming}).
The MLP approximations for iterated nested expectations introduced in \cref{subsec:MLP_scheme} below are inspired by the MLP approximations for semilinear PDEs introduced in Hutzenthaler et al.~\cite[Section 3]{Overcoming}.

\subsection{MLP approximation schemes for nested expectations}
\label{subsec:MLP_scheme}

\begin{setting}\label{setting} 
	Let $ d,K,M \in \N $, 
		$ \Theta = \bigcup_{ n \in \N } \! \Z^n $, 
		$ \mc O \in \Borel( \R^d )\setminus\{\emptyset\} $, 
	let $ f_{ k } \colon \mc O \times \R \to \R $, $ k \in \{ 0, 1, \ldots, K \} $,
	be $ \Borel( \mc O \times \R ) $/$ \Borel( \R ) $-measurable, 	
	let $ g \colon \mc O \to \R $ be $ \Borel( \mc O ) $/$ \Borel(\R) $-measurable, 	
	let $ ( S, \mc S ) $ be a measurable space, 
	let $ \phi_{ k } \colon \mc O\times S \to \mc O $, $ k \in \{ 0, 1, \ldots, K \} $, be $ ( \Borel( \mc O ) \otimes \mc S ) $/$ \Borel( \mc O ) $-measurable, 
	let $ ( \Omega, \mc F, \P ) $ be a probability space, 
	let $ W^{ \theta }_{ k } \colon \Omega \to S $, $ \theta \in \Theta $, $ k \in \{0, 1, \ldots, K\} $, be independent random variables, 
	assume for every 
		$ k \in \{0, 1, \ldots, K\} $ 
	that $ W^{ \theta }_{ k } \colon \Omega \to S $, $ \theta \in \Theta $, are identically distributed, 
	let $ \mc{R}^{ \theta } = (\mc{R}^{\theta}_{k})_{k \in \{0, 1, \ldots, K\}} \colon \{0, 1, \ldots, K\} \times \Omega \to \N_0 $, $ \theta \in \Theta $, be i.i.d.\,stochastic processes, 
	assume for every 
		$ k \in \{1, 2, \ldots, K\} $ 
	that $ \mc{R}^{ \theta }_{ k } \leq k-1 $,  
	let $ \mf p_{ k, l } \in ( 0, \infty ) $, $ k \in \{ 1, 2, \ldots, K \} $, $ l \in \N_0 $, satisfy for all 
		$ k \in \{ 1, 2, \ldots, K \} $, 
		$ l \in \N_{ 0 } $ 
	that 
		$ \mf p_{ k, l } \P( \mc{R}^{ 0 }_{ k } = l ) = |\P ( \mc{R}^{ 0 }_{ k } = l )|^2 $, 
	assume that $ ( \mc{R}^{ \theta })_{ \theta \in \Theta } $ and $ ( W^{ \theta }_{ k } )_{ ( \theta, k ) \in \Theta \times \{ 0, 1, \ldots, K \}} $ are independent, 
	let 
		$ X^{\theta,k} 
		= (X^{\theta,k,x}_l)_{(l,x)\in \{0,1,\ldots,k\}\times\mc O}\colon \{0,1,\ldots,k\}\times\mc O\times\Omega\to\mc O $, 
		$ k \in \{0,1,\ldots,K\} $,
		$ \theta \in \Theta $,  		 
	satisfy for all 
		$ \theta \in \Theta $,  
		$ k \in \{0,1,\ldots,K\} $,
		$ l \in \{0,1,\ldots,k\} $, 
		$ x \in \mc O $ 
	that 
		\begin{equation}\label{setting:dynamics}  
		X^{\theta,k,x}_l
		= 
		\begin{cases}
		x  & \colon l=k \\
		\phi_{l}( X^{\theta,k,x}_{l+1}, W^{\theta}_{l}) & \colon l<k, 
		\end{cases}
		\end{equation}
	and	let 
		$ V^{\theta}_{k,n} \colon \mc O \times \Omega \to \R $, $ n \in \N_0 $, $ k \in \{0,1,\ldots,K\} $, $ \theta\in\Theta $, 
	satisfy for all 
		$ \theta \in \Theta$, 
		$ k \in \{ 0, 1, \ldots, K \} $, 
		$ n \in \N_{ 0 } $, 
		$ x\in \mc O $ 
	that 
		\begin{equation} \label{setting:mlp_scheme}
		\begin{split}
		V^{ \theta }_{ k, n } ( x ) 
		& = 
		\frac{ \mathbbm{ 1 }_{ \N }( n ) }{ M^n } \sum_{ m = 1 }^{ M^{ n } }
		\left[ g( X^{ ( \theta, 0, -m ), k, x }_{ 0 } ) + \sum_{ l = 0 }^{ k - 1 } f_{ l }( X^{ ( \theta, 0, m ), k, x }_{ l }, 0 ) \right] 
		+ 
		\sum_{ j = 1 }^{ n - 1 } \frac{ 1 }{ M^{ n - j } } \sum_{ m = 1 }^{ M^{ n - j } } 
		\frac{ \mathbbm{ 1 }_{ \N }( k ) }{ \mf p_{ k, \mc{R}^{ ( \theta, j, m ) }_{ k } } } 
		\\
		& \cdot 
		\bigg[ \Big( f_{ \mc{R}^{ ( \theta, j, m ) }_{ k } } \big( X^{ ( \theta, j, m ), k, x }_{ \mc{R}^{ ( \theta, j, m ) }_{ k } }, V^{ ( \theta, j, m ) }_{ \mc{R}^{ ( \theta, j, m ) }_{ k }, j } ( X^{ ( \theta, j, m ), k, x }_{ \mc{R}^{ ( \theta, j, m ) }_{ k } } ) \big) 
		-
		V^{(\theta,j,m)}_{{\mc{R}^{(\theta,j,m)}_k},j}(X^{(\theta,j,m),k,x}_{\mc{R}^{(\theta,j,m)}_k})
		\Big) 
		\\
		& 
		- 
		\Big(f_{\mc{R}^{(\theta,j,m)}_k} \big( X^{(\theta,j,m),k,x}_{\mc{R}^{(\theta,j,m)}_k},  V^{(\theta,j,-m)}_{{\mc{R}^{(\theta,j,m)}_k},j-1}(X^{(\theta,j,m),k,x}_{\mc{R}^{(\theta,j,m)}_k})
		\big) 
		-
		V^{ ( \theta, j, -m ) }_{ \mc{R}^{ ( \theta, j, m ) }_{ k } , j - 1 }
		( X^{ ( \theta, j, m ), k, x }_{ \mc{R}^{ ( \theta, j, m ) }_{ k } } ) 
		\Big)
		\bigg].
		\end{split}
		\end{equation}
	\end{setting}

\subsection{Measurability properties for MLP approximations}
\label{subsec:measurability_properties}

\begin{lemma} \label{lem:elementary_measurability_X_processes} 
	Let $ d,K \in \N $, 
		$ \mc O \in \Borel(\R^d) \setminus \{ \emptyset \} $, 
	let $ (S,\mc S) $ be a measurable space, 
	let $ \phi_k \colon \mc O \times S \to \mc O $, $ k \in \{0,1,\ldots,K\} $, be $ (\Borel(\mc O)\otimes\mc S) $/$ \Borel(\mc O) $-measurable, 
	let $ (\Omega,\mc F,\P) $ be a probability space, 
	let $ \mc A \subseteq \mc F $ be a sigma-algebra on $ \Omega $, 
	let $ W_k \colon \Omega \to S $, $ k \in \{0,1,\ldots,K\} $, be random variables, 
	and let 
		$ X^k = (X^{k,x}_l)_{(l,x) \in \{0,1,\ldots,k\}\times\mc O } \colon \{0,1,\ldots,k\} \times \mc O \times \Omega \to \mc O $, $ k \in \{0,1,\ldots,K\} $,  
	satisfy for all 
		$ k \in \{0,1,\ldots,K\} $,  
		$ l \in \{0,1,\ldots,k\} $, 
		$ x \in \mc O $ 
	that 
		\begin{equation} \label{elementary_measurability_X_processes:dynamics}
		X^{k,x}_l = 
		\begin{cases}
		x & \colon l=k \\
		\phi_{l}(X^{k,x}_{l+1},W_{l}) & \colon l<k. 
		\end{cases}
		\end{equation} 
	Then 
	\begin{enumerate}[(i)] 
		\item \label{elementary_measurability_X_processes:item1}
		it holds for all 
			$ k,l \in \N_0 $
		with 
			$ l \leq k \leq K $ 
		that 	
			$ \mc O \times \Omega \ni (x, \omega) \mapsto X^{k,x}_l(\omega) \in \mc O $ 
		is 
			$ ( \Borel(\mc O)\otimes\sigma( (W_m)_{m\in [l,k)\cap\N_0} ) ) $/$ \Borel(\mc O) $-measurable 
		and 
		\item \label{elementary_measurability_X_processes:item2} 
		it holds for all 
			$ k,l \in \N_0 $ 
		with 
			$ l \leq k \leq K $ 
		that 
			$ \mc O \times \Omega \ni (x,\omega) \mapsto (X^{k,x}_l(\omega),\omega) \in \mc O \times \Omega $ 
		is 
			$ ( \Borel(\mc O) \otimes \sigma( \sigma((W_m)_{ m \in [l,k) \cap \N_0 } ) \cup \mc A ) ) $/$ ( \Borel(\mc O) \otimes \mc A ) $-measurable.  
	\end{enumerate}
\end{lemma}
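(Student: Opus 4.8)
The plan is to prove both items at once by fixing $k \in \{0,1,\ldots,K\}$ and running a finite backward induction on $l \in \{0,1,\ldots,k\}$, from $l=k$ down to $l=0$, establishing item~(i) and then deducing item~(ii) from it. For the base case $l=k$ I would observe that $[k,k)\cap\N_0=\emptyset$, so that $\sigma((W_m)_{m\in[k,k)\cap\N_0})$ is the trivial sigma-algebra $\{\emptyset,\Omega\}$, and that by \eqref{elementary_measurability_X_processes:dynamics} the map $\mc O\times\Omega\ni(x,\omega)\mapsto X^{k,x}_k(\omega)=x\in\mc O$ is merely the coordinate projection onto $\mc O$, which is $(\Borel(\mc O)\otimes\{\emptyset,\Omega\})/\Borel(\mc O)$-measurable.

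For the induction step I would assume $0\le l<k$ and that $(x,\omega)\mapsto X^{k,x}_{l+1}(\omega)$ is $(\Borel(\mc O)\otimes\sigma((W_m)_{m\in[l+1,k)\cap\N_0}))/\Borel(\mc O)$-measurable. Since $[l+1,k)\cap\N_0\subseteq[l,k)\cap\N_0$, this map is then also measurable with respect to the larger sigma-algebra $\Borel(\mc O)\otimes\sigma((W_m)_{m\in[l,k)\cap\N_0})$; moreover, because $l\in[l,k)\cap\N_0$ and $W_l$ is $\mc F/\mc S$-measurable, the map $(x,\omega)\mapsto W_l(\omega)$ is $(\Borel(\mc O)\otimes\sigma((W_m)_{m\in[l,k)\cap\N_0}))/\mc S$-measurable. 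Using the standard fact that a map into a product space is measurable iff its components are, I get that $(x,\omega)\mapsto(X^{k,x}_{l+1}(\omega),W_l(\omega))$ is measurable from $(\mc O\times\Omega,\Borel(\mc O)\otimes\sigma((W_m)_{m\in[l,k)\cap\N_0}))$ to $(\mc O\times S,\Borel(\mc O)\otimes\mc S)$; composing with the $(\Borel(\mc O)\otimes\mc S)/\Borel(\mc O)$-measurable function $\phi_l$ and invoking \eqref{elementary_measurability_X_processes:dynamics} (which, since $l<k$, gives $X^{k,x}_l=\phi_l(X^{k,x}_{l+1},W_l)$) completes the induction and hence proves item~(i).

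To obtain item~(ii) I would fix $k,l$ with $l\le k\le K$, set $\mc G:=\sigma(\sigma((W_m)_{m\in[l,k)\cap\N_0})\cup\mc A)$, and combine two observations: first, by item~(i) and $\sigma((W_m)_{m\in[l,k)\cap\N_0})\subseteq\mc G$, the map $(x,\omega)\mapsto X^{k,x}_l(\omega)$ is $(\Borel(\mc O)\otimes\mc G)/\Borel(\mc O)$-measurable; second, since $\mc A\subseteq\mc G$, the coordinate projection $(x,\omega)\mapsto\omega$ is $(\Borel(\mc O)\otimes\mc G)/\mc A$-measurable. Applying once more the componentwise characterisation of measurability into a product space yields that $(x,\omega)\mapsto(X^{k,x}_l(\omega),\omega)$ is $(\Borel(\mc O)\otimes\mc G)/(\Borel(\mc O)\otimes\mc A)$-measurable, as claimed. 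I expect the only delicate points to be purely bookkeeping ones: keeping the index set $[l,k)\cap\N_0$ and the associated product sigma-algebras straight through the composition, and correctly handling the degenerate case $l=k$ where this index set is empty; there is no genuine analytical content.
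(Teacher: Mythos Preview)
Your proposal is correct and follows essentially the same approach as the paper: a backward induction on $l$ for item~(i) (base case $l=k$ via the projection, step via measurability of the pair $(X^{k,x}_{l+1},W_l)$ composed with $\phi_l$), and then item~(ii) deduced from item~(i) together with measurability of the projection $(x,\omega)\mapsto\omega$. The only cosmetic difference is that the paper verifies item~(ii) by checking preimages of rectangles $B\times A$ directly, whereas you invoke the equivalent componentwise criterion for measurability into a product.
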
 

\begin{proof}[Proof of \cref{lem:elementary_measurability_X_processes}] 
	Throughout this proof let 
		$ \A_{k} \subseteq \N_0 $, $ k\in \{0,1,\ldots,K\} $, 
	satisfy for all 
		$ k \in \{ 0,1,\ldots,K \} $ 
	that 
		\begin{equation} 
		\A_{ k } 
		= 
		\left\{ 
			l \in \{ 0, 1, \ldots, k \} \colon 
			[ \Forall A \in \Borel(\mc O) \colon 
			(X^{k}_l)^{-1}(A) \in ( \Borel( \mc O ) \otimes \sigma( (W_m)_{m\in [l,k)\cap\N_0} ) ) ]
		\right\}\!.
		\end{equation} 
	Observe that the fact that for all 
		$ k \in \{ 0, 1, \ldots, K-1 \} $ 
	it holds that 
		$ W_{ k } \colon \Omega \to S $ 
	is $ \sigma( W_{ k } ) $/$ \mc S $-measurable 
	implies that for all 
		$ k,l \in \{0,1,\ldots,K\} $ 
	with 
		$ l+1 \in \A_k $ 
	it holds that 
		$ \mc O \times \Omega \ni (x,\omega) \mapsto ( X^{k,x}_{l+1}(\omega), W_l(\omega) ) \in \mc O \times S $ 
	is 
		$ (\Borel( \mc O ) \otimes \sigma( (W_m)_{ m\in [l,k)\cap\N_0} ) ) $/$ (\Borel( \mc O ) \otimes \mc S ) $-measurable. 
	The fact that for all  
		$ k \in \{ 0, 1, \ldots, K-1 \} $ 
	it holds that 	
		$ \phi_k \colon \mc O \times S \to \mc O $ 
	is $ ( \Borel( \mc O ) \otimes \mc S ) $/$ \Borel( \mc O ) $-measurable and \eqref{elementary_measurability_X_processes:dynamics} therefore ensure that for all 
		$ k,l \in \{0,1,\ldots,K\} $
	with $ l + 1 \in \A_{k} $ it holds that 
		$X^{k}_{l} \colon \mc O \times \Omega \to \mc O $ 
	is 
		$ ( \Borel( \mc O ) \otimes \sigma( (W_m)_{m\in [l,k)\cap \N_0 } ) ) $/$ \Borel( \mc O ) $-measurable. 
	Hence, we obtain that for all 
		$ k,l \in \{0,1,\ldots,K\} $ 
	with  
		$ l+1 \in \A_{k} $ 
	it holds that 
		$ l \in \A_{k} $. 
	Moreover, observe that \eqref{elementary_measurability_X_processes:dynamics} ensures for all 
		$ k \in \{0,1,\ldots,K\} $ 
	that 
		$ k \in \A_{k} $. 
	Induction and the fact that for all 
		$ k,l \in \{0,1,\ldots,K\} $ with $ l+1 \in \A_{k} $ 
	it holds that 
		$ l \in \A_{k} $  
	therefore yield that for all 
		$ k \in \{ 0, 1, \ldots, K \} $
	it holds that 
		$ \A_{k} = \{ 0, 1, \ldots, k \} $. 
	This establishes item \eqref{elementary_measurability_X_processes:item1}. 
	Next note that item~\eqref{elementary_measurability_X_processes:item1} ensures that for all 
		$ k,l \in \N_0 $, 
		$ B \in \Borel(\mc O) $
	with 
		$ l \leq k \leq K $ 
	it holds that 
		\begin{equation} \label{elementary_measurability_X_processes:first_projection}
		\begin{split} 
		\big\{ (x,\omega) \in \mc O \times \Omega \colon X^{k,x}_l(\omega) \in B \big\} 
		& \in ( \Borel( \mc O ) \otimes \sigma( (W_m)_{m\in [l,k)\cap\N_0}) ) 
		\\
		& \subseteq ( \Borel(\mc O )\otimes\sigma(\sigma( (W_m)_{m\in [l,k)\cap\N_0}) \cup \mc A) ). 
		\end{split} 
		\end{equation}
	Furthermore, observe that for all 
		$ k,l \in \N_0 $, 
		$ A \in \mc A $ 
	with 
		$ l \leq k \leq K $ 
	it holds that 
		\begin{equation} 
		\left\{ (x,\omega) \in \mc O \times\Omega \colon \omega \in A \right\} 
		= ( \mc O \times A ) \in ( \Borel( \mc O ) \otimes \mc A ) \subseteq ( \Borel( \mc O )\otimes\sigma(\sigma((W_m)_{m\in [l,k)\cap\N_0}) \cup \mc A) ). 
		\end{equation} 
	This and \eqref{elementary_measurability_X_processes:first_projection} imply that for all 
		$ k,l \in \N_0 $, 
		$ B \in \Borel( \mc O ) $, 
		$ A \in \mc A $ 
	with 
		$ l \leq k \leq K $ 
	it  holds that 
		\begin{equation} 
		\begin{split}
			& 
			\big\{ (x,\omega) \in \mc O \times \Omega \colon (X^{k,x}_l(\omega),\omega) \in B \times A \big\} 
			\\
			& = 
			\big( \big\{ (x,\omega) \in \mc O\times\Omega \colon X^{k,x}_l(\omega) \in B \big\} \cap 
			(\mc O \times A) \big) 
			\in \big( \Borel(\mc O)\otimes\sigma( \sigma((W_m)_{m\in [l,k)\cap\N_0}) \cup \mc A ) \big). 
		\end{split}
		\end{equation} 
	This establishes item~\eqref{elementary_measurability_X_processes:item2}. 
	This completes the proof of \cref{lem:elementary_measurability_X_processes}. 
\end{proof} 

\begin{lemma} \label{lem:measurability}
Assume 
	\cref{setting}. 
Then
	\begin{enumerate}[(i)] 
	\item \label{measurability:item1} 
		it holds for all 
			$ \theta \in \Theta $, 
			$ n \in \N_0 $ 
		that 
			$ \{ 0, 1, \ldots, K \} \times \mc O \times \Omega \ni ( k, x, \omega ) \mapsto V^{\theta}_{k,n}( x, \omega ) \in \R $ 
		is 		
		$ ( {\mf 2^{ \{0, 1, \ldots, K \}} \mathbbm 2}^{ \{0,1,\ldots,K\} } \otimes 
			\Borel( \mc O ) \otimes 
			\sigma( (\mc{R}^{ ( \theta, \vartheta ) }_{ s } )_{ ( s, \vartheta) \in \{1, 2, \ldots, K\} \times \Theta }, ( W^{ ( \theta, \vartheta ) }_s )_{ ( s, \vartheta ) \in \{ 0, 1, \ldots, K-1 \} \times \Theta } ) ) $/$ \Borel( \R ) $-measurable
		and 
	\item \label{measurability:item2} 
		it holds for all 
			$ \theta,\vartheta \in \Theta $, 
			$ j \in \N $, 
			$ k \in \{1, 2, \ldots, K\} $ 
		that 
			\begin{equation}
			\mc O \times \Omega \ni ( x, \omega ) 
			\mapsto 
			\left[ \tfrac{ 
			f_{ \mc{R}^{ \theta }_{ k } ( \omega ) } ( X^{ \theta, k, x }_{ \mc{R}^{ \theta }_{ k } ( \omega ) }( \omega ), V^{ \vartheta }_{ \mc{R}^{ \theta }_{ k } ( \omega ), j } ( X^{ \theta, k, x }_{ \mc{R}^{ \theta }_{ k } ( \omega ) }( \omega ), \omega ) )
			- 
		V^{ \vartheta }_{ \mc{R}^{ \theta }_{ k } ( \omega ), j } ( X^{ \theta, k, x }_{ \mc{R}^{ \theta }_{ k } ( \omega ) }( \omega ), \omega ) }{ \mf p_{ k, \mc{R}^{ \theta }_{ k }( \omega ) } } \right]
			\in \R
			\end{equation} 
		is 
			$ ( \Borel( \mc O ) \otimes \sigma( ( \mc{R}^{ \theta }_{ s } )_{ s \in \{1, 2, \ldots, K\} }, ( \mc{R}^{ ( \vartheta, \eta ) }_{ s } )_{ (s, \eta) \in \{1, 2, \ldots, K\} \times \Theta }, ( W^{ \theta }_{ s } )_{ s \in \{ 0, 1, \ldots, K-1 \} }, ( W^{ ( \vartheta, \eta ) }_{ s } )_{ ( s, \eta ) \in \{ 0, 1, \ldots, K-1 \} \times \Theta } ) ) $\allowbreak/$ \Borel( \R ) $-measurable.	
	\end{enumerate}
\end{lemma}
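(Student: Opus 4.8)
The plan is to prove item~\eqref{measurability:item1} by strong induction on the level $n\in\N_0$, and to obtain item~\eqref{measurability:item2} along the way. First a reduction: since $\{0,1,\ldots,K\}$ is finite and carries its power set, a map on $\{0,1,\ldots,K\}\times\mc O\times\Omega$ is measurable for the product sigma-algebra in item~\eqref{measurability:item1} if and only if, for each fixed $k\in\{0,1,\ldots,K\}$, the section $(x,\omega)\mapsto V^\theta_{k,n}(x,\omega)$ is $(\Borel(\mc O)\otimes\mc G_\theta)$/$\Borel(\R)$-measurable, where $\mc G_\theta$ is the sigma-algebra generated by $(\mc R^{(\theta,\vartheta)}_s)_{(s,\vartheta)\in\{1,\ldots,K\}\times\Theta}$ and $(W^{(\theta,\vartheta)}_s)_{(s,\vartheta)\in\{0,\ldots,K-1\}\times\Theta}$; so I fix $k$ throughout. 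The two routine ingredients are, first, \cref{lem:elementary_measurability_X_processes} applied with the driving process $(W_s)_s$ taken to be $(W^{(\theta,\vartheta)}_s)_s$ for the relevant shifted index $(\theta,\vartheta)$ (legitimate by \eqref{setting:dynamics}), where I use both its value form, item~\eqref{elementary_measurability_X_processes:item1}, and — in order to plug a random point into $V$ — its graph form, item~\eqref{elementary_measurability_X_processes:item2}, taking the sigma-algebra $\mc A$ there to be the one generated by the $\mc R$'s and $W$'s that enter the relevant $V$; and, second, the fact that finite sums, finite products, and compositions of measurable maps are measurable, together with measurability of $g$, of the $f_l$, of the indicators $\mathbbm 1_{\{\mc R^\theta_k=l\}}$, and of the reciprocal weights $\mf p_{k,\cdot}^{-1}$.

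To derive item~\eqref{measurability:item2} from item~\eqref{measurability:item1} at a fixed level $j\in\N$, I would split over the value of the random time: as $\mc R^\theta_k\le k-1$, for every $\omega\in\Omega$ one has $V^\vartheta_{\mc R^\theta_k(\omega),j}(X^{\theta,k,x}_{\mc R^\theta_k(\omega)}(\omega),\omega)=\sum_{l=0}^{k-1}\mathbbm 1_{\{\mc R^\theta_k=l\}}(\omega)\,V^\vartheta_{l,j}(X^{\theta,k,x}_l(\omega),\omega)$, and similarly for the term $f_{\mc R^\theta_k}(X^{\theta,k,x}_{\mc R^\theta_k},V^\vartheta_{\mc R^\theta_k,j}(X^{\theta,k,x}_{\mc R^\theta_k}))$ and for the factor $\mf p_{k,\mc R^\theta_k}^{-1}$. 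Now fix $l\in\{0,1,\ldots,k-1\}$ and put $\mc A=\sigma((\mc R^{(\vartheta,\eta)}_s)_{(s,\eta)},(W^{(\vartheta,\eta)}_s)_{(s,\eta)})$ (so $\mc A=\mc G_\vartheta$). By \cref{lem:elementary_measurability_X_processes}, item~\eqref{elementary_measurability_X_processes:item2}, the map $(x,\omega)\mapsto(X^{\theta,k,x}_l(\omega),\omega)$ is $(\Borel(\mc O)\otimes\sigma(\sigma((W^\theta_s)_{s\in[l,k)\cap\N_0})\cup\mc A))$/$(\Borel(\mc O)\otimes\mc A)$-measurable, and by item~\eqref{measurability:item1} with $\vartheta$ in place of $\theta$ (restricted to the section $k=l$) the map $(x',\omega)\mapsto V^\vartheta_{l,j}(x',\omega)$ is $(\Borel(\mc O)\otimes\mc A)$/$\Borel(\R)$-measurable. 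Composing these two, then composing with the measurable $f_l$, and finally multiplying by the $\sigma(\mc R^\theta_k)$-measurable quantities $\mathbbm 1_{\{\mc R^\theta_k=l\}}$ and $\mf p_{k,l}^{-1}$ exhibits the $l$-th summand — hence the sum over $l$ — as measurable for exactly the sigma-algebra recorded in item~\eqref{measurability:item2}, because $\sigma((W^\theta_s)_{s\in[l,k)\cap\N_0})$, $\mc A$, and $\sigma(\mc R^\theta_k)$ are each contained in it.

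It remains to carry out the induction for item~\eqref{measurability:item1}. The base case $n=0$ is immediate, since $\mathbbm 1_\N(0)=0$ and the sum $\sum_{j=1}^{-1}$ is empty, whence $V^\theta_{k,0}\equiv 0$. For the inductive step, assume item~\eqref{measurability:item1} at every level in $\{0,1,\ldots,n-1\}$; the preceding paragraph then yields the measurability in item~\eqref{measurability:item2} at every level $j\in\{1,\ldots,n-1\}$ (the contribution with $j=1$ to \eqref{setting:mlp_scheme} additionally involves $V^{(\theta,1,-m)}_{\cdot,0}\equiv 0$, for which the corresponding composition is trivial). I then read off \eqref{setting:mlp_scheme} term by term: in the first line $(x,\omega)\mapsto X^{(\theta,0,\pm m),k,x}_l(\omega)$ is measurable by \cref{lem:elementary_measurability_X_processes}, item~\eqref{elementary_measurability_X_processes:item1}, so its compositions with the measurable maps $g$ and $f_l(\cdot,0)$ are measurable; in the double sum over $j\in\{1,\ldots,n-1\}$ and $m$ (empty, hence harmless, when $k=0$, since then $\mathbbm 1_\N(k)=0$) each summand, after the weight $\mf p_{k,\mc R^{(\theta,j,m)}_k}^{-1}$ is absorbed into the bracket, is a constant times the difference of two expressions of exactly the form treated in item~\eqref{measurability:item2} — with the ``$\theta$'' there instantiated as $(\theta,j,m)$ in both, the ``$\vartheta$'' there as $(\theta,j,m)$ for the first and $(\theta,j,-m)$ for the second, and its ``$j$'' as $j$ and $j-1$ respectively — and is hence measurable; moreover $\mathbbm 1_\N(n)M^{-n}$, $\mathbbm 1_\N(k)$, and $M^{-(n-j)}$ are constants. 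Therefore $V^\theta_{k,n}$ is a finite linear combination of measurable maps, hence measurable, and tracking upper indices shows that every $\mc R$-variable and $W$-variable that occurs carries an upper index of the form $(\theta,\vartheta)$ with $\vartheta\in\Theta$, so the sigma-algebra generated lies inside $\mc G_\theta$. Re-assembling over $k$ proves item~\eqref{measurability:item1}, and item~\eqref{measurability:item2} for arbitrary $j\in\N$ then follows from the now-established item~\eqref{measurability:item1} at level $j$ just as in the second paragraph. I expect the real work to be bookkeeping rather than ideas — routing the random lower index $\mc R^\theta_k$ correctly through the indicator decomposition (which is precisely what forces the graph form, item~\eqref{elementary_measurability_X_processes:item2}, of \cref{lem:elementary_measurability_X_processes} rather than the mere value form), and verifying that the families of random variables generating the various sigma-algebras coincide with those written in the statement.
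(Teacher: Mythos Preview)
Your proposal is correct and follows essentially the same approach as the paper: strong induction on $n$ for item~\eqref{measurability:item1}, the indicator decomposition $\sum_{l=0}^{k-1}\mathbbm 1_{\{\mc R^\theta_k=l\}}(\cdots)$ to unravel the random index, and the graph form of \cref{lem:elementary_measurability_X_processes} (its item~\eqref{elementary_measurability_X_processes:item2}) to compose $V^\vartheta_{l,j}$ with $X^{\theta,k,\cdot}_l$, with item~\eqref{measurability:item2} obtained from item~\eqref{measurability:item1} via that same decomposition. The paper packages the induction via an explicit set $\A\subseteq\N$ but the content is identical.
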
 

\begin{proof}[Proof of \cref{lem:measurability}] 
	Throughout this proof let 
		$ \A \subseteq \N $ 
	satisfy
		\begin{multline} 
		\A =
		\Bigg\{ 
		n \in \N \colon 
		\bigg[ 
		\Forall \theta \in \Theta, k \in \{ 0, 1, \ldots, K \}, j \in \{ 0, 1, \ldots, n-1 \}, A \in \Borel(\R) \colon \\
		(V^{ \theta }_{ k, j })^{-1}(A) \in ( \Borel( \mc O ) \otimes 
		\sigma ( ( \mc{R}^{ ( \theta, \vartheta ) }_{ s } )_{ (s,\vartheta) \in \{1, 2, \ldots, K\} \times \Theta }, ( W^{ ( \theta, \vartheta ) }_{ s } )_{ ( s, \vartheta ) \in \{ 0, 1, \ldots, K-1 \} \times \Theta } ) ) 
		\bigg] 
		\Bigg\}.
		\end{multline} 
	Observe that the assumption that for all 
		$ \theta \in \Theta $, 
		$ k \in \{ 0, 1, \ldots, K \} $, 
		$ x \in \mc O $ 
	it holds that 
		$ V^{ \theta }_{ k, 0 } ( x ) = 0 $ 
	implies that 
		$ 1 \in \A $. 
	In the next step we note that item~\eqref{elementary_measurability_X_processes:item2} of \cref{lem:elementary_measurability_X_processes} ensures that for all 
		$ n \in \A $, 
		$ \theta,\vartheta \in \Theta $, 
		$ j \in \{ 0, 1, \ldots, n-1 \} $,
		$ k,m \in \{ 0, 1, \ldots, K \} $, 
		$ l \in \{ 0, 1, \ldots, k \} $
	it holds that 
		\begin{equation} \label{measurability:composition_of_V_and_X}	
		\mc O \times \Omega \ni ( x, \omega ) \mapsto V^{ \vartheta }_{ m, j }( X^{ \theta, k, x }_{ l }( \omega ), \omega ) \in \R
		\end{equation} 
	is $ ( \Borel( \mc O ) \otimes \sigma( ( W^{ \theta }_{ s } )_{ s \in \{ 0, 1, \ldots, K-1 \} }, ( \mc{R}^{ ( \vartheta, \eta ) }_{ s } )_{ (s, \eta) \in \{1, 2, \ldots, K\} \times  \Theta }, ( W^{ ( \vartheta, \eta ) }_{ s })_{ ( s, \eta ) \in \{ 0, 1, \ldots, K-1 \} \times \Theta } ) ) $/$ \Borel(\R) $-measurable. 
	The fact that 
		$ \Borel(\mc O) \otimes \Borel(\R) = \Borel( \mc O \times \R) $ 
	and item~\eqref{elementary_measurability_X_processes:item1} of \cref{lem:elementary_measurability_X_processes} 
	hence ensure that for all 
		$ n \in \A $, 
		$ \theta,\vartheta \in \Theta $, 
		$ j \in \{0,1,\ldots,n-1\} $,
		$ k,m \in \{0,1,\ldots,K\} $, 
		$ l \in \{0,1,\ldots,k\} $
	it holds that 	
		\begin{equation} \label{measurability:measurability_display_01}
		 \mc O \times \Omega \ni ( x, \omega ) \mapsto ( X^{ \theta, k, x }_{ l } ( \omega ), V^{ \vartheta }_{ m, j } ( X^{ \theta, k, x }_{ l } ( \omega ), \omega ) ) \in \mc O \times \R 
		\end{equation} 
	is $ (\Borel( \mc O ) \otimes \sigma ( 
	( W^{ \theta }_{ s } )_{ s \in \{ 0, 1, \ldots, K-1 \} }, ( \mc{R}^{ ( \vartheta, \eta ) }_{ s } )_{ ( s, \eta ) \in \{1, 2, \ldots, K \} \times \Theta }, ( W^{ ( \vartheta, \eta ) }_{ s } )_{ ( s, \eta ) \in \{ 0, 1, \ldots, K-1 \} \times \Theta } ) ) $/$ ( \Borel( \mc O \times \R ) ) $-{mea\-su\-rable}.
	Combining this with the assumption that for all 
		$ k \in \{ 0, 1, \ldots, K-1 \} $ 
	it holds that 
		$ f_k \colon \mc O \times \R \to \R $ is $\Borel( \mc O \times \R ) $/$ \Borel(\R) $-measurable 
	and the fact that for all 
		$ k,j \in \N_0 $, 
		$ \theta,\vartheta\in\Theta	$, 
		$ x \in \mc O $, 
		$ \omega \in \Omega $ 
	with 
		$ 1 \leq k \leq K $ 
	it holds that 
		\begin{equation} 
		\begin{split} 
		& 
		\frac{ f_{ \mc{R}^{ \theta }_{ k } ( \omega ) } 
		\big( X^{ \theta, k, x }_{ \mc{R}^{ \theta }_{ k } ( \omega ) }( \omega ), V^{ \vartheta }_{ \mc{R}^{ \theta }_{ k } ( \omega ), j }( X^{ \theta, k, x }_{ \mc{R}^{ \theta }_{ k } ( \omega ) }( \omega ) ) \big) }{ \mf p_{ k, \mc{R}^{ \theta }_{ k }( \omega ) } } 
		= 
		\sum_{ l = 0 }^{ k - 1 } \mathbbm{1}_{ \{ \mc{R}^{ \theta }_{ k } = l \}}(\omega) 
		\left[ \frac{ f_{ l } \big( X^{\theta,k,x}_{l}(\omega), V^{\vartheta}_{l,j}( X^{\theta,k,x}_{l}(\omega) ) \big)  }{ \mf p_{ k, l } } \right]
		\end{split} 
		\end{equation} 
	ensures that for all 
		$ n \in \A $, 
		$ \theta,\vartheta \in \Theta $, 
		$ k \in \{0,1,\ldots,K\} $, 
		$ j \in \{0,1,\ldots,n-1\} $
	it holds that 
		\begin{equation} \label{measurability:measurability_display_02}
		\mc O \times \Omega \ni ( x, \omega ) \mapsto f_{ \mc{R}^{ \theta }_{ k } ( \omega ) }( X^{ \theta, k, x }_{ \mc{R}^{ \theta }_{ k } ( \omega ) }( \omega ), V^{ \vartheta }_{ \mc{R}^{ \theta }_{ k }( \omega ), j }( X^{ \theta, k, x }_{ \mc{R}^{ \theta }_{ k }( \omega ) }( \omega ) ) ) \in \R 
		\end{equation} 
	is $ (\Borel(\mc O) \otimes \sigma( 
		( \mc{R}^{ \theta }_{ s } )_{ s \in \{1, 2, \ldots, K\} } ),
		( W^{ \theta }_{ s } )_{ s \in \{ 0, 1, \ldots, K-1 \} }, ( \mc{R}^{ ( \vartheta, \eta ) }_{ s } )_{ (s, \eta ) \in \{1, 2, \ldots, K \} \times \Theta }, ( W^{ ( \vartheta, \eta ) }_{ s } )_{ ( s, \eta ) \in \{ 0, 1, \ldots, K-1 \} \times \Theta } ) 
		) $/\allowbreak$ \Borel(\R) $-{mea\-su\-rable}.
	The assumption that for all 
		$ k \in \{ 0, 1, \ldots, K-1 \} $ 
	it holds that $ f_{ k } \colon \mc O \times \R \to \R $ is $ \Borel( \mc O \times \R ) $/$ \Borel( \R ) $-measurable, 
	the assumption that $ g \colon \mc O \to \R $ is $ \Borel( \mc O ) $/$ \Borel(\R) $-measurable, 
	\eqref{setting:mlp_scheme},
	\eqref{measurability:composition_of_V_and_X}, 
	\eqref{measurability:measurability_display_01}, 
	and item~\eqref{elementary_measurability_X_processes:item1} of \cref{lem:elementary_measurability_X_processes} therefore 
	guarantee that for all 
		$ n \in \A $,
		$ \theta \in \Theta $, 
		$ k \in \{ 0, 1, \ldots, K \} $ 
	it holds that 
		$ V^{ \theta }_{ k, n } \colon \mc O \times \Omega \to \R $ 
	is 
		$ (\Borel( \mc O ) \otimes \sigma ( ( \mc{R}^{ ( \theta, \vartheta ) }_{ s } )_{ ( s, \vartheta ) \in \{1, 2, \ldots, K\} \times \Theta }, ( W^{ \theta, \vartheta }_{ s } )_{ ( s, \vartheta ) \in \{ 0, 1, \ldots, K-1 \} \times \Theta } ) ) $/$ \Borel( \R ) $-measurable.
	Hence, we obtain that for all 
		$ n \in \A $ 
	it holds that  
		$ n+1 \in \A $. 
	Induction and the fact that $ 1 \in \A $ therefore demonstrate that $ \A = \N $. 
	This establishes item~\eqref{measurability:item1}. 
	In the next step we note that item~\eqref{measurability:item1} and \eqref{measurability:measurability_display_02} establish item~\eqref{measurability:item2}. 
	This completes the proof of \cref{lem:measurability}. 
\end{proof}

\subsection{Distribution properties for general random fields}
\label{subsec:random_fields}

\begin{lemma} \label{lem:elementary_measurability}
	Let $ (\Omega, \mc F) $ be a measurable space, 
	let $ \mc G \subseteq \mc F $ and $ \mc A \subseteq \mc F $ be sigma-algebras on $ \Omega $, 
	let $ ( E, \mc E ) $, $ ( S, \mc S ) $, and $ ( T, \mc T ) $ be measurable spaces, 
	let $ U \colon S \times \Omega \to T $ be $ ( \mc S \otimes \mc G ) $/$ \mc T $-measurable, 
	and let $ Y \colon E \times \Omega \to S $ be $ ( \mc E \otimes \mc A ) $/$ \mc S $-measurable. 
	Then it holds that $ U(Y) = ( E \times \Omega \ni (e,\omega) \mapsto U(Y(e,\omega),\omega) \in T ) $ 
	is $ ( \mc E \otimes \mc F ) $/$ \mc T $-measurable. 
\end{lemma}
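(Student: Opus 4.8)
The plan is to factor $ U(Y) $ through the auxiliary map
$ \Phi \colon E \times \Omega \to S \times \Omega $ defined by $ \Phi(e,\omega) = ( Y(e,\omega), \omega ) $,
so that $ U(Y) = U \circ \Phi $. Since $ U \colon S \times \Omega \to T $ is assumed to be
$ ( \mc S \otimes \mc G ) $/$ \mc T $-measurable, it suffices to show that $ \Phi $ is
$ ( \mc E \otimes \mc F ) $/$ ( \mc S \otimes \mc G ) $-measurable; the conclusion then follows
because a composition of measurable maps is measurable.

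To verify measurability of $ \Phi $, I would use that the product sigma-algebra
$ \mc S \otimes \mc G $ is generated by the rectangles $ A \times B $ with $ A \in \mc S $
and $ B \in \mc G $, so that it is enough to check $ \Phi^{-1}( A \times B ) \in \mc E \otimes \mc F $
for all such $ A, B $. Writing $ \Phi^{-1}( A \times B ) = Y^{-1}(A) \cap ( E \times B ) $,
the first set lies in $ \mc E \otimes \mc A \subseteq \mc E \otimes \mc F $ by the assumed
$ ( \mc E \otimes \mc A ) $/$ \mc S $-measurability of $ Y $ together with $ \mc A \subseteq \mc F $,
while the second set lies in $ \mc E \otimes \mc F $ because $ E \in \mc E $ and
$ B \in \mc G \subseteq \mc F $; hence their intersection lies in $ \mc E \otimes \mc F $,
which proves the measurability of $ \Phi $.

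There is essentially no hard step here: the only point that requires a little care is the
reduction to preimages of rectangles, which relies on the standard fact that a map into a
product measurable space is measurable as soon as the preimages of a generating family of
measurable rectangles are measurable. Everything else is bookkeeping with the inclusions
$ \mc A \subseteq \mc F $ and $ \mc G \subseteq \mc F $, and the claim of \cref{lem:elementary_measurability} follows.
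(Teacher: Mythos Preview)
Your proof is correct and matches the paper's approach exactly: factor $U(Y)$ as $U \circ \Phi$ with $\Phi(e,\omega) = (Y(e,\omega),\omega)$, verify $\Phi$ is $(\mc E \otimes \mc F)$/$(\mc S \otimes \mc G)$-measurable using the inclusions $\mc A \subseteq \mc F$ and $\mc G \subseteq \mc F$, and conclude by composition. The paper's proof is in fact slightly terser than yours, omitting the explicit rectangle check that you spell out.
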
 

\begin{proof}[Proof of \cref{lem:elementary_measurability}]
	First, note that the assumption that $ \mc A \subseteq \mc F $ and the assumption that $ Y $ is $ ( \mc E \otimes \mc A ) $/$ \mc S $-measurable ensures that 
	$ Y \colon E \times \Omega \to S $ 
	is $ ( \mc E \otimes \mc F ) $/$ \mc S $-measurable. 
	This and the assumption that $ \mc G \subseteq \mc F $ imply that $ E \times \Omega \ni (e, \omega) \mapsto ( Y(e, \omega), \omega ) \in S \times \Omega $ is $ ( \mc E \otimes \mc F ) $/$ ( \mc S \otimes \mc G ) $-measurable. 
	The assumption that $ U $ is $ ( \mc S \otimes \mc G ) $/$ \mc T $-measurable hence demonstrates that $ E \times \Omega \ni (e,\omega) \mapsto U(Y(e,\omega),\omega) \in T $ is $ ( \mc E \otimes \mc F ) $/$ \mc T $-measurable. 	
	This completes the proof of \cref{lem:elementary_measurability}. 
\end{proof} 

\begin{lemma} \label{lem:equidistribution_random_fields}
	Let $ ( \Omega , \mc F , \P ) $ be a probability space, 
	let $ \mc G_i \subseteq \mc F $, $ i \in \{ 1, 2 \} $, and $ \mc A_i \subseteq \mc F $, $ i \in \{ 1, 2 \} $, be sigma-algebras on $ \Omega $, 
	let $ ( E, \mc E ) $, $ ( S, \mc S ) $, and $ ( T, \mc T ) $  be measurable spaces, 
	let $ U_i \colon S \times \Omega \to T $, $ i \in \{ 1, 2 \} $, be identically distributed random fields, 
	let $ Y_i \colon E \times \Omega \to S $, $ i \in \{ 1, 2 \} $, be identically distributed random fields, 
	assume for all 
	$ i \in \{1,2\} $ 
	that $ U_i $ is $ ( \mc S \otimes \mc G_i ) $/$ \mc T $-measurable, 
	assume for all 
	$ i \in \{1,2\} $ 
	that $ Y_i $ is $ ( \mc E \otimes \mc A_i ) $/$ \mc S $-measurable, 
	and	assume for all 
	$ i \in \{1,2\} $ 
	that $ \mc G_i $ and $ \mc A_i $ are independent. 
	Then
	\begin{enumerate}[(i)]
		\item \label{equidistribution_random_fields:item1}
		it holds for all $ i \in \{1,2\} $ that 
		$ U_i(Y_i) = ( E \times \Omega \ni (e,\omega) \mapsto U_i(Y_i(e,\omega),\omega) \in T ) $ 
		is $ ( \mc E \otimes \mc F ) $/$ \mc T $-measurable and 
		\item \label{equidistribution_random_fields:item2}
		it holds that 
		$ U_1(Y_1) $ 
		and $ U_2(Y_2) $ 
		are identically distributed random fields. 
	\end{enumerate}
\end{lemma}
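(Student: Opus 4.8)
\emph{Item~\eqref{equidistribution_random_fields:item1}} is immediate: for each $ i \in \{1,2\} $ I would apply \cref{lem:elementary_measurability} with $ (\mc G, \mc A, U, Y) $ replaced by $ (\mc G_i, \mc A_i, U_i, Y_i) $ to conclude that $ U_i(Y_i) $ is $ (\mc E \otimes \mc F) $/$ \mc T $-measurable. For \emph{item~\eqref{equidistribution_random_fields:item2}} the plan is to show that $ U_1(Y_1) $ and $ U_2(Y_2) $ have the same finite-dimensional distributions; since the law of a random field on $ (T^E, \mc T^{\otimes E}) $ is determined by its finite-dimensional distributions (the cylinder sets form a generating $ \pi $-system), this is exactly what ``identically distributed random fields'' means and hence suffices. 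So fix $ n \in \N $, $ e_1, \ldots, e_n \in E $, and $ B \in \mc T^{\otimes n} $.

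For $ i \in \{1,2\} $ I would introduce the bundled random field $ \widetilde U_i \colon S^n \times \Omega \to T^n $ given by $ \widetilde U_i( (s_1, \ldots, s_n), \omega ) = ( U_i(s_1, \omega), \ldots, U_i(s_n, \omega) ) $ and the random variable $ \widetilde Y_i \colon \Omega \to S^n $ given by $ \widetilde Y_i(\omega) = ( Y_i(e_1, \omega), \ldots, Y_i(e_n, \omega) ) $. Routine $ \sigma $-algebra bookkeeping (projections composed with the given measurable maps) then yields that $ \widetilde U_i $ is $ ( \mc S^{\otimes n} \otimes \mc G_i ) $/$ \mc T^{\otimes n} $-measurable, that $ \widetilde Y_i $ is $ \mc A_i $/$ \mc S^{\otimes n} $-measurable, that $ ( U_i(Y_i(e_1, \cdot)), \ldots, U_i(Y_i(e_n, \cdot)) ) = \widetilde U_i( \widetilde Y_i(\cdot), \cdot ) $, that $ \widetilde U_1(s, \cdot) $ and $ \widetilde U_2(s, \cdot) $ are identically distributed for every $ s \in S^n $ (this uses that $ U_1, U_2 $ are identically distributed random fields, applied to the $ n $-dimensional marginal at $ (s_1, \ldots, s_n) $), and that $ \widetilde Y_1 $ and $ \widetilde Y_2 $ are identically distributed (this uses that $ Y_1, Y_2 $ are identically distributed random fields).

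Next I would compute the law of $ \widetilde U_i( \widetilde Y_i(\cdot), \cdot ) $ via a change of variables onto a product space. The independence of $ \mc G_i $ and $ \mc A_i $ guarantees that the diagonal embedding $ \iota \colon \Omega \ni \omega \mapsto (\omega, \omega) \in \Omega \times \Omega $ satisfies $ \iota_* \P = (\P|_{\mc G_i}) \otimes (\P|_{\mc A_i}) $ on $ \mc G_i \otimes \mc A_i $ — one checks this on the generating $ \pi $-system of rectangles $ G \times A $ with $ G \in \mc G_i $, $ A \in \mc A_i $, where it reduces exactly to the identity $ \P(G \cap A) = \P(G)\P(A) $. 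Since $ (\omega_1, \omega_2) \mapsto \widetilde U_i( \widetilde Y_i(\omega_2), \omega_1 ) $ is $ ( \mc G_i \otimes \mc A_i ) $/$ \mc T^{\otimes n} $-measurable, combining this change of variables with Tonelli's theorem gives
\[
\P\big( \widetilde U_i( \widetilde Y_i(\cdot), \cdot ) \in B \big)
= \int_\Omega \! \int_\Omega \mathbbm 1_B\big( \widetilde U_i( \widetilde Y_i(\omega_2), \omega_1 ) \big) \, \P(d\omega_1) \, \P(d\omega_2)
= \int_{S^n} \varphi_i(s) \, \big( (\widetilde Y_i)_* \P \big)(ds),
\]
where $ \varphi_i(s) := \P( \widetilde U_i(s, \cdot) \in B ) $ is $ \mc S^{\otimes n} $/$ \Borel([0,1]) $-measurable (again by Tonelli, using the joint measurability of $ \widetilde U_i $). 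By the two identical-distribution facts recorded above one has $ \varphi_1 = \varphi_2 $ and $ (\widetilde Y_1)_* \P = (\widetilde Y_2)_* \P $, so the two integrals coincide; hence $ \P( \widetilde U_1( \widetilde Y_1(\cdot), \cdot ) \in B ) = \P( \widetilde U_2( \widetilde Y_2(\cdot), \cdot ) \in B ) $. Letting $ n $, $ e_1, \ldots, e_n $, and $ B $ vary then establishes item~\eqref{equidistribution_random_fields:item2}.

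The main obstacle — and the reason the hypothesis that $ U_i $ is \emph{jointly} $ (\mc S \otimes \mc G_i) $/$ \mc T $-measurable (rather than only measurable in $ \omega $ for each fixed $ s $) is used in an essential way — is that the abstract evaluation map $ (T^S, \mc T^{\otimes S}) \times (S, \mc S) \to (T, \mc T) $ need not be measurable, so one cannot simply argue that $ U_i(Y_i) $ is a fixed measurable functional of the independent pair $ (U_i, Y_i) $ and invoke equality of their joint laws. Joint measurability of $ \widetilde U_i $ is precisely what makes $ s \mapsto \varphi_i(s) $ measurable and legitimizes the Tonelli step on the product space; everything else is the (somewhat lengthy but routine) verification that the composed maps are measurable with respect to the appropriate product $ \sigma $-algebras — exactly the kind of bookkeeping already carried out in \cref{lem:elementary_measurability} and \cref{lem:elementary_measurability_X_processes}.
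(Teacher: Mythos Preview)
Your proposal is correct and follows essentially the same approach as the paper: bundle the $n$-dimensional marginals into $\widetilde U_i \colon S^n \times \Omega \to T^n$ and $\widetilde Y_i \colon \Omega \to S^n$, then use the independence of $\mc G_i$ and $\mc A_i$ to disintegrate $\P(\widetilde U_i(\widetilde Y_i) \in B)$ as $\int_{S^n} \P(\widetilde U_i(s,\cdot) \in B)\,(\widetilde Y_i)_*\P(ds)$ and conclude via the two identical-distribution hypotheses. The only cosmetic differences are that the paper outsources the disintegration step to an external lemma (Hutzenthaler et al.~\cite[Lemma~2.2]{Overcoming}) rather than deriving it via your diagonal-embedding/Tonelli argument, and it checks the finite-dimensional distributions only on rectangles $B_1 \times \cdots \times B_n$ (then extends via a $\pi$-system argument) rather than on all of $\mc T^{\otimes n}$ directly.
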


\begin{proof}[Proof of \cref{lem:equidistribution_random_fields}] 
	First, observe that \cref{lem:elementary_measurability} establishes item~\eqref{equidistribution_random_fields:item1}. 
	Next we prove item~\eqref{equidistribution_random_fields:item2}. 
	For this let 
	$ n \in \N $, 
	$ e_1, e_2, \ldots, e_n \in E $, 
	$ B_1, B_2, \ldots, B_n \in \mc T $, 
	let $ \U_i \colon S^n \times \Omega \to T^n $, $ i \in \{1,2\} $, satisfy for all 
	$ i \in \{1,2\} $, 
	$ s = ( s_1, s_2, \ldots, s_n ) \in S^n $, 
	$ \omega \in \Omega $
	that 
	\begin{equation} 
	\U_i(s,\omega) = (U_i(s_1,\omega), U_i(s_2,\omega), \ldots, U_i(s_n,\omega)), 
	\end{equation} 
	let 
	$ \Y_i \colon \Omega \to S^n $, $ i \in \{1,2\} $, 
	satisfy for all 
	$ i \in \{1,2\} $, $ \omega \in \Omega $ 
	that 
	\begin{equation} 
	\Y_i(\omega) = (Y_i(e_1,\omega),Y_i(e_2,\omega),\ldots,Y_i(e_n,\omega)), 
	\end{equation}
	and let 
	$ \I \colon T^n \to [0,\infty) $ 
	satisfy for all 
	$ t = (t_1,t_2,\ldots,t_n) \in T^n $ 
	that 
	\begin{equation}
			\I(t) = \mathbbm{1}_{B_1 \times B_2 \times \ldots \times B_n}(t_1, t_2, \ldots, t_n).
	\end{equation} 
	Note that the assumption that for all 
	$ i \in \{1,2\} $ 
	it holds that 
	$ Y_i $ 
	is 
	$ (\mc E \otimes \mc A_i ) $/$ \mc S $-measurable
	ensures that for all 
	$ i \in \{1,2\} $, 
	$ j \in \{1,2,\ldots,n\} $
	it holds that 
	$ \Omega \ni \omega \mapsto Y_i(e_j,\omega) \in S $ 
	is 
	$ \mc A_i $/$ \mc S $-measurable. 	
	This implies that for all 
	$ i \in \{1,2\} $ 
	it holds that 
	$ \Y_i $ 
	is 
	$ \mc A_i $/$ \mc S^{\otimes n} $-measurable. 
	Moreover, observe that the assumption that for all 
	$ i \in \{1,2\} $ 
	it holds that 
	$ U_i $ 
	is 
	$ (\mc S \otimes \mc G_i) $/$ \mc T $-measurable 
	guarantees that for all 
	$ i \in \{1,2\} $, 
	$ j \in \{1,2,\ldots,n\} $ 
	it holds that 
	$ S^n\times\Omega \ni (s_1,s_2,\ldots,s_n,\omega) \mapsto U_i(s_j,\omega) \in \mc T $ 
	is 
	$ (\mc S^{\otimes n}\otimes \mc G_i) $/$ \mc T $-measurable. 
	Therefore, we obtain for all 
	$ i \in \{1,2\} $ 
	that 
	$ \U_i $ 
	is 
	$ (\mc S^{\otimes n} \otimes \mc G_i) $/$ \mc T^{\otimes n} $-measurable. 
	The fact that 
	$ \I $ is $ \mc T^{\otimes n}$/$\Borel([0,\infty)) $-measurable 
	hence implies that for all 
	$ i \in \{1,2\} $ 
	it holds that 
	$ \I \circ \U_i $ 
	is 	
	$ (\mc S^{\otimes n} \otimes \mc G_i) $/$ \Borel([0,\infty)) $-measurable. 
	Combining this, the fact that for all 
	$ i \in \{1,2\} $ 
	it holds that 
	$ \Y_i $ 
	is 
	$ \mc A_i $/$ \mc S^{\otimes n} $-measurable, 
	the hypothesis that for every 
	$ i \in \{1,2\} $ 
	it holds that 
	$ \mc G_i $ and $ \mc A_i $ 
	are independent, and Hutzenthaler et al.~\cite[Lemma 2.2]{Overcoming} (applied with 
	$ (\Omega, \mc F, \P) \is (\Omega,\mc F, \P) $, 
	$ \mc G \is \mc G_i $, 
	$ (S, \mc S) \is (S^n,\mc S^{\otimes n}) $, 
	$ U \is \I\circ\U_i $, 
	$ Y \is \Y_i $ 
	for $ i \in \{1,2\} $
	in the notation of Hutzenthaler et al.~\cite[Lemma 2.2]{Overcoming}) assure that for all 
	$ i \in \{1,2\} $ 
	it holds that 
	\begin{equation} \label{U_refined_measurability:calculation}
	\begin{split}
	&\P\big(U_i(Y_i(e_1))\in B_1, U_i(Y_i(e_2)) \in B_2, \ldots, U_i(Y_i(e_n))\in B_n\big) 
	\\[1ex]
	&= 
	\Exp{ \mathbbm{1}_{B_1 \times B_2 \times \ldots \times B_n} ( U_i(Y_i(e_1)),  U_i(Y_i(e_2)), \ldots, U_i(Y_i(e_n)) )  }
	= 
	\Exp{ \I(\U_i(\Y_i)) } 
	\\
	&= 
	\Exp{ (\I\circ\U_i)(\Y_i) } 
	= 
	\int_{S^n} \Exp{ (\I\circ \U_i)(s_1,s_2,\ldots,s_n) }\!\,(\Y_i(\P))_{\mc S^{\otimes n}}(d(s_1,s_2,\ldots,s_n)). 
	\end{split}
	\end{equation} 
	Moreover, note that the hypothesis that $ U_1 $ and $ U_2 $ are identically distributed yields that for all 
	$ s_1, s_2, \ldots, s_n \in S $ 
	it holds that 
	\begin{equation} \label{U_refined_measurability:expectations}
	\begin{split}
	& \Exp{(\I\circ\U_1)(s_1,s_2,\ldots,s_n)} 
	= 
	\P( U_1(s_1) \in B_1, U_1(s_2) \in B_2, \ldots, U_1(s_n) \in B_n) 
	\\
	& = 
	\P( U_2(s_1) \in B_1, U_2(s_2) \in B_2, \ldots, U_2(s_n) \in B_n) 
	= 
	\Exp{(\I\circ\U_2)(s_1,s_2,\ldots,s_n)}\!. 
	\end{split}
	\end{equation} 
	In addition, observe that the hypothesis that $ Y_1 $ and $ Y_2 $ are identically distributed ensures that $ (\Y_1(\P))_{\mc S^{\otimes n}} = (\Y_2(\P))_{\mc S^{\otimes n}} $. 
	This, \eqref{U_refined_measurability:expectations}, and \eqref{U_refined_measurability:calculation} ensure that 
	\begin{equation}
	\begin{split}
	& \P(U_1(Y_1(e_1))\in B_1, U_1(Y_1(e_2)) \in B_2, \ldots, U_1(Y_1(e_n))\in B_n) 
	\\
	& = 
	\int_{S^n} \Exp{ (\I\circ \U_1)(s_1, s_2,\ldots, s_n) }\!\,(\Y_1(\P))_{\mc S^{\otimes n}}(d(s_1, s_2, \ldots, s_n))
	\\
	& = 
	\int_{S^n} \Exp{ (\I\circ \U_2)(s_1, s_2, \ldots, s_n) }\!\,(\Y_2(\P))_{\mc S^{\otimes n}}(d(s_1, s_2, \ldots, s_n))
	\\		
	& = 
	\P(U_2(Y_2(e_1))\in B_1, U_2(Y_2(e_2))\in B_2, \ldots, U_2(Y_2(e_n))\in B_n).  
	\end{split}
	\end{equation} 
	Hence, we obtain that $ U_1(Y_1) $ and $ U_2(Y_2) $ are identically distributed random fields. 
	This establishes item~\eqref{equidistribution_random_fields:item2}. 
	This completes the proof of \cref{lem:equidistribution_random_fields}.
\end{proof}

\begin{lemma} \label{lem:equidistributed_vectors} 
	Let $ N \in \N $, 
	let $ ( \Omega, \mc F, \P ) $ be a probability space,  
	let $ ( E, \mc E ) $ and $ ( S, \mc S ) $ be measurable spaces, 
	let $ \mc G_i \subseteq \mc F $, $ i \in \{ 1, 2, \ldots, N \} $, be independent sigma-algebras on $ \Omega $, 
	let $ \mc A_i \subseteq \mc F $, $ i \in \{ 1, 2, \ldots, N \} $, be independent sigma-algebras on $ \Omega $,  
	let $ U_i \colon S \times \Omega \to E $, $ i \in \{1,2,\ldots,N\} $, 
	satisfy for all 
	$ i \in \{1,2,\ldots,N\} $ 
	that $ U_i $ is $ (\mc S \otimes \mc G_i) $/$ \mc E $-measurable, 
	let $ V_i \colon S \times \Omega \to E $, $ i \in \{1,2,\ldots,N\} $, 
	satisfy for all 
	$ i \in \{1,2,\ldots,N\} $ 
	that $ V_i $ is $ (\mc S \otimes \mc A_i) $/$ \mc E $-measurable, 
	and assume for all 
	$ i \in \{1,2,\ldots,N\} $ 
	that $ U_i $ and $ V_i $ are identically distributed random fields. 
	Then it holds that 
	$ (U_1,U_2,\ldots,U_N) = ( S \times \Omega \ni (s,\omega) \mapsto (U_1(s,\omega),U_2(s,\omega),\ldots,U_N(s,\omega)) \in E^N ) $ 
	and 
	$ (V_1,V_2,\ldots,V_N) = ( S \times \Omega \ni (s,\omega) \mapsto (V_1(s,\omega),V_2(s,\omega),\ldots,V_N(s,\omega)) \in E^N ) $  
	are identically distributed random fields. 
\end{lemma}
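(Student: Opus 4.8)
The plan is to unwind the definition of identically distributed random fields (agreement of all finite-dimensional distributions) and then to use the independence of the families $(\mc G_i)_{i \in \{1, \ldots, N\}}$ and $(\mc A_i)_{i \in \{1, \ldots, N\}}$ to write these finite-dimensional distributions as product measures whose factors are, by hypothesis, pairwise equal.

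First I would fix $n \in \N$ and $s_1, s_2, \ldots, s_n \in S$. For every $i \in \{1, \ldots, N\}$ and every $s \in S$ the map $\Omega \ni \omega \mapsto (s, \omega) \in S \times \Omega$ is $\mc G_i$/$(\mc S \otimes \mc G_i)$-measurable, so the assumption that $U_i$ is $(\mc S \otimes \mc G_i)$/$\mc E$-measurable shows that $\Omega \ni \omega \mapsto U_i(s, \omega) \in E$ is $\mc G_i$/$\mc E$-measurable and hence that $\Omega \ni \omega \mapsto (U_i(s_1, \omega), \ldots, U_i(s_n, \omega)) \in E^n$ is $\mc G_i$/$\mc E^{\otimes n}$-measurable; the same reasoning with $\mc A_i$ in place of $\mc G_i$ applies to $V_i$. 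I would also note at this point that each $U_i$ and each $V_i$ is in particular $(\mc S \otimes \mc F)$/$\mc E$-measurable, so that $(U_1, \ldots, U_N)$ and $(V_1, \ldots, V_N)$ are indeed $(\mc S \otimes \mc F)$/$\mc E^{\otimes N}$-measurable, i.e.\ genuine $E^N$-valued random fields.

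Next I would invoke the independence of $\mc G_1, \ldots, \mc G_N$: since $(U_i(s_1), \ldots, U_i(s_n))$ is $\mc G_i$-measurable, the $E^n$-valued random vectors $(U_i(s_1), \ldots, U_i(s_n))$, $i \in \{1, \ldots, N\}$, are independent, so the law of $((U_i(s_j))_{j=1}^n)_{i=1}^N$ on $(E^n)^N$ is the product $\bigotimes_{i=1}^N \mu_i$, where $\mu_i$ denotes the law of $(U_i(s_1), \ldots, U_i(s_n))$; the analogous statement holds for $V$ with $\mc A_i$ and with $\nu_i$ the law of $(V_i(s_1), \ldots, V_i(s_n))$. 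The hypothesis that $U_i$ and $V_i$ are identically distributed random fields gives $\mu_i = \nu_i$ for every $i$, hence $\bigotimes_{i=1}^N \mu_i = \bigotimes_{i=1}^N \nu_i$; reindexing coordinates through the obvious measurable bijection $(E^N)^n \cong (E^n)^N$ then yields that $((U_1(s_j), \ldots, U_N(s_j)))_{j=1}^n$ and $((V_1(s_j), \ldots, V_N(s_j)))_{j=1}^n$ have the same law. Since $n$ and $s_1, \ldots, s_n$ were arbitrary, the conclusion follows.

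I do not expect a substantial obstacle. The only points needing a little care are the measurability of the sections $\omega \mapsto U_i(s, \omega)$ (a routine fact about product $\sigma$-algebras) and the step ``random variables measurable with respect to independent $\sigma$-algebras are independent, so their joint law is the product law''; if one prefers not to cite this directly, it can be verified on the $\pi$-system of measurable rectangles of $E^n$ (which generates $\mc E^{\otimes n}$ and contains $E^n$) together with the uniqueness theorem for measures. Should an earlier auxiliary lemma on factorising joint distributions over independent $\sigma$-algebras be available, I would use it to streamline the argument.
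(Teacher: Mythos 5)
Your proposal is correct and follows essentially the same route as the paper's proof: both fix finitely many points $s_1,\ldots,s_n$, use independence of the $\mc G_i$ to factor the joint law of $((U_i(s_j))_j)_i$ into the marginal laws of the $(U_i(s_1),\ldots,U_i(s_n))$, swap each factor using the hypothesis that $U_i$ and $V_i$ are identically distributed, and reassemble on the $V$-side using independence of the $\mc A_i$. The only cosmetic difference is that the paper verifies agreement on measurable rectangles and cites a uniqueness-of-measures lemma (Klenke, Lemma 1.42), whereas you phrase the same computation directly in terms of product measures on $(E^n)^N$ and a coordinate reindexing.
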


\begin{proof}[Proof of \cref{lem:equidistributed_vectors}]
	Throughout this proof let 
	$ n \in \N $, 
	$ s_1, s_2,\ldots,s_n \in S $, 
	let 
	$ B_{i,j} \in \mc E $, $ i\in\{1,2,\ldots,N\} $, $ j \in \{1,2,\ldots,n\}$, 
	and let 
	$ \U \colon S \times \Omega \to E^N $ and $ \V \colon S \times \Omega \to E^N $
	satisfy for all 
	$ s \in S $, 
	$ \omega \in \Omega $ 
	that 
	$ \U(s,\omega) = (U_1(s,\omega), U_2(s,\omega),\ldots, U_N(s,\omega)) $ 
	and 
	$ \V(s,\omega) = (V_1(s,\omega), V_2(s,\omega),\ldots, V_N(s,\omega)) $. 
	Observe that the assumption that for all 
	$ i \in \{1,2,\ldots,N\} $ 
	it holds that 
	$ U_i \colon S\times\Omega \to E $ 
	is $ (\mc S\otimes\mc G_i) $/$ \mc E $-measurable implies that for all 
	$ i \in \{1,2,\ldots,N\} $, 
	$ j \in \{1,2,\ldots,n\} $ 
	it holds that 
	$ \Omega \ni \omega \mapsto U_i(s_j,\omega) \in E $ 
	is 
	$ \mc G_i $/$ \mc E $-measurable. 
	This implies that for all 
	$ i \in \{1,2,\ldots,N\} $ 
	it holds that 
	$ \big( \bigcap_{j=1}^n \{ U_i(s_j) \in B_{i,j} \} \big) \in \mc G_i  $.  
	The fact that $ \mc G_i $, $ i \in \{1,2,\ldots,N\} $, are independent sigma-algebras on $ \Omega $ therefore ensures that 
	\begin{equation}
	\begin{split}
	& 
	\P\!\left( 
	\U(s_1) \in \left[ \bigtimes_{i=1}^N B_{i,1} \right]\!, 
	\U(s_2) \in \left[ \bigtimes_{i=1}^N B_{i,2} \right]\!, 
	\ldots, 
	\U(s_n) \in \left[ \bigtimes_{i=1}^N B_{i,n} \right] 
	\right)
	\\
	& 
	= 
	\P\!\left( \bigcap_{i=1}^N \bigcap_{j=1}^n \left\{ U_i(s_j) \in B_{i,j} \right\} \right)
	= 
	\prod_{i=1}^N \P\!\left(\bigcap_{j=1}^n \left\{ U_i(s_j) \in B_{i,j} \right\} \right)\!. 
	\end{split}
	\end{equation}
	The assumption that for all 
	$ i \in \{ 1, 2, \ldots, N \} $ 
	it holds that $ U_i $ and $ V_i $ are identically distributed hence ensures that 
	\begin{equation} 
	\begin{split}
	& 
	\P\!\left( 
	\U(s_1) \in \left[ \bigtimes_{i=1}^N B_{i,1} \right]\!, \U(s_2) \in \left[ \bigtimes_{i=1}^N B_{i,2} \right]\!, \ldots, 
	\U(s_n) \in \left[ \bigtimes_{i=1}^N B_{i,n} \right] \right)
	\\
	& 
	= 
	\prod\limits_{i=1}^N \P\!\left(\bigcap_{j=1}^n \left\{ V_i(s_j) \in B_{i,j} \right\} \right)\!. 
	\end{split}
	\end{equation} 
	Combining this with the fact that for all 
		$ i \in \{1,2,\ldots,N\} $ 
	it holds that $ \big(\bigcap_{j=1}^n \{ V_i(s_j) \in B_{i,j} \}\big) \in \mc A_i $ and the assumption that $ \mc A_i $, $ i\in\{1,2,\ldots,N\} $, are independent sigma-algebras on $ \Omega $ yields that 
	\begin{equation} 
	\begin{split}
	& 
	\P\!\left( 
	\U(s_1) \in \left[ \bigtimes_{i=1}^N B_{i,1}\right]\!, \U(s_2) \in \left[ \bigtimes_{i=1}^N B_{i,2}\right]\!, \ldots, 
	\U(s_n) \in \left[ \bigtimes_{i=1}^N B_{i,n}\right] \right)
	\\
	& =
	\P\!\left( \bigcap_{i=1}^N \bigcap_{j=1}^n \left\{ V_i(s_j) \in B_{i,j} \right\} \right) 
	\\
	& =
	\P\!\left( 
	\V(s_1) \in \left[ \bigtimes_{i=1}^N B_{i,1} \right]\!,
	\V(s_2) \in \left[ \bigtimes_{i=1}^N B_{i,2} \right]\!,
	\ldots, 
	\V(s_n) \in \left[ \bigtimes_{i=1}^N B_{i,n} \right]
	\right)\!. 
	\end{split} 
	\end{equation} 
	Klenke~\cite[Lemma 1.42]{Klenke2014} hence ensures that $\U$ and $\V$ are identically distributed random fields. 
	This completes the proof of \cref{lem:equidistributed_vectors}. 
\end{proof}

\begin{cor} \label{cor:equidistributed_sums}
	Let $ K,N \in \N $, 
	let $ ( \Omega, \mc F, \P ) $ be a probability space, 
	let $ ( S, \mc S ) $ be a measurable space, 
	let $ \mc G_i \subseteq \mc F $, $i\in\{1,2,\ldots,N\}$, be independent sigma-algebras on $\Omega$, 
	let $ \mc A_i \subseteq \mc F $, $i\in\{1,2,\ldots,N\}$, be independent sigma-algebras on $\Omega$,
	let $ U_i \colon S \times \Omega \to \R^K $, $ i \in \{1,2,\ldots,N\} $, satisfy for all 
	$ i \in \{1,2,\ldots,N\} $ 
	that $ U_i $ is $ (\mc S \otimes \mc G_i) $/$ \Borel(\R^K) $-measurable, 
	let $ V_i \colon S \times \Omega \to \R^K $, $ i \in \{1,2,\ldots,N\} $, satisfy for all 
	$ i \in \{1,2,\ldots,N\} $ 
	that $ V_i $ is $ (\mc S \otimes \mc A_i) $/$ \Borel(\R^K) $-measurable, 
	and assume for all
	$ i \in \{1,2,\ldots,N\} $ 
	that $ U_i $ and $ V_i $ are identically distributed random fields. 
	Then it holds that 
	$ S\times\Omega\ni(s,\omega)\mapsto\sum_{i=1}^N U_i(s,\omega) \in\R^K $
	and 
	$ S\times\Omega\ni(s,\omega)\mapsto\sum_{i=1}^N V_i(s,\omega) \in\R^K $
	are identically distributed random fields. 
\end{cor}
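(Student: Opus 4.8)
The plan is to derive \cref{cor:equidistributed_sums} from \cref{lem:equidistributed_vectors} by composing the vector-valued random field $(U_1,\ldots,U_N)$ with the fixed (deterministic) summation map. First I would invoke \cref{lem:equidistributed_vectors} with $(E,\mc E)\is(\R^K,\Borel(\R^K))$ and with $N$, $(\Omega,\mc F,\P)$, $(S,\mc S)$, $(\mc G_i)_{i\in\{1,\ldots,N\}}$, $(\mc A_i)_{i\in\{1,\ldots,N\}}$, $(U_i)_{i\in\{1,\ldots,N\}}$, $(V_i)_{i\in\{1,\ldots,N\}}$ as given here; this yields that the $(\R^K)^N$-valued random fields
\[
\mathbf{U} = \big( S \times \Omega \ni (s,\omega) \mapsto (U_1(s,\omega),\ldots,U_N(s,\omega)) \in (\R^K)^N \big)
\]
and $\mathbf{V} = \big( S \times \Omega \ni (s,\omega) \mapsto (V_1(s,\omega),\ldots,V_N(s,\omega)) \in (\R^K)^N \big)$ are identically distributed random fields.

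Next I would introduce the summation map $\Sigma \colon (\R^K)^N \to \R^K$ given by $\Sigma(y_1,\ldots,y_N) = \sum_{i=1}^N y_i$. Since $\Sigma$ is continuous it is $\Borel((\R^K)^N)$/$\Borel(\R^K)$-measurable, and since $\R^K$ is a separable metric space one has $\Borel((\R^K)^N) = \Borel(\R^K)^{\otimes N}$, so $\Sigma$ is in particular $\Borel(\R^K)^{\otimes N}$/$\Borel(\R^K)$-measurable. Consequently $\Sigma \circ \mathbf{U} = \big( (s,\omega) \mapsto \sum_{i=1}^N U_i(s,\omega) \big)$ and $\Sigma \circ \mathbf{V} = \big( (s,\omega) \mapsto \sum_{i=1}^N V_i(s,\omega) \big)$ are $(\mc S \otimes \mc F)$/$\Borel(\R^K)$-measurable random fields, and it remains to check that they are identically distributed.

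For that last step I would use the elementary observation that post-composing identically distributed random fields with one and the same measurable map preserves all finite-dimensional distributions: for $n \in \N$, $s_1,\ldots,s_n \in S$, and $B_1,\ldots,B_n \in \Borel(\R^K)$ one has, writing $\Sigma^{-1}(B_j) \in \Borel(\R^K)^{\otimes N}$,
\[
\P\Big( \big\{ {\textstyle\sum_{i=1}^N} U_i(s_1) \in B_1 \big\} \cap \ldots \cap \big\{ {\textstyle\sum_{i=1}^N} U_i(s_n) \in B_n \big\} \Big)
= \P\Big( \big\{ \mathbf{U}(s_1) \in \Sigma^{-1}(B_1) \big\} \cap \ldots \cap \big\{ \mathbf{U}(s_n) \in \Sigma^{-1}(B_n) \big\} \Big),
\]
and the same identity with $\mathbf{V}$ in place of $\mathbf{U}$; since $\mathbf{U}$ and $\mathbf{V}$ have the same finite-dimensional distributions, the two left-hand sides agree, giving the claim. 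I do not expect any genuine obstacle here; the only points worth spelling out are the identification $\Borel((\R^K)^N) = \Borel(\R^K)^{\otimes N}$ and the stability of equality in law under composition with a fixed measurable map, both of which are routine.
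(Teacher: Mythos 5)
Your proof is correct and follows essentially the same route as the paper: the paper likewise first applies \cref{lem:equidistributed_vectors} with $(E,\mc E)\is(\R^K,\Borel(\R^K))$ to get that $(U_1,\ldots,U_N)$ and $(V_1,\ldots,V_N)$ are identically distributed random fields, and then concludes by composing with the summation map. The only cosmetic difference is that the paper handles the composition step by citing item~\eqref{equidistribution_random_fields:item2} of \cref{lem:equidistribution_random_fields} (with the deterministic map $\Sigma$ playing the role of the outer random field), whereas you verify the preservation of finite-dimensional distributions under post-composition by hand.
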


\begin{proof}[Proof of \cref{cor:equidistributed_sums}]
	Observe that \cref{lem:equidistributed_vectors} 
	(applied with 
		$ (E,\mc E) \is (\R^K,\Borel(\R^K)) $
	in the notation of \cref{lem:equidistributed_vectors}) proves that $ (U_1,U_2,\ldots,U_N) \colon S \times \Omega \to (\R^K)^N $ and $ (V_1,V_2,\ldots,V_N) \colon S \times \Omega \to (\R^K)^N $ are identically distributed random fields. 
	Item~\eqref{equidistribution_random_fields:item2} of \cref{lem:equidistribution_random_fields} 
	hence implies that 
	$ S\times\Omega\ni(s,\omega)\mapsto\sum_{i=1}^N U_i(s,\omega) \in\R^K $ and $ S\times\Omega\ni(s,\omega)\mapsto\sum_{i=1}^N V_i(s,\omega) \in\R^K $ 
	are identically distributed random fields. 
	This completes the proof of \cref{cor:equidistributed_sums}. 
\end{proof}

\subsection{Distribution properties for MLP approximations}
\label{subsec:distribution_properties}

\begin{lemma} \label{lem:elementary_equidistribution_X_processes} 
	Assume \cref{setting}. Then 
		\begin{enumerate}[(i)]
		\item \label{elementary_equidistribution_X_processes:item1}
		it holds for all 
			$ \theta \in \Theta $
		that 
			$ \{ (m,n) \in \N_0\times\N_0 \colon n \leq m \leq K \} \times \mc O \times \Omega \ni ( k, l, x, \omega ) \mapsto X^{\theta,k,x}_l(\omega) \in \mc O  $
		and 
			$ \{ (m,n) \in \N_0\times\N_0 \colon n \leq m \leq K \} \times \mc O \times \Omega \ni ( k, l, x, \omega ) \mapsto X^{0,k,x}_l(\omega) \in \mc O  $
		are identically distributed random fields and 
		\item \label{elementary_equidistribution_X_processes:item2}
		it holds for all 
			$ \theta,\vartheta \in \Theta $, 
			$ k,l,m \in \N_0 $
		with 
			$ m \leq l \leq k \leq K $ 
		that 
			$ \mc O \times \Omega \ni ( x, \omega ) \mapsto X^{ \vartheta, l, X^{ \theta, k, x }_l(\omega) }_m(\omega) \in \mc O $
		and 
			$ \mc O \times \Omega \ni ( x, \omega ) \mapsto X^{0,k,x}_m( \omega ) \in \mc O $
		are identically distributed random fields.  
		\end{enumerate} 
\end{lemma}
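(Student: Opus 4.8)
\emph{Approach.} Both assertions reduce to the observation that, for fixed indices, the random variable $X^{\theta,k,x}_l$ is a \emph{deterministic} measurable function of the noise family $(W^\theta_m)_{m\in\{0,1,\ldots,K-1\}}$ alone, and that this function does not depend on $\theta$. To make this precise I would introduce, mirroring \eqref{setting:dynamics}, a map $\Phi$ from $\{(j,i)\in\N_0\times\N_0\colon i\le j\le K\}\times\mc O\times S^K$ to $\mc O$ characterised by $\Phi(k,k,x,w)=x$ and, for $0\le l<k\le K$, $\Phi(k,l,x,w)=\phi_{l}(\Phi(k,l+1,x,w),w_{l})$, and then verify — by the same descending induction on $l$ that underlies the proof of \cref{lem:elementary_measurability_X_processes} — that $\Phi$ is measurable (with respect to the power-set $\sigma$-algebra on the discrete index set, $\Borel(\mc O)$ on $\mc O$, and $\mc S^{\otimes K}$ on $S^K$) and that $X^{\theta,k,x}_l(\omega)=\Phi(k,l,x,(W^\theta_0(\omega),W^\theta_1(\omega),\ldots,W^\theta_{K-1}(\omega)))$ holds for all $\theta\in\Theta$, all admissible $k,l$, all $x\in\mc O$, and all $\omega\in\Omega$. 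The key tools are then \cref{lem:elementary_measurability_X_processes} (for reading off which noise variables a given $X$-field depends on) and \cref{lem:equidistribution_random_fields} (for passing identical distributions through such compositions).

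\emph{Item~\eqref{elementary_equidistribution_X_processes:item1}.} Given $\Phi$, it suffices to note that the $S^K$-valued random variables $(W^\theta_0,W^\theta_1,\ldots,W^\theta_{K-1})$, $\theta\in\Theta$, are identically distributed: by the hypotheses of \cref{setting} the family $(W^\vartheta_s)_{(\vartheta,s)\in\Theta\times\{0,1,\ldots,K\}}$ is independent and, for each fixed $s$, the $W^\vartheta_s$, $\vartheta\in\Theta$, are identically distributed, so the joint law of $(W^\theta_0,\ldots,W^\theta_{K-1})$ — being the product of its marginals — does not depend on $\theta$ (this is \cref{lem:equidistributed_vectors} applied with a one-point set in place of $S$, or Klenke~\cite[Lemma~1.42]{Klenke2014}). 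Since for every finite collection of indices $(k_1,l_1,x_1),\ldots,(k_n,l_n,x_n)$ and every choice of Borel sets $B_1,\ldots,B_n$ the event $\{X^{\theta,k_1,x_1}_{l_1}\in B_1,\ldots,X^{\theta,k_n,x_n}_{l_n}\in B_n\}$ equals $\{(W^\theta_0,\ldots,W^\theta_{K-1})\in C\}$ for a single set $C\in\mc S^{\otimes K}$ not depending on $\theta$, the law of a measurable transformation of a random element depends only on the law of that element — equivalently one may apply \cref{lem:equidistribution_random_fields} with the deterministic field $\Phi$ in the role of $U$, the trivial $\sigma$-algebra $\{\emptyset,\Omega\}$ in the role of each $\mc G_i$, and the triple $(k,l,x)$ absorbed into the parameter set — and it follows that $(k,l,x,\omega)\mapsto X^{\theta,k,x}_l(\omega)$ and $(k,l,x,\omega)\mapsto X^{0,k,x}_l(\omega)$ are identically distributed random fields.

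\emph{Item~\eqref{elementary_equidistribution_X_processes:item2}.} Fix $\theta,\vartheta\in\Theta$ and $k,l,m\in\N_0$ with $m\le l\le k\le K$. I would apply \cref{lem:equidistribution_random_fields} with $Y_1=(\mc O\times\Omega\ni(x,\omega)\mapsto X^{\theta,k,x}_l(\omega)\in\mc O)$, $U_1=(\mc O\times\Omega\ni(y,\omega)\mapsto X^{\vartheta,l,y}_m(\omega)\in\mc O)$, $Y_2=(\mc O\times\Omega\ni(x,\omega)\mapsto X^{0,k,x}_l(\omega)\in\mc O)$, and $U_2=(\mc O\times\Omega\ni(y,\omega)\mapsto X^{0,l,y}_m(\omega)\in\mc O)$, so that $U_1(Y_1)=(\mc O\times\Omega\ni(x,\omega)\mapsto X^{\vartheta,l,X^{\theta,k,x}_l(\omega)}_m(\omega)\in\mc O)$ and $U_2(Y_2)=(\mc O\times\Omega\ni(x,\omega)\mapsto X^{0,l,X^{0,k,x}_l(\omega)}_m(\omega)\in\mc O)$. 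Item~\eqref{elementary_measurability_X_processes:item1} of \cref{lem:elementary_measurability_X_processes} shows that $Y_1$ is $(\Borel(\mc O)\otimes\sigma((W^\theta_j)_{j\in[l,k)\cap\N_0}))$/$\Borel(\mc O)$-measurable and $U_1$ is $(\Borel(\mc O)\otimes\sigma((W^\vartheta_i)_{i\in[m,l)\cap\N_0}))$/$\Borel(\mc O)$-measurable, and likewise for $Y_2,U_2$ with $\theta,\vartheta$ replaced by $0$; item~\eqref{elementary_equidistribution_X_processes:item1}, already proved, shows that $Y_1,Y_2$ are identically distributed and that $U_1,U_2$ are identically distributed; and the independence hypothesis on $(W^\eta_s)_{(\eta,s)}$ shows that $\sigma((W^\vartheta_i)_{i\in[m,l)})$ and $\sigma((W^\theta_j)_{j\in[l,k)})$ are independent — if $\theta\neq\vartheta$ because the superscripts differ, and if $\theta=\vartheta$ because the index ranges $[m,l)$ and $[l,k)$ are disjoint — and, the same way, that $\sigma((W^0_i)_{i\in[m,l)})$ and $\sigma((W^0_j)_{j\in[l,k)})$ are independent. \cref{lem:equidistribution_random_fields} therefore yields that $U_1(Y_1)$ and $U_2(Y_2)$ are identically distributed random fields. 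It remains only to identify $U_2(Y_2)$: a pathwise descending induction on the time index (using $X^{0,l,y}_l=y$ and $X^{0,l,y}_j=\phi_j(X^{0,l,y}_{j+1},W^0_j)$ for $j<l$) gives the flow identity $X^{0,l,X^{0,k,x}_l(\omega)}_m(\omega)=X^{0,k,x}_m(\omega)$ for all $x\in\mc O$ and all $\omega\in\Omega$, so $U_2(Y_2)$ coincides with $(x,\omega)\mapsto X^{0,k,x}_m(\omega)$, and item~\eqref{elementary_equidistribution_X_processes:item2} follows.

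\emph{Main obstacle.} The bulk of the care is needed in item~\eqref{elementary_equidistribution_X_processes:item2}: one must correctly read off, via \cref{lem:elementary_measurability_X_processes}, the noise $\sigma$-algebras on which $Y_1$ and $U_1$ depend, and then verify their independence through the small case distinction $\theta=\vartheta$ versus $\theta\neq\vartheta$; the degenerate situations $k=l$ (where $Y_1$ is the identity in its first argument) and $l=m$ (where $U_1$ is the identity) require no separate treatment, since the relevant $\sigma$-algebra is then trivial and the flow identity continues to hold. In item~\eqref{elementary_equidistribution_X_processes:item1} the only non-routine point beyond the bookkeeping induction defining $\Phi$ is that the noise vectors $(W^\theta_0,\ldots,W^\theta_{K-1})$ have a $\theta$-independent law, which is precisely where the two distributional assumptions on the $W^\theta_k$ in \cref{setting} are used.
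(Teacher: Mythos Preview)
Your proposal is correct and follows essentially the same route as the paper: the paper likewise introduces a deterministic functional $F^k_l$ (your $\Phi$) expressing $X^{\theta,k,x}_l$ as a measurable function of $(W^\theta_0,\ldots,W^\theta_{K-1},x)$ to get item~(i), and then combines item~(i), \cref{lem:elementary_measurability_X_processes}, and \cref{lem:equidistribution_random_fields} with the flow identity $X^{0,l,X^{0,k,x}_l}_m=X^{0,k,x}_m$ to get item~(ii). Your explicit case distinction $\theta=\vartheta$ versus $\theta\neq\vartheta$ for the independence check is a slight elaboration of what the paper states in one line, but the substance is the same.
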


\begin{proof}[Proof of \cref{lem:elementary_equidistribution_X_processes}]
	Throughout this proof let 
		$ F^{ k }_{ l } \colon S^{ K } \times \mc O \to \mc O $, $ l \in \{ 0, 1, \ldots, k \} $, $ k \in \{ 0, 1, \ldots, \allowbreak K \} $, 
	satisfy for all 
		$ k \in \{ 0, 1, \ldots, K \} $, 
		$ l \in \{ 0, 1, \ldots, k \} $, 
		$ s = ( s_{ 1 }, s_{ 2 }, \ldots, s_{ K } ) \in S^{ K } $, 
		$ x \in \mc O $ 
	that 
		\begin{equation} \label{elementary_equidistribution_X_processes:definition_functionals}
		F^{ k }_{ l }( s, x ) = 
		\begin{cases} 
			x & \colon l = k \\
			\phi_{ l }( F^{ k }_{ l + 1 }( s, x ), s_{ l + 1 } ) & \colon l < k. 
		\end{cases} 
		\end{equation} 
	Observe that induction shows that for all 
		$ k \in \{ 0, 1, \ldots, K \} $, 
		$ l \in \{ 0, 1, \ldots, k \} $ 
	it holds that 
		$ F^{ k }_{ l } \colon S^{ K } \times \mc O \to \mc O $ 
	is 
		$ ( \mc S^{ \otimes K } \otimes \Borel( \mc O ) ) $/$ \Borel( \mc O ) $-measurable. 
	Moreover, note that induction, \eqref{elementary_equidistribution_X_processes:definition_functionals}, and \eqref{setting:dynamics} ensure that for all 
		$ k \in \{ 0, 1, \ldots, K \} $, 
		$ l \in \{ 0, 1, \ldots, k \} $, 
		$ \theta \in \Theta $, 
		$ x \in \mc O $ 
	it holds that 
		\begin{equation} \label{elementary_equidistribution_X_processes:functional_dependence}
		X^{ \theta, k, x }_{ l } = F^{ k }_{ l }( W^{ \theta }_{ 0 }, \ldots, W^{ \theta }_{ K - 1 }, x ) . 
		\end{equation} 
	Next observe that the assumption that  
		$ W^{ \theta }_{ k } \colon \Omega \to S $, $ \theta \in \Theta $, $ k \in \{ 0, 1, \ldots, K - 1 \} $, 
	are independent random variables and the assumption that for all 
		$ k \in \{ 0, 1, \ldots, K-1 \} $
	it holds that 
		$ W^{ \theta }_{ k } \colon \Omega \to S $, $ \theta \in \Theta $, 
	are identically distributed implies that for all 
		$ \theta \in \Theta $
	it holds that 
		$ ( W^{ \theta }_{ l } )_{ l \in \{ 0, 1, \ldots, K-1 \} } $ 
	and 
		$ ( W^{ 0 }_{ l } )_{ l \in \{ 0, 1, \ldots, K-1 \} } $ 
	are identically distributed. 
	This and \eqref{elementary_equidistribution_X_processes:functional_dependence} establish item~\eqref{elementary_equidistribution_X_processes:item1}. 
	Next we prove item~\eqref{elementary_equidistribution_X_processes:item2}. 
	Note that for all 
		$ \theta, \vartheta \in \Theta $, 
		$ k,l,m \in \N_0 $ 
	with 
		$ m \leq l \leq k \leq K $ 
	it holds that 
		$ \sigma( ( W^{ \vartheta }_{ s } )_{ s \in [ m, l ) \cap \N_0 }) $
	and 
		$ \sigma( ( W^{ \theta }_{ s } )_{ s \in [ l, k ) \cap \N_0 } ) $ 
	are independent. 
	This, item~\eqref{elementary_equidistribution_X_processes:item1}, item~\eqref{elementary_measurability_X_processes:item1} of \cref{lem:elementary_measurability_X_processes}, and item~\eqref{equidistribution_random_fields:item2} of
	\cref{lem:equidistribution_random_fields} demonstrate that for all 
		$ \theta,\vartheta \in \Theta $, 
		$ k,l,m \in \N_0 $ 
	with 
		$ m \leq l \leq k \leq K $
	it holds that 
		\begin{equation} \label{elementary_equidistribution_X_processes:statement_on_identical_distributions}
		\mc O \times \Omega \ni (x, \omega ) \mapsto X^{ \vartheta, l, X^{ \theta, k, x }_{ l } ( \omega ) }_{ m } ( \omega ) \in \mc O \qandq 
		\mc O \times \Omega \ni ( x, \omega ) \mapsto  X^{ 0, l, X^{ 0, k, x }_{ l } ( \omega ) }_{ m } ( \omega ) \in \mc O 
		\end{equation} 
	are identically distributed random fields. 
	Furthermore, observe that \eqref{setting:dynamics} and induction imply that for all 
		$ k,l,m \in \N_0 $, 
		$ x \in \mc O $, 
		$ \omega \in \Omega $ 
	with 
		$ m \leq l \leq k \leq K $
	it holds that 
		\begin{equation} 
		X^{ 0, l, X^{ 0, k, x }_{ l } ( \omega ) }_{ m }( \omega ) = X^{ 0, k, x }_{ m } ( \omega ).  
		\end{equation} 
	Combining this with \eqref{elementary_equidistribution_X_processes:statement_on_identical_distributions} establishes item~\eqref{elementary_equidistribution_X_processes:item2}. 
	This completes the proof of  \cref{lem:elementary_equidistribution_X_processes}. 
\end{proof}

\begin{prop} \label{lem:equidistribution} 
	Assume \cref{setting}. Then 
	\begin{enumerate} [(i)]
		\item \label{equidistribution:item1} 
		it holds for all 
			$ n \in \N_0 $, 
			$ \theta \in \Theta $  
		that 
			$ \{ 0, 1, \ldots, K \} \times \mc O \times \Omega \ni (k,x,\omega) \mapsto V^{\theta}_{k,n}( x, \omega ) \in \R $
		and 
			$ \{ 0, 1, \ldots, K \} \times \mc O \times \Omega \ni (k,x,\omega) \mapsto V^{0}_{k,n}( x, \omega ) \in \R $
		are identically distributed random fields, 
		\item \label{equidistribution:item2}
		it holds for all 
			$ n \in \N $,  
			$ \theta,\vartheta \in \Theta $
		with 
			$ \{(\theta,\eta)\in\Theta \colon \eta\in\Theta \} \cap \{ (\vartheta,\eta)\in\Theta \colon \eta\in\Theta \} = \emptyset $
		that 
			$ \{ 0, 1, \ldots, K \} \times \mc O \times \Omega \ni ( k, x, \omega ) \mapsto ( V^{\theta}_{k,n}( x, \omega ), V^{\vartheta}_{k,n-1}( x, \omega ) ) \in \R^2 $
		and 
			$ \{ 0, 1, \ldots, K \} \times \mc O \times \Omega \ni ( k, x, \omega ) \mapsto (V^{0}_{k,n}(x,\omega), V^{1}_{k,n-1}( x, \omega ) ) \in \R^2 $
		are identically distributed random fields, and 
		\item \label{equidistribution:item3}
		it holds for all 
			$ n \in \N $, 
			$ \theta,\vartheta \in \Theta $
		with 
			$ \{ (\theta,\eta)\in\Theta \colon \eta\in\Theta \} \cap \{ (\vartheta,\eta)\in\Theta \colon \eta\in\Theta \} = \emptyset $
		that
			\begin{multline}
			\{ 1, 2, \ldots, K \} \times \mc O \times \Omega \ni ( k, x, \omega )    
			\mapsto \\
			\frac{ 1 }{ \mf p_{ k, \mc{R}^{ \theta }_{ k } ( \omega ) } } \Big[ 
			f_{ \mc{R}^{ \theta }_k( \omega ) }( X^{ \theta, k, x }_{ \mc{R}^{ \theta }_k( \omega )}( \omega ), V^{ \theta }_{ \mc{R}^{ \theta }_k( \omega ), n }( X^{ \theta, k, x }_{ \mc{R}^{ \theta }_k( \omega )}( \omega ), \omega ) ) 
			- 
			V^{\theta}_{ \mc{R}^{ \theta }_k( \omega ), n}( X^{ \theta, k, x }_{ \mc{R}^{\theta}_k( \omega ) }( \omega ), \omega ) 
			\\
			- 
			f_{\mc{R}^{\theta}_k(\omega)}(X^{ \theta, k, x }_{\mc{R}^{\theta}_k(\omega)}( \omega ), V^{\vartheta}_{\mc{R}^{\theta}_k(\omega),n-1}( X^{ \theta, k, x }_{ \mc{R}^{\theta}_k(\omega) }(\omega),\omega))
			+ 
			V^{\vartheta}_{ \mc{R}^{\theta}_k( \omega ), n-1 }( X^{ \theta, k, x }_{ \mc{R}^{\theta}_k(\omega) }( \omega ), \omega ) \Big]
			\in \R 
			\end{multline} 
		and 
			\begin{multline}
			\{ 1, 2, \ldots, K \} \times \mc O \times \Omega \ni ( k, x, \omega ) \mapsto \\
			\frac{ 1 }{ \mf p_{ k, \mc{R}^{ 0 }_{ k }( \omega ) } } \Big[ f_{ \mc{R}^{0}_k( \omega ) }( X^{ 0, k, x }_{\mc{R}^{ 0 }_{ k } ( \omega ) }( \omega ), V^{ 0 }_{ \mc{R}^{ 0 }_{ k } ( \omega ), n } ( X^{ 0, k, x }_{ \mc{R}^{ 0 }_{ k } ( \omega ) }( \omega ), \omega ) )
			- 
			V^{ 0 }_{ \mc{R}^{ 0 }_{ k } ( \omega ), n } ( X^{ 0, k, x }_{ \mc{R}^{ 0 }_{ k } ( \omega ) }( \omega ), \omega ) 
			\\
			- 
			f_{ \mc{R}^{ 0 }_{ k } ( \omega ) } ( X^{ 0, k, x }_{ \mc{R}^{ 0 }_{ k } ( \omega ) }( \omega ), V^{ 1 }_{\mc{R}^{ 0 }_{ k } ( \omega ), n-1 }( X^{ 0, k, x }_{ \mc{R}^{ 0 }_{ k } ( \omega ) } ( \omega ), \omega ) )
			+ 
			V^{ 1 }_{ \mc{R}^{ 0 }_{ k } ( \omega ), n-1 } ( X^{ k, 0 }_{ \mc{R}^{ 0 }_{ k } ( \omega ) } ( x, \omega ), \omega ) \Big] \in \R 
			\end{multline} 
		are identically distributed random fields. 
	\end{enumerate}
\end{prop}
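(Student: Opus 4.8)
The plan is to prove items~(i), (ii), and (iii) simultaneously by strong induction on $n \in \N_0$. Within the induction the three items are interlocked as follows: item~(i) at level $n$ will be obtained from item~(iii) at levels $1, 2, \ldots, n-1$ together with the equidistribution of the $X$-processes established in \cref{lem:elementary_equidistribution_X_processes}; item~(ii) at level $n$ will be obtained from item~(i) at levels $n$ and $n-1$; and item~(iii) at level $n$ will be obtained from item~(ii) at level $n$. The base case $n = 0$ is immediate, because $V^{\theta}_{k,0} \equiv 0$ for every $\theta \in \Theta$ makes item~(i) trivial and items~(ii) and~(iii) vacuous. So in the induction step I would fix $n \in \N$, assume items~(i)--(iii) at all levels below $n$, and establish them at level $n$ in the order (i), then (ii), then (iii).

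For item~(i) at level $n$ I would read off from \eqref{setting:mlp_scheme} that the random field $(k,x,\omega) \mapsto V^{\theta}_{k,n}(x,\omega)$ is a finite sum of random fields in $(k,x)$: one ``initial'' field for each $m \in \{1, \ldots, M^{n}\}$, which is a fixed measurable functional of $(X^{(\theta,0,m)}, X^{(\theta,0,-m)})$, and, for each $j \in \{1, \ldots, n-1\}$ and $m \in \{1, \ldots, M^{n-j}\}$, one field that is a deterministic multiple of exactly the random field appearing in item~(iii) with $\theta$ there replaced by $(\theta,j,m)$ and $\vartheta$ there replaced by $(\theta,j,-m)$ --- a pair with disjoint ``subtrees'' $\{(\theta,j,m,\eta) : \eta \in \Theta\}$ and $\{(\theta,j,-m,\eta) : \eta \in \Theta\}$, so item~(iii) at level $j \le n-1$ applies to it. I would then verify that each summand is measurable with respect to the sub-sigma-algebra generated by the random variables $\mc R^{\eta}$ and $W^{\eta}_{s}$ with $\eta$ ranging over a block of multi-indices depending on $(j,m)$, that these blocks are pairwise disjoint, and hence that the summands (for fixed $\theta$) are independent; the same structure holds for $\theta = 0$. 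Combining \cref{lem:elementary_equidistribution_X_processes} for the ``initial'' fields with the induction hypothesis (item~(iii) at the lower levels) for the remaining fields, and invoking \cref{lem:equidistributed_vectors} and then \cref{cor:equidistributed_sums}, yields that $(k,x,\omega) \mapsto V^{\theta}_{k,n}(x,\omega)$ and $(k,x,\omega) \mapsto V^{0}_{k,n}(x,\omega)$ are identically distributed. (For $n = 1$ the sum over $j$ is empty and only the ``initial'' part is present.)

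For item~(ii) at level $n$ I would use that the disjointness hypothesis forces the subtrees of $\theta$ and $\vartheta$ to be disjoint, so that by the measurability established in \cref{lem:measurability} the fields $V^{\theta}_{\cdot,n}$ and $V^{\vartheta}_{\cdot,n-1}$ are measurable with respect to independent sigma-algebras, and likewise $V^{0}_{\cdot,n}$ and $V^{1}_{\cdot,n-1}$. Since item~(i) at levels $n$ and $n-1$ gives that $V^{\theta}_{\cdot,n}$ and $V^{\vartheta}_{\cdot,n-1}$ are identically distributed to $V^{0}_{\cdot,n}$ and $V^{1}_{\cdot,n-1}$ respectively, \cref{lem:equidistributed_vectors} (with $N = 2$) upgrades this to identical distribution of the pairs. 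For item~(iii) at level $n$ I would write the displayed random field as a composition $U^{\theta,\vartheta}(Y^{\theta})$, where $Y^{\theta}$ is the random field $(k,x) \mapsto (k, \mc R^{\theta}_{k}, X^{\theta,k,x}_{0}, \ldots, X^{\theta,k,x}_{K-1})$ and $U^{\theta,\vartheta}$ is the random field in the variables $(k, r, y_{0}, \ldots, y_{K-1})$ given by $\mf p_{k,r}^{-1}\big[ f_{r}(y_{r}, V^{\theta}_{r,n}(y_{r})) - V^{\theta}_{r,n}(y_{r}) - f_{r}(y_{r}, V^{\vartheta}_{r,n-1}(y_{r})) + V^{\vartheta}_{r,n-1}(y_{r}) \big]$ (which makes sense because $\mc R^{\theta}_{k} \le k - 1 \le K-1$). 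Here $Y^{\theta}$ is measurable with respect to $\sigma(\mc R^{\theta}, (W^{\theta}_{s})_{s})$ and $U^{\theta,\vartheta}$ is measurable with respect to the sigma-algebra generated by the union of the subtrees of $\theta$ and $\vartheta$ (via \cref{lem:measurability}), and these two sigma-algebras are independent; the analogous facts hold for $Y^{0}$ and $U^{0,1}$. Since $\mc R^{\theta}$ and $(W^{\theta}_{s})_{s}$ are independent with $(\mc R^{\theta}, (W^{\theta}_{s})_{s})$ identically distributed to $(\mc R^{0}, (W^{0}_{s})_{s})$, and $X^{\theta}$ is a fixed measurable functional of $(W^{\theta}_{s})_{s}$, \cref{lem:elementary_equidistribution_X_processes} gives that $Y^{\theta}$ and $Y^{0}$ are identically distributed, while item~(ii) at level $n$ gives that $U^{\theta,\vartheta}$ and $U^{0,1}$ are identically distributed; \cref{lem:equidistribution_random_fields} then yields that $U^{\theta,\vartheta}(Y^{\theta})$ and $U^{0,1}(Y^{0})$ are identically distributed, which is item~(iii) at level $n$ and completes the induction.

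The step I expect to be the main obstacle is the bookkeeping that rewrites each constituent of \eqref{setting:mlp_scheme} as a composition of random fields, each measurable with respect to the correct, mutually independent ``subtree'' sigma-algebras --- in particular, in item~(iii), the composition of the random fields $V^{\theta}_{\cdot,n}$ and $V^{\vartheta}_{\cdot,n-1}$ with the processes $X^{\theta,k,x}$ evaluated at the \emph{random} index $\mc R^{\theta}_{k}$ --- so that the general distribution lemmas \cref{lem:equidistribution_random_fields}, \cref{lem:equidistributed_vectors}, and \cref{cor:equidistributed_sums} apply. The measurability facts in \cref{lem:measurability} and \cref{lem:elementary_measurability_X_processes} are exactly what is needed to make this rigorous.
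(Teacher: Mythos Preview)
Your proposal is correct and follows essentially the same approach as the paper: the paper packages the simultaneous induction as membership in sets $\mc X$, $\mc Y$, $\mc Z$ (for items~(i), (ii), (iii)) and proves the same chain of implications $\{n-1,n\}\subseteq\mc X\Rightarrow n\in\mc Y$, $\mc Y\subseteq\mc Z$, and $\{1,\ldots,n-1\}\subseteq\mc Z\Rightarrow n\in\mc X$, using exactly the lemmas you name (\cref{lem:elementary_equidistribution_X_processes}, \cref{lem:measurability}, \cref{lem:equidistribution_random_fields}, \cref{lem:equidistributed_vectors}, \cref{cor:equidistributed_sums}) at the same places. The only cosmetic difference is that the paper establishes $1\in\mc X$ explicitly before invoking the implication from $\mc Z$, whereas you fold this into the remark that the $j$-sum is empty for $n=1$.
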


\begin{proof}[Proof of \cref{lem:equidistribution}]
	Throughout this proof let 
		$ h_k \colon \mc O \times \R \to \R $, $ k \in \{ 0, 1, \ldots, K - 1 \} $,
	satisfy for all 
		$ k \in \{0,1,\ldots,K-1\} $, 
		$ x \in \mc O $, 
		$ a \in \R $  
	that 
		$ h_k( x, a ) = f_k( x, a ) - a $, 
	let $ \mc A^{I}_{J} \subseteq \mc F $, $ I,J \in \mf 2^{\Theta} $, satisfy for all 
		$ I, J \in \mf 2^{\Theta} $
	that 
		$ \mc A^{I}_{J} = \sigma( ( \mc{R}^{\theta}_{s} )_{ (s, \theta) \in \{1, 2, \ldots, K\} \times I }, ( W^{ \theta }_s )_{ ( s, \theta ) \in \{ 0, 1, \ldots, K-1 \} \times J } ) $, 
	let $ \mc X \subseteq \N_0 $ satisfy that
		\begin{equation}
		\mc X = 
		\left\{ n \in \N_0 
		\colon 
		\left( 
		\begin{array}{c} 
		\text{It holds for all}~\theta \in \Theta~\text{that} \\
		\{ 0, 1, \ldots, K \} \times \mc O \times \Omega \ni ( k, x, \omega ) \mapsto V^{ \theta }_{ k, n }( x, \omega ) \in \R
		~\text{and}~\\
		\{ 0, 1, \ldots, K \} \times \mc O \times \Omega \ni ( k, x, \omega ) \mapsto V^{ 0 }_{ k, n }( x, \omega ) \in \R \\
		\text{are identically distributed random fields.}
		\end{array}
		\right)
		\right\}\!, 
		\end{equation} 
	let $ \mc Y \subseteq \N $ satisfy that 
		\begin{equation} 
		\mc Y = \left\{ n \in \N \colon 
		\left( 
		\begin{array}{c}
		\text{It holds for all}~
			\theta,\vartheta \in \Theta
		~\text{with}~\\
			\{ ( \theta, \eta ) \in \Theta \colon \eta \in \Theta \} \cap \{ (\vartheta,\eta)\in\Theta \colon \eta\in\Theta \} = \emptyset \\
		\text{that}~ 
			\{ 0, 1, \ldots, K \} \times \mc O \times \Omega \ni ( k, x, \omega )	\mapsto \\
		 ( V^{\theta}_{k,n}(x,\omega),V^{\vartheta}_{k,n-1}( x, \omega ) ) \in \R^2~\text{and} \\
			\{ 0, 1, \ldots, K \} \times \mc O \times \Omega \ni ( k, x, \omega ) \mapsto 
		 ( V^{0}_{k,n}( x, \omega ), V^{ 1 }_{ k, n-1 }( x, \omega ) ) \in \R^2 \\
		~\text{are identically distributed random fields.}
		\end{array}
		\right)			
		\right\}\!, 
		\end{equation} 
	and let $ \mc Z \subseteq \N $ satisfy that
		\begin{equation}  
		\mc Z = 
		\left\{ 
		n\in\N\colon
		\left( 
		\begin{array}{c}
		\text{It holds for all}~\theta,\vartheta\in\Theta~\text{with}~\\ 
		\{(\theta,\eta)\in\Theta \colon \eta\in\Theta \} \cap \{ (\vartheta,\eta)\in\Theta \colon \eta\in\Theta \} = \emptyset \\
		~\text{that}~
		\{1,2,\ldots,K\} \times \mc O \times \Omega \ni (k,x,\omega) 
		\mapsto \\
		\frac{ 1 }{ \mf p_{ k, \mc{R}^{ \theta }_{ k }( \omega ) } } \big[ h_{ \mc{R}^{ \theta }_k( \omega ) }( X^{ \theta, k, x }_{ \mc{R}^{ \theta }_k( \omega ) }( \omega ), V^{ \theta }_{ \mc{R}^{ \theta }_k( \omega ),n}( X^{ \theta, k, x }_{ \mc{R}^{ \theta }_k( \omega ) }( \omega ), \omega ) )
		\\
		- 
		h_{ \mc{R}^{ \theta }_k( \omega ) }( X^{ \theta, k, x }_{ \mc{R}^{ \theta }_k( \omega ) }( \omega ), V^{ \vartheta }_{ \mc{R}^{ \theta }_k( \omega ), n-1 }( X^{ \theta, k, x }_{ \mc{R}^{ \theta }_k( \omega ) }( \omega ), \omega ) ) \big]
		\in \R 
		\\~\text{and}~
		\{0,1,\ldots,K\} \times \mc O \times \Omega \ni (k,x,\omega) 
		\mapsto
		\\
		\frac{ 1 }{ \mf p_{ k, \mc{R}^{ 0 }_{ k }( \omega ) } } \big[ h_{ \mc{R}^{ 0 }_{ k }( \omega ) }( X^{ 0, k, x }_{ \mc{R}^{ 0 }_{ k }( \omega ) }( \omega ), V^{ 0 }_{ \mc{R}^{ 0 }_{ k }( \omega ), n }( X^{ 0, k, x }_{ \mc{R}^{ 0 }_{ k }( \omega ) }( \omega ), \omega ) ) 
		\\
		- 
		h_{ \mc{R}^{ 0 }_{ k } ( \omega ) }( X^{ 0, k, x }_{ \mc{R}^{ 0 }_{ k }( \omega ) }( \omega ), V^{ 1 }_{ \mc{R}^{ 0 }_{ k }( \omega ), n-1 }( X^{ 0, k, x }_{ \mc{R}^{ 0 }_{ k }( \omega ) }( \omega ), \omega ) ) \in \R
		\\
		~\text{are identically distributed random fields.}
		\end{array} 
		\right) 
		\right\} \!. 
		\end{equation}
	Observe that the assumption that for all 
		$ \theta \in \Theta $, 
		$ k \in \{0,1,\ldots,K\} $, 
		$ x \in \mc O $
	it holds that 
		$ V^{\theta}_{k,0}(x) = 0 $ 
	ensures that 
		$ 0 \in \mc X $. 
	Next we prove that $ 1 \in \mc X $. 
	For this note that \eqref{setting:mlp_scheme} yields that for all 
		$ \theta \in \Theta $, 
		$ k \in \{0,1,\ldots,K\} $, 
		$ x \in \mc O $
	it holds that 
		\begin{equation} \label{equidistribution:n_is_one}
		V^{\theta}_{k,1}(x) = \frac{1}{M} \sum_{m=1}^M 
		\left[ g(X^{ (\theta,0,-m), k, x }_0 ) + \sum_{l=0}^{K-1} \mathbbm{1}_{ [0,k) }( l ) f_l( X^{ (\theta,0,m), k, x }_l, 0 ) 	\right] \!.
		\end{equation} 
	Moreover, observe that item~\eqref{elementary_equidistribution_X_processes:item1} of \cref{lem:elementary_equidistribution_X_processes} ensures that 
		\begin{enumerate}[(I)]
		\item it holds for all 
			$ \theta \in \Theta $, 
			$ m \in \N $ 
		that 
			$ \{ 0, 1, \ldots, K \} \times \mc O \times \Omega \ni ( k, x, \omega ) \mapsto g( X^{ (\theta,0,-m), k, x }_0( \omega ) ) \in \R $
		and 
			$ \{ 0, 1, \ldots, K \} \times \mc O \times \Omega \ni ( k, x, \omega ) \mapsto g( X^{ (0,0,-m), k, x }_0( \omega ) ) \in \R $			 
		are identically distributed random fields 
		and 
	\item it holds for all 
			$ \theta \in \Theta $, 
			$ m \in \N $ 
		that 
			$ \{ 1, 2, \ldots, K \} \times \mc O \times \Omega \ni ( k, x, \omega ) \mapsto \mathbbm{ 1 }_{ [ 0, k-1 ] } ( l ) f_{ l } ( X^{ ( \theta, 0, m ), k, x }_{ l } ( \omega ), \allowbreak 0 ) \in \R $
		and 
			$ \{ 1, 2, \ldots, K \} \times \mc O \times \Omega \ni ( k, x, \omega ) \mapsto \mathbbm{1}_{[0,k-1]}(l) f_l( X^{ (0,0,m), k, x }_l( \omega ), 0 ) \in \R $
		are identically distributed random fields. 
	\end{enumerate}
	Item~\eqref{elementary_measurability_X_processes:item1} of \cref{lem:elementary_measurability_X_processes},  
	\cref{cor:equidistributed_sums}, and the fact that $ \sigma( ( W^{\vartheta}_s )_{ s \in \{ 0, 1, \ldots, K-1 \} } ) $, $ \vartheta \in \Theta $, are independent sigma-algebras on $ \Omega $ therefore prove that for all 
		$ \theta \in \Theta $, 
		$ m \in \N $ 
	it holds that 
		\begin{multline} 
		\{ 1, 2, \ldots, K \} \times \mc O \times \Omega \ni ( k, x, \omega ) 
		\mapsto  
		g ( X^{ (\theta,0,-m), k, x }_0( \omega ) ) 
		\\
		+ \sum_{l=0}^{K-1} \mathbbm{1}_{[0,k-1]}(l) f_l( X^{ (\theta,0,m), k, x }_l( \omega ), 0 ) 
		\in \R 
		\end{multline}
	and 
		\begin{multline} 
		\{ 1, 2, \ldots, K \} \times \mc O \times \Omega \ni ( k, x, \omega ) 
		\mapsto  
		g ( X^{ (0,0,-m), k, x }_0( \omega ) ) 
		\\
		+
		\sum_{l=0}^{K-1} \mathbbm{1}_{[0,k-1]}(l) f_l( X^{ (0,0,m), k, x }_l( \omega ), 0  ) 
		\in \R 
		\end{multline}
	are identically distributed random fields. 
	Item~\eqref{elementary_measurability_X_processes:item1} of \cref{lem:elementary_measurability_X_processes},   
	\eqref{equidistribution:n_is_one}, 
	\cref{cor:equidistributed_sums}, and the fact that $ \sigma( ( W^{\vartheta}_s )_{ s \in \{ 0, 1, \ldots, K-1 \} } ) $, $ \vartheta \in \Theta $, are independent sigma-algebras on $ \Omega $
	hence ensure that for all 
		$ \theta \in \Theta $ 
	it holds that 
		$ \{ 0, 1, \ldots, K \} \times \mc O \times \Omega \ni ( k, x, \omega ) \mapsto V^{ \theta }_{ k, 1 }( x, \omega ) \in \R $ 
	and 
		$ \{ 0, 1, \ldots, K \} \times \mc O \times \Omega \ni ( k, x, \omega ) \mapsto V^{ 0 }_{ k, 1 }( x, \omega ) \in \R $ 
	are identically distributed random fields. 
	Therefore, we obtain that $ 1 \in \mc X $.   	
	Next we prove that $ \{ n \in \mc X \colon n-1 \in \mc X \} \subseteq \mc Y $. 
	Note that item~\eqref{measurability:item1} of \cref{lem:measurability} ensures that for all 
		$ n \in \N_0 $,
		$ \theta \in \Theta $
	it holds that 
		$ \{ 0, 1, \ldots, K \} \times \mc O \times \Omega \ni ( k, x, \omega ) \mapsto V^{\theta}_{ k, n }( x, \omega ) \in \R $ 
	is 
		$ ( \mf 2^{ \{ 0, 1, \ldots, K \} } \otimes \Borel( \mc O ) \otimes \mc A^{ \{ ( \theta, \eta ) \in \Theta \colon \eta \in \Theta \} }_{ \{ ( \theta, \eta ) \in \Theta \colon \eta \in \Theta \} } ) $/$ \Borel( \R ) $-measurable. 
	The fact that for all 
		$ \theta, \vartheta \in \Theta $ 
	with 
		$ \{ ( \theta, \eta ) \colon \eta \in \Theta \} \cap \{ ( \vartheta, \eta ) \colon \eta \in \Theta \} = \emptyset $ 
	it holds that 
		$ \mc A^{  \{ ( \theta, \eta ) \colon \eta \in \Theta \} }_{  \{ ( \theta, \eta ) \colon \eta \in \Theta \} } $ 
	and 
		$ \mc A^{ \{ ( \vartheta, \eta ) \colon \eta \in \Theta \} }_{ \{ ( \vartheta, \eta ) \colon \eta \in \Theta \} } $ 
	are independent and \cref{lem:equidistributed_vectors} 
	hence demonstrate that for all 
		$ n \in \mc X $, 
		$ \theta, \vartheta \in \Theta $ 
	with 
		$ \{ ( \theta, \eta ) \in \Theta \colon \eta \in \Theta \} \cap \{ ( \vartheta, \eta ) \in \Theta \colon \eta \in \Theta \} = \emptyset $ 
	and 
		$ n-1 \in \mc X $
	it holds that 
		$ \{ 0, 1, \ldots, K \} \times \mc O \times \Omega \ni ( k, x, \omega ) \mapsto ( V^{ \theta }_{ k, n }( x, \omega ), V^{ \vartheta }_{ k, n-1 }( x, \omega ) ) \in \R^2 $ 
	and 
		$ \{ 0, 1, \ldots, K \} \times \mc O \times \Omega \ni ( k, x, \omega ) \mapsto ( V^{ 0 }_{ k, n }( x, \omega ), V^{ 1 }_{ k, n-1 }( x, \omega ) ) \in \R^2 $ 
	are identically distributed random fields. 
	Therefore, we obtain that $ \{ n\in\mc X \colon n-1\in\mc X \} \subseteq \mc Y $. 
	Combining this with the fact that $ \{ 0, 1 \} \subseteq \mc X $ ensures that $ 1 \in \mc Y $. 
	Next we prove that $ \mc Y \subseteq \mc Z $. 
	For this note that item~\eqref{elementary_equidistribution_X_processes:item1} of \cref{lem:elementary_equidistribution_X_processes} and  \cref{lem:equidistribution_random_fields} 
	prove that for all 
		$ \theta \in \Theta $ 
	it holds that 
		$ \{ 1, 2, \ldots, K \} \times \mc O \times \Omega \ni ( k, x, \omega ) \mapsto ( \mc{R}^{ \theta }_k( \omega ), X^{ \theta, k, x }_{ \mc{R}^{\theta}_k( \omega ) }( \omega ) ) \in \{ 0, 1, \ldots, K-1 \} \times \mc O $ 
	and 
		$ \{ 1, 2, \ldots, K \} \times \mc O \times \Omega \ni ( k, x, \omega ) \mapsto ( \mc{R}^{ 0 }_k( \omega ), X^{ 0, k, x }_{\mc{R}^{ 0 }_k( \omega ) }( \omega ) ) \in \{ 0, 1, \ldots, K-1 \} \times \mc O $ 
	are identically distributed random fields. 
	Item~\eqref{elementary_measurability_X_processes:item1} of \cref{lem:elementary_measurability_X_processes}, 
	item~\eqref{measurability:item1} of \cref{lem:measurability}, \cref{lem:equidistribution_random_fields}, and the fact that for all 
		$ \theta, \vartheta \in \Theta $ 
	with 
		$ \theta \notin \{ ( \vartheta, \eta ) \in \Theta \colon \eta \in \Theta \} $ 
	it holds that $ \sigma( (\mc{R}^{ \theta }_{s})_{s\in\{1, 2, \ldots, K\}}, ( W^{ \theta }_s )_{ s \in \{ 0, 1, \ldots, K-1 \} } ) $ and $ \sigma( ( \mc{R}^{ ( \vartheta, \eta ) }_{s} )_{ (s, \eta) \in \{1, 2, \ldots, K\} \times \Theta } , ( W^{ ( \vartheta, \eta ) }_s )_{ (s,\eta) \in \{ 0, 1, \ldots, K-1 \}\times\Theta } ) $ are independent
	therefore demonstrate that for all 
		$ n \in \mc Y $, 
		$ \theta,\vartheta \in \Theta $ 
	with 
		$ \{(\theta,\eta)\in\Theta\colon \eta\in\Theta\}\cap\{(\vartheta,\eta)\in\Theta\colon \eta\in\Theta\} = \emptyset $ 
	it holds that 
		\begin{multline} 
		\{ 1, 2, \ldots, K \} \times \mc O \times \Omega \ni ( k, x, \omega ) \mapsto 
		\frac{ 1 }{ \mf p_{ k, \mc{R}^{ \theta }_{ k }( \omega ) } } 
		\Big[ 
		h_{ \mc{R}^{ \theta }_k( \omega ) }( X^{ \theta, k, x }_{ \mc{R}^{ \theta }_k( \omega ) }( \omega ), V^{ \theta }_{ \mc{R}^{ \theta }_k( \omega ), n }( X^{ \theta, k, x }_{ \mc{R}^{ \theta }_k( \omega ) }( \omega ), \omega ) ) 
		\\
		- h_{ \mc{R}^{ \theta }_k( \omega ) }( X^{ \theta, k, x }_{ \mc{R}^{ \theta }_k( \omega )}( \omega ), V^{ \vartheta }_{ \mc{R}^{ \theta }_k( \omega ), n-1 }( X^{ \theta, k, x }_{ \mc{R}^{ \theta }_k( \omega )}( \omega ), \omega ) ) \Big] \in \R 
		\end{multline}
	and 
		\begin{multline} 
		\{ 1, 2, \ldots, K \} \times \mc O \times \Omega \ni ( k, x, \omega ) \mapsto 
		\frac{ 1 }{ \mf p_{ k, \mc{R}^{ 0 }_{ k }( \omega ) } } \Big[ h_{ \mc{R}^{ 0 }_k( \omega ) }( X^{ 0, k, x }_{ \mc{R}^{ 0 }_k( \omega ) }( \omega ), V^{ 0 }_{ \mc{R}^{ 0 }_k( \omega ), n }( X^{ 0, k, x }_{ \mc{R}^{ 0 }_k( \omega ) }( \omega ), \omega ) ) 
		\\
		- h_{ \mc{R}^{ 0 }_k( \omega ) }( X^{ 0, k, x }_{ \mc{R}^{ 0 }_k( \omega ) }( \omega ), V^{1}_{ \mc{R}^{ \theta }_k( \omega ), n-1 }( X^{ 0, k, x }_{ \mc{R}^{ 0 }_k( \omega ) }( \omega ), \omega ) ) \Big] \in \R 
		\end{multline} 
	are identically distributed random fields. Hence, we obtain that $ \mc Y \subseteq \mc Z $. Combining this with the fact that 
		$ 1 \in \mc Y $ 
	demonstrates that 
		$ 1 \in \mc Z $. 
	Next we prove that  
		$ \{ n \in \N \cap [2,\infty) \colon \{1,2,\ldots,n-1\} \subseteq \mc Z \} \subseteq \mc X $. 
	Note that 
	item~\eqref{measurability:item2} of \cref{lem:measurability} demonstrates that for all 
		$ n \in \N $, 
		$ \theta,\vartheta \in \Theta $ 
	it holds that 
		\begin{equation}  
		\begin{gathered}
		\{ 1, 2, \ldots, K \} \times \mc O \times \Omega \ni ( k, x, \omega ) \mapsto 
		\frac{ 1 }{ \mf p_{ k, \mc{R}^{ \theta }_{ k } ( \omega )  } } \Big[ 
		h_{ \mc{R}^{ \theta }_k( \omega ) }( X^{ \theta, k, x }_{ \mc{R}^{ \theta }_k( \omega ) }( \omega ), V^{ \theta }_{ \mc{R}^{ \theta }_k( \omega ), n }( X^{ \theta, k, x }_{ \mc{R}^{ \theta }_k( \omega ) }( \omega ), \omega ) ) 
		\\
		- 
		h_{ \mc{R}^{ \theta }_k( \omega ) }( X^{ \theta, k, x }_{ \mc{R}^{ \theta }_k( \omega ) }( \omega ), V^{ \vartheta }_{ \mc{R}^{ \theta }_k( \omega ), n-1 }( X^{ \theta, k, x }_{ \mc{R}^{ \theta }_k( \omega ) }( \omega ), \omega ) ) \Big] \in \R
		\end{gathered}
		\end{equation}
	is $ ( ( \mf 2^{ \{ 0, 1, \ldots, K \} } \otimes \Borel( \mc O ) ) \otimes
		\mc A^{ \cup_{ \eta \in \Theta } \{ \theta, ( \theta, \eta ), ( \vartheta, \eta ) \} }_{ \cup_{ \eta \in \Theta } \{ \theta, ( \theta, \eta ), ( \vartheta, \eta ) \} } ) $/$ \Borel( \R ) $-measurable. 
	Combining this and the fact that for all 
		$ \theta \in \Theta $, 
		$ j \in \N $ 
	it holds that 
		$ \mc A^{ \cup_{ \eta \in \Theta } \{ ( \theta, j, m ), ( \theta, j, m, \eta ), ( \theta, j, -m, \eta ) \} }_{ \cup_{ \eta \in \Theta } \{ ( \theta, j, m ), ( \theta, j, m, \eta ), ( \theta, j, -m, \eta ) \} } $, $ m \in \N $, 
	are independent sigma-algebras on $ \Omega $ with \cref{cor:equidistributed_sums} proves that for all 
		$ n \in \{ k\in [2,\infty) \cap \N \colon \{1,2,\ldots,k-1\}\subseteq\mc Z\} $, 
		$ j \in \{1,2,\ldots,n-1\} $,
		$ \theta \in \Theta $ 
	it holds that 
		\begin{multline}
		\{ 1, 2, \ldots, K \} \times \mc O \times \Omega \ni ( k, x, \omega ) \mapsto 
		\\
		\sum_{ m = 1 }^{ M^{ n - j } } \frac{ 1 }{ \mf p_{ k, \mc{R}^{ ( \theta, j, m ) }_{ k }( \omega ) } } \bigg[ h_{ \mc{R}^{ ( \theta, j, m ) }_k( \omega ) }( X^{ ( \theta, j, m ), k, x }_{ \mc{R}^{ ( \theta, j , m ) }_k( \omega ) }( \omega ), V^{ ( \theta, j, m ) }_{ \mc{R}^{ ( \theta, j, m ) }_k( \omega ), j }( X^{ ( \theta, j, m ), k, x }_{ \mc{R}^{ ( \theta, j, m ) }_k( \omega ) }( \omega ), \omega ) ) 
		\\
		- h_{ \mc{R}^{ ( \theta, j, m ) }_k( \omega ) }( X^{ ( \theta, j, m ), k, x }_{ \mc{R}^{ ( \theta, j, m ) }_k( \omega ) }( \omega ), V^{ ( \theta, j, -m ) }_{ \mc{R}^{ ( \theta, j, m ) }_k( \omega ), j-1 }( X^{ (\theta,j,m), k, x }_{ \mc{R}^{ ( \theta, j, m ) }_k( \omega ) }( \omega ), \omega ) ) \bigg] \in \R 
		\end{multline} 
	and 
		\begin{multline} 
		\{ 1, 2, \ldots, K \} \times \mc O \times \Omega \ni ( k, x, \omega ) \mapsto 
		\\
		\sum_{ m = 1 }^{ M^{ n - j } } \frac{ 1 }{ \mf p_{ k, \mc{R}^{ ( 0, j, m ) }_{ k } ( \omega ) } }\bigg[ h_{ \mc{R}^{ (0,j,m) }_k( \omega ) }( X^{ (0,j,m), k, x }_{ \mc{R}^{ (0, j, m ) }_k( \omega ) }( \omega ), V^{ (0,j,m) }_{ \mc{R}^{ (0,j,m) }_k( \omega ), j }( X^{ (0,j,m), k, x }_{ \mc{R}^{ (0,j,m) }_k( \omega ) }( \omega ), \omega ) ) \\
		- h_{ \mc{R}^{ ( 0, j, m ) }_k( \omega ) }( X^{ ( 0, j, m ), k, x }_{ \mc{R}^{ ( 0, j, m ) }_k( \omega ) }( \omega ), V^{ ( 0, j, -m ) }_{ \mc{R}^{ ( 0, j, m ) }_k( \omega ), j - 1 }( X^{ ( 0, j, m ), k, x }_{ \mc{R}^{ ( 0, j, m ) }_k( \omega ) }( \omega ), \omega ) ) \bigg] \in \R 
		\end{multline}
	are identically distributed random fields. 
	This, \eqref{setting:mlp_scheme}, the fact that for all 
		$ n \in \N $, 
		$ \theta \in \Theta $ 
	it holds that 
		$ \{ 0, 1, \ldots, K \} \times \mc O \times \Omega \ni ( k, x, \omega ) \mapsto \frac{ 1 }{ M^n } \sum_{ m = 1 }^{ M^n }[ g( X^{ ( \theta, 0, -m ), k, x }_0( \omega ) ) + \sum_{ l = 0 }^{ k - 1 } f_l( X^{ ( \theta, 0, m ), k, x }_l( \omega ), 0 ) ] \in \R $ 
	and 
		$ \{ 0, 1, \ldots, K \} \times \mc O \times \Omega \ni ( k, x, \omega ) \mapsto \frac{ 1 }{ M^n } \sum_{ m = 1 }^{ M^n }[ g( X^{ ( 0, 0, -m ), k, x }( \omega ) ) + \sum_{ l = 0 }^{ k - 1 } f_l( X^{ ( 0, 0, m ), k, x }( \omega ), 0 ) ] \in \R $ 
	are identically distributed random fields, 
	item~\eqref{elementary_measurability_X_processes:item1} of \cref{lem:elementary_measurability_X_processes}, 
	item~\eqref{measurability:item1} of \cref{lem:measurability}, 
	and \cref{cor:equidistributed_sums} demonstrate that for all 
		$ n \in \{ k\in [2,\infty)\cap\N\colon \{1,\ldots,k-1\}\subseteq \mc Z\} $, 
		$ \theta \in \Theta $ 
	it holds that 
		$ \{ 0, 1, \ldots, K \} \times \mc O \times \Omega \ni ( k, x, \omega ) \mapsto V^{ \theta }_{ k, n }( x, \omega ) \in \R $ 
	and 
		$ \{ 0, 1, \ldots, K \} \times \mc O \times \Omega \ni ( k, x, \omega ) \mapsto V^{ 0 }_{ k, n }( x, \omega ) \in \R $ 
	are identically distributed random fields. 
	Hence, we obtain that 
		$ \{ n \in [2,\infty) \cap \N \colon \{1,2,\ldots,n-1\} \subseteq \mc Z \} \subseteq \mc X $. 
	This, 
	the fact that $ \{ 0,1 \} \subseteq \mc X $, 
	the fact that $ \{ n \in \mc X\colon n-1\in\mc X\} \subseteq \mc Y $, 
	the fact that $ \mc Y \subseteq \mc Z $, 
	and induction prove that 
		$ \mc X = \N_0 $, 
		$ \mc Y = \N $, 
	and 
		$ \mc Z = \N $. 
	Note that the fact that $ \mc X = \N_0 $ establishes item~\eqref{equidistribution:item1}.   
	Moreover, observe that the fact that $ \mc Y = \N $ establishes item~\eqref{equidistribution:item2}. 
	Furthermore, note that the fact that $ \mc Z = \N $ establishes item~\eqref{equidistribution:item3}. 
	This completes the proof of \cref{lem:equidistribution}. 
\end{proof}

\begin{cor} \label{cor:equidistribution_for_mean_lemma}
	Assume \cref{setting}. 
	Then it holds for all 
		$ k,l,n \in \N_0 $, 
		$ \theta,\vartheta \in \Theta $ 
	with 
		$ l \leq k \leq K $ 
	and 
		$ \theta \notin \{ (\vartheta,\eta) \in \Theta \colon \eta \in \Theta \} $ 
	that 
		$ \mc O \times \Omega \ni ( x, \omega ) \mapsto ( X^{ \theta, k, x }_l( \omega ), V^{ \vartheta }_{ l, n }( X^{ \theta, k, x }_l( \omega ), \omega ) ) \in \mc O \times \R $ 
	and 
		$ \mc O \times \Omega \ni ( x, \omega ) \mapsto ( X^{ 0, k, x }_l( \omega ), V^0_{ l, n }( X^{ 0, k, x }_l( \omega ), \omega ) ) \in \mc O \times \R $ 
	are identically distributed random fields. 
\end{cor}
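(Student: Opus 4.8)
The plan is to represent the two random fields in the statement as compositions of random fields and then invoke \cref{lem:equidistribution_random_fields}. To this end fix $k,l,n\in\N_{0}$ and $\theta,\vartheta\in\Theta$ with $l\le k\le K$ and $\theta\notin\{(\vartheta,\eta)\in\Theta\colon\eta\in\Theta\}$, and for every $\psi\in\Theta$ introduce the random fields $Y^{\psi}\colon\mc O\times\Omega\to\mc O$ and $\Psi^{\psi}\colon\mc O\times\Omega\to\mc O\times\R$ determined by $Y^{\psi}(x,\omega)=X^{\psi,k,x}_{l}(\omega)$ and $\Psi^{\psi}(y,\omega)=(y,V^{\psi}_{l,n}(y,\omega))$ for $x,y\in\mc O$ and $\omega\in\Omega$, together with the sigma-algebras $\mc A_{\psi}=\sigma((W^{\psi}_{m})_{m\in[l,k)\cap\N_{0}})$ and $\mc G_{\psi}=\sigma((\mc{R}^{(\psi,\eta)}_{s})_{(s,\eta)\in\{1,2,\ldots,K\}\times\Theta},(W^{(\psi,\eta)}_{s})_{(s,\eta)\in\{0,1,\ldots,K-1\}\times\Theta})$. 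The key observation is that $\Psi^{\psi}(Y^{\psi}(x,\omega),\omega)=(X^{\psi,k,x}_{l}(\omega),V^{\psi}_{l,n}(X^{\psi,k,x}_{l}(\omega),\omega))$ for all $x\in\mc O$, $\omega\in\Omega$, $\psi\in\Theta$, so that the first random field in the claim is $(x,\omega)\mapsto\Psi^{\vartheta}(Y^{\theta}(x,\omega),\omega)$ and the second one is $(x,\omega)\mapsto\Psi^{0}(Y^{0}(x,\omega),\omega)$.

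I would then check the hypotheses of \cref{lem:equidistribution_random_fields} for the choices $(E,\mc E)\is(\mc O,\Borel(\mc O))$, $(S,\mc S)\is(\mc O,\Borel(\mc O))$, $(T,\mc T)\is(\mc O\times\R,\Borel(\mc O\times\R))$, $U_{1}\is\Psi^{\vartheta}$, $U_{2}\is\Psi^{0}$, $Y_{1}\is Y^{\theta}$, $Y_{2}\is Y^{0}$, $\mc G_{1}\is\mc G_{\vartheta}$, $\mc G_{2}\is\mc G_{0}$, $\mc A_{1}\is\mc A_{\theta}$, $\mc A_{2}\is\mc A_{0}$. First, item~\eqref{elementary_measurability_X_processes:item1} of \cref{lem:elementary_measurability_X_processes} shows that $Y^{\psi}$ is $(\Borel(\mc O)\otimes\mc A_{\psi})$/$\Borel(\mc O)$-measurable, and restricting item~\eqref{measurability:item1} of \cref{lem:measurability} to the slice $k=l$, composing with the projection $(y,\omega)\mapsto y$, and using $\Borel(\mc O)\otimes\Borel(\R)=\Borel(\mc O\times\R)$ shows that $\Psi^{\psi}$ is $(\Borel(\mc O)\otimes\mc G_{\psi})$/$\Borel(\mc O\times\R)$-measurable. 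Second, restricting item~\eqref{elementary_equidistribution_X_processes:item1} of \cref{lem:elementary_equidistribution_X_processes} to the slice $(k,l)$ shows that $Y^{\theta}$ and $Y^{0}$ are identically distributed random fields, and, since for every $C\in\Borel(\mc O\times\R)$ and $y\in\mc O$ the section $\{a\in\R\colon(y,a)\in C\}$ lies in $\Borel(\R)$, the finite-dimensional distributions of $\Psi^{\psi}$ are completely determined by those of the random field $(x,\omega)\mapsto V^{\psi}_{l,n}(x,\omega)$, so restricting item~\eqref{equidistribution:item1} of \cref{lem:equidistribution} to the slice $k=l$ shows that $\Psi^{\vartheta}$ and $\Psi^{0}$ are identically distributed random fields. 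Third, since $\mc A_{\psi}\subseteq\sigma((\mc{R}^{\psi}_{s})_{s\in\{1,2,\ldots,K\}},(W^{\psi}_{s})_{s\in\{0,1,\ldots,K-1\}})$ for every $\psi\in\Theta$, the independence of $\mc G_{\vartheta}$ and $\mc A_{\theta}$ and the independence of $\mc G_{0}$ and $\mc A_{0}$ both follow from the standard fact already used in the proof of \cref{lem:equidistribution} that $\sigma((\mc{R}^{\alpha}_{s})_{s\in\{1,2,\ldots,K\}},(W^{\alpha}_{s})_{s\in\{0,1,\ldots,K-1\}})$ and $\mc G_{\beta}$ are independent whenever $\alpha\notin\{(\beta,\eta)\in\Theta\colon\eta\in\Theta\}$; one applies this once with $(\alpha,\beta)=(\theta,\vartheta)$, which is exactly the hypothesis, and once with $(\alpha,\beta)=(0,0)$, which is admissible because every element of $\{(0,\eta)\colon\eta\in\Theta\}$ is a tuple of length at least two while $0\in\Z^{1}$, so $0\notin\{(0,\eta)\in\Theta\colon\eta\in\Theta\}$.

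With these hypotheses in place, item~\eqref{equidistribution_random_fields:item2} of \cref{lem:equidistribution_random_fields} yields that $(x,\omega)\mapsto\Psi^{\vartheta}(Y^{\theta}(x,\omega),\omega)$ and $(x,\omega)\mapsto\Psi^{0}(Y^{0}(x,\omega),\omega)$ are identically distributed random fields, which, by the key observation above, is precisely the assertion of \cref{cor:equidistribution_for_mean_lemma}. I do not expect a genuine obstacle here: the proof is essentially a single invocation of the composition lemma, and the only points needing a little attention are the verification of the independence of the relevant sigma-algebras --- in particular the length argument legitimising the instance $\alpha=\beta=0$ --- and the routine section (equivalently, $\pi$-system) argument promoting the identical distribution of the random fields $V^{\psi}_{l,n}$ to that of the random fields $\Psi^{\psi}$.
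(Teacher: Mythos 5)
Your proposal is correct and follows essentially the same route as the paper: both proofs realise the two fields as compositions of $(y,\omega)\mapsto(y,V^{\cdot}_{l,n}(y,\omega))$ with $(x,\omega)\mapsto X^{\cdot,k,x}_{l}(\omega)$, verify the measurability hypotheses via item~(i) of \cref{lem:elementary_measurability_X_processes} and item~(i) of \cref{lem:measurability}, obtain the identical distributions from item~(i) of \cref{lem:elementary_equidistribution_X_processes} and item~(i) of \cref{lem:equidistribution}, check the required independence of the sigma-algebras generated by $(W^{\theta}_{s})_{s}$ and by the $(\vartheta,\eta)$-indexed randomness, and conclude with \cref{lem:equidistribution_random_fields}. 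Your additional remarks (the section argument upgrading the distributional identity from $V$ to $\Psi$, and the length argument showing $0\notin\{(0,\eta)\colon\eta\in\Theta\}$) are details the paper leaves implicit but are correctly supplied.
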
 

\begin{proof}[Proof of \cref{cor:equidistribution_for_mean_lemma}]
	First, observe that item~\eqref{equidistribution:item1} of \cref{lem:equidistribution} demonstrates that for all 
		$ n \in \N_0 $, 
		$ l \in \{ 0, 1, \ldots, K \} $, 
		$ \vartheta \in \Theta $ 
	it holds that 
		$ \mc O \times \Omega \ni ( y, \omega ) \mapsto ( y, V^{ \vartheta }_{ l, n }( y, \omega ) ) \in \mc O \times \R $ 
	and 
		$ \mc O \times \Omega \ni ( y, \omega ) \mapsto ( y, V^{0}_{ l, n }( y, \omega ) ) \in \mc O \times \R $
	are identically distributed random fields. 
	Moreover, note that item~\eqref{elementary_equidistribution_X_processes:item1} of \cref{lem:elementary_equidistribution_X_processes} ensures that for all 
		$ k,l \in \N_0 $, 
		$ \theta \in \Theta $ 
	with 
		$ l \leq k \leq K $ 
	it holds that 
		$ \mc O \times \Omega \ni ( x, \omega ) \mapsto X^{ \theta, k, x }_l( \omega ) \in \mc O $ 
	and
		$ \mc O \times \Omega \ni ( x, \omega ) \mapsto X^{ 0, k, x }_l( \omega ) \in \mc O $ 
	are identically distributed random fields. 
	Combining the fact that for all 
		$ n \in \N_0 $, 
		$ l \in \{ 0, 1, \ldots, K \} $,
		$ \vartheta \in \Theta $ 
	it holds that 
		$ \mc O \times \Omega \ni ( y, \omega ) \mapsto ( y, V^{ \vartheta }_{ l, n }( y, \omega ) ) \in \mc O \times \R $ 
	and 
		$ \mc O \times \Omega \ni ( y, \omega ) \mapsto ( y, V^{ 0 }_{ l, n }( y, \omega ) ) \in \mc O \times \R $
	are identically distributed random fields, 
	item~\eqref{elementary_measurability_X_processes:item1} of \cref{lem:elementary_measurability_X_processes}, 
	item~\eqref{measurability:item1} of \cref{lem:measurability}, 
	and \cref{lem:equidistribution_random_fields} 
	with the fact that for all 
		$ \theta, \vartheta \in \Theta $ 
	with 
		$  \theta \notin \{ ( \vartheta, \eta ) \in \Theta \colon \eta \in \Theta \} $ 
	it holds that 
		$ \sigma( ( W^{ \theta }_s )_{ s\in \{ 0, 1, \ldots, K-1 \} } ) $
	and 
		$ \sigma( ( \mc{R}^{ ( \vartheta, \eta ) }_{ s } )_{ (s, \eta ) \in \{1, 2, \ldots, K\} \times \Theta }, ( W^{ ( \vartheta, \eta ) }_s )_{ ( s, \eta ) \in \{ 0, 1, \ldots, K-1 \} \times \Theta } ) $ 
	are independent 
	hence 
	proves that for all 
		$ k,l,n \in \N_0 $, 
		$ \theta,\vartheta \in \Theta $ 
	with 
		$ l \leq k \leq K $ 
	and 
		$ \theta \notin \{ (\vartheta,\eta) \in \Theta\colon \eta \in \Theta \} $ 
	it holds that 
		$ \mc O \times \Omega \ni ( x, \omega ) \mapsto ( X^{ \theta, k, x }_l( \omega ), V^{ \vartheta }_{ l, n }( X^{ \theta, k, x }_l( \omega ), \omega ) ) \in \mc O \times \R $ 
	and 
		$ \mc O \times \Omega \ni ( x, \omega ) \mapsto ( X^{ 0, k, x }_l( \omega ), V^0_{ l, n }( X^{ 0, k, x }_l( \omega ), \omega ) ) \in \mc O \times \R $ 
	are identically distributed random fields. 
	This completes the proof of \cref{cor:equidistribution_for_mean_lemma}. 		
\end{proof}

\subsection{Integrability properties for MLP approximations}
\label{subsec:integrability_properties}

\begin{lemma}[Integrability properties] \label{lem:integrability}
	Assume 
		\cref{setting}, 
	assume for all 
		$ k \in \{ 0, 1, \ldots, K \} $, 
		$ x \in \mc O $
	that 
		$ \EXP{ | g( X^{ 0, k, x }_{ 0 } ) |^2 + \sum_{ l = 0 }^{ k - 1 } | f_l( X^{ 0, k, x }_l ) |^2 } < \infty $, 
	and let
		$ L_0, L_1, \ldots, L_{K} \in \R $
	satisfy for all 
		$ k \in \{ 0, 1, \ldots, K - 1 \} $, 
		$ x \in \mc O $, 
		$ a,b \in \R $	
	that 
		$| ( f_k( x, a ) - a ) - ( f_k( x, b ) - b ) | \leq L_k | a - b | $. 
	Then it holds for all 
		$ k, l, n \in \N_0 $, 
		$ \theta, \vartheta \in \Theta $, 
		$ x \in \mc O $
	with 
		$ l \leq k \leq K $ 
	and 
		$ \vartheta \notin \{ ( \theta, \eta ) \in \Theta \colon \eta \in \Theta \} $
	that 
		$ \EXP{ | V^{ \theta }_{ l, n }( X^{ \vartheta, k, x }_{ l } ) |^2} < \infty $. 
\end{lemma}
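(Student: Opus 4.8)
The plan is to prove, by induction on $n\in\N_0$, the auxiliary assertion $P(n)$ that for all $k,l\in\N_0$ and all $x\in\mc O$ with $l\le k\le K$ it holds that $\Exp{|V^0_{l,n}(X^{0,k,x}_l)|^2}<\infty$, and then to deduce the full statement from \cref{cor:equidistribution_for_mean_lemma}. Indeed, if $\theta,\vartheta\in\Theta$ satisfy $\vartheta\notin\{(\theta,\eta)\in\Theta\colon\eta\in\Theta\}$, then \cref{cor:equidistribution_for_mean_lemma}, applied with the roles of $\theta$ and $\vartheta$ interchanged, shows that $\mc O\times\Omega\ni(x,\omega)\mapsto(X^{\vartheta,k,x}_l(\omega),V^\theta_{l,n}(X^{\vartheta,k,x}_l(\omega),\omega))\in\mc O\times\R$ and $\mc O\times\Omega\ni(x,\omega)\mapsto(X^{0,k,x}_l(\omega),V^0_{l,n}(X^{0,k,x}_l(\omega),\omega))\in\mc O\times\R$ are identically distributed random fields, whence $\Exp{|V^\theta_{l,n}(X^{\vartheta,k,x}_l)|^2}=\Exp{|V^0_{l,n}(X^{0,k,x}_l)|^2}$, so that $P(n)$ for all $n\in\N_0$ implies the lemma.

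The base case $P(0)$ is immediate since $V^0_{l,0}\equiv0$. For the induction step I would fix $n\in\N$, assume $P(0),P(1),\ldots,P(n-1)$, fix $k,l\in\N_0$ and $x\in\mc O$ with $l\le k\le K$, abbreviate $\xi:=X^{0,k,x}_l$, substitute $\xi$ for the spatial variable in the defining identity \eqref{setting:mlp_scheme} for $V^0_{l,n}$, and apply the Minkowski inequality in $L^2(\P;\R)$. This reduces $P(n)$ to the finiteness of the $L^2$-norms of (a) the term $\tfrac1{M^n}\sum_{m=1}^{M^n}[g(X^{(0,0,-m),l,\xi}_0)+\sum_{s=0}^{l-1}f_s(X^{(0,0,m),l,\xi}_s,0)]$ and (b), for each $j\in\{1,2,\ldots,n-1\}$ and each admissible $m$, of the corresponding bracketed summand in \eqref{setting:mlp_scheme}. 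Part (a) I would settle with item~\eqref{elementary_equidistribution_X_processes:item2} of \cref{lem:elementary_equidistribution_X_processes}, which identifies, for each $s\in\{0,1,\ldots,l\}$ and each $m\in\N$, the distribution of $X^{(0,0,\pm m),l,\xi}_s$ with that of $X^{0,k,x}_s$; together with $l\le k$ and the hypothesis $\Exp{|g(X^{0,k,x}_0)|^2+\sum_{s=0}^{k-1}|f_s(X^{0,k,x}_s)|^2}<\infty$ this makes every summand of (a) square-integrable, and Minkowski's inequality concludes.

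For (b) I would fix $j$ and $m$, write $\rho:=\mc R^{(0,j,m)}_l$ (so $\rho\le l-1$ by \cref{setting}), and use the hypothesis $|(f_s(z,a)-a)-(f_s(z,b)-b)|\le L_s|a-b|$ together with the triangle inequality to bound the modulus of the summand by $\tfrac{L_\rho}{\mf p_{l,\rho}}\big(|V^{(0,j,m)}_{\rho,j}(X^{(0,j,m),l,\xi}_\rho)|+|V^{(0,j,-m)}_{\rho,j-1}(X^{(0,j,m),l,\xi}_\rho)|\big)$. Since $\rho$ takes only finitely many values in $\{0,1,\ldots,l-1\}$ and $\mf p_{l,s}\in(0,\infty)$, it suffices to bound $\Exp{|V^{(0,j,m)}_{s,j}(X^{(0,j,m),l,\xi}_s)|^2}$ and $\Exp{|V^{(0,j,-m)}_{s,j-1}(X^{(0,j,m),l,\xi}_s)|^2}$ for each $s\in\{0,1,\ldots,l-1\}$; here the indicator $\mathbbm{1}_{\{\rho=s\}}$ is pulled out using that, by item~\eqref{measurability:item1} of \cref{lem:measurability}, item~\eqref{elementary_measurability_X_processes:item1} of \cref{lem:elementary_measurability_X_processes}, and the independence assumptions in \cref{setting}, the random variable $\rho=\mc R^{(0,j,m)}_l$ is independent of the sigma-algebra generated by $W^0$, $W^{(0,j,m)}$, and the $W$'s and $\mc R$'s indexed by strict extensions of $(0,j,m)$ and of $(0,j,-m)$, with respect to which the two composed random variables above are measurable. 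To bound $\Exp{|V^{(0,j,m)}_{s,j}(X^{(0,j,m),l,\xi}_s)|^2}$ (the $(0,j,-m)$, $j-1$ term being analogous) I would reuse the argument of \cref{cor:equidistribution_for_mean_lemma}, i.e.\ apply \cref{lem:equidistribution_random_fields}: $V^{(0,j,m)}_{s,j}$ is measurable with respect to the sigma-algebra generated by the indices strictly extending $(0,j,m)$ (item~\eqref{measurability:item1} of \cref{lem:measurability}), $x\mapsto X^{(0,j,m),l,X^{0,k,x}_l}_s$ is measurable with respect to the sigma-algebra generated by $W^{(0,j,m)}$ and $W^0$ (item~\eqref{elementary_measurability_X_processes:item1} of \cref{lem:elementary_measurability_X_processes}), these two sigma-algebras are independent, $V^{(0,j,m)}_{s,j}$ and $V^0_{s,j}$ are identically distributed (item~\eqref{equidistribution:item1} of \cref{lem:equidistribution}), and $x\mapsto X^{(0,j,m),l,X^{0,k,x}_l}_s$ and $x\mapsto X^{0,k,x}_s$ are identically distributed (item~\eqref{elementary_equidistribution_X_processes:item2} of \cref{lem:elementary_equidistribution_X_processes}), so that $\Exp{|V^{(0,j,m)}_{s,j}(X^{(0,j,m),l,\xi}_s)|^2}=\Exp{|V^0_{s,j}(X^{0,k,x}_s)|^2}$; since $s\le l-1\le k-1\le K$ and $j\le n-1$ (respectively $j-1\le n-2$), this is finite by the induction hypothesis $P(j)$ (respectively $P(j-1)$). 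This closes the induction and hence proves the lemma.

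The only genuinely delicate part of this argument is the index bookkeeping in part (b): one has to keep precise track, via \cref{lem:measurability} and \cref{lem:elementary_measurability_X_processes}, of exactly which Brownian-motion and random-time indices enter each composed expression $V^{(0,j,\pm m)}_{s,i}(X^{(0,j,m),l,X^{0,k,x}_l}_s)$, so that the pertinent independence statements and the equidistribution results (\cref{lem:equidistribution}, \cref{lem:elementary_equidistribution_X_processes}, and \cref{lem:equidistribution_random_fields}) are applicable; once this is secured, everything else is the triangle inequality and the finiteness of finitely many terms.
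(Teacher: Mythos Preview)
Your proposal is correct and follows essentially the same route as the paper's proof: induction on $n$, expansion of \eqref{setting:mlp_scheme}, the Lipschitz bound on $f_s(\cdot,a)-a$, and the equidistribution machinery (\cref{lem:elementary_equidistribution_X_processes}, \cref{lem:measurability}, \cref{lem:equidistribution}, \cref{lem:equidistribution_random_fields}) to reduce the composed terms $V^{(0,j,\pm m)}_{s,i}(X^{(0,j,m),l,X^{0,k,x}_l}_s)$ to $V^0_{s,i}(X^{0,k,x}_s)$. The only organizational difference is that you invoke \cref{cor:equidistribution_for_mean_lemma} at the outset to reduce the full statement to the case $\theta=\vartheta=0$, whereas the paper carries general $\theta,\vartheta$ through the induction and performs the same reduction inside the inductive step; the substance is identical.
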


\begin{proof}[Proof of \cref{lem:integrability}] 
	Throughout this proof let 
		$ \A \subseteq \N $ 
	satisfy that
		\begin{equation} 
		\A = \left\{ n \in \N \colon \left( 
		\begin{array}{c} \text{It holds for all}~j,k,l\in\N_0,\theta,\vartheta\in\Theta,x\in\mc O~\text{with}~ \\ 
		0\leq l\leq k\leq K, 
		\vartheta \notin \{(\theta,\eta) \in \Theta\colon \eta\in\Theta \},
		~\text{and}~0\leq j\leq n-1 \\ 
		\text{that}~\EXP{ | V^{ \theta }_{ l, j }( X^{ \vartheta, k, x }_l ) |^2 } < \infty. 
		\end{array} 
		\right) 
		\right\}\!.
		\end{equation} 
	Note that the assumption that for all 
		$ k \in \{ 0, 1, \ldots, K \} $, 
		$ \theta \in \Theta $, 
		$ x \in \mc O $
	it holds that 
		$ V^{ \theta }_{ k, 0 }( x ) = 0 $ 
	yields that 
		$ 1 \in \A $. 
	Next observe that \eqref{setting:mlp_scheme} ensures that for all 
		$ k,l,n \in \N $, 
		$ \theta,\vartheta \in \Theta $, 
		$ x \in \mc O $
	with 
		$ 
		l \leq k \leq K $
	it holds that 
		\begin{equation} 
		\begin{split}
		& V^{ \theta }_{ l, n } ( X^{ \vartheta, k, x }_{ l } ) 
		\\
		&= 
		\frac{ 1 }{ M^n } 
		\sum_{ m = 1 }^{ M^n }
		\left[ g\big( X^{ (\theta,0,-m), l, X^{ \vartheta, k, x }_{ l } }_{ 0 } \big) 
		+ \sum_{ s = 0 }^{ l - 1 } f_s\big( X^{ ( \theta, 0, m ), l, X^{ \vartheta, k, x }_{ l } }_s, 0 \big) 
		\right]
		+ \sum_{ j = 1 }^{ n - 1 } \frac{ 1 }{ M^{ n - j } } 
		\sum_{ m = 1 }^{ M^{ n - j } } 
		\frac{ 1 }{ \mf p_{ l, \mc{R}^{ ( \theta, j, m ) }_{ l } } }
		\\
		&  
		\cdot 
		\Bigg[
		\bigg( 
			f_{ \mc{R}^{ ( \theta, j, m ) }_{ l } } 
				\Big( X^{ ( \theta, j, m ), l, X^{ \vartheta, k, x }_{ l } }_{ \mc{R}^{ ( \theta, j, m ) }_{ l } },
					V^{ ( \theta, j, m ) }_{ \mc{R}^{ ( \theta, j, m ) }_{ l }, j }\big( X^{ ( \theta, j, m ), l, X^{ \vartheta, k, x }_{ l } }_{ \mc{R}^{ ( \theta, j, m ) }_{ l } } \big)
				\Big) -
		V^{ ( \theta, j, m ) }_{ \mc{R}^{ ( \theta, j, m ) }_{ l }, j }
			\big( X^{ ( \theta, j, m ), l, X^{ \vartheta, k, x }_{ l } }_{ \mc{R}^{ ( \theta, j, m ) }_{ l } }  \big) 
			\bigg) 
		\\
		&  
		- \bigg( 
		f_{ \mc{R}^{ ( \theta, j, m ) }_l }
		\Big( X^{ ( \theta, j, m ), l, X^{ \vartheta, k, x }_{ l } }_{ \mc{R}^{ ( \theta, j, m ) }_{ l } },  
		V^{ ( \theta, j, -m ) }_{ \mc{R}^{ ( \theta, j, m ) }_{ l }, j - 1 }\big( X^{ ( \theta, j, m ), l, X^{ \vartheta, k, x }_{ l } }_{ \mc{R}^{ ( \theta, j, m ) }_{ l } } \big)
		\Big)  
		- V^{ ( \theta, j, -m ) }_{ \mc{R}^{ ( \theta, j, m ) }_{ l }, j - 1 }
		 	\big( X^{ ( \theta, j, m ), l, X^{ \vartheta, k, x }_{ l } }_{ \mc{R}^{ ( \theta, j, m ) }_{ l } } \big) 
	 	\bigg) 
		\Bigg].   
		\end{split}
		\end{equation} 
	The assumption that for all 
		$ k \in \{0,1,\ldots,K-1\} $, 
		$ x \in \mc O $, 
		$ a,b \in \R $ 
	it holds that 
		$ | ( f_k( x, a ) - a ) - ( f_k( x, b ) - b ) | \leq L_k | a - b | $
	hence implies that for all 
		$ k,l,n \in \N $, 
		$ \theta,\vartheta \in \Theta $,
		$ x \in \mc O $
	with 
		$ 
		l \leq k \leq K $
	it holds that 
		\begin{equation} 
		\begin{split}
		 & 
		 \big| V^{ \theta }_{ l, n }( X^{ \vartheta, k, x }_l ) \big| 
		 \leq  
		 \frac{ 1 }{ M^n } 
		 \sum_{ m = 1 }^{ M^n }
		 \left[ \Big| g\big( X^{ ( \theta, 0, -m ), l, X^{ \vartheta, k, x }_{ l } }_0 \big) \Big| 
			 +  
			 \sum_{ s = 0 }^{ l - 1 } \Big| f_s\big( X^{ ( \theta, 0, m ), l, X^{ \vartheta, k, x }_{ l } }_s, 0 \big) \Big| 
		 \right]
		 \\
		 & + 
		 \sum_{ j = 1 }^{ n - 1 } \frac{ 1 }{ M^{ n - j } } 
		 \sum_{ m = 1 }^{ M^{ n - j } }
		 \frac{ L_{ \mc{R}^{ ( \theta, j, m ) }_l } }{ \mf p_{ l, \mc{R}^{ ( \theta, j, m ) }_{ l } } }
		 	\Big| 
		 	V^{(\theta,j,m)}_{ \mc{R}^{ ( \theta, j, m ) }_{ l }, j }\big( X^{ ( \theta, j, m ), l, X^{ \vartheta, k, x }_l }_{ \mc{R}^{ ( \theta, j, m ) }_l }  \big) 
			- V^{ ( \theta, j, -m ) }_{ \mc{R}^{ ( \theta, j, m ) }_{ l }, j - 1 }\big( X^{ ( \theta, j, m ), l, X^{ \vartheta, k, x }_l }_{ \mc{R}^{ ( \theta, j, m ) }_l } \big)
			\Big|.
		\end{split}
		\end{equation} 
	This shows that for all 
		$ k,l,n \in \N $, 
		$ \theta,\vartheta \in \Theta $,  
		$ x \in \R^d $ 
	with 
		$ 
		l \leq k \leq K $
	it holds that 
		\begin{equation}
		\begin{split}
		&
		\big| V^{ \theta }_{ l, n }( X^{ \vartheta, k, x }_{ l } ) \big|^2
		\\
		& \leq 
		3 \left[ \frac{ 1 }{ M^n } \sum_{ m = 1 }^{ M^n } 
		\Big| g\big( X^{ ( \theta, 0, -m ), l, X^{ \vartheta, k, x }_{ l } }_{ 0 } \big) \Big|\right]^{2}
		+ 
		3 \left[ \frac{ 1 }{ M^n } \sum_{ m = 1 }^{ M^n } \sum_{ s = 0 }^{ l - 1 } \Big| f_s\big( X^{ ( \theta, 0, m ), l, X^{ \vartheta, k, x }_{ l } }_{ s }, 0 \big) \Big| \right]^2 
		\\ 
		& + 
		3 \left[ \sum_{ j = 1 }^{ n - 1 }  \frac{ 1 }{ M^{ n - j } } 
		\sum_{ m = 1 }^{ M^{ n - j } } \frac{ L_{ \mc{R}^{ ( \theta, j, m ) }_{ l } } }{  \mf p_{ l, \mc{R}^{ ( \theta, j, m ) }_{ l } } }
		\Big| V^{ ( \theta, j, m ) }_{ \mc{R}^{ ( \theta, j, m ) }_{ l },j }\big( X^{ ( \theta, j, m ), l, X^{ \vartheta, k, x }_{ l } }_{ \mc{R}^{ ( \theta, j, m ) }_{ l } } \big) - 
		V^{ ( \theta, j, -m ) }_{ \mc{R}^{ ( \theta, j, m ) }_{ l }, j - 1 }\big( X^{ ( \theta, j, m ), l, X^{ \vartheta, k, x }_{ l } }_{ \mc{R}^{ ( \theta, j, m ) }_l } \big) \Big| \right]^2\!. 
		\end{split}
		\end{equation}
	Jensen's inequality therefore ensures that for all 
		$ k,l,n \in \N $, 
		$ \theta,\vartheta \in \Theta $, 
		$ x \in \mc O $
	with 
		$ 
		l \leq k \leq K $ 
	it holds that 
		\begin{equation}
		\begin{split}
		& 
		\Exp{ \big| V^{ \theta }_{ l, n }( X^{ \vartheta, k, x }_{ l } ) \big|^2 } 
		\\
		&  
		\leq 
		\frac{ 3 }{ M^n } \sum_{ m = 1 }^{ M^n } \Exp{ \Big| g\big( X^{ ( \theta, 0, -m ), l, X^{ \vartheta, k, x }_{ l } }_{ 0 } \big) \Big|^2 }
		+ 
		\frac{ 3l }{ M^n } \sum_{ m = 1 }^{ M^n } \sum_{ s = 0 }^{ l - 1 } \Exp{ \Big| f_s\big( X^{ ( \theta, 0, m ), l, X^{ \vartheta, k, x }_{ l } }_{ s }, 0 \big) \Big|^2} 
		\\ 
		& + 
		\sum_{j=1}^{n-1} \frac{ 3n }{ M^{ n - j } } \sum_{ m = 1 }^{ M^{ n - j } } 
		\Exp{  \frac{ | L_{ \mc{R}^{ ( \theta, j, m ) }_{ l } } |^2  }{ | \mf p_{ l, \mc{R}^{ ( \theta, j, m ) }_{ l } } |^2 }
	 	\bigg| V^{ ( \theta, j, m ) }_{ \mc{R}^{ ( \theta, j, m ) }_{ l }, j }\big( X^{ ( \theta, j, m ), l, X^{ \vartheta, k, x }_{ l } }_{ \mc{R}^{ ( \theta, j, m ) }_{ l } } \big) - 
		V^{ ( \theta, j, -m ) }_{ \mc{R}^{ ( \theta, j, m ) }_{ l } , j - 1 } \big( X^{ ( \theta, j, m ), l, X^{ \vartheta, k, x }_{ l } }_{\mc{R}^{(\theta,j,m)}_l} \big) \bigg|^2}\!.
		\end{split}
		\end{equation}
	Item~\eqref{elementary_equidistribution_X_processes:item2} of \cref{lem:elementary_equidistribution_X_processes} hence assures that for all 
		$ k,l,n \in \N $, 
		$ \theta,\vartheta \in \Theta $, 
		$ x \in \mc O $ 
	with 
		$ 
		l \leq k \leq K $ 
	it holds that 
		\begin{equation} \label{integrability:zwischenrechnung}
		\begin{split}
		&
		\Exp{ \big| V^{ \theta }_{ l, n }( X^{ \vartheta, k, x }_{ l } ) \big|^2 } 
		\leq 3 \Exp{ \big| g( X^{ 0, k, x }_{ 0 } ) \big|^2 }
		+ 3 l \sum_{ s = 0 }^{ l - 1 } \Exp{ \big| f_s( X^{ 0, k, x }_{ s }, 0 ) \big|^2 }
		\\
		& + 
		\sum_{ j = 1 }^{ n - 1 } \sum_{ s = 0 }^{ l - 1 } \frac{ 3 n }{ M^{ n - j } } 
		\sum_{ m = 1 }^{ M^{ n - j } } \Exp{ \frac{ |L_{ s }|^{ 2 } }{ | \mf p_{ l, s } |^2 }
		\Big| V^{ ( \theta, j, m ) }_{ s, j }\big( X^{ ( \theta, j, m ), l, X^{ \vartheta, k, x }_{ l } }_{ s } \big) - 
		V^{ ( \theta, j, -m ) }_{ s, j - 1 }\big( X^{ ( \theta, j, m ), l, X^{ \vartheta, k, x }_{ l } }_{ s } \big) \Big|^2 }
		\\
		& \leq 
		3 \Exp{ \big| g( X^{ 0, k, x }_{ 0 } ) \big|^2 } + 3 l \sum_{ s = 0 }^{ l - 1 } \Exp{ \big| f_s( X^{ 0, k, x }_{ s }, 0 ) \big|^2 }
		\\
		& + 
		\sum_{ j = 1 }^{ n - 1 } \sum_{ s = 0 }^{ l - 1 } 
		\frac{ 6 n | L_{ s } |^2 }{ M^{ n - j } | \mf p_{ l, s  } |^2 } \sum_{ m = 1 }^{ M^{ n - j } } 
		\Exp{ \Big| V^{ ( \theta, j, m ) }_{ s, j } \big( X^{ ( \theta, j, m ), l, X^{ \vartheta, k, x }_{ l } }_{ s } \big) \Big|^2
		+  
		\Big| V^{ ( \theta, j, -m ) }_{ s, j - 1 }\big( X^{ ( \theta, j, m ), l, X^{ \vartheta, k, x }_{ l } }_{ s } \big) \Big|^2}.  
		\end{split}
		\end{equation}
	Next note that 
		item~\eqref{elementary_measurability_X_processes:item1} of \cref{lem:elementary_measurability_X_processes}, 
		item~\eqref{measurability:item1} of \cref{lem:measurability}, 
		item~\eqref{elementary_equidistribution_X_processes:item1} of \cref{lem:elementary_equidistribution_X_processes}, 
		item~\eqref{equidistribution:item1} of \cref{lem:equidistribution}, 
 		and item~\eqref{equidistribution_random_fields:item2} of \cref{lem:equidistribution_random_fields} 
	imply that for all 
		$ k,l,m,n \in \N_0 $, 
		$ \theta,\vartheta,\xi \in \Theta $, 
		$ x \in \mc O $ 
	with 
		$ m \leq l \leq k \leq K $
	and 
		$ \xi,\vartheta \notin \{(\theta,\eta) \in \Theta\colon \eta\in\Theta \} $
	it holds that 
		\begin{equation} 
		\mc O \times \Omega \ni ( x, \omega ) \mapsto V^{ \theta }_{ m, n }( X^{ \xi, l, X^{ \vartheta, k, x}_{ l }( \omega ) }_{ m }( \omega ), \omega ) \in \R 
		\end{equation}
	and
		\begin{equation}
		\mc O \times \Omega \ni ( x, \omega ) \mapsto V^{ 0 }_{ m, n }( X^{ 0, k, x }_{ m }( \omega ), \omega ) \in \R 
		\end{equation} 
	are identically distributed random fields. 
	This and \eqref{integrability:zwischenrechnung} imply that 
	for all 
		$ k,l,n \in \N $, 
		$ \theta,\vartheta \in \Theta $,
		$ x \in \mc O $ 
	with 
		$ 
		l \leq k \leq K $ 
	and 
		$ \vartheta \notin \{(\theta,\eta) \in \Theta\colon \eta\in\Theta \} $	
	it holds that 
		\begin{equation} 
		\begin{split}
		\Exp{\big| V^{ \theta }_{ l, n }( X^{ \vartheta, k, x }_{ l } ) \big|^2} 
		& 
		\leq 3 \Exp{\big| g( X^{ 0, k, x }_{ 0 } ) \big|^2 }
		+ 3 l \sum_{ s = 0 }^{ l - 1 } \Exp{ \big| f_s( X^{ 0, k, x }_{ s }, 0 ) \big|^2 }
		\\
		& + \sum_{ j = 1 }^{ n - 1 } \sum_{ s = 0 }^{ l - 1 } \frac{ 12 n | L_{ s } |^2 }{ M^{ n - j } | \mf p_{ l, s } |^2 } 
		\sum_{m=1}^{M^{n-j}} \Exp{ \big| V^{ 0 }_{ s, j }( X^{ 0, k, x }_{ s } ) \big|^2}\!.
		\end{split}
		\end{equation}
	The fact that for all 
		$ k \in \{ 0, 1, \ldots, K \} $, 
		$ x \in \mc O $ 
	it holds that 
		$ \EXP{ | g( X^{ 0, k, x }_{ 0 } ) |^2 } + \sum_{ s = 0 }^{ k - 1 } \EXP{ | f_s( X^{ 0, k, x }_{ s }, 0 ) |^2} < \infty $	
	therefore demonstrates that for all 
		$ n \in \A $ 
	it holds that 
		$ n+1 \in \A $. 
	The fact that $ 1 \in \A $ and induction hence imply that 
		$ \A = \N $. 
	Therefore, we obtain that for all 
		$ k,l,n \in \N_0 $, 
		$ \theta,\vartheta \in \Theta $, 
		$ x \in \mc O $ 
	with 
		$ l \leq k \leq K $ 
	and 
		$ \vartheta \notin \{(\theta,\eta)\in\Theta\colon \eta\in\Theta \} $
	it holds that 
	$ \EXP{ | V^{\theta}_{l,n} ( X^{ \vartheta, k, x }_{ l } )|^2} < \infty $.
	This completes the proof of \cref{lem:integrability}.
\end{proof}

\section[Error analysis for MLP approximations for nested expectations]{Error analysis for MLP approximations for iterated nested expectations}
\label{sec:analysis_of_MLP_approximations}

In this section we provide in \cref{prop:error_estimate} in \cref{subsec:full_error_analysis} below a full error analysis for MLP approximations for iterated nested expectations. 
Our proof of \cref{prop:error_estimate} is inspired by Hutzenthaler et al.~\cite{Overcoming} and is based on the idea to combine the recursive error estimate in \cref{lem:error_recursion} in \cref{subsec:recursive_error_estimate}, the elementary integration by parts type result in \cref{lem:special_alphas} in \cref{subsec:full_error_analysis}, the elementary Gronwall type inequality in \cref{lem:gronwall_type_inequality} in \cref{subsec:full_error_analysis}, and the elementary a priori bounds for iterated nested expectations in \cref{lem:a_priori_estimates} in \cref{subsec:a_priori_bounds}. 
Our proof of the recursive error estimate in \cref{lem:error_recursion} in \cref{subsec:recursive_error_estimate} is, roughly speaking, based on bias-variance decompositions for the approximation errors of the proposed MLP approximation schemes, the bias estimates for MLP approximations in \cref{lem:bias_estimate} in \cref{subsec:bias_and_variance_estimates}, and the variance estimates for MLP approximations in \cref{lem:variance_estimate} in \cref{subsec:bias_and_variance_estimates}.

\subsection{A priori estimates for nested expectations} 
\label{subsec:a_priori_bounds}

\begin{lemma} \label{lem:welldefinedness_induction_step} 
	Let $ d,K \in \N $, 
		$ L \in \R $, 
		$ \mc O \in \Borel( \R^d ) \setminus \{ \emptyset \} $, 
	let $ f \colon \mc O \times \R \to \R $ be $ \Borel( \mc O \times \R ) $/$ \Borel( \R ) $-measurable, 
	assume for all 
		$ x \in \mc O $, 
		$ a,b \in \R $ 
	that 
		$ | f ( x, a ) - f( x, b ) | \leq L | a - b | $,  
	let $ ( S, \mc S ) $ be a measurable space, 
	let $ \phi_k \colon \mc O \times S \to \mc O $, $ k \in \{ 0, 1, \ldots, K \} $, be $ ( \Borel( \mc O ) \otimes \mc S ) $/$ \Borel(\mc O) $-measurable, 
	let $ ( \Omega, \mc F, \P ) $ be a probability space, 
	let $ W_k \colon \Omega \to S $, $ k \in \{ 0, 1, \ldots, K \} $, be  independent random variables, 
	for every 
		$ k \in \{ 0, 1, \ldots, K \} $ 
	let $ X^k = (X^{ k, x }_{ l } )_{ ( l, x ) \in \{ 0, 1, \ldots, k \} \times \mc O } \colon \allowbreak \{ 0, 1, \ldots, k \} \times \mc O \times \Omega \to \mc O $ satisfy for all 
		$ l \in \{ 0, 1, \ldots, k \} $, 
		$ x \in \mc O $ 
	that 
		\begin{equation} \label{welldefinedness_induction_step:X_dynamics}
		X^{ k, x }_{ l } = 
		\begin{cases} 
		x & \colon l=k \\
		\phi_{ l }( X^{ k, x }_{ l + 1 }, W_{ l } ) & \colon l < k, 
	    \end{cases} 
		\end{equation} 
	let $ v \colon \mc O \to \R $ be $ \Borel( \mc O ) $/$ \Borel(\R) $-measurable, 
	and assume for all 
		$ x \in \mc O $ 
	that 
		$ \sum_{ k = 0 }^{ K } \EXP{ | f ( X^{ k, x }_{ 0 }, 0 ) |^2 + | v( X^{ k, x }_{ 0 } ) |^2  } < \infty $. 
	Then there exists a $ \Borel( \mc O ) $/$ \Borel( \R ) $-measurable $ u \colon \mc O \to \R $ such that for all 
		$ x \in \mc O $ 
	it holds that 
		$ \sum_{ k = 1 }^{ K } \EXP{ | u( X^{ k, x }_{ 1 } ) |^2 + | f( X^{ 1, x }_{ 0 }, v( X^{ 1, x }_{ 0 } ) ) |^2 } < \infty $ 
	and 
		$ u(x) = \EXP { f( X^{1, x}_{ 0 }, v( X^{ 1, x }_{ 0 } ) ) } $.
\end{lemma}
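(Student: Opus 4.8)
The plan is to \emph{define} $u \colon \mc O \to \R$ by setting $u(x) = \E[f(X^{1,x}_0,v(X^{1,x}_0))]$ for $x \in \mc O$, and then to verify in turn that this expectation is a well-defined real number, that the resulting function $u$ is $\Borel(\mc O)/\Borel(\R)$-measurable, and that $u$ satisfies the asserted square-integrability properties; the displayed identity $u(x) = \E[f(X^{1,x}_0,v(X^{1,x}_0))]$ then holds by construction.

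First I would exploit the Lipschitz hypothesis on $f$ in its second argument, which gives $|f(y,a)| \le |f(y,0)| + L|a|$ for all $y \in \mc O$ and $a \in \R$ and hence $|f(X^{1,x}_0,v(X^{1,x}_0))|^2 \le 2|f(X^{1,x}_0,0)|^2 + 2L^2|v(X^{1,x}_0)|^2$. Taking expectations and using the $k=1$ summand of the assumed bound $\sum_{k=0}^K \E[|f(X^{k,x}_0,0)|^2 + |v(X^{k,x}_0)|^2] < \infty$ shows $\E[|f(X^{1,x}_0,v(X^{1,x}_0))|^2] < \infty$ for every $x \in \mc O$, and the Cauchy--Schwarz inequality then gives $\E[|f(X^{1,x}_0,v(X^{1,x}_0))|] < \infty$, so that $u(x) \in \R$.

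For measurability I would use item~\eqref{elementary_measurability_X_processes:item1} of \cref{lem:elementary_measurability_X_processes} to see that $\mc O \times \Omega \ni (y,\omega) \mapsto X^{1,y}_0(\omega) \in \mc O$ is $(\Borel(\mc O) \otimes \sigma(W_0))/\Borel(\mc O)$-measurable, so that, using $\Borel(\mc O \times \R) = \Borel(\mc O) \otimes \Borel(\R)$ together with the measurability of $f$ and $v$, the map $\mc O \times \Omega \ni (y,\omega) \mapsto f(X^{1,y}_0(\omega),v(X^{1,y}_0(\omega))) \in \R$ is $(\Borel(\mc O) \otimes \mc F)/\Borel(\R)$-measurable. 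Since for each fixed $y$ this is $\P$-integrable in $\omega$ by the previous step, decomposing it into its positive and negative parts and applying a Tonelli type theorem (legitimate since $\P$ is a finite measure) yields that $\mc O \ni y \mapsto u(y) \in \R$ is $\Borel(\mc O)/\Borel(\R)$-measurable.

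It remains to bound $\sum_{k=1}^K \E[|u(X^{k,x}_1)|^2 + |f(X^{1,x}_0,v(X^{1,x}_0))|^2]$; the terms $|f(X^{1,x}_0,v(X^{1,x}_0))|^2$ are controlled by the second paragraph, so I only need $\E[|u(X^{k,x}_1)|^2] < \infty$ for $k \in \{1,2,\ldots,K\}$. By the triangle and Jensen inequalities one gets $|u(y)|^2 \le 2\E[|f(X^{1,y}_0,0)|^2] + 2L^2\E[|v(X^{1,y}_0)|^2] =: \psi(y)$, where $\psi$ is obtained by integrating a nonnegative, jointly measurable function against the law of $W_0$ and is therefore $\Borel(\mc O)/\Borel([0,\infty])$-measurable. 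Hence $\E[|u(X^{k,x}_1)|^2] \le \E[\psi(X^{k,x}_1)]$; since item~\eqref{elementary_measurability_X_processes:item1} of \cref{lem:elementary_measurability_X_processes} shows that $X^{k,x}_1$ is $\sigma((W_m)_{m\in\{1,2,\ldots,k-1\}})$-measurable and hence independent of $W_0$, and since $X^{k,x}_0 = \phi_0(X^{k,x}_1,W_0)$ by \eqref{welldefinedness_induction_step:X_dynamics}, the standard substitution formula for independent random variables gives $\E[\psi(X^{k,x}_1)] = 2\E[|f(X^{k,x}_0,0)|^2] + 2L^2\E[|v(X^{k,x}_0)|^2]$, which is finite (and summable over $k \in \{1,2,\ldots,K\}$) by hypothesis. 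Collecting these estimates proves the claim. The only steps requiring genuine care are the measurability of the parametrized integral defining $u$ and the independence-based substitution in this last paragraph; everything else consists of direct estimates.
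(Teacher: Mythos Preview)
Your proof is correct and follows essentially the same route as the paper's: the Lipschitz bound $|f(y,a)|^2 \le 2|f(y,0)|^2 + 2L^2|a|^2$ to get square-integrability of $f(X^{1,x}_0,v(X^{1,x}_0))$ and hence well-definedness of $u$, measurability of $u$ via a Fubini/Tonelli argument for parametrized integrals (the paper cites Klenke~\cite[Theorem~14.16]{Klenke2014} for this), and then the independence-based substitution $\E[\psi(X^{k,x}_1)] = 2\E[|f(X^{k,x}_0,0)|^2] + 2L^2\E[|v(X^{k,x}_0)|^2]$ using $X^{k,x}_0 = \phi_0(X^{k,x}_1,W_0)$ and independence of $X^{k,x}_1$ from $W_0$ (the paper phrases this via Hutzenthaler et al.~\cite[Lemma~2.2]{Overcoming} and bounds $|u(y)|^2 \le \E[|f(X^{1,y}_0,v(X^{1,y}_0))|^2]$ by Jensen before substituting, applying the Lipschitz split only afterwards, but this is a cosmetic reordering).
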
 

\begin{proof}[Proof of \cref{lem:welldefinedness_induction_step}]
	First, observe that the assumption that for all  
		$ x \in \mc O $, 
		$ a,b \in \R $ 
	it holds that 
		$ | f ( x, a ) - f ( x, b ) | \leq L | a - b | $ 
	and the assumption that for all 
		$ k \in \{ 0, 1, \ldots, K \} $, 
		$ x \in \mc O $ 
	it holds that 
		$ \EXP{ | f ( X^{ k, x }_0, 0 ) |^2 + | v ( X^{ k, x }_0 ) |^2 } < \infty $
	ensure that for all 
		$ k \in \{ 0, 1, \ldots, K \} $, 
		$ x \in \mc O $ 
	it holds that 
		\begin{equation} \label{welldefinedness_induction_step:square_integrability}
		\begin{split}
		\EXPP{ | f ( X^{ k, x }_{ 0 }, v( X^{ k, x }_{ 0 } ) )|^2 } 
		 &\leq 
		\EXPP{ ( | f ( X^{ k, x }_{ 0 }, 0 ) | + L | v( X^{ k, x }_{ 0 } ) | )^{2} } 
		\\
		& 
		\leq 2 \,\EXPP{ | f ( X^{ k, x }_{ 0 }, 0 ) |^2 } + 2 L^2 \,\EXPP{ | v( X^{ k, x }_{ 0 } ) |^2 } < \infty.  
		\end{split}
		\end{equation} 
	Jensen's inequality hence proves that there exists $ u \colon \mc O \to \R $ which satisfies that for all 
		$ x \in \mc O $ 
	it holds that 
		$ \EXP{ | f( X^{ 1, x }_{ 0 }, v( X^{ 1, x }_{ 0 } ) ) | } \leq (\EXP{ | f( X^{ 1, x }_{ 0 }, v( X^{ 1, x }_{ 0 } ) ) |^2 })^{\!\nicefrac12} < \infty $ 
	and 
		\begin{equation} \label{welldefinedness_induction_step:introducing_u}
		u(x) = \EXP{ f( X^{ 1, x }_{ 0 }, v( X^{ 1, x }_{ 0 } ) ) }.
		\end{equation} 
	Next note that Klenke~\cite[Theorem 14.16]{Klenke2014} and \eqref{welldefinedness_induction_step:introducing_u} establish that $ u $ is $ \Borel( \mc O ) $/$ \Borel( \R ) $-measurable. 
	Moreover, observe that item~\eqref{elementary_measurability_X_processes:item1} of \cref{lem:elementary_measurability_X_processes} yields that for all 
		$ k \in \{ 1, 2, \ldots, K \} $, 
		$ x \in \mc O $ 
	it holds that 
		$ X^{ k, x }_{ 1 } \colon \Omega \to \mc O $ 
	is 
		$ \sigma( ( W_{ r } )_{ r \in [ 1, k - 1 ] \cap \N_0 } ) $/$ \Borel( \mc O ) $-measurable. 
	Combining \eqref{welldefinedness_induction_step:X_dynamics}, \eqref{welldefinedness_induction_step:introducing_u}, Jensen's inequality, the assumption that $ W_k $, $ k \in \{ 0, 1, \ldots, K - 1 \} $, are independent random variables, item~\eqref{elementary_measurability_X_processes:item1} of \cref{lem:elementary_measurability_X_processes}, 
	and Hutzenthaler et al.~\cite[Lemma 2.2]{Overcoming} hence guarantees that for all 
		$ k \in \{ 1, 2, \ldots, K \} $, 
		$ x \in \mc O $ 
	it holds that 
		\begin{equation} 
		\begin{split}
		\EXPP{ | u( X^{ k, x }_{ 1 } )|^2} 
		& 
		= 
		\int_{ \mc O } | u( y ) |^2 \, \big( X^{ k, x }_{ 1 }( \P )\big) ( dy ) 
		\leq
		\int_{ \mc O } \EXPP{ | f( X^{ 1, y }_{ 0 }, v( X^{ 1, y }_{ 0 } ) ) |^2 } \, \big( X^{ k, x }_{ 1 }( \P ) \big) ( dy )
		\\
		&  
		= \Exp{ \Big| f \big( X^{ 1, X^{ k, x }_{ 1 } }_{ 0 }, v( X^{ 1, X^{ k, x }_{ 1 } }_{ 0 } ) \big) \Big|^2 } 
		= \EXPP{ | f ( X^{ k, x }_{ 0 }, v( X^{ k, x }_{ 0 } ) |^2 } 
		< \infty. 
		\end{split}
		\end{equation} 
	Combining this, \eqref{welldefinedness_induction_step:square_integrability}, \eqref{welldefinedness_induction_step:introducing_u}, and the fact that $ u $ is $ \Borel( \mc O ) $/$ \Borel( \R ) $-measurable completes the proof of \cref{lem:welldefinedness_induction_step}.
\end{proof}

\begin{lemma} \label{lem:welldefinedness_exact_solution}
	Let $ d,K \in \N $, 
		$ L \in \R $, 
		$ \mc O \in \Borel( \R^d ) \setminus \{ \emptyset \} $, 
	let $ f_k \colon \mc O \times \R \to \R $, $ k \in \{ 0, 1, \ldots, K \} $, be $ \Borel( \mc O \times \R ) $/$ \Borel(\R) $-measurable, 
	let $ g \colon \mc O \to \R $ be $ \Borel( \mc O ) $/$ \Borel(\R) $-measurable, 
	assume for all 
		$ k \in \{0,1,\ldots,K\} $,
		$ x \in \mc O $, 
		$ a,b \in \R $ 
	that 
		$ | f_k ( x, a ) - f_k( x, b ) | \leq L | a - b | $,  
	let $ (S,\mc S) $ be a measurable space, 
	let $ \phi_k \colon \mc O \times S \to \mc O $, $ k \in \{ 0, 1, \ldots, K \} $, be $ ( \Borel( \mc O ) \otimes \mc S ) $/$ \Borel( \mc O ) $-measurable, 
	let $ ( \Omega, \mc F, \P ) $ be a probability space, 
	let $ W_k \colon \Omega \to S $, $ k \in \{ 0, 1, \ldots, K \} $, be independent random variables, 
	for every 
		$ k \in \{0,1,\ldots,K\} $ 
	let 
		$ X^k = ( X^{ k, x }_{ l } )_{ (l,x) \in \{ 0, 1, \ldots, k \} \times \mc O } \colon \{ 0, 1, \ldots, k \} \times \mc O \times \Omega \to \mc O $
	satisfy for all 
		$ l \in \{ 0, 1, \ldots, k \} $, 
		$ x \in \mc O $ 
	that 
		\begin{equation} 
		X^{ k, x }_{ l } = 
		\begin{cases} 
			x & \colon l = k \\
			\phi_{ l }( X^{ k, x }_{ l + 1 }, W_{ l} ) & \colon l<k, 
		\end{cases} 
		\end{equation} 
	and assume for all 
		$ k \in \{ 0, 1, \ldots, K \} $, 
		$ x \in \mc O $ 
	that 
		$ \EXP{ | g( X^{ k, x }_{ 0 } ) |^2 + \sum_{ l = 0 }^{ k - 1 }  | f_l( X^{ k, x }_l, 0 ) |^2 } < \infty $. 
	Then there exist $ \Borel( \mc O ) $/$ \Borel(\R) $-measurable 
		$ v_k \colon \mc O \to \R $, $ k \in \{ 0, 1, \ldots, K \} $, 
	such that for all
		$ k \in \{ 1, 2, \ldots, K \} $, 
		$ x \in \mc O $ 
	it holds that 
		$ \EXP{ | f_{ k - 1 } ( X^{ k, x }_{ k - 1 }, v_{ k - 1 }( X^{ k, x }_{ k - 1 } ) ) | + \sum_{ l = 1 }^{ k } | v_l( X^{ k, x }_{ l } ) |^2 } < \infty $, 
		$ v_{ 0 }( x ) = g( x ) $, 
	and  
		$ v_k(x) = \EXP{ f_{ k - 1 } ( X^{ k, x }_{ k - 1 }, v_{ k - 1 }( X^{ k, x }_{ k - 1 } ) ) } $.
\end{lemma}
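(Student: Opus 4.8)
The plan is to build the functions $v_0, v_1, \ldots, v_K$ recursively and to verify all claimed integrability properties simultaneously by induction on $k \in \{0,1,\ldots,K\}$, invoking \cref{lem:welldefinedness_induction_step} at each step. Concretely, I would show that for every $k \in \{0,1,\ldots,K\}$ there exist $\Borel(\mc O)/\Borel(\R)$-measurable functions $v_0, v_1, \ldots, v_k \colon \mc O \to \R$ such that $v_0 = g$, such that for all $j \in \{1,2,\ldots,k\}$ and all $x \in \mc O$ it holds that $v_j(x) = \EXP{ f_{j-1}(X^{j,x}_{j-1}, v_{j-1}(X^{j,x}_{j-1})) }$ and $\EXP{ |f_{j-1}(X^{j,x}_{j-1}, v_{j-1}(X^{j,x}_{j-1}))|^2 } < \infty$, and such that for all $j \in \{1,2,\ldots,k\}$, all $i \in \{j,j+1,\ldots,K\}$, and all $x \in \mc O$ it holds that $\EXP{ |v_j(X^{i,x}_j)|^2 } < \infty$. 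The base case $k = 0$ is immediate: one sets $v_0 = g$, which is $\Borel(\mc O)/\Borel(\R)$-measurable by assumption, and the remaining assertions are vacuous.

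For the induction step, suppose the statement holds for some $k \in \{0,1,\ldots,K-1\}$. I would apply \cref{lem:welldefinedness_induction_step} with the time-shifted data $K \is K-k$, $L \is L$, $\mc O \is \mc O$, $f \is f_k$, $(S,\mc S) \is (S,\mc S)$, $\phi_j \is \phi_{j+k}$ for $j \in \{0,1,\ldots,K-k\}$, $W_j \is W_{j+k}$ for $j \in \{0,1,\ldots,K-k\}$, and $v \is v_k$ (for $k=0$ this reads $v \is g$, and the integrability hypothesis of the lemma then is essentially the standing hypothesis $\EXP{|g(X^{i,x}_0)|^2 + \sum_{l=0}^{i-1}|f_l(X^{i,x}_l,0)|^2} < \infty$). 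The bookkeeping point is that the process $\widehat X^{j}$ associated with the shifted data satisfies $\widehat X^{j,x}_l = X^{j+k,x}_{l+k}$ for all $j \in \{0,1,\ldots,K-k\}$, $l \in \{0,1,\ldots,j\}$, $x \in \mc O$ (a straightforward downward induction on $l$ using the recursion defining $X$), so in particular $\widehat X^{j,x}_0 = X^{j+k,x}_k$ and $\widehat X^{j,x}_1 = X^{j+k,x}_{k+1}$. Hence the integrability hypothesis $\sum_{j=0}^{K-k}\EXP{ |f_k(\widehat X^{j,x}_0,0)|^2 + |v_k(\widehat X^{j,x}_0)|^2 } < \infty$ of \cref{lem:welldefinedness_induction_step} becomes $\sum_{i=k}^{K}\EXP{ |f_k(X^{i,x}_k,0)|^2 + |v_k(X^{i,x}_k)|^2 } < \infty$: the $i=k$ contribution is finite since $X^{k,x}_k = x$, the $f_k$-terms for $i \in \{k+1,\ldots,K\}$ are finite by the standing hypothesis (as $k \le i-1$), and the $v_k$-terms for $i \in \{k,\ldots,K\}$ are finite by the induction hypothesis applied with $j = k$ (respectively, for $k=0$, directly by the hypothesis on $g$). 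Thus \cref{lem:welldefinedness_induction_step} furnishes a $\Borel(\mc O)/\Borel(\R)$-measurable $v_{k+1} \colon \mc O \to \R$ with $v_{k+1}(x) = \EXP{ f_k(X^{k+1,x}_k, v_k(X^{k+1,x}_k)) }$ for all $x \in \mc O$, and its conclusion $\sum_{j=1}^{K-k}\EXP{ |v_{k+1}(\widehat X^{j,x}_1)|^2 + |f_k(\widehat X^{1,x}_0, v_k(\widehat X^{1,x}_0))|^2 } < \infty$ translates into $\sum_{i=k+1}^{K}\EXP{ |v_{k+1}(X^{i,x}_{k+1})|^2 } < \infty$ together with $\EXP{ |f_k(X^{k+1,x}_k, v_k(X^{k+1,x}_k))|^2 } < \infty$; adding the trivially finite term $i = k+1$ (where $X^{k+1,x}_{k+1} = x$) and recalling the induction hypothesis for $v_0,\ldots,v_k$ closes the induction.

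Specializing to $k = K$ then yields $\Borel(\mc O)/\Borel(\R)$-measurable $v_0,\ldots,v_K$ with $v_0 = g$ and $v_k(x) = \EXP{ f_{k-1}(X^{k,x}_{k-1}, v_{k-1}(X^{k,x}_{k-1})) }$ for $k \in \{1,\ldots,K\}$, $x \in \mc O$; for such $k$ and $x$, the bound $\EXP{ |f_{k-1}(X^{k,x}_{k-1}, v_{k-1}(X^{k,x}_{k-1}))| } < \infty$ follows from the corresponding $L^2$-bound by Jensen's inequality, and $\sum_{l=1}^{k}\EXP{ |v_l(X^{k,x}_l)|^2 } < \infty$ follows from the square integrability of each $v_l$ along $X^{i,\cdot}_l$ taken with $i = k \in \{l,\ldots,K\}$, which is exactly the asserted estimate. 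The main obstacle I anticipate is organizational rather than analytic: one has to formulate a sufficiently strong induction hypothesis — square integrability of $v_j$ along $X^{i,\cdot}_j$ for \emph{every} $i \ge j$, not merely for $i = j$ — and keep the time-shift identity $\widehat X^{j,x}_l = X^{j+k,x}_{l+k}$ straight so that the hypotheses and the conclusion of \cref{lem:welldefinedness_induction_step} line up; all the genuine work is already packaged into \cref{lem:welldefinedness_induction_step}.
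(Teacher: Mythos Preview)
Your proposal is correct and follows essentially the same approach as the paper: both argue by induction on $k$, carry the strengthened hypothesis that $\EXP{|v_j(X^{i,x}_j)|^2}<\infty$ for all $i\in\{j,\ldots,K\}$, and at each step apply \cref{lem:welldefinedness_induction_step} with the time-shifted data $(\phi_{k+s})_s$, $(W_{k+s})_s$, $f\is f_k$, $v\is v_k$. Your explicit bookkeeping of the identity $\widehat X^{j,x}_l=X^{j+k,x}_{l+k}$ and the verification of the integrability hypothesis of \cref{lem:welldefinedness_induction_step} make the argument a bit more transparent than the paper's, but the substance is identical.
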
 

\begin{proof}[Proof of \cref{lem:welldefinedness_exact_solution}] 
	Throughout this proof let 
		$ \A \subseteq \{0,1,\ldots, K\} $ 
	satisfy that 
		\begin{equation} 
		\A = \left\{ k \in \{0,1,\ldots,K\} \colon \left( \begin{array}{c}  \text{There exist}~\Borel( \mc O )\text{/}\Borel( \R )\text{-measurable}~\\
		v_l \colon \mc O \to \R, l \in \{0, 1, \ldots, k\},~\text{such that}~\\
		\text{for all}~l \in \{1, 2, \ldots, k\}, x \in \mc O~\text{it holds that}~\\ 
		\EXP{ | f_{ l - 1 }( X^{ l, x }_{ l - 1 }, v_{ l - 1 }( X^{ l, x }_{ l - 1 } ) ) | + \sum_{m=l}^{K} | v_l( X^{ m, x }_{ l } ) |^2 } < \infty, \\
		v_0(x) = g(x),~\text{and}~v_l(x) = \EXP{ f_{ l - 1 }( X^{ l, x }_{ l - 1 }, v_{ l - 1 }( X^{ l, x }_{ l - 1 } ) ) } .
		\end{array} \right) \right\}.
		\end{equation} 
	Note that the assumption that $ g $ is $ \Borel( \mc O ) $/$ \Borel( \R ) $-measurable ensures that $ 0 \in \A $. 
	Moreover, observe that \cref{lem:welldefinedness_induction_step} (applied with 
		$ d \is d $, 
		$ K \is K - k $, 
		$ L \is L $, 
		$ \mc O \is \mc O $, 
		$ f \is f_k $, 
		$ ( S, \mc S ) \is ( S, \mc S ) $, 
		$ ( \phi_l )_{ l \in \{ 0, 1, \ldots, K - 1 \} } \is ( \phi_{ k + s } )_{ s \in \{ 0, 1, \ldots, K - k - 1 \} } $, 
		$ ( \Omega, \mc F, \P ) \is ( \Omega, \mc F, \P ) $, 
		$ ( W_l )_{ l \in \{ 0, 1, \ldots, K - 1 \} } \is ( W_{ k + s } )_{ s \in \{ 0, 1, \ldots, K - k - 1 \} } $, 
		$ v \is v_k $
	for $ k \in \{ 0, 1, \ldots, K - 1 \} \cap \A $ in the notation of \cref{lem:welldefinedness_induction_step}) and the assumption that for all 
		$ k \in \{ 0, 1, \ldots, K \} $, 
		$ x \in \mc O $ 
	it holds that 
		$ \EXP{ | g( X^{ k, x }_{ 0 } ) |^2 } < \infty$ 
	ensure that for all
		$ k \in \{ 0, 1, \ldots, K - 1 \} \cap \A $ 
	there exists $ v_{ k + 1 } \colon \mc O \to \R $ which satisfies that 
		\begin{enumerate}[(I)]
			\item it holds that $ v_{ k + 1 } $ is $ \Borel( \mc O ) $/$ \Borel( \R ) $-measurable, 
			\item it holds for all 
				$ m \in \{ k + 1, \ldots, K \} $, 
				$ x \in \mc O $ 
			that 
				$ \EXP{ | v_{ k + 1 }( X^{ m, x }_{ k + 1 } ) |^2 } < \infty $, 
			and 
			\item it holds for all 
				$ x \in \mc O $ 
			that 
				$ \EXP{ | f_{ k }( X^{ k + 1, x }_k , v_{ k }( X^{ k + 1, x }_k ) ) | } < \infty $
			and  
				\begin{equation} 
				v_{ k + 1 }( x ) = \EXP{ f_{ k }( X^{ k + 1, x }_k , v_{ k }( X^{ k + 1, x }_k ) ) }.
				\end{equation} 
		\end{enumerate}
	This demonstrates for all
		$ k \in \{ 0, 1, \ldots, K - 1 \} \cap \A $ 
	that 
		$ k + 1 \in \A $. 
	Induction and the fact that $ 0 \in \A $ therefore establish that 
		$ \A = \{ 0, 1, \ldots, K \} $. 
	This completes the proof of \cref{lem:welldefinedness_exact_solution}. 
\end{proof}

\begin{cor} \label{cor:well_definedness_exact_solution_brownian_case}
	Let $d, K \in \N $, 	
		$L, T \in \R $, 
	let $t_k \in \R$, $ k \in \{0, 1, \ldots, K\}$, satisfy 
		$0 = t_0 < t_1 < \ldots < t_K = T$, 
	let $f_k \colon \R^d \times \R \to \R$, $ k \in \{ 0, 1, \ldots, K \} $, be $\Borel(\R^d\times\R)$/$\Borel(\R)$-measurable, 
	let $g\colon\R^d \to \R$ be $\Borel(\R^d)$/$\Borel(\R)$-measurable, 
	assume for all 
		$ k \in \{0, 1, \ldots, K\} $, 
		$ x \in \R^d $, 
		$ a,b \in \R $
	that 
		$ | f_k( x, a) - f_k(x, b) | \leq L | a - b | $, 
	let $( \Omega, \mc F, \P) $ be a probability space, 
	let $W \colon [0, T]\times \Omega \to \R^d $ be a standard Brownian motion, 
	and assume for all 
		$ x \in \mc O $ 
	that 
		$ \EXP{|g(x+W_{T})|^2 + \sum_{l=0}^{K-1} |f_l( x + W_{T - t_l}, 0) |^2 } < \infty $. 
	Then there exist $ \Borel(\R^d) $/$ \Borel(\R) $-measurable
		$ v_k \colon \R^d \to \R $, $ k \in \{ 0, 1, \ldots, K \} $,
	such that for all
		$ k \in \{ 1, 2, \ldots, K \} $, 
		$ x \in \mc O $ 
	it holds that 
		$ \EXP{ | f_{k-1}(x + W_{t_k-t_{k-1}}, v_{k-1}(x + W_{t_k-t_{k-1}})) | + \sum_{ l = 1 }^{ k } |v_l( x + W_{t_k} - W_{t_l})|^2 } < \infty $, 
		$ v_0(x) = g(x) $, 
	and 
		$ v_k(x) = \EXP{ f_{k-1}(x + W_{t_k-t_{k-1}}, v_{k-1}(x + W_{t_k-t_{k-1}})) } $. 
\end{cor}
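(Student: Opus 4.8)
The plan is to obtain the statement as a specialization of \cref{lem:welldefinedness_exact_solution}, choosing the forward dynamics so that the associated Markov chain reproduces exactly the Brownian increments appearing in the claimed recursion, and then translating the conclusion back with the help of a few distributional identities.

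First I would apply \cref{lem:welldefinedness_exact_solution} with $\mc O = \R^d$, with the measurable space $(S,\mc S) = (\R^d,\Borel(\R^d))$, with $\phi_k \colon \R^d \times \R^d \to \R^d$ given by $\phi_k(y,s) = y+s$ for $k \in \{0,1,\ldots,K\}$ (continuous, hence $(\Borel(\R^d)\otimes\Borel(\R^d))$/$\Borel(\R^d)$-measurable since $\Borel(\R^d)\otimes\Borel(\R^d) = \Borel(\R^d\times\R^d)$), and with the random variables $W_k := W_{t_{k+1}} - W_{t_k}$ for $k \in \{0,1,\ldots,K-1\}$ and $W_K := 0 \in \R^d$; these are independent, because the increments of $W$ over the pairwise disjoint intervals $[t_k,t_{k+1}]$, $k \in \{0,1,\ldots,K-1\}$, are independent and a deterministic vector is independent of anything. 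A downward induction on $l$ (using $t_0 = 0$, so $W_{t_0} = 0$) then shows that the processes $X^k = (X^{k,x}_l)$ from \cref{lem:welldefinedness_exact_solution} satisfy $X^{k,x}_l = x + W_{t_k} - W_{t_l}$ for all $k \in \{0,1,\ldots,K\}$, $l \in \{0,1,\ldots,k\}$, $x \in \R^d$. In particular $X^{k,x}_0 = x + W_{t_k}$, and, by the Gaussian increment structure of Brownian motion, for $0 \le l < k$ the random variable $X^{k,x}_l$ has the same law as $x + W_{t_k - t_l}$ and $X^{k,x}_{k-1}$ has the same law as $x + W_{t_k-t_{k-1}}$.

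Next I would verify the $L^2$-hypothesis of \cref{lem:welldefinedness_exact_solution}, i.e. that $\EXP{|g(X^{k,x}_0)|^2 + \sum_{l=0}^{k-1}|f_l(X^{k,x}_l,0)|^2} < \infty$ for all $k \in \{0,1,\ldots,K\}$, $x \in \R^d$. The only mildly non-routine point is that the assumed bound only controls the variance parameters $T$ and $T-t_l$, whereas here the relevant parameters are the smaller numbers $t_k \le T$ and $t_k - t_l \le T - t_l$. This is handled by the elementary pointwise Gaussian density bound: for $0 < r \le s$ one has $(2\pi r)^{-d/2}\exp(-\tfrac{|v|^2}{2r}) \le (s/r)^{d/2}(2\pi s)^{-d/2}\exp(-\tfrac{|v|^2}{2s})$ for all $v \in \R^d$ (the exponent difference is $\le 0$), whence $\EXP{\psi(x + W_r)} \le (s/r)^{d/2}\EXP{\psi(x + W_s)}$ for every Borel measurable $\psi \colon \R^d \to [0,\infty)$ and every $x \in \R^d$. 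Applying this with $\psi = |g|^2$, $r = t_k$, $s = T$ for $k \ge 1$ (the case $k=0$ being trivial since $X^{0,x}_0 = x$ and $g$ is real-valued), and with $\psi = |f_l(\cdot,0)|^2$, $r = t_k - t_l$, $s = T - t_l$ for $0 \le l < k$ — using $X^{k,x}_l \overset{d}{=} x + W_{t_k - t_l}$ — reduces each summand to a quantity that the hypothesis assumes to be finite, and a finite sum of finite terms is finite.

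Finally, \cref{lem:welldefinedness_exact_solution} produces $\Borel(\R^d)$/$\Borel(\R)$-measurable functions $v_k \colon \R^d \to \R$, $k \in \{0,1,\ldots,K\}$, with $v_0(x) = g(x)$, with $\EXP{|f_{k-1}(X^{k,x}_{k-1}, v_{k-1}(X^{k,x}_{k-1}))| + \sum_{l=1}^k |v_l(X^{k,x}_l)|^2} < \infty$, and with $v_k(x) = \EXP{f_{k-1}(X^{k,x}_{k-1}, v_{k-1}(X^{k,x}_{k-1}))}$ for all $k \in \{1,2,\ldots,K\}$, $x \in \R^d$. Substituting $X^{k,x}_l = x + W_{t_k} - W_{t_l}$, and in the terms involving $X^{k,x}_{k-1}$ using that $y \mapsto f_{k-1}(y,v_{k-1}(y))$ and $y \mapsto |f_{k-1}(y,v_{k-1}(y))|$ are Borel measurable together with $X^{k,x}_{k-1} \overset{d}{=} x + W_{t_k - t_{k-1}}$ (so that the corresponding expectations are unchanged), yields exactly the asserted integrability, the identity $v_0(x) = g(x)$, and the recursion $v_k(x) = \EXP{f_{k-1}(x + W_{t_k-t_{k-1}}, v_{k-1}(x + W_{t_k-t_{k-1}}))}$. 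I expect the only step that is not pure bookkeeping to be the Gaussian density comparison in the previous paragraph, which is what lets one downgrade the variance parameter from $T$ (resp. $T - t_l$) to $t_k$ (resp. $t_k - t_l$) for indices $k < K$.
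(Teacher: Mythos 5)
Your proposal is correct and follows essentially the same route as the paper: both verify the $L^2$-hypothesis of \cref{lem:welldefinedness_exact_solution} via the same Gaussian density comparison (downgrading the variance parameter from $T$, resp.\ $T-t_l$, to $t_k$, resp.\ $t_k-t_l$) and then apply that lemma with $\phi_k(x,w)=x+w$ and $W_k := W_{t_{k+1}}-W_{t_k}$. The only difference is presentational, namely that you isolate the pointwise density bound before integrating, whereas the paper writes the integral comparison directly.
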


\begin{proof}[Proof of \cref{cor:well_definedness_exact_solution_brownian_case}]
	First, observe that the assumption that $ W $ is a Brownian motion and the assumption that for all 
		$ x \in \R^d $ 
	it holds that 
		$ \EXP{ | g ( x + W_{ T } ) |^2 } < \infty $  
	show that for all 
		$ k \in \{1, 2, \ldots, K\} $
	it holds that 
		\begin{equation} \label{well_definedness_exact_solution_brownian_case:integrability_of_g}
		\begin{split} 
		& \Exp{ | g( x + W_{ t_k } ) |^2 } 
		= \left[ 2 \pi t_k \right]^{ -\nicefrac{ d }{ 2 } } 
		\int_{ \R^d } | g ( x + z ) |^2 \exp\!\left( -\tfrac{ \norm{ z }^2 }{ 2 t_{ k } } \right)\!\,dz 
		\\
		& \leq \left[ 2 \pi t_{ k } \right]^{ -\nicefrac{ d }{ 2 } } 
		\int_{ \R^d } | g ( x + z ) |^2 \exp\!\left( -\tfrac{ \norm{ z }^2 }{ 2 T } \right)\!\,dz 
		= \left[ \tfrac{ T }{ t_{ k } } \right]^{ \nicefrac{ d }{ 2 } } \Exp{ | g ( x + W_{ T } ) |^2 } 
		< \infty. 
		\end{split} 
		\end{equation}
	Furthermore, note that the assumption that $ W $ is a standard Brownian motion and the assumption that $ \EXP{ \sum_{ k = 0 }^{ K - 1 } | f_l( x + W_{ T - t_l }, 0 ) |^2 } < \infty $ show that for all 
		$ k, l \in \{0, 1, \ldots, K\} $ 
	with 
		$ k \geq l $ 
	it holds that 
		$ \EXP{ |f_l(x+W_{t_k-t_l}, 0)|^2} < \infty $. 
	This, \eqref{well_definedness_exact_solution_brownian_case:integrability_of_g}, and the assumption that $ W $ is a standard Brownian motion prove that for all 
		$ k \in \{ 1, 2, \ldots, K \} $, 
		$ x \in \R^d $  
	it holds that 
		$ \EXP{ | g( x + W_{ t_{ k } } ) |^2 + \sum_{ l = 0 }^{ k - 1 } | f_l ( x + W_{ t_{ k } } - W_{ t_{ l } }, 0 ) |^2 } < \infty $. 
	\cref{lem:welldefinedness_exact_solution} (applied with 
		$ d \is d $, 
		$ K \is K $, 
		$ L \is L $, 
		$ \mc O \is \R^d $, 
		$ (f_k)_{k\in\{0, 1, \ldots, K-1\}} \is (f_k)_{k\in\{0,1,\ldots,K-1\}}$, 
		$ g \is g $, 
		$ (S, \mc S) \is (\R^d,\Borel(\R^d)) $, 
		$ (\phi_k)_{k\in\{0, 1, \ldots, K-1\}} \is ((\R^d \times \R^d \ni (x, w) \mapsto x + w \in \R^d))_{ k\in\{0, 1, \ldots, K-1\}} $, 
		$ (\Omega, \mc F, \P) \is (\Omega, \mc F, \P) $, 
		$ (W_k)_{k\in\{0, 1, \ldots, K-1\}} \is (W_{t_{k+1}} - W_{t_{k}})_{k\in\{0, 1, \ldots, K-1\}} $
	in the notation of \cref{lem:welldefinedness_exact_solution}) and the assumption that $W$ is a standard Brownian motion therefore establish the claim. 
	This completes the proof of \cref{cor:well_definedness_exact_solution_brownian_case}. 
\end{proof}

\begin{lemma} \label{lem:exact_solution_alternative_representation}
	Let $ d,K \in \N $, 
		$ L \in \R $, 
		$ \mc O \in \Borel( \R^d ) \setminus \{ \emptyset \} $, 
	let $ f_k \colon \mc O \times \R \to \R $, $ k \in \{ 0, 1, \ldots, K \} $, be $ \Borel( \mc O \times \R ) $/$ \Borel( \R ) $-measurable, 
	let $ g \colon \mc O \to \R $ be $ \Borel( \mc O ) $/$ \Borel( \R ) $-measurable, 
	assume for all 
		$ k \in \{ 0, 1, \ldots, K \} $,
		$ x \in \mc O $, 
		$ a,b \in \R $ 
	that 
		$ | f_k ( x, a ) - f_k ( x, b ) | \leq L | a - b | $,  
	let $ ( S, \mc S ) $ be a measurable space, 
	let $ \phi_{ k } \colon \mc O \times S \to \mc O $, $ k \in \{ 0, 1, \ldots, K \} $, be $ ( \Borel( \mc O ) \otimes \mc S ) $/$ \Borel( \mc O ) $-measurable, 
	let $ ( \Omega, \mc F, \P ) $ be a probability space, 
	let $ W_{ k } \colon \Omega \to S $, $ k \in \{ 0, 1, \ldots, K \} $, be independent random variables, 
	for every 
		$ k \in \{0,1,\ldots,K\} $ 
	let 
		$ X^k = ( X^{ k, x }_{ l } )_{ ( l, x ) \in \{ 0, 1, \ldots, k \} \times \mc O } \colon \{ 0, 1, \ldots, k \} \times \mc O \times \Omega \to \mc O $
	satisfy for all 
		$ l \in \{ 0, 1, \ldots, k \} $, 
		$ x \in \mc O  $ 
	that 
		\begin{equation} \label{exact_solution_alternative_representation:dynamics}
		X^{ k, x }_{ l } =
		\begin{cases} 
		x & \colon l=k \\
		\phi_{ l }( X^{ k, x }_{ l + 1 }, W_{ l } ) & \colon l < k, 
		\end{cases} 
		\end{equation} 
	assume for all 
		$ k \in \{ 0, 1, \ldots, K \} $, 
		$ x \in \mc O $
	that 
		$ \EXP{ | g( X^{ k, x }_{ 0 } ) |^2 + \sum_{l=0}^{k-1} | f_l ( X^{ k, x }_{ l },0 ) |^2 } < \infty $, 
	and let 
		$ v_k \colon \mc O \to \R $, $ k \in \{ 0, 1, \ldots, K \} $,
	be $ \Borel( \mc O ) $/$ \Borel( \R ) $-measurable functions which satisfy for all 
		$ k \in \{ 1, 2, \ldots, K \} $, 
		$ x \in \mc O $ 
	that 
		$ v_0( x ) = g( x ) $ 
	and 
		\begin{equation} \label{exact_solution_alternative_representation:iteration}
		v_k( x ) = \EXPP{ f_{ k - 1 }( X^{ k, x }_{ k - 1 }, v_{ k - 1 }( X^{ k, x }_{ k - 1 } ) ) }
		\end{equation} 
	(cf.\,\cref{lem:welldefinedness_exact_solution}). 
	Then it holds for all 
		$ k,l \in \{ 0, 1, \ldots, K \} $, 
		$ x \in \mc O $ 
	with 
		$ k \geq l $ 
	that 
		$ \EXP{ | v_l ( X^{ k, x }_{ l } ) | } + \sum_{ s = l }^{ k - 1 } \Exp{ | f_s ( X^{ k, x }_{ s }, v_{ s }( X^{ k, x }_{ s } ) ) - v_{ s }( X^{ k, x }_{ s } ) |} < \infty
		$ 
	and
		\begin{equation} \label{exact_solution_alternative_representation:claim}
		 v_{ k }( x ) = \EXPP{ v_{ l }( X^{ k, x }_{ l } ) } + \sum_{ s = l }^{ k - 1 } \EXPP{ f_{ s }( X^{ k, x }_{ s }, v_{ s }( X^{ k, x }_{ s } ) ) - v_{ s }( X^{ k, x }_{ s } ) }.
		\end{equation} 
\end{lemma}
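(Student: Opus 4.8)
The plan is to fix $ k \in \{ 0, 1, \ldots, K \} $ and to prove \eqref{exact_solution_alternative_representation:claim} together with the asserted integrability by downward induction on $ l \in \{ 0, 1, \ldots, k \} $, starting at $ l = k $. In the base case $ l = k $ the sum in \eqref{exact_solution_alternative_representation:claim} is empty and \eqref{exact_solution_alternative_representation:dynamics} gives $ X^{ k, x }_{ k } = x $, so both sides reduce to the finite real number $ v_{ k }( x ) $. For the induction step, assume that \eqref{exact_solution_alternative_representation:claim} and the integrability hold for some index $ l + 1 \in \{ 1, 2, \ldots, k \} $; I want to deduce them for the index $ l $.

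The heart of the argument is the one-step identity
\begin{equation} \label{plan:onestep}
\EXP{ v_{ l + 1 }( X^{ k, x }_{ l + 1 } ) } = \EXP{ f_{ l }( X^{ k, x }_{ l }, v_{ l }( X^{ k, x }_{ l } ) ) } ,
\end{equation}
which I would prove as follows. First, \eqref{exact_solution_alternative_representation:dynamics} yields the one-step flow property $ X^{ k, x }_{ l } = \phi_{ l }( X^{ k, x }_{ l + 1 }, W_{ l } ) = X^{ l + 1, X^{ k, x }_{ l + 1 } }_{ l } $, and \eqref{exact_solution_alternative_representation:iteration} together with \eqref{exact_solution_alternative_representation:dynamics} shows that $ v_{ l + 1 }( y ) = \EXP{ f_{ l }( \phi_{ l }( y, W_{ l } ), v_{ l }( \phi_{ l }( y, W_{ l } ) ) ) } $ for all $ y \in \mc O $. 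I would then introduce the random field $ U \colon \mc O \times \Omega \to \R $ given by $ U( y, \omega ) = f_{ l }( \phi_{ l }( y, W_{ l }( \omega ) ), v_{ l }( \phi_{ l }( y, W_{ l }( \omega ) ) ) ) $, which is $ ( \Borel( \mc O ) \otimes \sigma( W_{ l } ) ) $/$ \Borel( \R ) $-measurable since $ f_{ l } $, $ \phi_{ l } $, and $ v_{ l } $ are measurable and $ \Borel( \mc O ) \otimes \Borel( \R ) = \Borel( \mc O \times \R ) $, and apply Hutzenthaler et al.~\cite[Lemma 2.2]{Overcoming} to $ U $ and to the random variable $ X^{ k, x }_{ l + 1 } $: by item~\eqref{elementary_measurability_X_processes:item1} of \cref{lem:elementary_measurability_X_processes} the latter is $ \sigma( ( W_{ s } )_{ s \in [ l + 1, k ) \cap \N_0 } ) $/$ \Borel( \mc O ) $-measurable, hence independent of $ \sigma( W_{ l } ) $, so that $ \EXP{ U( X^{ k, x }_{ l + 1 } ) } = \int_{ \mc O } \EXP{ U( y, \cdot ) } \, ( X^{ k, x }_{ l + 1 }( \P ) )( dy ) = \int_{ \mc O } v_{ l + 1 }( y ) \, ( X^{ k, x }_{ l + 1 }( \P ) )( dy ) = \EXP{ v_{ l + 1 }( X^{ k, x }_{ l + 1 } ) } $; on the other hand the flow property gives $ U( X^{ k, x }_{ l + 1 } ) = f_{ l }( X^{ k, x }_{ l }, v_{ l }( X^{ k, x }_{ l } ) ) $, and combining these yields \eqref{plan:onestep}.

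For the integrability, from \cref{lem:welldefinedness_exact_solution} and $ k \geq l $ it holds that $ \EXP{ | v_{ l }( X^{ k, x }_{ l } ) |^2 } < \infty $ (and analogously for $ l + 1 $), and combining this with the Lipschitz hypothesis on $ f_{ l } $ and with the standing assumption $ \EXP{ | f_{ l }( X^{ k, x }_{ l }, 0 ) |^2 } < \infty $ shows that $ f_{ l }( X^{ k, x }_{ l }, v_{ l }( X^{ k, x }_{ l } ) ) - v_{ l }( X^{ k, x }_{ l } ) $ is square-integrable, hence integrable; together with the induction hypothesis for $ l + 1 $ this gives the required finiteness of $ \EXP{ | v_{ l }( X^{ k, x }_{ l } ) | } + \sum_{ s = l }^{ k - 1 } \EXP{ | f_{ s }( X^{ k, x }_{ s }, v_{ s }( X^{ k, x }_{ s } ) ) - v_{ s }( X^{ k, x }_{ s } ) | } $ and also legitimises the splitting of expectations. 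Substituting \eqref{plan:onestep} into the identity for the index $ l + 1 $ and writing $ \EXP{ v_{ l + 1 }( X^{ k, x }_{ l + 1 } ) } = \EXP{ v_{ l }( X^{ k, x }_{ l } ) } + \EXP{ f_{ l }( X^{ k, x }_{ l }, v_{ l }( X^{ k, x }_{ l } ) ) - v_{ l }( X^{ k, x }_{ l } ) } $ then produces \eqref{exact_solution_alternative_representation:claim} for the index $ l $, which closes the induction.

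I expect the only genuinely delicate point to be the careful application of Hutzenthaler et al.~\cite[Lemma 2.2]{Overcoming}: isolating the independent sub-$ \sigma $-algebras $ \sigma( W_{ l } ) $ and $ \sigma( ( W_{ s } )_{ s \in [ l + 1, k ) \cap \N_0 } ) $, checking the joint measurability of $ U $, verifying $ \EXP{ U( y, \cdot ) } = v_{ l + 1 }( y ) $ for each fixed $ y \in \mc O $, and ensuring the $ L^{ 1 } $-integrability needed to invoke the lemma; the remaining manipulations are a routine telescoping together with Jensen's inequality and the Lipschitz bounds.
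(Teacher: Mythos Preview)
Your proposal is correct and follows essentially the same route as the paper: both arguments reduce \eqref{exact_solution_alternative_representation:claim} to the one-step identity $\EXP{v_{l+1}(X^{k,x}_{l+1})} = \EXP{f_l(X^{k,x}_l, v_l(X^{k,x}_l))}$, prove that identity via the flow property $X^{k,x}_l = X^{l+1,X^{k,x}_{l+1}}_l$, item~\eqref{elementary_measurability_X_processes:item1} of \cref{lem:elementary_measurability_X_processes}, and Hutzenthaler et al.~\cite[Lemma 2.2]{Overcoming}, draw the required square-integrability from \cref{lem:welldefinedness_exact_solution} together with the Lipschitz bound, and then telescope by induction on $l$. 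The paper organises things slightly differently (it establishes all integrability statements up front and, when invoking \cite[Lemma 2.2]{Overcoming} for the signed integrand, explicitly splits into positive and negative parts), but the substance is identical.
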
 

\begin{proof}[Proof of \cref{lem:exact_solution_alternative_representation}] 
	Throughout this proof let 
		$ k \in \{ 1, 2, \ldots, K \}$.
	Observe that Jensen's inequality and \cref{lem:welldefinedness_exact_solution} prove that for all 
		$ l \in \{ 0, 1, \ldots, k \}$, 
		$ x \in \mc O $ 
	it holds that 
		\begin{equation} \label{exact_solution_alternative_representation:integrability}
		\EXP{ | v_{ l }( X^{ k, x }_{ l } ) | } \leq \big( \EXPP{ | v_l( X^{ k, x }_{ l } ) |^2 } \big)^{\!\nicefrac{1}{2}} < \infty. 
		\end{equation}
	Combining the assumption that for all 
		$ l \in \{0,\ldots,k-1\} $, 
		$ x \in \mc O $, 
		$ a,b \in \R $ 
	it holds that 
		$| f_{ l }( x, a ) - f_{ l }( x, b ) | \leq L | a - b | $
	and
		$ \EXP{ | f_{ l }( X^{ k, x }_{ l }, 0 ) |^2 } < \infty $ 
	with Jensen's inequality and Minkowski's inequality hence implies that for all 
		$ l \in \{ 1, 2, \ldots, k \} $, 
		$ x \in \mc O $ 
	it holds that 
		\begin{equation} \label{exact_solution_alternative_representation:integrability_of_term_involving_f}
		\begin{split}
		& 
		\EXPP{ | f_{ l - 1 } ( X^{ k, x }_{ l - 1 }, v_{ l - 1 } ( X^{ k, x }_{ l - 1 } ) ) | } 
		\leq 
		\left( \EXPP{ | f_{ l - 1 } ( X^{ k, x }_{ l - 1 }, v_{ l - 1 } ( X^{ k, x }_{ l - 1 } ) ) |^2 }  \right)^{\!\nicefrac12}
		\\
		& \leq 
		\left( \EXPP{ | f_{ l - 1 } ( X^{ k, x }_{ l - 1 }, 0 ) |^2 } \right)^{\!\nicefrac12}
		+ 
		L \left( \EXPP{ | v_{ l - 1 }( X^{ k, x }_{ l - 1 } ) |^2 } 
		\right)^{\!\nicefrac12}
		< \infty. 
		\end{split}
		\end{equation} 
	This and \eqref{exact_solution_alternative_representation:integrability} establish that for all 
		$ l \in \{ 0, 1, \ldots, k \} $, 
		$ x \in \mc O $ 
	it holds that 
		\begin{equation}
		\Exp{ | v_l( X^{ k, x }_{ l } ) | + \sum_{ s = l }^{ k - 1 } | f_{ s }( X^{ k, x }_{ s }, v_{ s }( X^{ k, x }_{ s } ) ) - v_{ s }( X^{ k, x }_{ s } )| } < \infty.  
		\end{equation}
	In the next step we prove that for all 
		$ l \in \{ 0, 1, \ldots, k \} $, 
		$ x \in \mc O $ 
	it holds that 
		\begin{equation} \label{exact_solution_alternative_representation:remains_to_be_proved}
		v_{ k }( x ) = \EXPP{ v_{ l }( X^{ k, x }_{ l } ) } + \sum_{ s = l }^{ k - 1 } \EXPP{ f_{ s }( X^{ k, x }_{ s }, v_{ s }( X^{ k, x }_{ s } ) ) - v_{ s }( X^{ k, x }_{ s } ) }.
		\end{equation}
	For this we observe that  item~\eqref{elementary_measurability_X_processes:item1} of
		\cref{lem:elementary_measurability_X_processes} 
	ensures that 
	\begin{enumerate}[(I)] 
		\item \label{exact_solution_alternative_representation:proof_item1} 
		for all 
			$ l \in \{ 1, 2, \ldots, k \} $
		it holds that 
			$ \mc O \times \Omega \ni ( x, \omega ) \mapsto X^{ l, x }_{ l - 1 }( \omega )  \in \mc O $ 
		is $ ( \Borel( \mc O ) \otimes \sigma( W_{ l - 1 } ) $/$ \Borel( \mc O ) $-measurable 
		and 
		\item \label{exact_solution_alternative_representation:proof_item2} 
		for all 
			$ l \in \{ 0, 1, \ldots, k \} $, 
			$ x \in \mc O $  
		it holds that 			
			$ \Omega \ni \omega \mapsto X^{ k, x }_l( \omega ) \in \mc O $ is $ \sigma( ( W_{ m } )_{ m \in [ l, k ) \cap\N_0 } ) $/$ \Borel(\mc O ) $-measurable. 
	\end{enumerate}
	This, the assumption that $ W_{ m } \colon \Omega \to S $, $ m \in \{ 0, 1, \ldots, K - 1 \} $, are independent random variables, and Hutzenthaler et al.~\cite[Lemma 2.2]{Overcoming} ensure that for all 
		$ l \in \{ 1, 2, \ldots, k \} $, 
		$ x \in \mc O $ 
	it holds that 
		\begin{equation}
		\int_{ \mc O } 
		\Exp{ | f_{ l - 1 } ( X^{ l, y }_{ l - 1 }, v_{ l - 1 } ( X^{ l, y }_{ l - 1 } ) ) |^2 }
		\!\,\big( X^{ k, x }_{ l }(\P)\big)(dy)
		= 
		\Exp{ \Big| f_{ l - 1 } \big( X^{ l, X^{ k, x }_{ l } }_{ l - 1 }, v_{ l - 1 } ( X^{ l, X^{ k, x }_{ l } }_{ l - 1 } ) \big) \Big|^2 } 
		\!.	
		\end{equation}
	Combining this with \eqref{exact_solution_alternative_representation:dynamics} ensures that for all 
		$ l \in \{ 1, 2, \ldots, k \} $, 
		$ x \in \mc O $ 
	it holds that 
		\begin{equation} 
		\int_{ \mc O } \Exp{ | f_{ l - 1 } ( X^{ l, y }_{ l - 1 }, v_{ l - 1 } ( X^{ l, y }_{ l - 1 } ) |^2 } \!\,\big( X^{ k, x }_{ l }(\P)\big) (dy)
		=
		\Exp{ | f_{ l - 1 }( X^{ k, x }_{ l - 1 }, v_{ l - 1 }( X^{ k, x }_{ l - 1 } ) ) |^2 }  
		\!.	
		\end{equation} 
	Jensen's inequality and \eqref{exact_solution_alternative_representation:integrability_of_term_involving_f} hence guarantee that for all 
		$ l \in \{ 1, 2, \ldots, k \} $, 
		$ x \in \mc O $ 
	it holds that 
		\begin{equation} \label{exact_solution_alternative_representation:integrability_of_f_term}
		\begin{split}
		& 
		\int_{ \mc O } \Exp{ \big| f_{ l - 1 }( X^{ l, y }_{ l - 1 }, v_{ l - 1 } ( X^{ l, y }_{ l - 1 } ) ) \big| } \!\,\big( X^{ k, x }_{ l }(\P) \big) (dy)
		\\
		& \leq 
		\left[ \int_{ \mc O } \left( \Exp{ \big| f_{ l - 1 } ( X^{ l, y }_{ l - 1 }, v_{ l - 1 }( X^{ l, y }_{ l - 1 } ) ) \big| } \right)^{\!2} \!\, \big( X^{ k, x }_{ l } (\P) \big) (dy) \right]^{\nicefrac12}	
		\\ 
		& \leq 
		\left[ \int_{ \mc O } \Exp{ \big| f_{ l - 1 } ( X^{ l, y }_{ l - 1 }, v_{ l - 1 } ( X^{ l, y }_{ l - 1 }) ) \big|^2 } 		\!\,\big( X^{ k, x }_{ l }( \P ) \big) (dy)
		\right]^{\nicefrac12}	
		\\
		& =  
		\left( \Exp{ \big| f_{ l - 1 }( X^{ k, x }_{ l - 1 }, v_{ l - 1 }( X^{ k, x }_{ l - 1 } ) ) \big|^2 }
		\right)^{\nicefrac12}
		< \infty. 
		\end{split}
		\end{equation} 
	The assumption that 
		$W_m\colon\Omega\to S$, $m\in\{0,1,\ldots,K-1\}$, 
	are independent random variables, item~\eqref{exact_solution_alternative_representation:proof_item1}, 	item~\eqref{exact_solution_alternative_representation:proof_item2}, and Hutzenthaler et al.~\cite[Lemma 2.2]{Overcoming} therefore ensure that for all 
		$ l \in \{ 1, 2, \ldots, k \} $, 
		$ x \in \mc O $ 
	it holds that
		\begin{equation} 
		\begin{split}
		&
		\Exp{ f_{ l - 1 } \big( X^{ l, X^{ k, x }_{ l } }_{ l - 1 }, v_{ l - 1 } ( X^{ l, X^{ k, x }_{ l } }_{ l - 1 } ) \big) }
		\\
		& = 
		\Exp{ \max \left\{ f_{ l - 1 } \big( X^{ l, X^{ k, x }_{ l } }_{ l - 1 }, v_{ l - 1 } ( X^{ l, X^{ k, x }_{ l } }_{ l - 1 } ) \big), 0 \right\} } 
		\\
		& \quad -
		\Exp{ \max \left\{ -f_{ l - 1 } \big( X^{ l, X^{ k, x }_{ l } }_{ l - 1 }, v_{ l - 1 }( X^{ l, X^{ k, x }_{ l } }_{ l - 1 } ) \big) , 0 \right\} }
		\\
		& = 
		\int_{ \mc O }
		\Exp{ \max \left\{ f_{ l - 1 } ( X^{ l, y }_{ l - 1 }, v_{ l - 1 }( X^{ l, y }_{ l - 1 } ) ), 0 \right\} } \!\,\big( X^{ k, x }_{ l }(\P) \big) ( dy ) 
		\\
		& \quad - \int_{ \mc O } 
		\Exp{ \max \left\{ -f_{ l - 1 }( X^{ l, y }_{ l - 1 }, v_{ l - 1 }( X^{ l, y }_{ l - 1 } ) ), 0 \right\} } 
		\!\,\big( X^{ k, x }_{ l }(\P) \big) ( dy )
		\\
		& = 
		\int_{ \mc O }
		\Exp{ f_{ l - 1 } ( X^{ l, y }_{ l - 1 }, v_{ l - 1 } ( X^{ l, y }_{ l - 1 } ) ) }
		\!\,\big( X^{ k, x }_{ l }( \P ) \big) ( dy ). 
		\end{split}
		\end{equation}
	Combining this with \eqref{exact_solution_alternative_representation:dynamics} yields for all 
		$ l \in \{ 1, 2, \ldots, k \} $, 
		$ x \in \mc O $ 
	that 
		\begin{equation} 
		\Exp{ f_{ l - 1 } ( X^{ k, x }_{ l - 1 }, v_{ l - 1 } ( X^{ k, x }_{ l - 1 } ) ) } 
		= \int_{ \mc O } \Exp{ f_{ l - 1 } ( X^{ l, y }_{ l - 1 }, v_{ l - 1 } ( X^{ l, y }_{ l - 1 } ) ) }
		\!\,\big( X^{ k, x }_{ l }( \P ) \big)( dy ).
		\end{equation} 
	Note that \eqref{exact_solution_alternative_representation:iteration} hence implies for all 
		$ l \in \{ 1, 2, \ldots, k \} $, 
		$ x \in \mc O $ 
	that 
		\begin{equation} 
		\begin{split}
		\Exp{ f_{ l - 1 } ( X^{ k, x }_{ l - 1 }, v_{ l - 1 } ( X^{ k, x }_{ l - 1 } ) ) } 
		= 
		\int_{\R^d}
		v_l(y)\!\,\big(X^{k}_{l}(x)(\P)\big)(dy)
		= 
		\Exp{v_l(X^{k}_l(x))}\!.
		\end{split}
		\end{equation} 
	This, \eqref{exact_solution_alternative_representation:integrability}, and \eqref{exact_solution_alternative_representation:integrability_of_f_term} ensure that for all 
		$ l \in \{ 1, 2, \ldots, k \} $, 
		$ x \in \mc O $ 
	it holds that 
		$\EXP{ | v_{ l - 1 } ( X^{ k, x }_{ l - 1 } ) | + | f_{ l - 1 } ( X^{ k, x }_{ l - 1 }, v_{ l - 1 } ( X^{ k, x }_{ l - 1 } ) ) - v_{ l - 1 } ( X^{ k, x }_{ l - 1 } ) | } < \infty $ 
	and 
		\begin{equation} 	
		\begin{split}
		\EXPP{ v_l ( X^{ k, x }_{ l } ) } 
		= 
		\EXPP{ v_{ l - 1 } ( X^{ k, x }_{ l - 1 } ) } 
		+ \EXPP{ f_{ l - 1 } ( X^{ k, x }_{ l - 1 }, v_{ l - 1 } ( X^{ k, x }_{ l - 1 } ) ) - v_{ l - 1 } ( X^{ k, x }_{ l - 1 } ) }.	
		\end{split}
		\end{equation} 
	The fact that for all 
		$ x \in \mc O $ 
	it holds that
		$v_k(x) = \EXP{ v_k ( X^{ k, x }_{ k } ) } $ 
	and induction hence establish \eqref{exact_solution_alternative_representation:claim}. 
	This completes the proof of \cref{lem:exact_solution_alternative_representation}.
\end{proof} 

\begin{lemma}[A Gronwall lemma] \label{discrete_gronwall}
	Let 
		$ \alpha \in [0,\infty) $, 
		$ K \in \N $, 
	and let 
		$ L_0, L_1, \ldots, L_{K-1} \in [0,\infty)$, 
		$ A_0, A_1, \ldots, A_{K} \in [0,\infty)$
	satisfy for all 
		$ k \in \{0,1,\ldots,K\} $ 
	that 
	\begin{equation} \label{discrete_gronwall:ass}
	 A_k \leq \alpha + \sum_{l=0}^{k-1} L_l A_l. 
	\end{equation}  
	Then it holds for all 
		$ k\in\{0,1,\ldots,K\} $ 
	that 
		\begin{equation} 
		\label{discrete_gronwall:claim}
		A_k 
		\leq 
		\alpha \prod_{l=0}^{k-1} (1 + L_l)
		\leq 
		\alpha \exp\!\left(\sum_{l=0}^{k-1} L_l\right)\!. 
		\end{equation} 
\end{lemma}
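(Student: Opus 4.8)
The plan is to establish the first inequality in \eqref{discrete_gronwall:claim} by induction on $k \in \{0,1,\ldots,K\}$ and then to obtain the second inequality from the elementary bound $1 + x \leq \exp(x)$, $x \in [0,\infty)$. Throughout I would abbreviate $P_k = \prod_{l=0}^{k-1}(1+L_l)$ for $k \in \{0,1,\ldots,K\}$, using the usual conventions that the empty product equals $1$ (so that $P_0 = 1$) and that the empty sum equals $0$.

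\emph{Base case.} For $k = 0$ the hypothesis \eqref{discrete_gronwall:ass} reads $A_0 \leq \alpha$, since the sum on its right-hand side is empty, and because $P_0 = 1$ this is exactly $A_0 \leq \alpha P_0$.

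\emph{Induction step.} Suppose $k \in \{1,2,\ldots,K\}$ satisfies $A_l \leq \alpha P_l$ for all $l \in \{0,1,\ldots,k-1\}$. The crucial elementary observation is the telescoping identity, which follows from $P_{l+1} = (1+L_l)P_l$ and hence $P_{l+1} - P_l = L_l P_l$:
\[
\sum_{l=0}^{k-1} L_l P_l = \sum_{l=0}^{k-1}\big(P_{l+1} - P_l\big) = P_k - P_0 = P_k - 1.
\]
Combining \eqref{discrete_gronwall:ass}, the induction hypothesis, and the assumption that $L_l \in [0,\infty)$ for all $l$ I then obtain
\[
A_k \leq \alpha + \sum_{l=0}^{k-1} L_l A_l \leq \alpha + \sum_{l=0}^{k-1} L_l \alpha P_l = \alpha\Big(1 + \sum_{l=0}^{k-1} L_l P_l\Big) = \alpha P_k,
\]
which completes the induction and proves the first inequality in \eqref{discrete_gronwall:claim} for all $k \in \{0,1,\ldots,K\}$.

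\emph{Final estimate.} Since $1 + x \leq \exp(x)$ for all $x \in [0,\infty)$ and since each factor $1 + L_l$ is nonnegative, monotonicity of finite products of nonnegative reals yields $P_k = \prod_{l=0}^{k-1}(1+L_l) \leq \prod_{l=0}^{k-1}\exp(L_l) = \exp\!\big(\sum_{l=0}^{k-1} L_l\big)$, which is the remaining inequality in \eqref{discrete_gronwall:claim}. I do not anticipate any genuine difficulty here: the whole argument is an elementary finite induction, and the only point needing a little care is the bookkeeping with the empty-sum and empty-product conventions in the base case together with the telescoping identity $\sum_{l=0}^{k-1} L_l P_l = P_k - 1$.
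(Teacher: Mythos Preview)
Your proposal is correct and follows essentially the same approach as the paper: both proofs proceed by induction, use the telescoping identity $\sum_{l=0}^{k-1} L_l P_l = P_k - 1$ (equivalently $P_{l+1}-P_l = L_l P_l$), and finish with $1+x \leq e^x$. The only cosmetic difference is that the paper packages the induction via an index set $\A$, whereas you state it directly.
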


\begin{proof}[Proof of \cref{discrete_gronwall}]
	Throughout this proof let 
		$ \A \subseteq \{0,1,\ldots,K\} $ 
	satisfy
		\begin{equation} 
		\A = \left\{ k\in\{0,1,\ldots,K \} \colon \left( \begin{array}{c} \text{For all}~ s \in \{0,1,\ldots,k\}~\text{it holds that}
		\\
		A_s \leq \alpha \prod_{l=0}^{s-1} (1+L_s)
		\end{array} \right)
		\right\}\!.
		\end{equation} 
	Note that the fact that 
		$A_0 \leq \alpha = \alpha \prod_{l=0}^{-1} (1+L_l) $ 
	assures that 
		$ 0 \in \A $. 
	Next observe that \eqref{discrete_gronwall:ass} and the fact that for all 
		$ l \in \{ 0, 1, \ldots, K \} $
	it holds that 
		$ \prod_{s=0}^{l} ( 1 + L_s ) - \prod_{s=0}^{l-1} ( 1 + L_s ) = 
		L_l \prod_{s=0}^{l-1} ( 1 + L_s ) $
	ensure that for all 
		$ k \in \{0,1,\ldots,K-1\} \cap \A $ 
	it holds that 
		\begin{equation} 
		\begin{split}
		A_{k+1} 
		& 
		\leq
		\alpha + 
		\sum_{s=0}^{k} 
		L_s \,\alpha \prod_{l=0}^{s-1} ( 1 + L_s )
		=
		\alpha 
		+ 
		\alpha  
		\sum_{s=0}^{k} \left[ 
	    \prod_{l=0}^{s} ( 1 + L_l ) - \prod_{l=0}^{s-1} (1 + L_l ) \right]
		\\
		& 
		= 
		\alpha 
		+ 
		\alpha \left[ \left( \prod_{l=0}^{k} ( 1 + L_l ) \right) - 1 \right] 
		= 
		\alpha \prod_{l=0}^{k} ( 1 + L_l ).  
		\end{split}  
		\end{equation} 
	This demonstrates that for all 
		$ k \in \{ 0, 1, \ldots, K - 1 \} \cap \A $ 
	it holds that 
		$ k + 1 \in \A $. 
	Induction and the fact that $ 0 \in \A $ hence demonstrate that for all 
		$ k \in \{ 0, 1, \ldots, K \} $ 
	it holds that 
		\begin{equation} 
		A_k \leq \alpha \prod_{ l = 0 }^{ k - 1 } ( 1 + L_l ). 
		\end{equation} 
	Combining this with the fact that for all $ t \in \R $ it holds that $ 1 + t \leq e^t $ establishes \eqref{discrete_gronwall:claim}. 
	This completes the proof of \cref{discrete_gronwall}.
\end{proof}

\begin{lemma} \label{lem:a_priori_estimates}
	Let $ d,K \in \N $, 
		$ L_0, L_1, \ldots, L_{K}  \in \R $, 
		$ \mc O \in \Borel( \R^d ) \setminus \{ \emptyset \} $, 
	let $ f_k \colon \mc O \times \R \to \R $, $ k \in \{ 0, 1, \ldots, K \} $, be $ \Borel( \mc O \times \R ) $/$ \Borel( \R ) $-measurable, 
	let $ g \colon \mc O \to \R $ be $ \Borel( \mc O ) $/$ \Borel(\R) $-measurable, 
	assume for all 
		$ k \in \{ 0, 1, \ldots, K \} $,
		$ x \in \mc O $, 
		$ a,b \in \R $ 
	that 
		$ | ( f_k ( x, a ) - a ) - ( f_k ( x, b ) - b ) | \leq L_k | a - b | $,  
	let $ ( S, \mc S ) $ be a measurable space, 
	let $ \phi_{ k } \colon \mc O \times S \to \mc O $, $ k \in \{ 0, 1, \ldots, K \} $, be $ ( \Borel( \mc O) \otimes \mc S ) $/$ \Borel( \mc O ) $-measurable, 
	let $ ( \Omega, \mc F, \P ) $ be a probability space, 
	let $ W_{ k } \colon \Omega \to S $, $ k \in \{ 0, 1, \ldots, K \} $, be independent random variables, 
	for every 
		$ k \in \{ 0, 1, \ldots, K \} $ 
	let 
		$ X^k = ( X^{ k, x }_{ l } )_{ ( l, x ) \in \{ 0, 1, \ldots, k \} \times \mc O } \colon \{ 0, 1, \ldots, k \} \times \mc O \times \Omega \to \mc O $
	satisfy for all 
		$ l \in \{0,1,\ldots,k\} $, 
		$ x \in \mc O $ 
	that 
		\begin{equation} \label{a_priori_estimates:dynamics}
		X^{ k, x }_{ l } 
		= 
		\begin{cases} 
		x & \colon l = k \\
		\phi_{ l } ( X^{ k, x }_{ l + 1 }, W_{ l } ) & \colon l<k, 
		\end{cases} 
		\end{equation} 
	assume for all 
		$ k \in \{0,1,\ldots,K\} $, 
		$ x \in \mc O $
	that 
		$ \EXP{ | g ( X^{ k, x }_{ 0 } ) |^2 + \sum_{ l = 0 }^{ k - 1 } | f_{ l } ( X^{ k, x }_{ l }, 0 ) |^2 } < \infty $, 
	and	let $ v_{ 0 }, v_{ 1 }, \ldots, v_{ K } \colon \mc O \to \R $ be $ \Borel( \mc O ) $/$ \Borel( \R ) $-measurable functions which satisfy for all 
		$ k \in	\{ 1, 2, \ldots, K \} $, 
		$ x \in \mc O $ 
	that 
		$ v_{ 0 } ( x ) = g ( x ) $ 
	and 
		\begin{equation} \label{a_priori_estimates:iteration}
		v_{ k } ( x ) = \EXP{ f_{ k - 1 }( X^{ k, x }_{ k - 1 }, v_{ k - 1 }( X^{ k, x }_{ k - 1 } ) ) }
		\end{equation} 
	(cf.\,\cref{lem:welldefinedness_exact_solution}). 
	Then it holds for all 
		$ k \in \{ 0, 1, \ldots, K \} $, 
		$ x \in \mc O $ 
	that 
		\begin{equation} \label{a_priori_estimates:claim}
		\begin{split}
		& 
		\left( \EXPP{ | v_k( X^{ K, x }_{ k } ) |^2 } \right)^{\!\nicefrac12}
		\leq \exp\!\left( \sum_{ l = 0 }^{ K - 1 } L_{ l } \right) \!\left[ \left( \EXPP{ | g ( X^{ K, x }_{ 0 } ) |^2 } \right)^{\!\nicefrac12} + \sum_{ l = 0 }^{ K - 1 } \left( \EXPP{ | f_{ l } ( X^{ K, x }_{ l }, 0 ) |^2 } \right)^{\!\nicefrac12} \right]\!.
		\end{split}
		\end{equation} 
\end{lemma}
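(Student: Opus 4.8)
The plan is to fix $x \in \mc O$, abbreviate $A_k := ( \EXP{ | v_k( X^{ K, x }_{ k } ) |^2 } )^{\nicefrac12}$ for $k \in \{0,1,\ldots,K\}$ and $\alpha := ( \EXP{ | g( X^{ K, x }_{ 0 } ) |^2 } )^{\nicefrac12} + \sum_{l=0}^{K-1} ( \EXP{ | f_l( X^{ K, x }_{ l }, 0 ) |^2 } )^{\nicefrac12}$, reduce \eqref{a_priori_estimates:claim} to the recursive inequality $A_k \leq \alpha + \sum_{l=0}^{k-1} L_l A_l$ for all $k \in \{0,1,\ldots,K\}$, and then invoke the discrete Gronwall lemma \cref{discrete_gronwall}. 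Note first that testing the assumed estimate $| ( f_l( x, a ) - a ) - ( f_l( x, b ) - b ) | \leq L_l | a - b |$ with $a \neq b$ forces $L_0, L_1, \ldots, L_{K-1} \in [0,\infty)$, so \cref{discrete_gronwall} is applicable and, once the recursion is established, yields $A_k \leq \alpha \prod_{l=0}^{k-1}(1+L_l) \leq \alpha \exp( \sum_{l=0}^{K-1} L_l )$, which is exactly \eqref{a_priori_estimates:claim}.

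To obtain the recursion I would first record the flow (Markov) property: for all $k \in \{0,1,\ldots,K\}$ and $l \in \{0,1,\ldots,k\}$ it holds that $X^{ k, X^{ K, x }_{ k } }_{ l } = X^{ K, x }_{ l }$, which follows by downward induction on $l$ directly from \eqref{a_priori_estimates:dynamics} (this identity already appears implicitly in the proof of \cref{lem:elementary_equidistribution_X_processes}). Next, applying \cref{lem:exact_solution_alternative_representation} with its second index equal to $0$ gives, for every $y \in \mc O$, that $v_k(y) = \EXP{ U(y) }$, where $U \colon \mc O \times \Omega \to \R$ denotes the random field $U(y) := g( X^{ k, y }_{ 0 } ) + \sum_{s=0}^{k-1} [ f_s( X^{ k, y }_{ s }, v_s( X^{ k, y }_{ s } ) ) - v_s( X^{ k, y }_{ s } ) ]$. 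Item~\eqref{elementary_measurability_X_processes:item1} of \cref{lem:elementary_measurability_X_processes} together with the Borel measurability of $g$, $f_0,\ldots,f_{K-1}$, and $v_0,\ldots,v_{K-1}$ shows that $U$, and hence $|U|^2$, is $( \Borel(\mc O) \otimes \sigma( (W_m)_{m\in[0,k)\cap\N_0} ) )$/$\Borel(\R)$-measurable, whereas $X^{K,x}_k$ is (for fixed $x$) $\sigma( (W_m)_{m\in[k,K)\cap\N_0} )$-measurable; since $W_0,\ldots,W_{K-1}$ are independent, these two sigma-algebras are independent.

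I would then estimate, using the change-of-variables (image-measure) formula, Jensen's inequality $|\EXP{U(y)}|^2 \leq \EXP{|U(y)|^2}$, and Hutzenthaler et al.~\cite[Lemma 2.2]{Overcoming} (applied to the random field $|U|^2$ and the independent random variable $X^{K,x}_k$), that $A_k^2 = \int_{\mc O} |v_k(y)|^2 \, (X^{K,x}_k(\P))(dy) \leq \int_{\mc O} \EXP{ |U(y)|^2 } \, (X^{K,x}_k(\P))(dy) = \EXP{ |U(X^{K,x}_k)|^2 }$, and by the flow property $U(X^{K,x}_k) = g(X^{K,x}_0) + \sum_{s=0}^{k-1} [ f_s(X^{K,x}_s, v_s(X^{K,x}_s)) - v_s(X^{K,x}_s) ]$. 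Minkowski's inequality, the hypothesis $| ( f_s( \cdot, a ) - a ) - ( f_s( \cdot, b ) - b ) | \leq L_s | a - b |$ with $b = 0$, and the finiteness of all occurring $L^2$-norms (guaranteed by \cref{lem:welldefinedness_exact_solution} and the integrability assumptions) then yield $A_k \leq ( \EXP{|g(X^{K,x}_0)|^2} )^{\nicefrac12} + \sum_{s=0}^{k-1} \big( ( \EXP{|f_s(X^{K,x}_s,0)|^2} )^{\nicefrac12} + L_s A_s \big) \leq \alpha + \sum_{s=0}^{k-1} L_s A_s$, which is the desired recursion.

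The main obstacle is the legitimacy of ``plugging the random initial value $X^{K,x}_k$ into the deterministic identity $v_k(y) = \EXP{U(y)}$'': the freezing step passing from $\int \EXP{|U(y)|^2} \, (X^{K,x}_k(\P))(dy)$ to $\EXP{|U(X^{K,x}_k)|^2}$ hinges on the independence of the internal randomness $(W_0,\ldots,W_{k-1})$ driving $U$ from the randomness $(W_k,\ldots,W_{K-1})$ driving $X^{K,x}_k$, together with the flow property identifying $U(X^{K,x}_k)$ with the path-functional of $X^{K,x}$; this is precisely where \cref{lem:elementary_measurability_X_processes} and Hutzenthaler et al.~\cite[Lemma 2.2]{Overcoming} enter. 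The remaining steps — the reduction to the Gronwall recursion and the Lipschitz/Minkowski manipulation — are routine.
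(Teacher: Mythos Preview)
Your proposal is correct and follows essentially the same route as the paper's proof: both use \cref{lem:exact_solution_alternative_representation} (with $l=0$) for the representation of $v_k$, combine Jensen's and Minkowski's inequalities with the independence/freezing argument via \cref{lem:elementary_measurability_X_processes} and Hutzenthaler et al.~\cite[Lemma 2.2]{Overcoming}, invoke the flow identity $X^{k,X^{K,x}_k}_l = X^{K,x}_l$, and close with \cref{discrete_gronwall}. The only cosmetic difference is that the paper applies Minkowski first (in $L^2(X^{K,x}_k(\P))$) and then Jensen term by term, whereas you apply Jensen once to the full sum $U$ and Minkowski afterwards; both orderings yield the same recursion $A_k \leq \alpha + \sum_{s=0}^{k-1} L_s A_s$.
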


\begin{proof}[Proof of \cref{lem:a_priori_estimates}]
	First, observe that \cref{lem:welldefinedness_exact_solution} ensures that for all 
	 	$ k \in \{ 0, 1, \ldots, K \} $, 
	 	$ x \in \mc O $  
	it holds that 
		\begin{equation} \label{a_priori_estimates:L2_bound}
		\EXPP{ | v_{ k } ( X^{ K, x }_{ k } ) |^2 } < \infty.
		\end{equation} 
	Moreover, note that \cref{lem:exact_solution_alternative_representation} yields that for all 
 		$ k \in \{ 0, 1, \ldots, K \} $, 
 		$ x \in \mc O $ 
	it holds that 
		\begin{equation} \label{a_priori_estimates:fixed_point_equation}
		v_{ k } ( x ) 
		= \EXPP{ g ( X^{ k, x }_{ 0 } ) } + \sum_{ l = 0 }^{ k - 1 } \EXPP{ f_{ l }( X^{ k, x }_{ l }, v_{ l }( X^{ k, x }_{ l } ) ) - v_l( X^{ k, x }_{ l } ) }. 
		\end{equation} 
	Minkowski's inequality therefore guarantees for all 
		$ k \in \{ 0, 1, \ldots, K \} $, 
		$ x \in \mc O $ 
	that 
		\begin{equation} 
		\begin{split}
		& 
		\left( \EXPP{ | v_{ k } ( X^{ K, x }_{ k } ) |^2} \right)^{\!\nicefrac12} 
		= 
		\left[ \int_{ \mc O } | v_{ k } ( y ) |^2 \, \big( X^{ K, x }_{ k } (\P) \big)( dy ) \right]^{\nicefrac12}
		\\
		&
		\leq \left[ \int_{ \mc O } \big| \EXP{ g ( X^{ k, y }_{ 0 } ) } \big|^2 \, \big( X^{ K, x }_{ k } ( \P ) \big)( dy ) 	\right]^{\nicefrac12}
		\\
		& 
		+ \sum_{ l = 0 }^{ k - 1 } \left[ \int_{ \mc O } 	\big| \EXPP{ f_{ l } ( X^{ k, y }_{ l } ( y ), v_{ l } ( X^{ k, y }_{ l } ) ) - v_{ l } ( X^{ k, y }_{ l } ) } \big|^2\,\big( X^{ K, x }_{ k } (\P) \big) ( dy )
		\right]^{\nicefrac12}. 
		\end{split}
		\end{equation} 
	Jensen's inequality hence yields for all 
		$ k \in \{ 0, 1, \ldots, K \} $, 
		$ x \in \mc O $ 
	that 
		\begin{equation} \label{a_priori_estimates:eq04}
		\begin{split}
		& 
		\left(\EXPP{ | v_{ k }( X^{ K, x }_{ k } ) |^2 } \right)^{\!\nicefrac12} 
		\leq 
		\left[ \int_{ \mc O } \EXPP{ | g( X^{ k, y }_{ 0 } ) |^2 }\!\,\big( X^{ K, x }_{ k }(\P) \big) ( dy ) 	\right]^{\nicefrac{1}{2}}\\ 
		& + 
		\sum_{ l = 0 }^{ k - 1 } 
		\left[ \int_{ \mc O } \EXPPP{ \big| f_{ l } ( X^{ k, y }_{ l },  v_{ l } ( X^{ k, y }_{ l } ) ) 
		 	- v_{ l } ( X^{ k, y }_{ l } ) \big|^2 }\!\,\big( X^{ K, x }_{ k } ( \P ) \big) ( dy )
		\right]^{\nicefrac12}.
		\end{split}
		\end{equation}  
	Next observe that item~\eqref{elementary_measurability_X_processes:item1} of \cref{lem:elementary_measurability_X_processes} yields that for all 
		$ k \in \{ 0, 1, \ldots, K \} $, 
		$ x \in \mc O $ 
	it holds that 
		$ X^{ K, x }_{ k } \colon \Omega \to \mc O $ 
	is 
		$ \sigma( ( W_m )_{ m \in [ k, K ) \cap \N_0 } ) $/$ \Borel( \mc O ) $-measurable. 
	This, the fact that for all 
		$ k \in \{ 0, 1, \ldots, K \} $ 
	it holds that 
		$ \mc O \times \Omega \ni ( x, \omega ) \mapsto  g( X^{ k, x }_{ 0 }( \omega ) ) \in \R $ 
	is 
		$ ( \Borel( \mc O ) \otimes \sigma( ( W_m )_{ m \in [ 0, k ) \cap \N_0 } ) )$/$ \Borel( \R ) $-measurable, 
	the assumption that 
		$ W_{ l } \colon \Omega \to S $, $ l \in \{ 0, 1, \ldots, K-1 \} $, 
	are independent random variables, and Hutzenthaler et al.~\cite[Lemma 2.2]{Overcoming} therefore demonstrate that for all 
		$ k \in \{ 0, 1, \ldots, K \} $, 
		$ x \in \mc O $ 
	it holds that 
		\begin{equation} \label{a_priori_estimates:eq05}
		\int_{ \mc O } \EXPP{ | g ( X^{ k, y }_{ 0 } ) |^2 } \big( X^{ K, x }_{ k }( \P ) \big)( dy )
		= \Exp{ \Big| g \big( X^{ k, X^{ K, x }_{ k } }_{ 0 } \big) \Big|^2}\!.
		\end{equation} 
	Moreover, note that the fact that for all 
		$ k,l \in \N_0 $, 
		$ x \in \mc O  $ 
	with 
		$ l \leq k \leq K $ 
	it holds that $ X^{ k, x }_{ l } \colon \Omega \to \mc O  $ is $ \sigma( ( W_{ m } )_{ m \in [ l, k ) \cap \N_0 } ) $/$ \Borel(\mc O ) $-measurable (cf.~item~\eqref{elementary_measurability_X_processes:item1} of \cref{lem:elementary_measurability_X_processes}), 
	the assumption that 
		$ W_l \colon \Omega\to S $, $ l \in \{ 0, 1, \ldots, K - 1 \} $, 
	are independent random variables, and Hutzenthaler et al.~\cite[Lemma 2.2]{Overcoming} demonstrate that for all 
	 	$ k \in \{ 0, 1, \ldots, K \} $, 
	 	$ l \in \{ 0, 1, \ldots, k - 1 \} $,
	 	$ x \in \mc O $ 
	it holds that 
		\begin{equation}
		\begin{split}
		& 
		\sum_{ l = 0 }^{ k - 1 } \int_{ \mc O } \EXPP{ | f_{ l } ( X^{ k, y }_{ l }, v_{ l } ( X^{ k, y }_{ l } ) ) - 		v_{ l } ( X^{ k, y }_{ l } ) |^2 }\!\,\big( X^{ K, x }_{ k } ( \P ) \big)( dy )
	 	\\& = 
	 	\sum_{ l = 0 }^{ k - 1 } \Exp{ \Big| f_{ l } ( X^{ k, X^{ K, x }_{ k } }_{ l }, v_{ l } ( X^{ k, X^{ K, x }_{ k } }_{ l })) - v_{ l } ( X^{ k, X^{ K, x }_{ k } }_{ l } ) \Big|^2}.
	 	\end{split}
	 	\end{equation}
	This, \eqref{a_priori_estimates:dynamics}, \eqref{a_priori_estimates:eq04}, and \eqref{a_priori_estimates:eq05}  ensure for all 
	 	$ k \in \{ 0, 1, \ldots, K \} $, 
	 	$ x \in \mc O $ 
	that 
		\begin{equation} 
		\begin{split}
		& \left( \EXPP{ | v_{ k } ( X^{ K, x }_{ k } ) |^2 } \right)^{\!\nicefrac12} 
		\\
		& \leq \left( \Exp{ \Big| g \big( X^{ k, X^{ K, x }_{ k } }_{ 0 } \big) \Big|^2 } \right)^{\!\!\nicefrac12}
		+
		\sum_{ l = 0 }^{ k - 1 } 
		\left( \Exp{ \Big| f_l \big( X^{ k, X^{ K, x }_{ k } }_{ l } ), v_{ l } ( X^{ k, X^{ K, x }_{ k } }_{ l } ) \big) - v_{ l } ( X^{ k, X^{ K, x }_{ k } }_{ l } ) \Big|^2}
		\right)^{\!\nicefrac12} 
		\\
		& = 
		\left( \EXPP{ | g ( X^{ K, x }_{ 0 } ) |^2 } \right)^{\!\nicefrac12}
		+ \sum_{ l = 0 }^{ k - 1 } \left( \EXPP{ | f_{ l } ( X^{ K, x }_{ l }, v_{ l } ( X^{ K, x }_{ l } ) ) - v_{ l } ( X^{ K, x }_{ l } ) |^2 } \right)^{\!\nicefrac12}.
		\end{split}
		\end{equation} 
	Minkowski's inequality and the assumption that for all 
	 	$ k \in \{ 0, 1, \ldots, K - 1 \} $,
		$ a,b \in \R $, 
		$ x \in \mc O $ 
	it holds that 
	 	$ | ( f_k ( x, a ) - a ) - ( f_k ( x, b ) - b ) | \leq L_k | a - b | $ 
	hence guarantee that for all 
	 	$ k \in \{ 0, 1, \ldots, K \} $, 
		$ x \in \mc O $
	it holds that 
		\begin{equation}
		\begin{split}
		&
		\left( \EXPP{ | v_{ k } ( X^{ K, x }_{ k } ) |^2 } \right)^{\!\nicefrac12}
		\\[1ex] 
		& \leq 
		\left( \EXPP{ | g ( X^{ K, x }_{ 0 } ) |^2 } \right)^{\!\nicefrac12}
		+ \sum_{ l = 0 }^{ k - 1 } \left[ \left( \EXPP{ | f_{ l } ( X^{ K, x }_{ l }, 0 ) |^2 } \right)^{\!\nicefrac12} 
		+ L_l \left( \EXPP{ | v_{ l } ( X^{ K, x }_{ l } ) |^2 } \right)^{\!\nicefrac12} \right]
		\\ 
		&
		=
		\left[ \left( \EXPP{ | g( X^{ K, x }_{ 0 } ) |^2 } \right)^{\!\nicefrac12}
			+ \sum_{ l = 0 }^{ K - 1 } \left( \EXPP{ | f_{ l } ( X^{ K, x }_{ l }, 0 ) |^2} \right)^{\!\nicefrac12} \right]
		+ \sum_{ l = 0 }^{ k - 1 } L_{ l } \!\left( \EXPP{ | v_{ l } ( X^{ K, x }_{ l } ) |^2}\right)^{\!\nicefrac12} .
		\end{split}  
		\end{equation}
	\cref{discrete_gronwall} 
	and \eqref{a_priori_estimates:L2_bound} hence ensure for all 
	 	$ k \in \{ 0, 1, \ldots, K \} $, 
	 	$ x \in \mc O $ 
	that 
		\begin{equation} 
		\begin{split}
		\left( \EXPP{ | v_{ k } ( X^{ K, x }_{ k } ) |^2 } \right)^{\!\nicefrac12} 
		\leq 
		\exp\!\left(\sum_{ l = 0 }^{ k - 1 } L_{ l } \right)\!
		\left[ \left( \EXPP{ | g( X^{ K, x }_{ 0 } ) |^2 } \right)^{\!\nicefrac12} 
		+ 
		\sum_{ l = 0 }^{ K - 1 } \left( \EXPP{ | f_{ l } ( X^{ K, x }_{ l }, 0 ) |^2 } \right)^{\!\nicefrac12}
		\right]\!.
		\end{split}
		\end{equation} 
	This establishes \eqref{a_priori_estimates:claim}. 
	This completes the proof of \cref{lem:a_priori_estimates}.
\end{proof}

\subsection{Bias and variance estimates for MLP approximations} 
\label{subsec:bias_and_variance_estimates}

\begin{lemma} \label{lem:mean}
	Assume \cref{setting}, 
	let $ c, L_0, L_1, \ldots, L_{ K } \in [ 0, \infty ) $	satisfy for all 
		$ k \in \{ 0, 1, \ldots, K \} $, 
		$ x \in \mc O $, 
		$ a,b \in \R $ 
	that 	 
		$ | ( f_k ( x, a ) - a ) - ( f_k ( x, b ) - b ) | \leq L_k | a - b | $, 
	assume for all
		$ k,l \in \N_0 $ 
	with 
		$ l < k \leq K $
	that 
		$ L_l \leq c \P( \mc R^{0}_k = l ) $,
	and assume for all 
		$ k \in \{ 0, 1, \ldots, K \} $, 
		$ x \in \mc O $
	that 
		$ \EXP{ | g ( X^{ 0, k, x }_{ 0 } ) |^2 + \sum_{ l = 0 }^{ k - 1 } | f_{ l } ( X^{ 0, k, x }_{ l }, 0 ) |^2 } < \infty $. 
	Then it holds for all 
		$ k \in \{ 1, 2, \ldots, K \} $, 
		$ n \in \N $, 
		$ x \in \mc O $ 
	that 
		$ \EXP{ | g ( X^{ 0, k, x }_{ 0 } ) | } + \sum_{ l = 0 }^{ k - 1 } \EXP{ |f_{ l } ( X^{ 0, k, x }_{ l }, V^{ 0 }_{ l, n - 1 } ( X^{ 0, k, x }_{ l } ) ) - V^{ 0 }_{ l, n - 1 } ( X^{ 0, k, x }_{ l } ) | } < \infty $ 
	and 
		\begin{equation} \label{mean:claim}
		\EXPP{ V^{ 0 }_{ k, n } ( x ) } 
		= \EXPP{ g ( X^{ 0, k, x }_{ 0 } ) } + \sum_{ l = 0 }^{ k - 1 } \EXPP{ f_{ l } ( X^{ 0, k, x }_{ l },  V^{ 0 }_{ l, n - 1 }( X^{ 0, k, x }_{ l } ) ) - V^{ 0 }_{ l, n - 1 } ( X^{ 0, k, x }_{ l } ) }.
		\end{equation} 
\end{lemma}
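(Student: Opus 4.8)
The plan is to take the expectation of the defining recursion~\eqref{setting:mlp_scheme} for $V^{0}_{k,n}$ summand by summand and to match it with the right-hand side of~\eqref{mean:claim}. Throughout I would abbreviate $h_l(x,a) = f_l(x,a) - a$, so that the hypothesis reads $|h_l(x,a) - h_l(x,b)| \leq L_l|a-b|$ and $|h_l(x,a)| \leq L_l|a| + |f_l(x,0)|$, and I note that $\mathbbm{1}_{\N}(k) = \mathbbm{1}_{\N}(n) = 1$ since $k,n \geq 1$. First I would settle integrability: \cref{lem:integrability} (applied with $\theta \is 0$, $\vartheta \is 0$, which is admissible since $0 \notin \{(0,\eta) \in \Theta \colon \eta \in \Theta\}$) together with Jensen's inequality and the standing $L^2$-hypotheses on $g$ and on the $f_l$ gives $\EXP{|g(X^{0,k,x}_0)|} < \infty$ and $\EXP{|V^{0}_{l,j}(X^{0,k,x}_l)|^2} < \infty$, hence $\EXP{|h_l(X^{0,k,x}_l, V^{0}_{l,j}(X^{0,k,x}_l))|} < \infty$, for all $l \in \{0,1,\ldots,k-1\}$, $j \in \N_0$; by item~\eqref{elementary_equidistribution_X_processes:item1} of \cref{lem:elementary_equidistribution_X_processes} and \cref{cor:equidistribution_for_mean_lemma} the same bounds hold with the index $0$ of $X$ and $V$ replaced by $(0,0,\pm m)$, $(0,j,m)$, or $(0,j,-m)$, so every summand in~\eqref{setting:mlp_scheme} is integrable and the expectation distributes over the finite sums.

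By item~\eqref{elementary_equidistribution_X_processes:item1} of \cref{lem:elementary_equidistribution_X_processes}, for each $m$ the random fields $X^{(0,0,m),k,\cdot}_{\cdot}$ and $X^{(0,0,-m),k,\cdot}_{\cdot}$ have the same distribution as $X^{0,k,\cdot}_{\cdot}$, so the expectation of the initialisation term $\frac{1}{M^n}\sum_{m=1}^{M^n}[g(X^{(0,0,-m),k,x}_0) + \sum_{l=0}^{k-1}f_l(X^{(0,0,m),k,x}_l, 0)]$ is $\EXP{g(X^{0,k,x}_0)} + \sum_{l=0}^{k-1}\EXP{f_l(X^{0,k,x}_l, 0)}$, which, since $V^{0}_{l,0} \equiv 0$ and $h_l(x,0) = f_l(x,0)$, equals $\EXP{g(X^{0,k,x}_0)} + \sum_{l=0}^{k-1}\EXP{h_l(X^{0,k,x}_l, V^{0}_{l,0}(X^{0,k,x}_l))}$.

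For the correction summand at level $j \in \{1,\ldots,n-1\}$ and index $m$, I would write $\frac{1}{\mf p_{k,\mc{R}^{(0,j,m)}_k}}(\cdots) = \sum_{l=0}^{k-1}\frac{\mathbbm{1}_{\{\mc{R}^{(0,j,m)}_k = l\}}}{\mf p_{k,l}}\big(h_l(X^{(0,j,m),k,x}_l, V^{(0,j,m)}_{l,j}(X^{(0,j,m),k,x}_l)) - h_l(X^{(0,j,m),k,x}_l, V^{(0,j,-m)}_{l,j-1}(X^{(0,j,m),k,x}_l))\big)$, and then, using the measurability statements of \cref{lem:elementary_measurability_X_processes} and item~\eqref{measurability:item1} of \cref{lem:measurability} together with the independence assumptions of \cref{setting} (in particular that $(\mc{R}^{\theta})_{\theta\in\Theta}$ is independent of the family of Brownian-type variables and that the $\mc{R}^{\theta}$ are i.i.d.), argue that $\sigma(\mc{R}^{(0,j,m)})$ is independent of the sigma-algebra carrying $X^{(0,j,m),k,x}_l$, $V^{(0,j,m)}_{l,j}$, and $V^{(0,j,-m)}_{l,j-1}$, so that $\EXP{\mathbbm{1}_{\{\mc{R}^{(0,j,m)}_k = l\}}(\cdots)} = \P(\mc{R}^{0}_k = l)\,\EXP{\cdots}$. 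If $\P(\mc{R}^{0}_k = l) > 0$, then the relation $\mf p_{k,l}\P(\mc{R}^{0}_k = l) = |\P(\mc{R}^{0}_k = l)|^2$ forces $\P(\mc{R}^{0}_k = l)/\mf p_{k,l} = 1$, and \cref{cor:equidistribution_for_mean_lemma} (admissible because $(0,j,m)$ is not a strict extension of $(0,j,m)$ or of $(0,j,-m)$) lets me replace the indices $(0,j,m)$ and $(0,j,-m)$ by $0$; averaging the $M^{n-j}$ identical contributions, the level-$j$ part for such $l$ is $\EXP{h_l(X^{0,k,x}_l, V^{0}_{l,j}(X^{0,k,x}_l))} - \EXP{h_l(X^{0,k,x}_l, V^{0}_{l,j-1}(X^{0,k,x}_l))}$. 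If instead $\P(\mc{R}^{0}_k = l) = 0$, the summand vanishes, but then $L_l \leq c\,\P(\mc{R}^{0}_k = l) = 0$, so $h_l(x,\cdot)$ is constant and both of these expectations reduce to $\EXP{f_l(X^{0,k,x}_l, 0)}$, so the same identity holds trivially. Summing over $j \in \{1,\ldots,n-1\}$ telescopes to $\EXP{h_l(X^{0,k,x}_l, V^{0}_{l,n-1}(X^{0,k,x}_l))} - \EXP{h_l(X^{0,k,x}_l, V^{0}_{l,0}(X^{0,k,x}_l))}$; adding the $l$-th term of the initialisation expectation cancels the $j=0$ boundary term and leaves $\EXP{h_l(X^{0,k,x}_l, V^{0}_{l,n-1}(X^{0,k,x}_l))} = \EXP{f_l(X^{0,k,x}_l, V^{0}_{l,n-1}(X^{0,k,x}_l)) - V^{0}_{l,n-1}(X^{0,k,x}_l)}$, which together with the $\EXP{g(X^{0,k,x}_0)}$ contribution is exactly~\eqref{mean:claim}. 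The main obstacle will be the index bookkeeping in this independence step, and the one easily missed point is the degenerate case $\P(\mc{R}^{0}_k = l) = 0$, where $\mf p_{k,l}$ need not coincide with $\P(\mc{R}^{0}_k = l)$ and one instead exploits $L_l = 0$.
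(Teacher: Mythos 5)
Your proposal is correct and follows essentially the same route as the paper's proof: integrability via \cref{lem:integrability}, termwise expectation of \eqref{setting:mlp_scheme}, decomposition of the correction terms with the indicators $\mathbbm{1}_{\{\mc{R}^{(0,j,m)}_k=l\}}$, independence to factor out $\P(\mc{R}^0_k=l)/\mf p_{k,l}$, the case split on $\P(\mc{R}^0_k=l)=0$ handled through $L_l=0$, index replacement via \cref{cor:equidistribution_for_mean_lemma}, and the telescoping sum with $V^0_{l,0}=0$. The only (cosmetic) difference is your explicit use of the shorthand $h_l(x,a)=f_l(x,a)-a$.
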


\begin{proof}[Proof of \cref{lem:mean}] 
	First, observe that \cref{lem:integrability} guarantees that for all 
		$ k,l,n \in \N_0 $, 
		$ x \in \mc O $ 
	with 
		$ l \leq k \leq K $ 
	it holds that 
		$ \EXP{ | V^{ 0 }_{ l, n } ( X^{ 0, k, x }_{ l } ) |^2 } < \infty $. 
	This, the assumption that for all 
	 	$ k \in \{ 0, 1, \ldots, K - 1 \} $, 
	 	$ x \in \mc O $, 
	 	$ a,b \in \R $
	it holds that 
	 	$ | ( f_{ k } ( x, a ) - a ) - ( f_{ k } ( x, b ) - b ) | \leq L_{ k } | a - b | $, 
	the assumption that for all 
		$ k \in \{ 0, 1, \ldots, K \} $, 
		$ x \in \mc O $ 
	it holds that 
		$ \EXP{ | g ( X^{ 0, k, x }_{ 0 } ) |^2 + \sum_{ l = 0 }^{ k - 1 } | f_{ l } ( X^{ 0, k, x }_{ l }, 0 ) |^2 } < \infty $, 
	and \eqref{setting:mlp_scheme} ensure that for all 
	 	$ k \in \{ 1, 2, \ldots, K \} $, 
	 	$ n \in \N $, 
	 	$ x \in \mc O $ 
	it holds that 
	 	$ \EXP{ | g ( X^{ 0, k, x }_{ 0 } ) | + \sum_{ l = 0 }^{ k - 1 } | f_{ l } ( X^{ 0, k, x }_{ l }, V^{ 0 }_{ l, n - 1 } ( X^{ 0, k, x }_{ l } ) ) - V^{ 0 }_{ l, n - 1 } ( X^{ 0, k, x }_{ l } ) | } < \infty $ 
	and 
	 	\begin{equation} \label{mean:eq01}
	 	\begin{split}
		& 
	  	\Exp{ V^{ 0 }_{ k, n } ( x ) } 
	  	= \frac{ 1 }{ M^n } \sum_{ m = 1 }^{ M^n } \Exp{ g ( X^{ ( 0, 0, -m ), k, x }_{ 0 } ) 
	  	+ \sum_{ l = 0 }^{ k - 1 } f_{ l } ( X^{ ( 0, 0, m ), k, x }_{ l }, 0 ) }
	  	+ 
	  	\sum_{ j = 1 }^{ n - 1 } 
	  	\sum_{ m = 1 }^{ M^{ n - j } } 
	  	\frac{ 1 }{ M^{ n - j } }
		\\  
	  	& 
	    \cdot \Bigg[ 
	    \EXPPPP{ \frac{ 1 }{ \mf p_{ k, \mc{R}^{ ( 0, j, m ) }_{ k } } } \Big( f_{ \mc{R}^{ ( 0, j, m ) }_{ k } } \Big( X^{ ( 0, j, m ), k, x }_{ \mc{R}^{ ( 0, j, m ) }_{ k } },  V^{ ( 0, j, m ) }_{ \mc{R}^{ ( 0, j, m ) }_{ k }, j } \big( X^{ ( 0, j, m ), k, x }_{ \mc{R}^{ ( 0, j, m ) }_{ k } } \big) \Big) 
	  	-
	  	V^{ ( 0, j, m ) }_{ \mc{R}^{ ( 0, j, m ) }_{ k }, j } \big( X^{ ( 0, j, m ), k, x }_{ \mc{R}^{ ( 0, j, m ) }_{ k } } \big) \Big) 
	  	\\ 
	  	& - 
		\frac{ 1 }{ \mf p_{ k,  \mc{R}^{ ( 0, j, m ) }_{ k } } } \Big( f_{ \mc{R}^{ ( 0, j, m ) }_{ k } } \Big( X^{ ( 0, j, m ), k, x }_{ \mc{R}^{ ( 0, j, m ) }_{ k } },   V^{ ( 0, j, -m ) }_{ \mc{R}^{ ( 0, j, m) }_{ k }, j - 1 } \big( X^{ ( 0, j, m ), k, x }_{ \mc{R}^{ ( 0, j, m ) }_{ k } } \big) \Big) - V^{ ( 0, j, -m ) }_{ \mc{R}^{ ( 0, j, m ) }_{ k }, j - 1 } \big( X^{ ( 0, j, m ), k, x }_{ \mc{R}^{ ( 0, j, m ) }_{ k } } \big) \Big) }
	  	\Bigg]. 
	 	\end{split}
	 	\end{equation}
	Next note that the fact that for all 
		$ \theta \in \Theta $, 
		$ k \in \{ 1, 2, \ldots, K \} $ 
	it holds that 
		$ \P( \mc{R}^{ \theta }_{ k } \in \{ 0, 1, \ldots, k - 1\} ) = 1 $
	ensures that for all 
		$ n,m \in \N $, 
		$ k \in \{1,2,\ldots,K\} $, 
		$ j \in \{1,2,\ldots,n-1\} $, 
		$ x \in \mc O $ 
	it holds that  
		\begin{equation} 
		\begin{split}
		&
		\Exp{ \frac{ 1 }{ \mf p_{ k, \mc{R}^{ ( 0, j, m ) }_{ k } } } \Big( f_{ \mc{R}^{ ( 0, j, m ) }_{ k } } \big( X^{ ( 0, j, m ), k, x }_{ \mc{R}^{ ( 0, j, m ) }_{ k } }, V^{ ( 0, j, m ) }_{ \mc{R}^{ ( 0, j, m ) }_{ k }, j } ( X^{ ( 0, j, m ), k, x }_{ \mc{R}^{ ( 0, j, m ) }_{ k } } ) \big) 
		- V^{ ( 0, j, m ) }_{ \mc{R}^{ ( 0, j, m ) }_{ k }, j } ( X^{ ( 0, j, m ), k, x }_{ \mc{R}^{ ( 0, j, m ) }_{ k } } ) \Big) }
		\\
		& - 
		\Exp{ \frac{ 1 }{ \mf p_{ k, \mc{R}^{ ( 0, j, m ) }_{ k } } } \Big( f_{ \mc{R}^{ ( 0, j, m ) }_{ k } } \big( X^{ ( 0, j, m ), k, x }_{ \mc{R}^{ ( 0, j, m ) }_{ k } }, V^{ ( 0, j, -m ) }_{ \mc{R}^{ ( 0, j, m ) }_{ k }, j - 1 } ( X^{ ( 0, j, m ), k, x }_{ \mc{R}^{ ( 0, j, m ) }_{ k }} ) \big) - V^{ ( 0, j, -m ) }_{ \mc{R}^{ ( 0, j, m ) }_{ k }, j - 1 } ( X^{ ( 0, j, m ), k, x }_{ \mc{R}^{ ( 0, j, m ) }_{ k } } ) \Big) }
		\\
		& = 
		\sum_{ l = 0 }^{ k - 1 } 
		\Exp{ \frac{ \mathbbm{ 1 }_{ \{ \mc{R}^{ ( 0, j, m ) }_{ k } = l \} } }{ \mf p_{ k, l } }  \left( 
		f_{ l } ( X^{ ( 0, j, m ), k, x }_{ l }, V^{ ( 0, j, m ) }_{ l, j } ( X^{ ( 0, j, m ), k, x }_{ l } ) ) - 
		V^{ ( 0, j, m ) }_{ l, j } ( X^{ ( 0, j, m ), k, x }_{ l } ) \right) }
		\\
		& - 
		\sum_{ l = 0 }^{ k - 1 } 
		\Exp{ \frac{ \mathbbm{ 1 }_{ \{ \mc{R}^{ ( 0, j, m ) }_{ k } = l \} } }{ \mf p_{ k, l } } \left( 			
		f_{ l } ( X^{ ( 0, j, m ), k, x }_{ l }, V^{ ( 0, j, -m ) }_{ l, j - 1 }( X^{ ( 0, j, m ), k, x }_{ l } ) ) - 
		V^{ ( 0, j, -m ) }_{ l, j - 1 } ( X^{ ( 0, j, m ), k, x }_{ l } ) \right) }\!.
		\end{split}
		\end{equation}
	Item~\eqref{elementary_measurability_X_processes:item1} of \cref{lem:elementary_measurability_X_processes}, 	item~\eqref{measurability:item1} of \cref{lem:measurability}, and the fact that for all 
		$ n, m \in \N $, 
		$ j \in \{ 1, 2, \ldots, n - 1 \} $
	it holds that 
		$ \sigma ( ( \mc{R}^{ ( 0, j, m ) }_{ s } )_{ s \in \{1, 2, \ldots, K\} } ) $ 
	and 
		$ \sigma( 
			( \mc{R}^{ ( 0, j, m, \vartheta ) }_{ s } )_{ ( s, \vartheta ) \in \{ 1, 2, \ldots, K \} \times \Theta },
			( \mc{R}^{ ( 0, -j, m, \vartheta ) }_{ s } )_{ ( s, \vartheta ) \in \{1, 2, \ldots, K \} \times \Theta },  \allowbreak
			( W^{ ( 0, j, m, \vartheta ) }_{ s } )_{ ( s, \vartheta ) \in \{ 0, 1, \ldots, K - 1 \} \times \Theta },
			( W^{ ( 0, j, -m, \vartheta ) }_{ s } )_{ ( s, \vartheta ) \in \{ 0, 1, \ldots, K - 1 \} \times \Theta }, \allowbreak 
			\allowbreak			 
			( W^{ ( 0, j, m ) }_{ s })_{ s \in \{ 0, 1, \ldots, K - 1 \}}
			)
		$
	are independent therefore ensure that for all 
		$ n,m \in \N $, 
		$ k \in \{ 1, 2, \ldots, K \} $, 
		$ j \in \{ 1, 2, \ldots, n - 1 \} $, 
		$ x \in \mc O $ 
	it holds that 
		\begin{equation}
		\begin{split}
		&
		\Exp{ \frac{ 1 }{ \mf p_{ k, \mc{R}^{ ( 0, j, m ) }_{ k  } } } \Big( f_{ \mc{R}^{ ( 0, j, m ) }_{ k } } \big( X^{ ( 0, j, m ), k, x }_{ \mc{R}^{ ( 0, j, m ) }_{ k } }, V^{ ( 0, j, m ) }_{ \mc{R}^{ ( 0, j, m ) }_{ k } , j } ( X^{ ( 0, j, m ), k, x }_{ \mc{R}^{ ( 0, j, m ) }_{ k } } ) \big) 
		- V^{ ( 0, j, m ) }_{ \mc{R}^{ ( 0, j, m ) }_{ k }, j } ( X^{ ( 0, j, m ), k, x }_{ \mc{R}^{ ( 0, j, m ) }_{ k } } ) \Big) }
		\\
		& - 
		\Exp{ \frac{ 1 }{ \mf p_{ k, \mc{R}^{ ( 0, j, m ) }_{ k } } } \Big( f_{ \mc{R}^{ ( 0, j, m ) }_{ k } } \big( X^{ ( 0, j, m ), k, x }_{ \mc{R}^{ ( 0, j, m ) }_{ k } },  V^{ ( 0, j, -m ) }_{ \mc{R}^{ ( 0, j, m ) }_{ k } , j - 1 } ( X^{ ( 0, j, m ), k, x }_{ \mc{R}^{ ( 0, j, m ) }_{ k } } ) \big) 
		- V^{ ( 0, j, -m ) }_{ \mc{R}^{ ( 0, j, m ) }_{ k }, j - 1 } ( X^{ ( 0, j, m ), k, x }_{ \mc{R}^{ ( 0, j, m ) }_{ k } } ) \Big) }
		\\
		& = 
		\sum_{ l = 0 }^{ k - 1 } 
		\frac{ \P\big( \mc{R}^{ ( 0, j, m ) }_{ k } = l \big) }{ \mf p_{ k, l } } 
		\Bigg[ \Exp{ f_{ l } ( X^{ ( 0, j, m ), k, x }_{ l }, V^{ ( 0, j, m ) }_{ l, j } ( X^{ ( 0, j, m ), k, x }_{ l } ) ) - 
			V^{ ( 0, j, m ) }_{ l, j } ( X^{ ( 0, j, m ), k, x }_{ l } ) }
		\\
		& - \Exp{ f_{ l } ( X^{ ( 0, j, m ), k, x }_{ l }, V^{ ( 0, j, -m ) }_{ l, j - 1 } ( X^{ ( 0, j, m ), k, x }_{ l } ) ) - 
			V^{ ( 0, j, -m ) }_{ l, j - 1 } ( X^{ ( 0, j, m ), k, x }_{ l } )  } \Bigg].
		\end{split}
		\end{equation}
	The fact that for all 
		$ \theta \in \Theta $,
		$ k \in \{ 1, 2, \ldots, K \} $, 
		$ l \in \{ 0, 1, \ldots, k - 1 \} $ 
	with $ \P( \mc{R}^{ \theta }_{ k } = l ) > 0 $ it holds that $ \P( \mc{R}^{ \theta }_{ k } = l ) = \mf p_{ k, l } $ 
	and the fact that for all
		$ \theta \in \Theta $, 
		$ k \in \{ 1, 2, \ldots, K \} $,
		$ l \in \{ 0, 1, \ldots, k - 1 \} $ 
	with $ \P( \mc R^{\theta}_k = l ) = 0 $ it holds that 
		$ L_l = 0 $ 
	hence yield that for all 
		$ n,m \in \N $, 
		$ k \in \{ 1, 2, \ldots, K \} $, 
		$ j \in \{ 1, 2, \ldots, n - 1 \} $, 
		$ x \in \mc O $ 
	it holds that 
		\begin{equation} \label{mean:eq04}
		\begin{split}
		&
		\Exp{ \frac{ 1 }{ \mf p_{ k, \mc{R}^{ ( 0, j, m ) }_{ k } } } \Big( f_{ \mc{R}^{ ( 0, j, m ) }_{ k } } \big( X^{ ( 0, j, m ), k, x }_{ \mc{R}^{ ( 0, j, m ) }_{ k } },  V^{ ( 0, j, m ) }_{ \mc{R}^{ ( 0, j, m ) }_{ k }, j } ( X^{ ( 0, j, m ), k, x }_{ \mc{R}^{ ( 0, j, m ) }_{ k } } ) \big) 
		- V^{ ( 0, j, m ) }_{ \mc{R}^{ ( 0, j, m ) }_{ k }, j } ( X^{ ( 0, j, m ), k, x }_{ \mc{R}^{ ( 0, j, m ) }_{ k } } ) \Big) }
		\\
		& - 
		\Exp{ \frac{ 1 }{ \mf p_{ k, \mc{R}^{ ( 0, j, m ) }_{ k } } } \Big( f_{ \mc{R}^{ ( 0, j, m ) }_{ k } }\big( X^{ ( 0, j, m ), k, x }_{ \mc{R}^{ ( 0, j, m ) }_{ k } },  V^{ ( 0, j, -m ) }_{ \mc{R}^{ ( 0, j, m ) }_{ k }, j - 1 } ( X^{ ( 0, j, m ), k, x }_{ \mc{R}^{ ( 0, j, m ) }_{ k } } ) \big) 
		- V^{ ( 0, j, -m ) }_{ \mc{R}^{ ( 0, j, m ) }_{ k }, j - 1 } ( X^{ ( 0, j, m ), k, x }_{ \mc{R}^{ ( 0, j, m ) }_{ k } } ) \Big) } 
		\\
		& = 
		\sum_{ l = 0 }^{ k - 1 } 
		\Bigg[ 
			\Exp{ f_{ l } ( X^{ ( 0, j, m ), k, x }_{ l }, V^{ ( 0, j, m ) }_{ l, j } ( X^{ ( 0, j, m ), k, x }_{ l } ) ) - 
			V^{ ( 0, j, m ) }_{ l, j } ( X^{ ( 0, j, m ), k, x }_{ l } ) }
		\\
		& - \Exp{ f_{ l } ( X^{ ( 0, j, m ), k, x }_{ l }, V^{ ( 0, j, -m ) }_{ l, j - 1 } ( X^{ ( 0, j, m ), k, x }_{ l } ) ) - 
			V^{ ( 0, j, -m ) }_{ l, j - 1 } ( X^{ ( 0, j, m ), k, x }_{ l } ) }
		\Bigg].
		\end{split}
		\end{equation}
	This and \cref{cor:equidistribution_for_mean_lemma} assure that for all 
		$ n,m \in \N $, 
		$ k \in \{ 1, 2, \ldots, K \} $, 
		$ j \in \{ 0, 1, \ldots, n - 1 \} $, 
		$ x \in \mc O $ 
	it holds that 
		\begin{equation}
		\begin{split} 
		& 
		\Exp{ \frac{ 1 }{ \mf p_{ k, \mc{R}^{ ( 0, j, m ) }_{ k } } } \Big( f_{ \mc{R}^{ ( 0, j, m ) }_{ k } } ( X^{ ( 0, j, m ), k, x }_{ \mc{R}^{ ( 0, j, m ) }_{ k } }, V^{ ( 0, j, m ) }_{ \mc{R}^{ ( 0, j, m ) }_{ k }, j } ( X^{ ( 0, j, m ), k, x }_{ \mc{R}^{ ( 0, j, m ) }_{ k } } ) ) 
		- V^{ ( 0, j, m ) }_{ \mc{R}^{ ( 0, j, m ) }_{ k }, j }( X^{ ( 0, j, m ), k, x }_{ \mc{R}^{ ( 0, j, m ) }_{ k } } ) \Big) } 
		\\[1ex]
		& - 
		\Exp{ \frac{ 1 }{ \mf p_{ k, \mc{R}^{ ( 0, j, m ) }_{ k } } } \Big( f_{ \mc{R}^{ ( 0, j, m ) }_{ k } } ( X^{ ( 0, j, m ), k, x }_{ \mc{R}^{ ( 0, j, m ) }_{ k } },  V^{ ( 0, j, -m ) }_{ \mc{R}^{ ( 0, j, m ) }_{ k }, j - 1 } ( X^{ ( 0, j, m ), k, x }_{ \mc{R}^{ ( 0, j, m ) }_{ k } } ) ) - V^{ ( 0, j, -m ) }_{ \mc{R}^{ ( 0, j, m ) }_{ k }, j - 1 } ( X^{ ( 0, j, m ), k, x }_{ \mc{R}^{ ( 0, j, m ) }_{ k } } ) \Big) }
		\\ 
		& = 
		\sum_{ l = 0 }^{ k - 1 } 
		\EXPP{ f_{ l } ( X^{ 0, k, x }_{ l }, V^{ 0 }_{ l, j } ( X^{ 0, k, x }_{ l } ) ) - V^{ 0 }_{ l, j } ( X^{ 0, k, x }_{ l } ) } 
		\\
		& - 
		\sum_{ l = 0 }^{ k - 1 } 
		\EXPP{ f_{ l } ( X^{ 0, k, x }_{ l }, V^{ 0 }_{ l, j - 1 } ( X^{ 0, k, x }_{ l } ) ) - V^{ 0 }_{ l, j - 1 } ( X^{ 0, k, x }_{ l } ) }. 
		\end{split} 
		\end{equation}
	This, \eqref{mean:eq01}, and item~\eqref{elementary_equidistribution_X_processes:item1} of \cref{lem:elementary_equidistribution_X_processes} imply that for all 
		$ k \in \{ 1, 2, \ldots, K \} $, 
		$ n \in \N $, 
		$ x \in \mc O $
	it holds that 
		\begin{equation}
		\begin{split}
		& \Exp{ V^{ 0 }_{ k, n } ( x ) }  
		= \EXPP{ g ( X^{ 0, k, x }_{ 0 } ) } 
		+ \sum_{ l = 0 }^{ k - 1 } \EXPP{ f_{ l } ( X^{ 0, k, x }_{ l }, 0 ) } 
		\\ 
		& \quad + 
		\sum_{ j = 1 }^{ n - 1 } 
		\sum_{ l = 0 }^{ k - 1 } 
		 \EXPP{ 
			f_{ l } ( X^{ 0, k, x }_{ l }, V^{ 0 }_{ l, j } ( X^{ 0, k, x }_{ l } ) ) 
		- V^{ 0 }_{ l, j } ( X^{ 0, k, x }_{ l } ) }
		\\
		& 
		\quad -
		\sum_{ j = 1 }^{ n - 1 } 
		\sum_{ l = 0 }^{ k - 1 } 
		\EXPP{ f_{ l } ( X^{ 0, k, x }_{ l }, V^{ 0 }_{ l, j - 1 } ( X^{ 0, k, x }_{ l } ) ) - V^{ 0 }_{ l, j - 1 } ( X^{ 0, k, x }_{ l } ) }
		\\
		& = 
		\EXPP{ g ( X^{ 0, k, x }_{ 0 } ) } 
		+ \sum_{ l = 0 }^{ k - 1 } \EXPP{ f_{ l } ( X^{ 0, k, x }_{ l }, 0 ) } 
		\\ 
		& \quad + 
		\sum_{ l = 0 }^{ k - 1 } 
		\Exp{ 
			f_{ l } ( X^{ 0, k, x }_{ l }, V^{ 0 }_{ l, n - 1 } ( X^{ 0, k, x }_{ l } ) ) 
			- V^{ 0 }_{ l, n - 1 } ( X^{ 0, k, x }_{ l } ) }
		\\
		& \quad -
		\sum_{ l = 0 }^{ k - 1 } 
		\Exp{ f_{ l } ( X^{ 0, k, x }_{ l }, V^{ 0 }_{ l, 0 } ( X^{ 0, k, x }_{ l } ) ) - V^{ 0 }_{ l, 0 } ( X^{ 0, k, x }_{ l } ) }\!. 
		\end{split}
		\end{equation} 
	The fact that for all 
		$ k \in \{ 0, 1, \ldots, K \} $,
		$ x \in \mc O $ 
	it holds that 
		$ V^{ 0 }_{ k, 0 } ( x ) = 0 $ 
	therefore implies that for all 
		$ k \in \{ 1, 2, \ldots, K \} $, 
		$ n \in \N $, 
		$ x \in \mc O $ 
	it holds that 
		\begin{equation} 
		 \Exp{ V^{ 0 }_{ k, n } ( x ) } 
		 = \Exp{ g ( X^{ 0, k, x }_{ 0 } ) } 
		 + \sum_{ l = 0 }^{ k - 1 } \Exp{ f_{ l } \big( X^{ 0, k, x }_{ l }, V^{ 0 }_{ l, n - 1 }( X^{ 0, k, x }_{ l } ) \big) - V^{ 0 }_{ l, n - 1 } ( X^{ 0, k, x }_{ l } ) }\!.
		\end{equation} 
	This establishes \eqref{mean:claim}. 
	This completes the proof of \cref{lem:mean}.
\end{proof} 

\begin{lemma}[Bias estimate] \label{lem:bias_estimate}
	Assume 
		\cref{setting}, 
	let 
		$ c, L_{ 0 }, L_{ 1 }, \ldots, L_{ K } \in [ 0, \infty ) $ 
	satisfy for all
		$ k \in \{ 0, 1, \ldots, K \} $, 
		$ x \in \mc O $,  
		$ a,b \in \R $
	that 
		$ | ( f_{ k } ( x, a ) - a  ) - ( f_{ k } ( x, b ) - b ) | \leq L_{ k } | a - b | $, 
	assume for all 
		$ k, l \in \N_{ 0 } $ 
	with $ l < k \leq K $ that $ L_l \leq c \P( \mc R^{ 0 }_{ k } = l ) $, 
	assume for all
		$ k \in \{ 0, 1, \ldots, K \} $, 
		$ x \in \mc O $
	that 
		$ \EXP{ | g ( X^{ 0, k, x }_{ 0 } ) |^2 + \sum_{ l = 0 }^{ k - 1 } | f_{ l } ( X^{ 0, k, x }_{ l }, 0 ) |^2 } < \infty $, 
	and let $ v_{ k } \colon \mc O \to \R $, $ k \in \{ 0, 1, \ldots, K \} $, be $ \Borel( \mc O ) $/$ \Borel( \R ) $-measurable functions which satisfy for all 
		$ k \in \{ 1, 2, \ldots, K \} $,  
		$ x \in \mc O $ 
	that 
		$ v_{ 0 } ( x ) = g ( x ) $ 
	and 
		\begin{equation} \label{bias_estimate:exact_solution}
		v_{ k } ( x ) 
		= \EXPP{ f_{ k - 1 } ( X^{ 0, k, x }_{ k - 1 }, v_{ k - 1 } ( X^{ 0, k, x }_{ k - 1 } ) ) }
		\end{equation} 
	(cf.~\cref{lem:welldefinedness_exact_solution}). 
	Then it tholds for all 
		$ k \in \{ 1, 2, \ldots, K \} $, 
		$ n \in \N $, 
		$ x \in \mc O $ 
	that 
		\begin{equation} \label{bias_estimate:claim}
		 \left| \Exp{ V^{ 0 }_{ k, n } ( x ) } - v_{ k } ( x ) \right|^2
		 \leq  
		 \left( \sum_{ l = 0 }^{ k - 1 } L_{ l } \right) \!
		 \left( \sum_{ l = 0 }^{ k - 1 } L_{ l } \,\Exp{ \big| V^{ 0 }_{ l, n - 1 } ( X^{ 0, k, x }_{ l } ) - v_{ l } ( X^{ 0, k, x }_{ l } ) \big|^2 } \right)\!.
		\end{equation} 
\end{lemma}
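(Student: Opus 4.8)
The plan is to subtract the alternative representation of the exact solution furnished by \cref{lem:exact_solution_alternative_representation} from the representation of the mean of the MLP approximation furnished by \cref{lem:mean}, and then to exploit the assumed Lipschitz continuity of the maps $\mc O \times \R \ni ( y, a ) \mapsto f_l( y, a ) - a \in \R$, $l \in \{ 0, 1, \ldots, K - 1 \}$.

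First I would apply \cref{lem:exact_solution_alternative_representation} with the summation index starting at $0$ (which is applicable because of the measurability hypotheses on the $f_k$ and on $g$, the assumed square integrability, and the defining recursion \eqref{bias_estimate:exact_solution} for the $v_k$) to obtain that for all $k \in \{ 1, 2, \ldots, K \}$, $x \in \mc O$ it holds that
\[
v_k( x ) = \EXPP{ g( X^{ 0, k, x }_{ 0 } ) } + \sum_{ l = 0 }^{ k - 1 } \EXPP{ f_l( X^{ 0, k, x }_{ l }, v_l( X^{ 0, k, x }_{ l } ) ) - v_l( X^{ 0, k, x }_{ l } ) }.
\]
At the same time, \cref{lem:mean} (whose hypotheses are precisely those assumed in the present lemma) yields that for all $k \in \{ 1, 2, \ldots, K \}$, $n \in \N$, $x \in \mc O$ it holds that
\[
\EXPP{ V^{ 0 }_{ k, n }( x ) } = \EXPP{ g( X^{ 0, k, x }_{ 0 } ) } + \sum_{ l = 0 }^{ k - 1 } \EXPP{ f_l( X^{ 0, k, x }_{ l }, V^{ 0 }_{ l, n - 1 }( X^{ 0, k, x }_{ l } ) ) - V^{ 0 }_{ l, n - 1 }( X^{ 0, k, x }_{ l } ) }.
\]
Both displays involve only integrable random variables (cf.\ the integrability assertions in \cref{lem:exact_solution_alternative_representation}, \cref{lem:mean}, and \cref{lem:integrability}), so subtracting them cancels the terms involving $g$ and yields, for all $k \in \{ 1, 2, \ldots, K \}$, $n \in \N$, $x \in \mc O$,
\[
\EXPP{ V^{ 0 }_{ k, n }( x ) } - v_k( x ) = \sum_{ l = 0 }^{ k - 1 } \EXPP{ \big( f_l( X^{ 0, k, x }_{ l }, V^{ 0 }_{ l, n - 1 }( X^{ 0, k, x }_{ l } ) ) - V^{ 0 }_{ l, n - 1 }( X^{ 0, k, x }_{ l } ) \big) - \big( f_l( X^{ 0, k, x }_{ l }, v_l( X^{ 0, k, x }_{ l } ) ) - v_l( X^{ 0, k, x }_{ l } ) \big) }.
\]

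Next I would apply the triangle inequality together with the assumption that $| ( f_l( y, a ) - a ) - ( f_l( y, b ) - b ) | \leq L_l | a - b |$ for all $l \in \{ 0, 1, \ldots, K - 1 \}$, $y \in \mc O$, $a, b \in \R$, which bounds the right-hand side in absolute value by $\sum_{ l = 0 }^{ k - 1 } L_l \, \EXPP{ | V^{ 0 }_{ l, n - 1 }( X^{ 0, k, x }_{ l } ) - v_l( X^{ 0, k, x }_{ l } ) | }$; Jensen's inequality (legitimate since the pertinent second moments are finite by \cref{lem:integrability} and \cref{lem:welldefinedness_exact_solution}) then upgrades this to
\[
\big| \EXPP{ V^{ 0 }_{ k, n }( x ) } - v_k( x ) \big| \leq \sum_{ l = 0 }^{ k - 1 } L_l \big( \EXPP{ | V^{ 0 }_{ l, n - 1 }( X^{ 0, k, x }_{ l } ) - v_l( X^{ 0, k, x }_{ l } ) |^2 } \big)^{\!\nicefrac12}.
\]
Finally, squaring this inequality, writing $L_l = \sqrt{ L_l }\,\sqrt{ L_l }$ (valid because $L_l \in [0, \infty)$), and applying the Cauchy--Schwarz inequality to the sum over $l$ produces exactly the claimed estimate \eqref{bias_estimate:claim}.

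Since the substantive inputs — the mean representation of the scheme in \cref{lem:mean}, the alternative representation of the exact solution in \cref{lem:exact_solution_alternative_representation}, and all requisite integrability — are already established, I do not expect a genuine obstacle here; the only point that needs a moment's care is verifying that the functions $v_k$ appearing in the hypothesis are indeed those covered by \cref{lem:welldefinedness_exact_solution} and \cref{lem:exact_solution_alternative_representation}, so that the two representations above may legitimately be subtracted term by term.
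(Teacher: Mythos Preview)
Your proposal is correct and follows essentially the same route as the paper: subtract the representation from \cref{lem:exact_solution_alternative_representation} and \cref{lem:mean}, apply the Lipschitz bound, and then combine Cauchy--Schwarz with Jensen. The only cosmetic difference is the order of the last two steps --- the paper applies Cauchy--Schwarz to the $L^1$-expectations first and then Jensen, whereas you apply Jensen termwise first and then Cauchy--Schwarz --- but both orderings yield \eqref{bias_estimate:claim} immediately.
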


\begin{proof}[Proof of \cref{lem:bias_estimate}]
	First, observe that 
		\cref{lem:exact_solution_alternative_representation} 
	ensures for all 
		$ k \in \{ 1, 2, \ldots, K \} $, 
		$ x \in \mc O $ 
	that 
		\begin{equation} \label{bias:exact_solution_formula}
		v_{ k } ( x ) = \EXPP{ g ( X^{ 0, k, x }_{ 0 } ) } + \sum_{ l = 0 }^{ k - 1 } \EXPP{ f_{ l } ( X^{ 0, k, x }_{ l }, v_{ l } ( X^{ 0, k, x }_{ l } ) ) - v_{ l } ( X^{ 0, k, x }_{ l } ) }. 
		\end{equation} 
	Moreover, note that \cref{lem:mean} proves that for all 
		$ k \in \{ 1, 2, \ldots, K \} $, 
		$ n \in \N $, 
		$ x \in \mc O $  
	it holds that 
		\begin{equation} 
		\EXPP{ V^{ 0 }_{ k, n } ( x ) } 
		= 
		\EXPP{ g ( X^{ 0, k, x }_{ 0 } ) } 
		+ \sum_{ l = 0 }^{ k - 1 } \EXPP{ f_{ l } ( X^{ 0, k, x }_{ l }, V^{ 0 }_{ l, n - 1 } ( X^{ 0, k, x }_{ l } ) ) - V^{ 0 }_{ l, n - 1 } ( X^{ 0, k, x }_{ l } ) }.
		\end{equation} 
	This and \eqref{bias:exact_solution_formula} ensure that for all
		$ k \in \{ 1, 2, \ldots, K \} $, 
	 	$ n \in \N $, 
	 	$ x \in \mc O $ 
	it holds that 
		\begin{equation} 
		\begin{split}
		& \EXPP{ V^{ 0 }_{ k, n } ( x ) } - v_{ k } ( x ) 
		\\
		& = 
		\sum_{ l = 0 }^{ k - 1 } 
		\EXPP{ 
			\big(
			f_{ l } ( X^{ 0, k, x }_{ l },
		  	V^{ 0 }_{ l, n - 1 } ( X^{ 0, k, x }_{ l } ) ) - V^{ 0 }_{ l, n - 1 } ( X^{ 0, k, x }_{ l } )
		  	\big) 
			-
			\big( 
			f_{ l } ( X^{ 0, k, x }_{ l }, v_{ l } ( X^{ 0, k, x }_{ l } ) ) - v_{ l } ( X^{ 0, k, x }_{ l } ) 
			\big) 	
			}.
		\end{split}
		\end{equation} 
	The assumption that for all 
	 	$ k \in \{ 0, 1, \ldots, K - 1 \} $, 
	 	$ a,b \in \R $, 
	 	$ x	\in \mc O $
	it holds that 
	 	$ | ( f_{ k } ( x, a ) - a ) - ( f_{ k } ( x, b ) - b ) | \leq L_{ k } | a - b | $	
	hence yields that for all 
	 	$ k \in \{ 1, 2, \ldots, K \} $, 
	 	$ n \in \N $, 
	 	$ x \in \mc O $ 
	it holds that 
		\begin{equation} 
		\begin{split} 
		\left| \Exp{ V^{ 0 }_{ k, n } ( x ) } - v_{ k } ( x ) \right| 
		& \leq  
		\sum_{ l = 0 }^{ k - 1 } 
		L_{ l } \, \EXPP{ | V^{ 0 }_{ l, n - 1 } ( X^{ 0, k, x }_{ l } ) - v_{ l } ( X^{ 0, k, x }_{ l } ) | } .
		\end{split}
		\end{equation} 
	The Cauchy-Schwarz inequality therefore shows that for all 
	 	$ k \in \{ 0, 1, \ldots, K \} $, 
	 	$ n \in \N $, 
	 	$ x \in \mc O $ 
	it holds that 
		\begin{equation} 
		\begin{split} 
		\left| \Exp{ V^{ 0 }_{ k, n } ( x ) } - v_{ k } ( x ) \right|^2 
		& \leq 
		\left[ \sum_{ l = 0 }^{ k - 1 } L_{ l }^{ \nicefrac12 } 
		\left( L_{ l }^{ \nicefrac12 } \,
		\Exp{ \big| V^{ 0 }_{ l, n - 1 } ( X^{ 0, k, x }_{ l } ) - v_{ l } ( X^{ 0, k, x }_{ l } ) \big| } \right) \right]^2
		\\
		& \leq 
		\left( \sum_{ l = 0 }^{ k - 1 } L_{ l } \right) \!
		\left( \sum_{ l = 0 }^{ k - 1 } L_{ l } \left| \EXPP{ | V^{ 0 }_{ l, n - 1 } ( X^{ 0, k, x }_{ l } ) - v_{ l } ( X^{ 0, k, x }_{ l } ) | } \right|^2 \right)
		\\
		& \leq 
		\left( \sum_{ l = 0 }^{ k - 1 } L_{ l } \right) \!
		\left( \sum_{ l = 0 }^{ k - 1 } L_{ l } \, \EXPP{ |  V^{ 0 }_{ l, n - 1 } ( X^{ 0, k, x }_{ l } ) - v_{ l } ( X^{ 0, k, x }_{ l } ) |^2} \right)\!. 
		\end{split}
		\end{equation} 
	This establishes \eqref{bias_estimate:claim}. 
	This completes the proof of \cref{lem:bias_estimate}. 
\end{proof}

\begin{lemma}[Variance estimate] \label{lem:variance_estimate}
	Assume 
		\cref{setting}, 
	let 
		$ L_{ 0 }, L_{ 1 }, \ldots, L_{ K } \in [ 0, \infty ) $
	satisfy for all 
		$ k	\in \{ 0, 1, \ldots, K \} $,
		$ x	\in \mc O $,
		$ a,b \in \R $	
	that 
		$ | ( f_{ k } ( x, a ) - a ) - ( f_{ k } ( x, b ) - b ) | \leq L_{ k } | a - b | $, 
	and assume for all 
		$ k \in \{ 0, 1, \ldots, K \} $, 
		$ x \in \mc O $
	that 
		$ \EXP{ | g ( X^{ 0, k, x }_{ 0 } ) |^2 + \sum_{ l = 0 }^{ k - 1 } | f_{ l } ( X^{ 0, k, x }_{ l }, 0 ) |^2 } < \infty $.  
	Then it holds for all 
		$ k \in \{ 1, 2, \ldots, K \} $, 
		$ n \in \N $, 
		$ x \in \mc O $ 
	that 
		\begin{equation} \label{variance_estimate:claim}
		\begin{split}
		\var{ V^{ 0 }_{ k, n } ( x ) } 
		& \leq 
		\frac{ 1 }{ M^n } \Exp{ | g ( X^{ 0, k, x }_{ 0 } ) |^2
		+ 
		\big| \smallsum\nolimits_{ l = 0 }^{ k - 1 } f_{  l } ( X^{ 0, k, x }_{ l }, 0 ) \big|^2 } 
		\\
	 	& + 
	 	\sum_{ j = 1 }^{ n - 1 } \frac{ 1 }{ M^{ n - j } } \left[
	 	\max_{ l \in \{ 0, 1, \ldots, k - 1 \} } \frac{ L_{ l } }{ \mf p_{ k, l } } \right]\!
	 	\left( \sum_{ l = 0 }^{ k - 1 } L_{ l } \,\Exp{ \big|
	 		V^{ 0 }_{ l, j } ( X^{ 0, k, x }_{ l } ) 
	 		- V^{ 1 }_{ l, j - 1 } ( X^{ 0, k, x} _{ l } ) 		\big|^2}
	 	\right)\!.
		\end{split}
	\end{equation}
\end{lemma}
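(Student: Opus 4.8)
The plan is to use that, for fixed $k\in\{1,2,\ldots,K\}$, $n\in\N$, and $x\in\mc O$, formula~\eqref{setting:mlp_scheme} represents $V^0_{k,n}(x)$ as a sum of $n$ mutually independent summands: the Monte Carlo average $M^{-n}\sum_{m=1}^{M^n}[g(X^{(0,0,-m),k,x}_0)+\sum_{l=0}^{k-1}f_l(X^{(0,0,m),k,x}_l,0)]$, together with, for each $j\in\{1,2,\ldots,n-1\}$, the correction average $M^{-(n-j)}\sum_{m=1}^{M^{n-j}}Y_{j,m}$, where $Y_{j,m}$ denotes the summand in~\eqref{setting:mlp_scheme} attached to the index $(0,j,m)$. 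Independence of these $n$ blocks follows from \cref{lem:measurability}, from the fact that the index sets $\{(0,0,\pm m):m\in\N\}$ and $\{(0,j,\pm m):m\in\N\}$, $j\in\{1,2,\ldots,n-1\}$, are pairwise disjoint, and from the independence hypotheses on $(\mc R^\theta)_{\theta\in\Theta}$ and on $(W^\theta_s)_{(\theta,s)\in\Theta\times\{0,1,\ldots,K\}}$ in \cref{setting}. Hence $\var{V^0_{k,n}(x)}$ equals the sum of the variances of the $n$ blocks, and it remains to bound each of them; the requisite square-integrability is provided by \cref{lem:integrability} together with the standing hypothesis $\EXP{|g(X^{0,k,x}_0)|^2+\sum_{l=0}^{k-1}|f_l(X^{0,k,x}_l,0)|^2}<\infty$.

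For the first block I would use that its $M^n$ summands are i.i.d.\ (they depend on disjoint families of the random variables $W^\theta_s$ and are identically distributed by item~\eqref{elementary_equidistribution_X_processes:item1} of \cref{lem:elementary_equidistribution_X_processes}), so that its variance equals $M^{-n}$ times the variance of a single summand; within one summand, $g(X^{(0,0,-1),k,x}_0)$ and $\sum_{l=0}^{k-1}f_l(X^{(0,0,1),k,x}_l,0)$ depend on disjoint index families and are therefore independent, so the variance of their sum is the sum of the two variances, which I would bound by the corresponding second moments and translate back to $M^{-n}\EXP{|g(X^{0,k,x}_0)|^2+|\sum_{l=0}^{k-1}f_l(X^{0,k,x}_l,0)|^2}$ via item~\eqref{elementary_equidistribution_X_processes:item1} of \cref{lem:elementary_equidistribution_X_processes}.

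For the block indexed by $j\in\{1,2,\ldots,n-1\}$ the $M^{n-j}$ summands are again i.i.d., so that its variance is $M^{-(n-j)}$ times the variance of $Y_{j,1}$, which I would bound by $\EXP{|Y_{j,1}|^2}$. Writing $h_l(x,a)=f_l(x,a)-a$, invoking the Lipschitz bound $|h_l(\cdot,a)-h_l(\cdot,b)|\leq L_l|a-b|$, and abbreviating $r:=\mc R^{(0,j,1)}_k$, one obtains $|Y_{j,1}|\leq \mf p_{k,r}^{-1}L_r|V^{(0,j,1)}_{r,j}(X^{(0,j,1),k,x}_r)-V^{(0,j,-1)}_{r,j-1}(X^{(0,j,1),k,x}_r)|$. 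Since $r=\mc R^{(0,j,1)}_k$ is, by the independence hypotheses in \cref{setting}, independent of the remaining random variables entering $Y_{j,1}$, conditioning on the value of $r$ gives $\EXP{|Y_{j,1}|^2}\leq\sum_{l=0}^{k-1}\P(\mc R^0_k=l)\,\mf p_{k,l}^{-2}L_l^2\,\EXP{|V^{(0,j,1)}_{l,j}(X^{(0,j,1),k,x}_l)-V^{(0,j,-1)}_{l,j-1}(X^{(0,j,1),k,x}_l)|^2}$, where I used $\P(\mc R^{(0,j,1)}_k=l)=\P(\mc R^0_k=l)$. The defining relation $\mf p_{k,l}\P(\mc R^0_k=l)=|\P(\mc R^0_k=l)|^2$ lets me replace $\P(\mc R^0_k=l)\,\mf p_{k,l}^{-2}$ by $\mf p_{k,l}^{-1}$ (the summands with $\P(\mc R^0_k=l)=0$ vanish), and then $\mf p_{k,l}^{-1}L_l^2=L_l(L_l\,\mf p_{k,l}^{-1})\leq L_l\max_{l'\in\{0,1,\ldots,k-1\}}(L_{l'}\,\mf p_{k,l'}^{-1})$. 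Finally I would replace $\EXP{|V^{(0,j,1)}_{l,j}(X^{(0,j,1),k,x}_l)-V^{(0,j,-1)}_{l,j-1}(X^{(0,j,1),k,x}_l)|^2}$ by $\EXP{|V^0_{l,j}(X^{0,k,x}_l)-V^1_{l,j-1}(X^{0,k,x}_l)|^2}$ by combining item~\eqref{equidistribution:item2} of \cref{lem:equidistribution} (the random fields $(V^{(0,j,1)}_{\cdot,j},V^{(0,j,-1)}_{\cdot,j-1})$ and $(V^0_{\cdot,j},V^1_{\cdot,j-1})$ are identically distributed, since the subtrees $\{((0,j,1),\eta):\eta\in\Theta\}$ and $\{((0,j,-1),\eta):\eta\in\Theta\}$ are disjoint), item~\eqref{elementary_equidistribution_X_processes:item1} of \cref{lem:elementary_equidistribution_X_processes} ($X^{(0,j,1),k,\cdot}_l$ and $X^{0,k,\cdot}_l$ are identically distributed random fields), and item~\eqref{equidistribution_random_fields:item2} of \cref{lem:equidistribution_random_fields} to compose the two, exactly as in the proof of \cref{lem:mean}. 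Summing the $n$ block estimates then yields~\eqref{variance_estimate:claim}.

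I expect the main obstacle to be this last replacement: one has to introduce the appropriate auxiliary random fields and identify the two independent sigma-algebras demanded by \cref{lem:equidistribution_random_fields} --- namely the deep-index sigma-algebra with respect to which the composed functions $V^{(0,j,1)}_{l,j}$ and $V^{(0,j,-1)}_{l,j-1}$ are measurable (by \cref{lem:measurability}) and the shallow-index sigma-algebra $\sigma((W^{(0,j,1)}_s)_{s\in[l,k)\cap\N_0})$ with respect to which $X^{(0,j,1),k,\cdot}_l$ is measurable (by \cref{lem:elementary_measurability_X_processes}) --- and to verify that they are independent. This index bookkeeping is delicate but follows the pattern already carried out in \cref{lem:elementary_equidistribution_X_processes,cor:equidistribution_for_mean_lemma,lem:mean}.
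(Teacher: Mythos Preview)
Your proposal is correct and follows essentially the same strategy as the paper: decompose the variance of $V^0_{k,n}(x)$ into independent blocks via the index structure, bound each block's variance by a second moment, condition on the random level $\mc R_k$, apply the Lipschitz bound for $h_l=f_l(\cdot,a)-a$, and pull out the maximum of $L_l/\mf p_{k,l}$.

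The only substantive difference is the order and the tool used for the index replacement. The paper invokes item~\eqref{equidistribution:item3} of \cref{lem:equidistribution} \emph{before} bounding by the second moment: that item is tailored precisely to this situation, asserting directly that the whole functional $\mf p_{k,\mc R^\theta_k}^{-1}[h_{\mc R^\theta_k}(X^{\theta,k,x}_{\mc R^\theta_k},V^\theta_{\mc R^\theta_k,n}(\cdot))-h_{\mc R^\theta_k}(X^{\theta,k,x}_{\mc R^\theta_k},V^\vartheta_{\mc R^\theta_k,n-1}(\cdot))]$ has the same law as its $(0,1)$-indexed counterpart. After this replacement the conditioning and Lipschitz steps are carried out with the canonical indices $0$ and $1$, so the ``delicate index bookkeeping'' you anticipate at the end never arises. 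Your route---Lipschitz and conditioning first in the $(0,j,\pm 1)$ indices, then replacing via item~\eqref{equidistribution:item2} of \cref{lem:equidistribution} composed through \cref{lem:equidistribution_random_fields}---is equivalent and equally valid; it simply reconstructs the content of item~\eqref{equidistribution:item3} by hand.
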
 

\begin{proof}[Proof of \cref{lem:variance_estimate}] 
	First, observe that 
		item~\eqref{elementary_measurability_X_processes:item1} of \cref{lem:elementary_measurability_X_processes},  
 		item~\eqref{measurability:item2} of \cref{lem:measurability}, 
 	and the fact that for all 
 		$ k \in \{ 1, 2, \ldots, K \} $ 
 	it holds that 
 		$ \{ W^{\theta}_s, \mc{R}^{\theta}_{k} \colon \theta \in \Theta, s \in \{ 0, 1, \ldots, K - 1 \} \} $ 
	is a family of independent random variables  	
	ensure that for all 
		$ k \in \{ 1, 2, \ldots, K \} $, 
		$ n \in \N $,  
		$ x \in \mc O $
	it holds that 
		\begin{equation}
		  \begin{split}
		  & \var{ V^{ 0 }_{ k, n } ( x ) } 
		  = \sum_{ m = 1 }^{ M^n } \var{ \frac{ g ( X^{ ( 0, 0, -m ), k, x }_{ 0 } ) }{ M^n } } 
		  + \sum_{ m = 1 }^{ M^n } 
		  \var{ \frac{ \sum_{ l = 0 }^{ k - 1 } f_{ l } ( X^{ ( 0, 0, m ), k, x }_{ l }, 0 ) }{ M^n } }
		  \\ 
		  & \quad  
		  + \sum_{ j = 1 }^{ n - 1 } \sum_{ m = 1 }^{ M^{ n - j } } 
		  \VARRRRR{
		  \bigg(
		  \frac{ f_{ \mc{R}^{ ( 0, j, m ) }_{ k } } ( X^{ ( 0, j, m ), k, x }_{ \mc{R}^{ ( 0, j, m ) }_{ k } },  V^{ ( 0, j, m ) }_{ \mc{R}^{ ( 0, j, m ) }_{k}, j } ( X^{ ( 0, j, m ), k, x }_{ \mc{R}^{ ( 0, j, m ) }_{ k } } ) ) - 
		  V^{ ( 0, j, m ) }_{ \mc{R}^{ ( 0, j, m ) }_{ k }, j } ( X^{ ( 0, j, m ), k, x }_{ \mc{R}^{ ( 0, j, m ) }_{ k } } ) }{ M^{ n - j } }
		  \\
		  & \quad    
		  - \frac{ 
		  		f_{ \mc{R}^{ ( 0, j, m ) }_{ k } } (  X^{ ( 0, j, m ), k, x }_{ \mc{R}^{ ( 0, j, m ) }_{ k } },  V^{ ( 0, j, -m ) }_{ \mc{R}^{ ( 0, j, m ) }_{ k }, j - 1 } ( X^{ ( 0, j, m ), k, x }_{ \mc{R}^{ ( 0, j, m ) }_{ k } } ) ) - V^{ ( 0, j, -m ) }_{ \mc{R}^{ ( 0, j, m ) }_{ k }, j - 1 } ( X^{ ( 0, j, m ), k, x }_{ \mc{R}^{ ( 0, j, m ) }_{ k } } ) }{ M^{ n - j } }
		  	\bigg) 
		  \cdot 
		  \frac{ 1 }{ \mf p_{ k, \mc{R}^{ ( 0, j, m ) }_{ k } } }
		   } .
		\end{split}
		\end{equation}  
	Item~\eqref{elementary_equidistribution_X_processes:item1} of \cref{lem:elementary_equidistribution_X_processes} and 
	item~\eqref{equidistribution:item3} of \cref{lem:equidistribution} therefore 
	ensure that for all 
		$ k \in \{ 1, 2, \ldots, K \} $, 
		$ n \in \N $, 
		$ x \in \mc O $ 
	it holds that 
		\begin{equation} 
		\begin{split}
		 \var{ V^{ 0 }_{ k, n } ( x ) } 
		 & = \frac{ 1 }{ M^n } \left( \VAR{ g ( X^{ 0, k, x }_{ 0 } ) } 
		 + \VARRR{ \smallsum\nolimits_{ l = 0 }^{ k - 1 } f_{ l } ( X^{ 0, k, x }_{ l }, 0 ) }
		 \right)
		 \\
		 &  + \sum_{ j = 1 }^{ n - 1 } 
		 \frac{ 1 }{ M^{ n - j } } \VARRRR{ \frac{ 1 }{ \mf p_{ k, \mc{R}^{ 0 }_{ k } } } \big( f_{ \mc{R}^{ 0 }_{ k } }( X^{ 0, k, x }_{ \mc{R}^{ 0 }_{ k } }, V^{ 0 }_{ \mc{R}^{ 0 }_{ k }, j } ( X^{ 0, k, x }_{ \mc{R}^{ 0 }_{ k } } ) ) - 		V^{ 0 }_{ \mc{R}^{ 0 }_{ k }, j } ( X^{ 0, k, x }_{ \mc{R}^{ 0 }_{ k } } ) \big)
	 	\\
	 	& 
	 	- \big( f_{ \mc{R}^{ 0 }_{ k } } ( X^{ 0, k, x }_{ \mc{R}^{ 0 }_{ k } }, V^{ 1 }_{ \mc{R}^{ 0 }_{ k }, j - 1 } ( X^{ 0, k, x }_{ \mc{R}^{ 0 }_{ k } } ) ) - V^{ 1 }_{ \mc{R}^{ 0 }_{ k }, j - 1 } ( X^{ 0, k, x }_{ \mc{R}^{ 0 }_{ k } } ) \big) } .
		\end{split}
		\end{equation} 
	Combining this with the fact that for every random variable 
		$ \mc X \colon \Omega \to \R $ 
	with $\EXP{ | \mc X |} < \infty $ it holds that 
		$ \var{ \mc X } \leq \Exp{ | \mc X |^2 } \in [ 0, \infty ] $ 
	and \cref{lem:integrability} yields that for all 
		$ k \in \{ 0, 1, \ldots, K \} $, 
		$ n \in \N $, 
		$ x \in \mc O $ 
	it holds that 
		\begin{equation} \label{variance:eq03}
		\begin{split}
		\var{ V^{ 0 }_{ k, n } ( x ) } 
		& \leq 
		\frac{ 1 }{ M^n } \Exp{ | g ( X^{ 0, k, x }_{ 0 } ) |^2
		+ \left| \smallsum\nolimits_{ l = 0 }^{ k - 1 }  f_{ l } ( X^{ 0, k, x }_{ l }, 0 ) \right|^2 } 
		\\
		& + \sum_{ j = 1 }^{ n - 1 } 
		\frac{ 1 }{ M^{ n - j } }  
		\EXPPPP{ \Big| \frac{ 1 }{ \mf p_{ k, \mc{R}^{ 0 }_{ k } } }  \big( f_{ \mc{R}^{ 0 }_{ k } } ( X^{ 0, k, x }_{ \mc{R}^{ 0 }_{ k } }, V^{ 0 }_{ \mc{R}^{ 0 }_{ k }, j } ( X^{ 0, k, x }_{ \mc{R}^{ 0 }_{ k } } ) ) 
		- V^{ 0 }_{ \mc{R}^{ 0 }_{ k }, j } ( X^{ 0, k, x }_{ \mc{R}^{ 0 }_{ k } } ) \big)
		\\
		& - \big( f_{ \mc{R}^{ 0 }_{ k } } ( X^{ 0, k, x }_{ \mc{R}^{ 0 }_{ k } }, V^{ 1 }_{ \mc{R}^{ 0 }_{ k }, j - 1 } ( X^{ 0, k, x }_{ \mc{R}^{ 0 }_{ k } } ) ) - 	V^{ 1 }_{ \mc{R}^{ 0 }_{ k }, j - 1 } ( X^{ 0, k, x }_{ \mc{R}^{ 0 }_{ k } } ) \big) \Big|^2 }.
		\end{split} 
		\end{equation} 
	Moreover, note that item~\eqref{elementary_measurability_X_processes:item2} of \cref{lem:elementary_measurability_X_processes}, 	item~\eqref{measurability:item1} of \cref{lem:measurability}, the fact that 
		$ \sigma( ( \mc{R}^0_{s} )_{ s \in \{1, 2, \ldots, K\} } ) $ 
	and 
		$ \sigma( ( \mc{R}^{ ( 0, \eta ) }_{ s } )_{ ( s, \eta ) \in \{1, 2, \ldots, K\} \times \Theta }, ( W^{ ( 0, \eta ) }_{ s } )_{ ( s, \eta ) \in \{ 0, 1, \ldots, K - 1  \} \times \Theta }, ( W^{ 0 }_{ s } )_{ s \in \{ 0, 1, \ldots, K - 1 \} } ) $ 
	are independent sigma-algebras, 
	and the fact that 
		$ \sigma ( ( \mc{R}^0_{s} )_{ s\in \{1, 2, \ldots, K\} } ) $ 
	and 
		$ \sigma ( ( \mc{R}^{ ( 1, \eta ) }_{ s } )_{ (s, \eta ) \in \{1, 2, \ldots, K \} \times \Theta }, ( W^{ ( 1, \eta ) }_{ s } )_{ ( s, \eta ) \in\{ 0, 1, \ldots, K - 1 \} \times \Theta }, \allowbreak ( W^{ 0 }_{ s } )_{ s \in \{ 0, 1, \ldots, K - 1 \} }) $ are independent sigma-algebras on $ \Omega $ ensure that for all 
		$ k \in \{ 1, 2, \ldots, K \} $, 
		$ n \in \N $, 
		$ j \in \{ 1, 2, \ldots, n - 1 \} $, 
		$ x \in \mc O $ 
	it holds that 
		\begin{equation} 
		\begin{split}
		&
		\EXPPPP{ \Big| \frac{ 1 }{ \mf p_{ k, \mc{R}^{ 0 }_{ k } } } \big( f_{ \mc{R}^{ 0 }_{ k } } ( X^{ 0, k, x }_{ \mc{R}^{ 0 }_{ k } }, V^{ 0 }_{ \mc{R}^{ 0 }_{ k }, j } ( X^{ 0, k, x }_{ \mc{R}^{ 0 }_{ k } } ) ) - 
		V^{ 0 }_{ \mc{R}^{ 0 }_{ k }, j } ( X^{ 0, k, x }_{ \mc{R}^{ 0 }_{ k } } ) \big)
		\\
		& \quad 
		- \big( f_{ \mc{R}^{ 0 }_{ k } } ( X^{ 0, k, x }_{ \mc{R}^{ 0 }_{ k } }, V^{ 1 }_{ \mc{R}^{ 0 }_{ k }, j - 1 } ( X^{ 0, k, x }_{ \mc{R}^{ 0 }_{ k } } ) ) - V^{ 1 }_{ \mc{R}^{ 0 }_{ k }, j - 1 } ( X^{ 0, k, x }_{ \mc{R}^{ 0 }_{ k } } )  \big)\Big|^2 }
		\\
		& = 
		\sum_{ l = 0 }^{ k -  1 } 
		\EXPPPP{ \frac{ \mathbbm{1}_{ \{ \mc{R}^{ 0 }_{ k } = l \} } }{ | \mf p_{ k, l } |^2 } \big| \big( f_{ l } \big( X^{ 0, k, x }_{ l }, V^{ 0 }_{ l, j }( X^{ 0, k, x }_{ l } ) \big) - V^{ 0 }_{ l, j }(  X^{ 0, k, x }_{ l } ) \big) 
		\\
		& \quad 
		- \big( f_{ l }( X^{ 0, k, x }_{ l }, V^{ 1 }_{ l, j - 1 } ( X^{ 0, k, x }_{ l } ) ) - V^{ 1 }_{ l, j - 1 } ( X^{ 0, k, x }_{ l } ) \big) \big|^2 }
		\\
		& = 
		\sum_{ l = 0 }^{ k - 1 } 
		\frac{ 1 }{ \mf p_{ k, l } }
		\EXPPP{ \big| \big( f_{ l }( X^{ 0, k, x }_{ l }, V^{ 0 }_{ l, j } ( X^{ 0, k, x }_{ l } ) ) - 	V^{ 0 }_{ l, j } ( X^{ 0, k, x }_{ l } ) \big) 
		\\
		& \quad 
		- \big( f_{ l } ( X^{ 0, k, x }_{ l }, V^{ 1 }_{ l, j - 1 } (X^{ 0, k, x }_{ l } ) ) - V^{ 1 }_{ l, j - 1 }  ( X^{ 0, k, x }_{ l } ) \big) \big|^2 }.
		\end{split}
		\end{equation} 
	This and \eqref{variance:eq03} yield that for all 
		$ k \in \{ 1, 2, \ldots, K \} $, 
		$ n \in \N $, 
		$ x \in \mc O $ 
	it holds that 
		\begin{equation} 
		\begin{split}
		\var{ V^{ 0 }_{ k, n } ( x ) } 
		& \leq 
		\frac{ 1 }{ M^n } 
		\Exp{ | g ( X^{ 0, k, x }_{ 0 } ) |^2
		+ \big| \smallsum_{ l = 0 }^{ k - 1 } f_{ l } ( X^{ 0, k, x }_{ l }, 0 ) \big|^2
		}
		\\
		&  
		+ \sum_{ j = 1 }^{ n - 1 } 
		\frac{ 1 }{ M^{ n - j } } \sum_{ l = 0 }^{ k - 1 } 
		\frac{ 1 }{ \mf p_{ k, l } }
		\EXPPP{\big| \big( f_{ l } ( X^{ 0, k, x }_{ l }, V^{ 0 }_{ l, j } ( X^{ 0, k, x }_{ l } ) ) 
		- V^{ 0 }_{ l, j } ( X^{ 0, k, x }_{ l } ) \big) 
		\\
		& - 
		\big( f_{ l } ( X^{ 0, k, x }_{ l }, V^{ 1 }_{ l, j - 1 } ( X^{ 0, k, x }_{ l } ) ) - V^{ 1 }_{ l, j - 1 } ( X^{ 0, k, x }_{ l } ) \big) 
		\big|^2}.
		\end{split} 
		\end{equation} 
	The assumption that for all 
		$ k \in \{ 0, 1, \ldots, K - 1 \} $, 
		$ a,b \in \R $, 
		$ x \in \mc O  $ 
	it holds that 
		$ | ( f_{ k } ( x, a ) - a ) - ( f_{ k } ( x, b ) - b ) | \leq L_{ k } | a - b | $
	therefore implies that for all 
		$ k \in \{ 1, 2, \ldots, K \} $, 
		$ n \in \N $, 
		$ x \in \mc O $ 
	it holds that 
		\begin{equation} 
		\begin{split}
		 \var{ V^{ 0 }_{ k, n } ( x ) } 
		 & \leq \frac{ 1 }{ M^n } \Exp{ | g ( X^{ 0, k, x }_{ 0 } ) |^2 + \big| \smallsum\nolimits_{ l = 0 }^{ k - 1 } f_{ l } ( X^{ 0, k, x }_{ l }, 0 ) \big|^2 } 
		 \\
		 & 
		 + \sum_{ j = 1 }^{ n - 1 } 
		 \frac{ 1 }{ M^{ n - j } } \sum_{ l = 0 }^{ k - 1 } 
		 \frac{ L_{ l }^2 }{ \mf p_{ k, l } } \Exp{ \big| V^{ 0 }_{ l, j } ( X^{ 0, k, x }_{ l } ) - V^{ 1 }_{ l, j - 1 } ( X^{ 0, k, x }_{ l } ) 	 \big|^2 } \!.
		\end{split}
		\end{equation}
	H\"older's inequality hence ensures for all 
		$ k \in \{ 1, 2, \ldots, K \} $, 
		$ n \in \N $, 
		$ x \in \mc O $ 
	that 
		\begin{equation} 
		\begin{split}
		\var{ V^{ 0 }_{ k, n } ( x ) }  
		& \leq 
		\frac{ 1 }{ M^n } \Exp{ | g ( X^{ 0, k, x }_{ 0 } ) |^2	+ \big| \smallsum\nolimits_{ l = 0 }^{ k - 1 } f_{ l } ( X^{ 0, k, x }_{ l }, 0 ) \big|^2 } 
		\\
		& + 
		\sum_{ j = 1 }^{ n - 1 } \frac{ 1 }{ M^{ n - j } } \left[
		\max_{ l \in [ 0, k ) \cap \N_0 } \frac{ L_{ l } }{ \mf p_{ k, l } }  \right] \!
		\left( \sum_{ l = 0 }^{ k - 1 }  L_{ l } \, \Exp{ \big|
			V^{ 0 }_{ l, j }( X^{ 0, k, x }_{ l } ) -  	V^{ 1 }_{ l, j - 1 } ( X^{ 0, k, x }_{ l } ) \big|^2}
	 	\right)\!.
		\end{split}
		\end{equation}  
	This establishes \eqref{variance_estimate:claim}. 
	This completes the proof of \cref{lem:variance_estimate}. 
\end{proof}

\subsection{Recursive error estimates for MLP approximations} 
\label{subsec:recursive_error_estimate}

\begin{lemma} \label{lem:error_recursion}
	Assume 
		\cref{setting}, 
	let $ c \in [0, \infty) $, 
	let $ \alpha_{ k }, \beta_{ k }, L_{ k } \in [ 0, \infty ) $, $ k \in \{ 0, 1, \ldots, K \} $, 
	assume for all
		$ k \in \{ 0, 1, \ldots, K \} $, 
		$ x \in \mc O $
	that 
		$ \EXP{ | g ( X^{ 0, k, x }_{ 0 } ) |^2 + \sum_{ l = 0 }^{ k - 1 } | f_{ l } ( X^{ 0, k, x }_{ l }, 0 ) |^2 } < \infty $, 
	assume for all 
		$ k \in \{ 0, 1, \ldots, K - 1 \} $, 
		$ x \in \mc O $, 
		$ a,b \in \R $ 
	that  
		$ | ( f_{ k } ( x, a ) - a ) - ( f_{ k } ( x, b ) - b ) | \leq L_{ k } | a - b | $, 
	assume for all 
		$ k, l \in \N_{ 0 } $ 
	with $ l < k \leq K $ that $ L_l \leq c \P(\mc R^{0}_k = l ) $, 
	assume for all 
		$ l \in \{ 0, 1, \ldots, K - 1 \} $ 
	that 
		$ L_{ l } \sum_{ k = l + 1 }^{ K } \alpha_{ k } \leq \beta_{ l } \sum_{ j = 0 }^{ K - 1 } L_{ j } $, 
	and let $ v_{ k } \colon \mc O \to \R $, $ k \in \{ 0, 1, \ldots, K \} $, be $ \Borel( \mc O ) $/$ \Borel( \R ) $-measurable functions which  satisfy for all 
		$ k \in \{ 1, 2, \ldots, K \} $, 
		$ x \in \mc O $ 
	that 
		$ v_{ 0 } ( x ) = g ( x ) $ 
	and 
		\begin{equation} 
		v_{ k } ( x ) = \EXPP{ f_{ k - 1 } ( X^{ 0, k, x }_{ k - 1 }, v_{ k - 1 } ( X^{ 0, k, x }_{ k - 1 } ) ) }
		\end{equation} 
	(cf.~\cref{lem:welldefinedness_exact_solution}). 
	Then it holds for all 
		$ n \in \N $, 
		$ x \in \mc O $ 
	that 
		\begin{equation} \label{error_recursion:claim}
		\begin{split} 
		& 
		\left(
		\sum_{ k = 0 }^{ K } 
		\alpha_{ k } \, \Exp{ \big| V^{ 0 }_{ k, n } ( X^{ 0, K, x }_{ k } ) - v_{ k } ( X^{ 0, K, x }_{ k } ) \big|^2 } 
		\right)^{\!\nicefrac12}
		\\
		& \leq 
		\frac{ 1 }{ \sqrt{ M^n } } 
		\left[ \sum_{ k = 0 }^{ K } \alpha_{ k } \right]^{\nicefrac12}
		\left[ \left( \Exp{ \big| g ( X^{ 0, K, x }_{ 0 } ) \big|^2 } \right)^{\!\nicefrac12}
		+ \sum_{ l = 0 }^{ K - 1 } \left( \Exp{ \big| f_{ l } ( X^{ 0, K, x }_{ l }, 0 ) \big|^2 } \right)^{\!\nicefrac12}
		\right] 
		\\
		& + 2\left[ \max_{ k,l \in [ 0, K ] \cap \N_0, k > l }  \left( \frac{ L_{ l } }{ \mf p_{ k, l } } \right) \right]  
		\left[ \sum_{ j = 0 }^{ n - 1 } \frac{ 1 }{ \sqrt{ M^{ n - j - 1 } } } 
		\left( \sum_{ k = 0 }^{ K - 1 } \beta_{ k } \, \Exp{ \big| V^{ 0 }_{ k, j }( X^{ 0, K, x }_{ k } ) - v_{ k } ( X^{ 0, K, x }_{ k } ) \big|^2 } \right)^{\!\nicefrac12} \right]\!.
		\end{split}
		\end{equation} 
\end{lemma}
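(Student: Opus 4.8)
The plan is to combine the bias estimate of \cref{lem:bias_estimate}, the variance estimate of \cref{lem:variance_estimate}, a bias--variance decomposition, and a careful reweighting. Throughout fix $ x \in \mc O $ and $ n \in \N $, abbreviate $ e_{ k, j } := \EXP{ | V^{ 0 }_{ k, j }( X^{ 0, K, x }_{ k } ) - v_{ k }( X^{ 0, K, x }_{ k } ) |^2 } $ (finite by \cref{lem:integrability} and \cref{lem:welldefinedness_exact_solution}), $ \gamma_{ j } := ( \sum_{ k = 0 }^{ K - 1 } \beta_{ k } e_{ k, j } )^{ \nicefrac{1}{2} } $, $ \Lambda := \sum_{ l = 0 }^{ K - 1 } L_{ l } $, and $ \mf c := \max_{ k, l \in [ 0, K ] \cap \N_0, \, k > l } \tfrac{ L_{ l } }{ \mf p_{ k, l } } $. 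Since $ V^{ 0 }_{ 0, j } = g = v_{ 0 } $ for $ j \in \N $, the summand $ k = 0 $ in \eqref{error_recursion:claim} drops out. For each fixed starting point $ y \in \mc O $ the bias--variance decomposition $ \EXP{ | V^{ 0 }_{ k, n }( y ) - v_{ k }( y ) |^2 } = | \EXP{ V^{ 0 }_{ k, n }( y ) } - v_{ k }( y ) |^2 + \var{ V^{ 0 }_{ k, n }( y ) } $ holds (valid since $ \EXP{ | V^{ 0 }_{ k, n }( y ) |^2 } < \infty $ by \cref{lem:integrability} and $ v_{ k } $ is deterministic), and inserting \cref{lem:bias_estimate} and \cref{lem:variance_estimate} bounds its right-hand side by the sum of a bias term $ ( \sum_{ l < k } L_{ l } )( \sum_{ l < k } L_{ l } \EXP{ | V^{ 0 }_{ l, n - 1 }( X^{ 0, k, y }_{ l } ) - v_{ l }( X^{ 0, k, y }_{ l } ) |^2 } ) $, a statistical term $ M^{ - n } \EXP{ | g( X^{ 0, k, y }_{ 0 } ) |^2 + | \smallsum_{ l < k } f_{ l }( X^{ 0, k, y }_{ l }, 0 ) |^2 } $, and a recursion term $ \sum_{ j = 1 }^{ n - 1 } M^{ -( n - j ) } [ \max_{ l < k } \tfrac{ L_{ l } }{ \mf p_{ k, l } } ] \sum_{ l < k } L_{ l } \EXP{ | V^{ 0 }_{ l, j }( X^{ 0, k, y }_{ l } ) - V^{ 1 }_{ l, j - 1 }( X^{ 0, k, y }_{ l } ) |^2 } $.

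Next I would integrate these inequalities in $ y $ against the law of $ X^{ 0, K, x }_{ k } $. The point is the flow identity $ X^{ 0, K, x }_{ l } = X^{ 0, k, X^{ 0, K, x }_{ k } }_{ l } $ for $ l \leq k \leq K $ (immediate from \eqref{setting:dynamics} by induction on $ l $), together with the mutual independence of $ \sigma( ( W^{ 0 }_{ s } )_{ s \in [ k, K ) \cap \N_0 } ) $ (which generates $ X^{ 0, K, x }_{ k } $), $ \sigma( ( W^{ 0 }_{ s } )_{ s \in [ l, k ) \cap \N_0 } ) $ (which enters $ X^{ 0, k, \cdot }_{ l } $), and the families of random variables entering $ V^{ 0 } $ and $ V^{ 1 } $; by the measurability properties in \cref{lem:elementary_measurability_X_processes,lem:measurability} and Hutzenthaler et al.~\cite[Lemma~2.2]{Overcoming}, every occurrence of $ X^{ 0, k, y }_{ l } $ with $ y $ distributed as $ X^{ 0, K, x }_{ k } $ may be replaced by $ X^{ 0, K, x }_{ l } $. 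Hence the bias and recursion contributions become $ ( \sum_{ l < k } L_{ l } )( \sum_{ l < k } L_{ l } e_{ l, n - 1 } ) $ and $ \sum_{ j = 1 }^{ n - 1 } M^{ -( n - j ) } [ \max_{ l < k } \tfrac{ L_{ l } }{ \mf p_{ k, l } } ] \sum_{ l < k } L_{ l } \EXP{ | V^{ 0 }_{ l, j }( X^{ 0, K, x }_{ l } ) - V^{ 1 }_{ l, j - 1 }( X^{ 0, K, x }_{ l } ) |^2 } $, while the statistical term is $ \leq M^{ - n } ( \mf B( x ) )^2 $ with $ \mf B( x ) := ( \EXP{ | g( X^{ 0, K, x }_{ 0 } ) |^2 } )^{ \nicefrac{1}{2} } + \sum_{ l = 0 }^{ K - 1 } ( \EXP{ | f_{ l }( X^{ 0, K, x }_{ l }, 0 ) |^2 } )^{ \nicefrac{1}{2} } $ by Minkowski's inequality. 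Then item~\eqref{equidistribution:item1} of \cref{lem:equidistribution} together with \cref{lem:equidistribution_random_fields} (composing with the independent field $ X^{ 0, K, x }_{ l } $) shows that $ ( X^{ 0, K, x }_{ l }, V^{ 1 }_{ l, j - 1 }( X^{ 0, K, x }_{ l } ) ) $ and $ ( X^{ 0, K, x }_{ l }, V^{ 0 }_{ l, j - 1 }( X^{ 0, K, x }_{ l } ) ) $ are identically distributed, so the triangle inequality through $ v_{ l }( X^{ 0, K, x }_{ l } ) $ gives $ \EXP{ | V^{ 0 }_{ l, j }( X^{ 0, K, x }_{ l } ) - V^{ 1 }_{ l, j - 1 }( X^{ 0, K, x }_{ l } ) |^2 } \leq ( e_{ l, j }^{ \nicefrac{1}{2} } + e_{ l, j - 1 }^{ \nicefrac{1}{2} } )^2 $; an analogous bookkeeping underlies \cref{cor:equidistribution_for_mean_lemma}, on which \cref{lem:bias_estimate} rests.

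Having obtained, for $ k \in \{ 1, 2, \ldots, K \} $, a recursive inequality for $ e_{ k, n } $, I would multiply by $ \alpha_{ k } $, sum over $ k $, and interchange the order of summation so that the weights $ L_{ l } \sum_{ k = l + 1 }^{ K } \alpha_{ k } $ appear; these are $ \leq \beta_{ l } \Lambda $ by hypothesis, and $ \max_{ l < k } \tfrac{ L_{ l } }{ \mf p_{ k, l } } \leq \mf c $. The crucial elementary observation is that $ \Lambda \leq \mf c $: for $ l < K $ one has $ L_{ l } \leq c \, \P( \mc R^{ 0 }_{ K } = l ) = c \, \mf p_{ K, l } $ when $ \P( \mc R^{ 0 }_{ K } = l ) > 0 $ and $ L_{ l } = 0 $ otherwise, so that $ \Lambda \leq \mf c \sum_{ l \colon \P( \mc R^{ 0 }_{ K } = l ) > 0 } \mf p_{ K, l } = \mf c \, \P( \mc R^{ 0 }_{ K } \in \{ 0, 1, \ldots, K - 1 \} ) = \mf c $, using that $ \mc R^{ 0 }_{ K } \in \{ 0, 1, \ldots, K - 1 \} $ almost surely. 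Using this, Minkowski's inequality, and subadditivity of $ t \mapsto \sqrt t $, the quantity $ ( \sum_{ k = 0 }^{ K } \alpha_{ k } e_{ k, n } )^{ \nicefrac{1}{2} } $ is bounded by $ ( \sum_{ k = 0 }^{ K } \alpha_{ k } )^{ \nicefrac{1}{2} } M^{ - \nicefrac{n}{2} } \mf B( x ) + \Lambda \gamma_{ n - 1 } + \mf c \sum_{ j = 1 }^{ n - 1 } M^{ -( n - j ) / 2 }( \gamma_{ j } + \gamma_{ j - 1 } ) $. Finally, regrouping the last two sums by the index of $ \gamma $, reindexing $ j \mapsto j - 1 $ in the $ \gamma_{ j - 1 } $-part, and using $ \Lambda + \mf c \leq 2 \mf c $ together with $ M^{ - \nicefrac{1}{2} } \leq 1 $ collapses everything into $ ( \sum_{ k = 0 }^{ K } \alpha_{ k } )^{ \nicefrac{1}{2} } M^{ - \nicefrac{n}{2} } \mf B( x ) + 2 \mf c \sum_{ j = 0 }^{ n - 1 } M^{ -( n - j - 1 ) / 2 } \gamma_{ j } $, which is exactly \eqref{error_recursion:claim}.

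The step I expect to be the main obstacle is the reweighting and regrouping in the last paragraph: matching the three contributions (the bias at iteration level $ n - 1 $, the statistical term, and the variance recursion) to precisely the stated geometric structure with constant $ 2 $ requires both the non-obvious inequality $ \sum_{ l = 0 }^{ K - 1 } L_{ l } \leq \max_{ k > l } \tfrac{ L_{ l } }{ \mf p_{ k, l } } $ and a careful index-by-index grouping of the $ \gamma_{ j } $-terms rather than a crude bound. A secondary technical difficulty is carrying out the integration over the random initial condition rigorously, since it has to be applied term by term with the correct independence and measurability inputs from \cref{lem:elementary_measurability_X_processes,lem:measurability,lem:equidistribution,lem:equidistribution_random_fields,cor:equidistribution_for_mean_lemma}.
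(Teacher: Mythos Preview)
Your proposal is correct and follows essentially the same route as the paper: bias--variance decomposition (the paper phrases it as Minkowski on the weighted $\ell^2$ norm, you as the pointwise identity followed by integration, but these coincide), integration against the law of $X^{0,K,x}_k$ via the flow identity and independence, the reweighting through the hypothesis $L_l\sum_{k>l}\alpha_k\le\beta_l\Lambda$, the key inequality $\Lambda\le\mf c$ (your proof of it is exactly the paper's), replacement of $V^1$ by $V^0$ via equidistribution, and the final regrouping of the $\gamma_j$-terms. You have also correctly identified the two non-obvious technical points (the inequality $\Lambda\le\mf c$ and the careful index-by-index regrouping needed to land on the constant $2$).
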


\begin{proof}[Proof of \cref{lem:error_recursion}]
	First, note that Minkowski's inequality guarantees that for all 
		$ n \in \N_0 $, 
	 	$ x \in \mc O $  
	it holds that 
		\begin{equation} \label{error_recursion:triangle_inequality}
		\begin{split}
			&
			\left( \sum_{ k = 0 }^{ K } \alpha_{ k } \, \Exp{ \big| V^{ 0 }_{ k, n } - v_{ k } \big|^2  ( X^{ 0, K, x }_{ k } ) } \right)^{\!\nicefrac12} 
			\\
			& \leq 
			\left( \sum_{ k = 0 }^{ K } \alpha_{ k } \, \Exp{ \big| V^{ 0 }_{ k, n } - \Exp{ V^{ 0 }_{ k, n } } \! \big|^2 ( X^{ 0, K, x }_{ k } ) } 
			\right)^{\!\nicefrac12} 
			+ 
			\left( \sum_{ k = 0 }^{ K } \alpha_{ k } \, \Exp{ \big| \Exp{ V^{ 0 }_{ k, n } } - v_{ k } \big|^2 ( X^{ 0, K, x }_{ k } ) } 
			\right)^{\!\nicefrac12} .
		\end{split}
		\end{equation} 	
	Moreover, note that \cref{lem:bias_estimate} ensures that for all 
		$ k \in \{ 1, 2, \ldots, K \} $, 
		$ n \in \N $, 
		$ x \in \mc O $ 
	it holds that 
		\begin{equation} 
		\left| \Exp{ V^{ 0 }_{ k, n } ( x ) } - v_{ k } ( x ) \right|^2
		\leq  
		\left( \sum_{ l = 0 }^{ k - 1 } L_{ l } \right) \!
		\left( \sum_{ l = 0 }^{ k - 1 } L_{ l } \, \Exp{ \big| V^{ 0 }_{ l, n - 1 } ( X^{ 0, k, x }_{ l } ) 
		- v_{ l } ( X^{ 0, k, x }_{ l } ) \big|^2 } \right)\!.
		\end{equation} 
	In addition, observe that 
		item~\eqref{elementary_measurability_X_processes:item2} of \cref{lem:elementary_measurability_X_processes} 
	and 	
		item~\eqref{measurability:item1} of \cref{lem:measurability} 		
	ensure that for all 
		$ k, l, n \in \N_0 $ 
	with 
		$ l \leq k \leq K $ 
	it holds that 
		$ \mc O \times \Omega \ni ( x, \omega ) \mapsto V^{ 0 }_{ l, n } ( X^{ 0, k, x }_{ l } ( \omega ), \omega ) \in \R $ 
	is
		$ ( \Borel( \mc O ) \otimes \sigma( ( \mc{R}^{ ( 0, \vartheta ) }_{ s } )_{ ( s, \vartheta ) \in \{1, 2, \ldots, K\} \times \Theta }, ( W^{ ( 0, \vartheta ) }_{ s } )_{ ( s, \vartheta ) \in \{ 0, 1, \ldots, K - 1 \} \times \Theta }, ( W^{ 0 }_{ s } )_{ s \in [ l, k ) \cap \N_0 } ) ) $/$ \Borel( \R ) $-measurable. 
	Combining this and the fact that for all 
		$ k,l \in \N_0 $ 
	with 
		$ l \leq k \leq K $
	it holds that 
		$ \sigma( ( W^{ 0 }_{ s } )_{ s \in [ k, K - 1 ] \cap \N_0 } ) $ 
	and 
		$ \sigma( ( \mc{R}^{ ( 0, \vartheta ) }_{ s } )_{ (s, \vartheta ) \in \{1, 2, \ldots, K\} \times \Theta }, ( W^{ 0 }_{ s } )_{ s \in [ l, k ) \cap \N_0 }, \allowbreak ( W^{ ( 0, \vartheta ) }_{ s } )_{ ( s, \vartheta ) \in \{ 0, 1, \ldots, K - 1 \} \times \Theta }  ) $ 
	are independent sigma-algebras on $ \Omega $ with \cref{lem:bias_estimate} and Hutzenthaler et al.~\cite[Lemma 2.2]{Overcoming} ensures that for all 
		$ k \in \{ 1, 2, \ldots, K \} $, 
		$ n \in \N $, 
		$ x \in \mc O $ 
	it holds that 
		\begin{equation} 
		\begin{split}
		 & 
		 \Exp{ \big| \Exp{ V^{ 0 }_{ k, n } } - v_{ k } \big|^2 ( X^{ 0, K, x }_{ k } \big) }
		 = \int_{ \mc O } \big| \Exp{ V^{ 0 }_{ k, n } ( y ) } - v_{ k } ( y ) \big|^2 \left[ \big( X^{ 0, K, x }_{ k } ( \P ) \big) ( dy) \right]
		 \\ 
		 & \leq  
		 \int_{ \mc O } \left( \sum_{ l = 0 }^{ k - 1 } L_{ l } \right)\!
		 \left( \sum_{ l = 0 }^{ k - 1 } L_{ l } \, \Exp{ \big| V^{ 0 }_{ l, n - 1 } ( X^{ 0, k, y }_{ l } ) 
		 - v_{ l } ( X^{ 0, k, y }_{ l } ) \big|^2 } \right)\!
		 \left[ \big( X^{ 0, K, x }_{ k }( \P ) \big)( dy ) \right] 
		 \\
		 & = 
		 \left( \sum_{ l = 0 }^{ k - 1 } L_{ l } \right)\!\left( \sum_{ l = 0 }^{ k - 1 } L_{ l }
		 \int_{ \mc O } \Exp{ \big| V^{ 0 }_{ l, n - 1 } ( X^{ 0, k, y }_{ l } ) - v_{ l } ( X^{ 0, k, y }_{ l } ) \big|^2 } 
		 \!\left[ \big( X^{ 0, K, x }_{ k }( \P ) \big)( dy ) \right] \right)
		 \\
		 & = 
		 \left( \sum_{ l = 0 }^{ k - 1 } L_{ l } \right)\!\left( \sum_{ l = 0 }^{ k - 1 } L_{ l } \, \Exp{ \Big| V^{ 0 }_{ l, n - 1 } \big( X^{ 0, k, X^{ 0, K, x }_{ k } }_{ l } \big) 
		 - v_{ l } \big( X^{ 0, k, X^{ 0, K, x }_{ k } }_{ l } \big) \Big|^2 } \right)
		 \\
		 & = \left( \sum_{ l = 0 }^{ k - 1 } L_{ l } \right)\!\left( \sum_{ l = 0 }^{ k - 1 } L_{ l } \, \Exp{ |  V^{ 0 }_{ l, n - 1 } ( X^{ 0, K, x }_{ l } ) - v_{ l } ( X^{ 0, K, x }_{ l } ) \big|^2 } 
		 \right)\!.
		\end{split}
		\end{equation}  
	This and the fact that for all 
		$ n \in \N $, 
		$ x \in \mc O $ 
	it holds that 
		$ V^{ 0 }_{ 0, n } ( x ) = g ( x ) $
	demonstrate that for all 
		$ n \in \N $, 
		$ x \in \mc O $
	it holds that 
		\begin{equation} \label{error_recursion:bias_part}
		\begin{split} 
		& 
		\sum_{ k = 0 }^{ K } 
		\alpha_k \, \Exp{ \big| \Exp{ V^{ 0 }_{ k, n } } - v_{ k } \big|^2 ( X^{ 0, K, x }_{ k } ) } 
		\\
		& 
		\leq \sum_{ k = 1 }^{ K } \alpha_{ k } \left( \sum_{ l = 0 }^{ k - 1 } L_{ l } \right) 
		\left( \sum_{ l = 0 }^{ k - 1 } L_{ l } \, \Exp{ \big| V^{ 0 }_{ l, n - 1 } ( X^{ 0, K, x }_{ l } ) - v_{l} ( X^{ 0, K, x  }_{ l } ) \big|^2 } 
		\right) 
		\\
		& = 
		\sum_{ l = 0 }^{ K - 1 } \sum_{ k = l + 1 }^{ K } 
		\alpha_{ k } \left( \sum_{ j = 0 }^{ k - 1 } L_{ j } \right)
		L_{ l } \, \Exp{ \big| V^{ 0 }_{ l, n - 1 } ( X^{ 0, K, x }_{ l } ) - v_{ l } ( X^{ 0, K, x }_{ l } ) \big|^2 } 
		\\
		& \leq 
		\left( \sum_{ j = 0 }^{ K - 1 } L_{ j } \right) \sum_{ l = 0 }^{ K - 1 }
		L_{ l } \left( \sum_{ k = l + 1 }^{ K } \alpha_{ k } \right)  
		\Exp{ \big| V^{ 0 }_{ l, n - 1 } ( X^{ 0, K, x }_{ l } ) - v_{ l } ( X^{ 0, K, x }_{ l } ) \big|^2 }\!.
		\end{split}
		\end{equation} 
	Next note that the fact that for all 
		$ l \in \{ 0, 1, \ldots, K - 1 \} $
	with 
		$ \P( \mc R^{ 0 }_{ K } = l ) > 0 $ 
	it holds that 
		$ \P( \mc R^{ 0 }_{ K } = l ) = \mf p_{ K, l } $,  
	the fact that for all 
		$ l \in \{ 0, 1, \ldots, K - 1 \} $
	with 
		$ \P( \mc R^{ 0 }_{ K } = l ) = 0 $
	it holds that  
		$ L_l = 0 $, 
	and the fact that 
		$ \sum_{ l = 0 }^{ K - 1 } \P ( \mc R^{ 0 }_{ K } = l ) = 1 $
	ensure that 
		\begin{equation} \label{error_recursion:auxiliary_inequality}
		\begin{split} 
		\sum_{ l = 0 }^{ K - 1 } L_l 
		& = 
		\sum_{ l = 0 }^{ K - 1 } L_l \, \mathbbm{1}_{ (0, \infty) }( \P( \mc R^{ 0 }_K = l ) ) 
		\\
		& = 
		\sum_{ l = 0 }^{ K - 1 } \frac{ L_l }{ \mf p_{ K, l } } \, \mf p_{ K, l} \, \mathbbm{1}_{ (0, \infty) }( \P( \mc R^{ 0 }_K = l ) ) 
		= 
		\sum_{ l = 0 }^{ K - 1 } \frac{ L_l }{ \mf p_{ K, l } } \, \P( \mc R^{ 0 }_K = l ) \, \mathbbm{1}_{ (0, \infty) }( \P( \mc R^{ 0 }_K = l ) ) 
		\\
		& \leq 
		\left[ \max_{ l \in \{ 0, 1, \ldots, K - 1 \} } \frac{ L_l }{ \mf p_{ K, l } } \right] 
		\left[ \sum_{ l = 0 }^{ K - 1 } \P( \mc R^{ 0 }_K = l ) \, \mathbbm{1}_{ ( 0, \infty ) } ( \P( \mc R^{ 0 }_K = l ) ) \right]
		\\
		& = 
		\left[ \max_{ l \in \{ 0, 1, \ldots, K - 1 \} } \frac{ L_l }{ \mf p_{ K, l } } \right] 
		\left[ \sum_{ l = 0 }^{ K - 1 } \P( \mc R^{ 0 }_K = l ) \right]
		= 
		\left[ \max_{ l \in \{ 0, 1, \ldots, K - 1 \} } \frac{ L_l }{ \mf p_{ K, l } } \right]\!.
		\end{split}
		\end{equation} 
	The assumption that for all 
		$ l \in \{ 0, 1, \ldots, K - 1 \} $ 
	it holds that  
		$ L_{ l } \sum_{ k = l + 1 }^{ K } \alpha_{ k } \leq \beta_{ l } \sum_{ j = 0 }^{ K - 1 } L_{ j } $ 
	and \eqref{error_recursion:bias_part} therefore ensure that for all 
		$ n \in \N $, 
		$ x \in \mc O $ 
	it holds that 
		\begin{equation} \label{error_recursion:estimation_of_the_bias_error}
		\begin{split} 
		&  
		\sum_{ k = 0 }^{ K } \alpha_k \, \Exp{ \big| \Exp{ V^{ 0 }_{ k, n } } - v_{ k } \big|^2 ( X^{ 0, K, x }_{ k } ) } 
		\\
		& 
		\leq \left[ \sum_{ j = 0 }^{ K - 1 } L_{ j } \right]^{ 2 } \!
		\left( \sum_{ l = 0 }^{ K - 1 } \beta_{ l } \, \Exp{ \big| V^{ 0 }_{ l, n - 1 } ( X^{ 0, K, x }_{ l } ) 
		- v_{ l } ( X^{ 0, K, x }_{ l } ) \big|^2 } \right)
		\\
		& 
		\leq \left[ \max_{ j \in [0,K)\cap\N_0 } \frac{ L_j }{ \mf p_{ K, j } } \right]^2\!		
		\left( \sum_{ l = 0 }^{ K - 1 } \beta_{ l } \, \Exp{ \big| V^{ 0 }_{ l, n - 1 } ( X^{ 0, K, x }_{ l } ) 
		- v_{ l } ( X^{ 0, K, x }_{ l } ) \big|^2 } 
		\right)\!.
		\end{split}
		\end{equation} 
	In the next step we note that item~\eqref{measurability:item1} of \cref{lem:measurability} ensures that for all 
		$ n \in \N_0 $, 
		$ k \in \{ 0, 1, \ldots, K \} $ 
	it holds that 
		\begin{equation} 
		\mc O \times \Omega \ni ( x, \omega ) \mapsto \left| V^{ 0 }_{ k, n } ( x, \omega ) - \int_{ \Omega } V^{ 0 }_{ k, n } ( x, \xi ) \, \P( d\xi ) \right|^2 \in \R 
		\end{equation} 
	is
		$ ( \Borel( \mc O ) \otimes \sigma ( ( \mc{R}^{ ( 0, \vartheta ) }_{ s } )_{ (s, \vartheta) \in \{1, 2, \ldots, K \} \times \Theta }, ( W^{ ( 0, \vartheta ) }_{ s } )_{ ( s, \vartheta ) \in \{ 0, 1, \ldots, K - 1 \} \times \Theta } ) ) $/$ \Borel( \R ) $-measurable. 
	The fact that 	
		$ \sigma( ( \mc{R}^{ ( 0, \vartheta ) }_{ s } )_{ (s, \vartheta) \in \{1, 2, \ldots, K\} \times \Theta }, ( W^{ ( 0, \vartheta ) }_{ s } )_{ ( s, \vartheta ) \in \{ 0, 1, \ldots, K - 1 \} \times \Theta } ) $ 
	and 
		$ \sigma( (W^{ 0 }_{ s } )_{ s \in \{ 0, 1, \ldots, K - 1 \} } ) $ 
	are independent, item\ \eqref{elementary_measurability_X_processes:item1} of \cref{lem:elementary_measurability_X_processes}, 
	and Hutzenthaler et al.~\cite[Lemma 2.2]{Overcoming} hence show that for all 
		$ k \in \{ 0, 1, \ldots, K \} $, 
		$ n \in \N $, 
		$ x \in \mc O $ 
	it holds that  
		\begin{equation} 
		\begin{split} 
		& 
		\Exp{ \big| V^{ 0 }_{ k, n } - \Exp{ V^{ 0 }_{ k, n } } \! \big|^2 ( X^{ 0, K, x }_{ k } ) } 
		= \int_{ \mc O } \Exp{ \big | V^{ 0 }_{ k, n } ( y ) - \Exp{ V^{ 0 }_{ k, n } ( y ) }\! \big|^2}\!\,\big( X^{ 0, K, x }_{ k } ( \P ) \big) ( dy ) .
		\end{split}
		\end{equation} 
	\cref{lem:variance_estimate} hence guarantees for all 
		$ k \in \{ 1, 2, \ldots, K \} $, 
		$ n \in \N $, 
		$ x \in \mc O $ 
	that 
		\begin{equation} 
		\begin{split}
		& 
		\Exp{ \big| V^{ 0 }_{ k, n } - \Exp{ V^{ 0 }_{ k, n } }\! \big|^2 ( X^{ 0, K, x }_{ k } ) } 
		\\
		& 
		\leq \int_{ \mc O } \frac{ 1 }{ M^n } \Exp{ \big| g ( X^{ 0, k, y }_{ 0 } ) \big|^2 + 
		 \Big| \sum\nolimits_{ l = 0 }^{ k - 1 } f_{ l } ( X^{ 0, k, y }_{ l }, 0 ) \Big|^2
		 }\! ( X^{ 0, K, x }_{ k } ( \P ) \big) ( dy ) 
		\\
		&
		+ \int_{ \mc O } \sum_{ j = 1 }^{ n - 1 } \frac{  
		\max_{ l \in \{ 0, 1, \ldots, k - 1 \} } ( \frac{ L_{ l } }{ \mf p_{ k, l } } ) }{ M^{ n - j } }
		\! \left( \sum_{ l = 0 }^{ k - 1 } L_{ l } \, \Exp{ \big| V^{ 0 }_{ l, j } ( X^{ 0, k, y }_{ l } ) - V^{ 1 }_{ l, j - 1 } ( X^{ 0, k, y }_{ l } ) \big|^2 } \right)\!\,\big( X^{ 0, K, x }_{ k } ( \P ) \big) ( dy ) .  
		\end{split} 
		\end{equation} 
	Combining this with item~\eqref{elementary_measurability_X_processes:item1} of \cref{lem:elementary_measurability_X_processes} 
	and Hutzenthaler et al.~\cite[Lemma 2.2]{Overcoming} yields that for all 
		$ k \in \{ 1, 2, \ldots, K \} $, 
		$ n \in \N $, 
		$ x \in \mc O $ 
	it holds that 
		\begin{equation} 
		\begin{split}
		& 
		\Exp{ \big| V^{ 0 }_{ k, n } - \Exp{ V^{ 0 }_{ k, n } }\! \big|^2 ( X^{ 0, K, x }_{ k } ) } 
		\\
		& 
		\leq \frac{ 1 }{ M^n } \Exp{ \Big| g \big( X^{ 0, k, X^{ 0, K, x }_{ k } }_{ 0 } \big ) \Big|^2
			+ \Big| \sum\nolimits_{ l = 0 }^{ k - 1 } f_{ l } \big( X^{ 0, k, X^{ 0, K, x }_{ k } }_{ l }, 0 \big) \Big|^2
		}  
		\\
		&
		+ \int_{ \mc O }
		\sum_{ j = 1 }^{ n - 1 } \frac{ \max_{ l \in \{ 0, 1, \ldots, k - 1 \} } ( \frac{ L_{ l } }{ \mf p_{ k, l } } ) }{ M^{ n - j } } 
		\! \left( \sum_{ l = 0 }^{ k - 1 } L_{ l } \, \Exp{ \big| V^{ 0 }_{ l, j } ( X^{ 0, k, y }_{ l } ) - V^{ 1 }_{ l, j - 1 } ( X^{ 0, k, y }_{ l } ) \big|^2 } \right)\!\,\big( X^{ 0, K, x }_{ k }( \P ) \big)(dy).  
		\end{split} 
		\end{equation} 
	Next observe that item~\eqref{elementary_measurability_X_processes:item2} of \cref{lem:elementary_measurability_X_processes} and 
	item~\eqref{measurability:item1} of \cref{lem:measurability} show that for all
		$ k,l \in \N_0 $, 
		$ n \in \N $ 
	with 
		$ l \leq k \leq K $ 
	it holds that 
		$ \mc O \times \Omega \ni ( x, \omega ) \mapsto | V^{ 0 }_{ l, n } ( X^{ 0, k, x }_{ l } ( \omega ), \omega ) - V^{ 1 }_{ l, n - 1 } ( X^{ 0, k, x }_{ l } ( \omega ), \omega ) |^2 \in [0,\infty) $ 
	is 
		$ ( \Borel( \mc O ) \otimes \sigma ( 
		 ( \mc{R}^{ ( 0, \eta ) }_{ s } )_{ (s, \eta) \in \{1, 2, \ldots, K\} \times \Theta }, 
		 ( \mc{R}^{ ( 1, \eta ) }_{ s } )_{ (s, \eta) \in \{1, 2, \ldots, K\} \times \Theta }, 
		 ( W^{ ( 0, \eta ) }_{ s } )_{ ( s, \eta ) \in \{ 0, 1, \ldots, K - 1 \} \times \Theta }, 
		 ( W^{ 0 }_{ s } )_{ s \in [ l, k ) \cap \N_0 }, \allowbreak 		 
		 ( W^{ ( 1, \eta ) }_{ s } )_{ ( s, \eta ) \in \{ 0, 1, \ldots, K - 1 \} \times \Theta }
		 )
		 ) $/$ \Borel([0,\infty)) $-measurable. 
	This, the fact that for all 
		$ k,l \in \N_0 $ 
	with 
		$ l \leq k \leq K $
	it holds that 
		$ \sigma( ( W^{ 0 }_{ s } )_{ s \in [ k, K ) \cap \N_0 } ) $ 
	and
		$ \sigma( 
		( \mc{R}^{ ( 0, \eta ) }_{ s } )_{ (s, \eta) \in \{1, 2, \ldots, K\} \times \Theta },  
		( \mc{R}^{ ( 1, \eta ) }_{ s } )_{ (s, \eta) \in \{1, 2, \ldots, K\} \times \Theta }, 
		( W^{ 0 }_{ s } )_{ s \in [ l, k ) \cap \N_0 }, \allowbreak
		( W^{ ( 0, \eta ) }_{ s } )_{ ( s, \eta ) \in \{ 0, 1, \ldots, K - 1 \} \times \Theta }, \allowbreak		 
		( W^{ ( 1, \eta ) }_{ s } )_{ ( s, \eta ) \in \{ 0, 1, \ldots, K - 1 \} \times \Theta }) $ 		
	are independent, and Hutzenthaler et al.~\cite[Lemma 2.2]{Overcoming} demonstrate that for all 
		$ k \in \{ 1, 2, \ldots, K \} $, 
		$ n \in \N $, 
		$ x \in \mc O $ 
	it holds that
		\begin{equation} 
		\begin{split} 
		& 
		\Exp{ \big| V^{ 0 }_{ k, n } - \Exp{ V^{ 0 }_{ k, n } }\! \big|^2 ( X^{ 0, K, x }_{ k } ) } 
		\\
		& \leq
	 	\frac{ 1 }{ M^n } \Exp{ \Big| g \big( X^{ 0, k, X^{ 0, K, x }_{ k } }_{ 0 } \big) \Big|^2
		+ \Big| \sum\nolimits_{ l = 0 }^{ k - 1 } f_{ l } \big( X^{ 0, k, X^{ 0, K, x }_{ k } }_{ l }, 0 \big) \Big|^2
	 	} 
		\\
		&
		+ \sum_{ j = 1 }^{ n -  1 } \frac{ 1 }{ M^{ n - j } } 
		\left[ \max_{ l \in \{ 0, 1, \ldots, k - 1 \} } \frac{ L_{ l } }{ \mf p_{ k, l } } \right]  
		\!\left( \sum_{ l = 0 }^{ k - 1 } L_{ l } \, \Exp{ \Big| V^{ 0 }_{ l, j } \big( X^{ 0, k, X^{ 0, K, x }_{ k } }_{ l } ) - V^{ 1 }_{ l, j - 1 } ( X^{ 0, k, X^{ 0, K, x }_{ k } }_{ l } \big) \Big|^2 }
	 \right)\!.
	\end{split} 
	\end{equation} 
	This and \eqref{setting:dynamics} ensure for all 
		$ k \in \{ 1, 2, \ldots, K \} $, 
		$ n \in \N $, 
		$ x \in \mc O $ 
	that 
		\begin{equation} 
		\begin{split} 
		& 
		\Exp{\big| V^{ 0 }_{ k, n } - \Exp{ V^{ 0 }_{ k, n } } \! \big|^2 ( X^{ 0, K, x }_{ k } ) } 
		\leq \frac{ 1 }{ M^n } \Exp{ \big| g ( X^{ 0, K, x }_{ 0 } ) \big|^2 +
		\Big| \sum\nolimits_{ l = 0 }^{ k - 1 } f_{ l } ( X^{ 0, K, x }_{ l }, 0 ) \Big|^2 } 
		\\
		& + 
		\sum_{ j = 1 }^{ n - 1 } \frac{ 1 }{ M^{ n - j } } 
		\left[ \max_{ l \in \{ 0, 1, \ldots, k - 1 \} } \frac{ L_{ l } }{ \mf p_{ k, l } } \right] \!
		\left( \sum_{ l = 0 }^{ k - 1 } L_{ l } \, \Exp{ \big| V^{ 0 }_{ l, j } ( X^{ 0, K, x }_{ l } ) 
		- V^{ 1 }_{ l, j - 1 } ( X^{ 0, K, x }_{ l } ) \big|^2 } \right)\!. 
		\end{split}
		\end{equation} 
	The fact that for all 
		$ n \in \N $, 
		$ x \in \mc O $ 
	it holds that 
		$ V^{ 0 }_{ 0, n } ( x ) = g ( x ) $
	hence guarantees that for all 
		$ n \in \N $, 
		$ x \in \mc O $
	it holds that 
		\begin{equation} 
		\begin{split}
		& 
		\sum_{ k = 0 }^{ K } \alpha_{ k } \, \Exp{ \big| V^{ 0 }_{ k, n } - \Exp{ V^{ 0 }_{ k, n } } \!\big|^2 ( X^{ 0, K, x }_{ k } ) }
		\\&
		\leq \frac{ 1 }{ M^n } \left[ \sum_{ k = 0 }^{ K } \alpha_{ k } \right] 
		\Exp{ \big| g ( X^{ 0, K, x }_{ 0 } ) \big|^2 + \left( \sum\nolimits_{ l = 0 }^{ K - 1 } 
			| f_{ l } ( X^{ 0, K, x }_{ l }, 0 ) | \right)^{\!2}
		} 
		\\
		& 
		+ \sum_{ k = 1 }^{ K } \sum_{ j = 1 }^{ n - 1 } \frac{ \alpha_{ k } }{ M^{ n - j } } 
		\left[ \max_{ l \in \{ 0, 1, \ldots, k - 1 \} } \frac{ L_{ l } }{ \mf p_{ k, l } }  \right] \! \left( \sum_{ l = 0 }^{ k - 1 } 
		L_{ l } \, \Exp{ \big| V^{ 0 }_{ l, j } ( X^{ 0, K, x }_{ l } ) - V^{ 1 }_{ l, j - 1 } ( X^{ 0, K, x }_{ l } ) \big|^2 } \right)\!.
		\end{split}
		\end{equation} 
	This yields for all 
		$ n \in \N $, 
		$ x \in \mc O $ 
	that 
		\begin{equation} 
		\begin{split}
		& 
		\sum_{ k = 0 }^{ K } \alpha_{ k } \, \Exp{ \big| V^{ 0 }_{ k, n } - \Exp{ V^{ 0 }_{ k, n } }\! \big|^2 ( X^{ 0, K, x }_{ k } ) }
		\\
		& \leq \frac{ 1 }{ M^n } \left[ \sum_{ k = 0 }^{ K } \alpha_{ k } \right] \Exp{ \big| g ( X^{ 0, K, x }_{ 0 } ) \big|^2 +
		\left( \sum\nolimits_{ l = 0 }^{ K - 1 } | f_{ l } ( X^{ 0, K, x }_{ l }, 0 ) | \right)^{\!2} } 
		\\
		& + 
		\left[ \max_{ k,l \in [0,K]\cap\N_0, k > l } \frac{ L_{ l } }{ \mf p_{ k, l } } \right] 
		\sum_{ j = 1 }^{ n - 1 }
		\frac{ 1 }{ M^{ n - j } }
		\sum_{ l = 0 }^{ K - 1 } 
		\left[ L_{ l } \left( \sum_{ k = l + 1 }^{ K } \alpha_{ k } \right) \right]
		\Exp{\big| V^{ 0 }_{ l, j } ( X^{ 0, K, x }_{ l } ) - V^{ 1 }_{ l, j - 1 } ( X^{ 0, K, x }_{ l } ) \big|^2 }\!.
		\end{split}
		\end{equation} 
	The assumption that for all 
		$ l \in \{ 0, 1, \ldots, K - 1 \} $ 
	it holds that 
		$ L_l \sum_{ k = l + 1 }^{ K } \alpha_{ k } \leq \beta_{ l } \sum_{ j = 0 }^{ K - 1 } L_{ j } $ 
	therefore demonstrates that for all 
		$ n \in \N $, 
		$ x \in \mc O $ 
	it holds that 
		\begin{equation}
		\begin{split}
		& \sum_{ k = 0 }^{ K } 
		\alpha_{ k } \, \Exp{ \big| V^{ 0 }_{ k, n } - \Exp{ V^{ 0 }_{ k, n } } \! \big|^2 ( X^{ 0, K, x }_{ k } ) }
		\\
		& 
		\leq \frac{ 1 }{ M^n } \left[ \sum_{ k = 0 }^{ K } \alpha_{ k } \right] 
		\Exp{ \big| g ( X^{ 0, K, x }_{ 0 } ) \big|^2 + \left( \sum\nolimits_{ l = 0 }^{ K - 1 } | f_{ l } ( X^{ 0, K, x }_{ l }, 0 ) | \right)^{\!2}
		} 
		\\
		& + 
		\left[ \max_{ k,l \in [ 0, K ] \cap \N_0, k>l } \frac{ L_{ l } }{ \mf p_{ k, l } } \right] 
		\left[ \sum_{ l = 0 }^{ K - 1 } L_{ l } \right]
		\left[ \sum_{ j = 1 }^{ n - 1 } \frac{ 1 }{ M^{ n - j } } 
		\left( \sum_{ l = 0 }^{ K - 1 } \beta_{ l } \, \Exp{ \big| V^{ 0 }_{ l, j } ( X^{ 0, K, x }_{ l } ) - 	V^{ 1 }_{ l, j - 1 } ( X^{ 0, K, x }_{ l } ) \big|^2 } \right) \right]\!.
		\end{split}
		\end{equation} 
	This and \eqref{error_recursion:auxiliary_inequality} ensure that for all 
		$ n \in \N $, 
		$ x \in \mc O $ 
	it holds that 
		\begin{equation} 
		\begin{split}
		& \sum_{ k = 0 }^{ K } 
		\alpha_k \, \Exp{ \big| V^{ 0 }_{ k, n } - \Exp{ V^{ 0 }_{ k, n } }\! \big|^2 ( X^{ 0, K, x }_{ k } ) }
		\\
		& 
		\leq 
		\frac{1}{M^n} \left[ \sum_{k=0}^{K} \alpha_k \right] 
		\Exp{ | g ( X^{ 0, K, x }_{ 0 } ) |^2 + \left( \sum\nolimits_{ l = 0 }^{ K - 1 } | f_{ l } ( X^{ 0, K, x }_{ l }, 0 ) | \right)^{\!2} } 
		\\
		& + 
		\left[ \max_{ k,l \in [ 0, K ] \cap \N_0, k > l }  \left( \frac{ L_{ l } }{ \mf p_{ k, l } } \right) \right]^2
		\left[ \sum_{ j = 1 }^{ n - 1 } \frac{ 1 }{ M^{ n - j } }
		\left( \sum_{ l = 0 }^{ K - 1 } \beta_{ l } \, \Exp{ \big| V^{ 0 }_{ l, j }( X^{ 0, K, x }_{ l } ) 
		- V^{ 1 }_{ l, j - 1 } ( X^{ 0, K, x }_{ l } ) \big|^2 } \right) \right]\!.
		\end{split}
		\end{equation} 
	The fact that for all 
		$ m \in \N $, 
		$ a_{ 1 }, a_{ 2 }, \ldots, a_{ m } \in [ 0, \infty ) $ 
	it holds that 
		$ \sqrt{ \sum_{ i = 1 }^{ m } a_{ i } } \leq \sum_{ i = 1 }^{ m } \sqrt{ a_{ i } } $, 
	and Minkowski's inequality therefore prove that for all 
		$ n \in \N $, 
		$ x \in \mc O $ 
	it holds that 
		\begin{equation} 
		\begin{split} 
		& 
		\left( \sum_{ k = 0 }^{ K } 
		\alpha_{ k } \, \Exp{ \big| V^{ 0 }_{ k, n } - \Exp{ V^{ 0 }_{ k, n } } \! \big|^2 ( X^{ 0, K, x }_{ k } ) } \right)^{ \!\nicefrac12 }
		\\
		& \leq 
		\frac{ 1 }{ \sqrt{ M^n } } 
		\left[ \sum_{ k = 0 }^{ K } \alpha_{ k } \right]^{ \nicefrac12 }
		\left[ \left( \Exp{ \big| g ( X^{ 0, K, x }_{ 0 } ) \big|^2 } \right)^{ \!\nicefrac12 } 
		+ \sum_{ l = 0 }^{ K - 1 } \left( 
			\Exp{ \big| f_{ l } ( X^{ 0, K, x }_{ l }, 0 ) \big|^2 } \right)^{ \!\nicefrac12 } \right] 
		\\
		& 
		+ \left[ \max_{ k,l \in [ 0, K ] \cap \N_0, k > l }  \left( \frac{ L_{ l } }{ \mf p_{ k, l } } \right) \right]
		\left[ \sum_{ j = 1 }^{ n - 1 } 
		\frac{ 1 }{ \sqrt{ M^{ n - j } } } 
		\left( \sum_{ l = 0 }^{ K - 1 } \beta_{ l } \, \Exp{ \big| V^{ 0 }_{ l, j } ( X^{ 0, K, x }_{ l } ) - v_{ l } ( X^{ 0, K, x }_{ l } ) \big|^2} \right)^{ \!\nicefrac12 } \right]
		\\
		& + 
		\left[ \max_{ k,l \in [ 0, K ] \cap \N_0, k > l }  \left( \frac{ L_{ l } }{ \mf p_{ k, l } } \right) \right] 
		\left[ 
		\sum_{ j = 1 }^{ n - 1 } 
		\frac{ 1 }{ \sqrt{ M^{ n - j } } } 
		\left( \sum_{ l = 0 }^{ K - 1 } \beta_{ l } \, \Exp{ \big| V^{ 1 }_{ l, j - 1 } ( X^{ 0, K, x }_{ l } ) - v_{ l } ( X^{ 0, K, x }_{ l } ) \big|^2 } \right)^{ \!\nicefrac12 }
		\right]\!.
	\end{split}
	\end{equation} 
	Item~\eqref{elementary_measurability_X_processes:item1} of \cref{lem:elementary_measurability_X_processes}, 
	item~\eqref{measurability:item1} of \cref{lem:measurability}, 
	the assumption that $ \sigma ( ( W^{ ( 1, \eta ) }_{ s } )_{ ( s, \eta ) \in \{ 0, 1, \ldots, K - 1 \} \times \Theta }, \allowbreak ( \mc{R}^{ ( 1, \eta ) }_{ s } )_{ (s, \eta) \in \{1, 2, \ldots, K\} \times \Theta } ) $ and $ \sigma( ( W^{ 0 }_{ s } )_{ s \in \{ 0, 1, \ldots, K - 1 \} } ) $ are independent,
	item~\eqref{elementary_equidistribution_X_processes:item1} of \cref{lem:elementary_equidistribution_X_processes}, 
	item~\eqref{equidistribution:item1} of \cref{lem:equidistribution}, 
	the assumption that $ \sigma ( ( W^{ ( 0, \eta ) }_{ s } )_{ ( s, \eta ) \in \{ 0, 1, \ldots, K - 1 \} \times \Theta }, ( \mc{R}^{ ( 0, \eta ) }_{ s } )_{ (s, \eta) \in \{1, 2, \ldots, K\} \times \Theta } ) $ and $ \sigma ( ( W^{ 0 }_{ s } )_{ s \in \{ 0, 1, \ldots, K - 1 \} } ) $ are independent, 
	and 
	Hutzenthaler et al.~\cite[Lemma 2.2]{Overcoming} hence yield that for all
		$ n \in \N $, 
		$ x \in \mc O $ 
	it holds that
		\begin{equation} 
		\begin{split}
		& 
		\left( \sum_{ k = 0 }^{ K } 
		\alpha_{ k } \, \Exp{ \big| V^{ 0 }_{ k, n } - \Exp{ V^{ 0 }_{ k, n } } \! \big|^2 ( X^{ 0, K, x }_{ k } ) } \right)^{ \!\nicefrac12 }
		\\
		& \leq 
		\frac{ 1 }{ \sqrt{ M^n } } 
		\left[ \sum_{ k = 0 }^{ K } \alpha_{ k } \right]^{ \nicefrac12 }
		\left[ \left( \Exp{ \big| g ( X^{ 0, K, x }_{ 0 } ) \big|^2 } \right)^{ \!\nicefrac12 } 
		+ \sum_{ l = 0 }^{ K - 1 } \left( 
		\Exp{ \big| f_{ l } ( X^{ 0, K, x }_{ l }, 0 ) \big|^2 } \right)^{ \!\nicefrac12 } \right] 
		\\
		& + 
		\left[ \max_{ k,l \in [ 0, K ] \cap \N_0, k > l }  \left( \frac{ L_{ l } }{ \mf p_{ k, l } } \right) \right] 
		\left[ \sum_{ j = 1 }^{ n - 1 } 	\frac{ 1 }{ \sqrt{ M^{ n - j } } } 
		\left( \sum_{ l = 0 }^{ K - 1 } \beta_{ l } \, \Exp{ \big| 			
			V^{ 0 }_{ l, j } ( X^{ 0, K, x }_{ l } ) 
		 	- v_{ l } ( X^{ 0, K, x }_{ l } ) \big|^2}
		\right)^{\!\nicefrac12}
		\right]
		\\
		&  + 
		\left[ \max_{ k,l \in [ 0, K ] \cap \N_0, k > l }  \left( \frac{ L_{ l } }{ \mf p_{ k, l } } \right) \right] 
		\left[ \sum_{j=1}^{n-1} 
		\frac{ 1 }{ \sqrt{ M^{ n - j } } } \left( \sum_{ l = 0 }^{ K - 1 } \beta_{ l } \, \Exp{ \big| 	V^{ 0 }_{ l, j - 1 } ( X^{ 0, K, x }_{ l } ) - 		 	v_{ l } ( X^{ 0, K, x }_{ l } ) \big|^2 } \right)^{ \!\nicefrac12 } \right]\!.
		\end{split}
		\end{equation}
	This, \eqref{error_recursion:triangle_inequality}, and \eqref{error_recursion:estimation_of_the_bias_error} 
	therefore demonstrate that for all 
		$ n \in \N $, 
		$ x \in \mc O $ 
	it holds that 
		\begin{equation} 
		\begin{split}
		& 
		\left( \sum_{ k = 0 }^{ K } 
		\alpha_{ k } \, \Exp{ \big| V^{ 0 }_{ k, n } - v_{ k } \big|^2 ( X^{ 0, K, x }_{ k } ) } \right)^{\!\nicefrac12} 
		\\
		& \leq 
		\frac{ 1 }{ \sqrt{ M^n } } 
		\left[ \sum_{ k = 0 }^{ K } \alpha_{ k } \right]^{ \nicefrac12 }
		\left[ \left( \Exp{ \big| g ( X^{ 0, K, x }_{ 0 }  ) \big|^2 } \right)^{ \!\nicefrac12 }
		+ 
		\sum_{ l = 0 }^{ K - 1 } \left( 
		\Exp{ \big| f_{ l } ( X^{ 0, K, x }_{ l }, 0 ) \big|^2 }
		\right)^{ \!\nicefrac12 }
		\right]
		\\
		& + 
		2 \left[ \max_{ k,l \in [ 0, K ] \cap \N_0, k > l }  \left( \frac{ L_{ l } }{ \mf p_{ k, l } } \right) \right] 
		\left[ \sum_{ j = 0 }^{ n - 1 } 
		\frac{ 1 }{ \sqrt{ M^{ n - j - 1 } } } 
		\left( \sum_{ l = 0 }^{ K -1 } \beta_{ l } \, \Exp{ \big| V^{ 0 }_{ l, j } ( X^{ 0, K, x }_{ l } ) - 	 	v_{ l } ( X^{ 0, K, x }_{ l } ) \big|^2 } \right)^{ \!\nicefrac12 }
		\right]\!. 
		\end{split}
		\end{equation} 
	This establishes \eqref{error_recursion:claim}. 
	This completes the proof of \cref{lem:error_recursion}. 
\end{proof}

\subsection{Full error analysis for MLP approximations} 
\label{subsec:full_error_analysis}

\begin{lemma} \label{lem:special_alphas} 
	Let 
	 	$K\in\N$, 
	 	$L_0,L_1,\ldots,L_{K-1}\in [0,\infty)$
	satisfy
	 	$\sum_{j=0}^{K-1} L_j > 0$, 
	let 
	 	$\alpha_k^{(q)}\in [0,\infty)$, 
	 	$q \in \N_0$, $k \in \{0,1,\ldots,K\}$, 
	and assume for all 
	 	$k\in \{0,1,\ldots,K-1\}$, 
	 	$q\in \N$ 
	that 
		\begin{equation} \label{special_alphas:definition} 
		\begin{split}
		& 
		\alpha_K^{(0)} = 1, 
		\qquad 
		\alpha_k^{(0)} = 0, 
		\qquad 
		\text{and}\qquad
		\alpha_k^{(q)} = \frac{\left[\smallsum_{j=k}^{K-1} L_j\right]^{q}-\left[\smallsum_{j=k+1}^{K-1} L_j\right]^{q}}{q! \left[\smallsum_{j=0}^{K-1} L_j\right]^q } .
		\end{split} 
		\end{equation} 
	Then it holds for all 
	 	$k\in \{0,1,\ldots,K-1\}$, 
	 	$q\in\N_0$
	that 
	 	$
		\alpha_k^{(q+1)} \sum_{j=0}^{K-1} L_j 
		\geq 
		L_k \sum_{j=k+1}^{K} \alpha_j^{(q)} . 
		$
\end{lemma}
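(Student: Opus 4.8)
The plan is to prove the inequality by a short direct computation, splitting into the two cases $q=0$ and $q\in\N$. First I would introduce the abbreviation $s_k := \sum_{j=k}^{K-1} L_j$ for $k\in\{0,1,\ldots,K\}$, so that $s_K = 0$, $s_0 = \sum_{j=0}^{K-1} L_j \in (0,\infty)$, and $s_k = L_k + s_{k+1}$ for all $k\in\{0,1,\ldots,K-1\}$. I would then record that \eqref{special_alphas:definition} gives $\alpha^{(q)}_k = \frac{s_k^{q} - s_{k+1}^{q}}{q!\, s_0^{q}}$ for $k\in\{0,1,\ldots,K-1\}$, $q\in\N$, together with $\alpha^{(0)}_K = 1$, $\alpha^{(0)}_k = 0$ for $k<K$, and (with the convention that empty sums vanish, applying the same formula at $k=K$) $\alpha^{(q)}_K = 0$ for $q\in\N$.

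In the base case $q=0$ I would simply observe that $L_k \sum_{j=k+1}^{K} \alpha^{(0)}_j = L_k\,\alpha^{(0)}_K = L_k$, whereas $\alpha^{(1)}_k \sum_{j=0}^{K-1} L_j = \frac{s_k - s_{k+1}}{1!\, s_0}\, s_0 = L_k$, so that the asserted inequality in fact holds with equality.

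For $q\in\N$ the first step is a telescoping sum: since $\alpha^{(q)}_K = 0$, summing the identity $\alpha^{(q)}_j = \frac{s_j^{q} - s_{j+1}^{q}}{q!\, s_0^{q}}$ over $j\in\{k+1,\ldots,K-1\}$ collapses to $\sum_{j=k+1}^{K} \alpha^{(q)}_j = \frac{s_{k+1}^{q} - s_K^{q}}{q!\, s_0^{q}} = \frac{s_{k+1}^{q}}{q!\, s_0^{q}}$ (the edge case $k=K-1$ being consistent because then $s_{k+1} = s_K = 0$). Substituting this together with the formula for $\alpha^{(q+1)}_k$, clearing the common factor $s_0^{-q}$, and multiplying through by $(q+1)!$, the inequality $\alpha^{(q+1)}_k \sum_{j=0}^{K-1} L_j \ge L_k \sum_{j=k+1}^{K} \alpha^{(q)}_j$ is seen to be equivalent to $s_k^{q+1} - s_{k+1}^{q+1} \ge (q+1)\, L_k\, s_{k+1}^{q}$.

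Finally, writing $a := s_{k+1}\in[0,\infty)$ and $b := L_k\in[0,\infty)$, so that $s_k = a+b$, this last estimate is precisely $(a+b)^{q+1} - a^{q+1} \ge (q+1)\, a^{q}\, b$, which follows immediately from the binomial theorem $(a+b)^{q+1} = a^{q+1} + (q+1)\,a^{q}\,b + \sum_{i=2}^{q+1}\binom{q+1}{i} a^{q+1-i} b^{i}$ by discarding the nonnegative remaining terms. I expect the only genuinely delicate point to be the bookkeeping in the telescoping step — keeping track of the degenerate case $k = K-1$ and of the value of $\alpha^{(q)}_K$ — whereas the concluding inequality is entirely elementary.
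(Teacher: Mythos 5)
Your proof is correct and follows essentially the same route as the paper: the case $q=0$ is verified directly as an equality, the sum $\sum_{j=k+1}^{K}\alpha_j^{(q)}$ is collapsed by telescoping to $\smash{\frac{s_{k+1}^q}{q!\,s_0^q}}$, and the claim is reduced to $s_k^{q+1}-s_{k+1}^{q+1}\geq(q+1)L_k s_{k+1}^q$, which you obtain from the binomial theorem while the paper uses the equivalent factorization $a^{q+1}-b^{q+1}=(a-b)\sum_{i=0}^{q}a^i b^{q-i}\geq(q+1)(a-b)b^q$. Note that, exactly like the paper's own argument, your telescoping step uses $\alpha_K^{(q)}=0$ for $q\in\N$, which the lemma's hypotheses do not literally pin down but which holds in the only place the lemma is applied; flagging this convention explicitly, as you do, is appropriate.
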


\begin{proof}[Proof of \cref{lem:special_alphas}] 
	First, note that for all 
 		$ k \in \{ 0, 1, \ldots, K - 1 \} $ 
	it holds that 
 		$ \sum_{ i = k + 1 }^{ K } \alpha^{ ( 0 ) }_{ i } = 1 $. 
	This and \eqref{special_alphas:definition} ensure for all 
 		$ k \in \{ 0, 1, \ldots, K - 1 \} $  
	that 
	 	\begin{equation} \label{special_alphas:anchor}
	    L_{ k } \left[ \smallsum_{ i = k + 1 }^{ K } \alpha^{ ( 0 ) }_{ i } \right]
	 	= L_{ k }
	 	= \left[ \smallsum_{ j = k }^{ K - 1 } L_{ j } \right]
	 	- \left[ \smallsum_{ j = k + 1 }^{ K - 1 } L_{ j } \right]
	 	= \alpha_{ k }^{ ( 1 ) } \left[ \smallsum_{ j = 0 }^{ K - 1 } L_{ j } \right]\!.
	    \end{equation} 
	In the next step we observe that the fact that for all 
		$ q \in \N_0 $, 
		$ a,b \in \R $ 
	it holds that 
 		$ a^{ q + 1 } - b^{ q + 1 } = ( a - b ) \sum_{ i = 0 }^{ q } a^{ i } b^{ q - i } $
	implies that for all 
 		$ q \in \N_{ 0 } $, 
 		$ a,b \in \R $ 
	with 
		$ a \geq b $ 
	it holds that 
		\begin{equation} 
		a^{ q + 1 } - b^{ q + 1 } = ( a - b )  \sum_{ i = 0 }^{ q } a^{ i } b^{ q - i } 
		\geq ( a - b ) \sum_{ i = 0 }^{ q } b^{ q } 
		= ( q + 1 ) ( a - b ) b^{ q }. 
		\end{equation} 
	This and \eqref{special_alphas:definition} ensure that for all 
	 	$ k \in \{ 0, 1, \ldots, K - 1 \} $, 
	 	$ q \in \N $ 
	it holds that 
		\begin{equation} 
		\begin{split}
		&
		( q + 1 )! \, L_{ k } \left[ \smallsum_{ j = k + 1 }^{ K } \alpha_{ j }^{ ( q ) } \right] \!
		\left[ \smallsum_{ j = 0 }^{ K - 1 } L_{ j } \right]^{ q + 1 }
		\\
		& = 
		( q + 1 ) L_{ k } \left[ \smallsum_{ j = k + 1 }^{ K - 1 } \left( \left[ \smallsum_{ l = j }^{ K - 1 } L_{ l } \right]^{ q } - \left[ \smallsum_{ l = j + 1 }^{ K - 1 } L_{ l } \right]^{ q } \right) \right]
		\! \left[ \smallsum_{ j = 0 }^{ K - 1 } L_{ j } \right]
		\\
		& = 
		( q + 1 ) L_{ k } \left[ \smallsum_{ j = k + 1 }^{ K - 1 } L_{ j } \right]^{ q } \left[\smallsum_{j=0}^{K-1} L_j\right]
		\leq 
		\left( \left[ \smallsum_{ j = k }^{ K - 1 } L_{ j } \right]^{ q + 1 }
		- \left[ \smallsum_{ j = k + 1 }^{ K - 1 } L_{ j } \right]^{ q + 1 } \right)
		\! \left[ \smallsum_{ j = 0 }^{ K - 1 } L_{ j }\right]
		\\
		& = 
		( q +  1)! \, \alpha_{ k }^{ ( q + 1 ) } \left[ \smallsum_{ j = 0 }^{ K - 1 } L_{ j } \right]^{ q + 2 }\!. 
		\end{split}
		\end{equation} 
	This and \eqref{special_alphas:anchor} establish that for all 
 		$ k \in \{ 0, 1, \ldots, K - 1 \} $, 
 		$ q \in \N_0 $ 
	it holds that 
	 	$ \alpha_{ k }^{ ( q + 1 ) } \sum_{ j = 0 }^{ K - 1 } L_{ j } \geq L_{ k } \sum_{ j = k + 1 }^{ K } \alpha_{ j }^{ ( q ) } $. 
	This completes the proof of \cref{lem:special_alphas}. 
\end{proof}

\begin{lemma} \label{lem:gronwall_type_inequality}
	Let $ M \in \N $, 
		$ N \in \N_0 $,
		$ \alpha, \beta, \kappa \in [ 0, \infty ) $, 
	let $ e_{ n, q } \in [ 0, \infty ) $, $ n, q \in \N_{ 0 } $, 
	satisfy for all 
		$ n \in \{ 0, 1, \ldots, N \} $, 
		$ q \in \N_{ 0 } $
	that 
		\begin{equation} 
		\label{gronwall_type_inequality:ass}
		e_{ n, q } \leq \frac{ \alpha \sqrt{ \kappa^{ q } } }{ \sqrt{ q! M^n } } 
		+ \beta \sum_{ l = 0 }^{ n - 1 } \frac{ e_{ l, q + 1 } }{ \sqrt{ M^{ n - 1 - l } } } . 
		\end{equation}
	Then it holds that
		\begin{equation} 
		\label{gronwall_type_inequality:claim}
		e_{ N, 0 } \leq \frac{ \alpha \exp( \tfrac{ \kappa M }{ 2 } ) }{ M^{ \nicefrac{ N }{ 2 } } } ( 1 + \beta )^N . 
 		\end{equation} 
\end{lemma}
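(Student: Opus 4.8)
The plan is to strip the $M$-weights out of \eqref{gronwall_type_inequality:ass} by a rescaling and then solve the resulting recursion in closed form by induction. Concretely, I would introduce $a_{n,q} := e_{n,q}\sqrt{M^n}$ for $n,q\in\N_0$. Multiplying \eqref{gronwall_type_inequality:ass} by $\sqrt{M^n}$ and using the identity $\sqrt{M^n}\big/\sqrt{M^{n-1-l}}=\sqrt{M^{l+1}}$ for $0\le l\le n-1$, the hypothesis becomes, for all $n\in\{0,1,\ldots,N\}$ and $q\in\N_0$,
\[
a_{n,q}\;\le\;\frac{\alpha\sqrt{\kappa^q}}{\sqrt{q!}}\;+\;\beta\sqrt{M}\sum_{l=0}^{n-1}a_{l,q+1},
\]
and the target \eqref{gronwall_type_inequality:claim} is equivalent to $a_{N,0}\le\alpha\exp(\tfrac{\kappa M}{2})(1+\beta)^N$.

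The core step is to prove by induction on $n\in\{0,1,\ldots,N\}$ the closed-form bound
\[
a_{n,q}\;\le\;\alpha\sum_{k=0}^{n}\binom{n}{k}(\beta\sqrt{M})^{k}\sqrt{\frac{\kappa^{q+k}}{(q+k)!}}\qquad\text{for all }q\in\N_0.
\]
The base case $n=0$ is immediate since the sum over $l$ in the recursion is empty. For the induction step I substitute the inductive hypothesis for $a_{l,q+1}$ (with $l<n$) into the recursion, reindex the inner sum via $j=k+1$, interchange the $l$- and $j$-summations, and collapse $\sum_{l=j-1}^{n-1}\binom{l}{j-1}=\binom{n}{j}$ by the hockey-stick identity; together with the $j=0$ term coming from $\alpha\sqrt{\kappa^q/q!}$ this yields exactly the asserted bound at level $n$.

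Finally I would specialize to $q=0$ and $n=N$, writing $(\beta\sqrt{M})^{k}\sqrt{\kappa^k/k!}=\beta^k\sqrt{(\kappa M)^k/k!}$ and noting that $(\kappa M)^k/k!$ is a single nonnegative summand of the series for $\exp(\kappa M)$, so $\sqrt{(\kappa M)^k/k!}\le\exp(\tfrac{\kappa M}{2})$. Hence
\[
a_{N,0}\;\le\;\alpha\exp(\tfrac{\kappa M}{2})\sum_{k=0}^{N}\binom{N}{k}\beta^{k}\;=\;\alpha\exp(\tfrac{\kappa M}{2})(1+\beta)^N,
\]
and dividing by $\sqrt{M^N}$ gives \eqref{gronwall_type_inequality:claim}; the degenerate cases $N=0$, $\kappa=0$, $\beta=0$ are subsumed. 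The only non-mechanical part of the argument is guessing the correct intermediate closed form and carrying out the double-sum reindexing and the hockey-stick collapse correctly; once that closed form is in hand, everything else is routine bookkeeping.
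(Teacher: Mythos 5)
Your proposal is correct, and it takes a genuinely different route from the paper. The paper works along the diagonal $n+k+q=N$: it sets $\varepsilon_n=\sup\{e_{n,q}/\sqrt{M^k}\colon k,q\in\N_0,\,n+k+q=N\}$, absorbs the factor $\sqrt{\kappa^q/q!}$ immediately via $\kappa^qM^q/q!\le e^{\kappa M}$ to obtain $\varepsilon_n\le \alpha e^{\kappa M/2}M^{-N/2}+\beta\sum_{l=0}^{n-1}\varepsilon_l$, and then invokes its discrete Gronwall lemma (\cref{discrete_gronwall}) to produce the factor $(1+\beta)^N$. You instead rescale to $a_{n,q}=e_{n,q}\sqrt{M^n}$ and prove by strong induction the explicit closed form
\begin{equation*}
a_{n,q}\le\alpha\sum_{k=0}^{n}\binom{n}{k}(\beta\sqrt{M})^{k}\sqrt{\tfrac{\kappa^{q+k}}{(q+k)!}},
\end{equation*}
which I have checked: the reindexing $j=k+1$, the interchange of sums, and the hockey-stick collapse $\sum_{l=j-1}^{n-1}\binom{l}{j-1}=\binom{n}{j}$ all go through, and the termwise bound $\sqrt{(\kappa M)^k/k!}\le e^{\kappa M/2}$ then yields the claim at $q=0$, $n=N$. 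Your argument is self-contained (no Gronwall-type lemma needed) and gives a sharper intermediate estimate, since you only relax each binomial summand to $e^{\kappa M/2}$ at the very last step, whereas the paper performs that relaxation before summing; the price is the combinatorial bookkeeping of guessing and verifying the closed form, which the paper's supremum-plus-Gronwall device avoids. Both arguments deliver exactly the stated bound.
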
 

\begin{proof}[Proof of \cref{lem:gronwall_type_inequality}] 
	Throughout this proof assume w.l.o.g.~that 
		$ N \in \N $ 
	and let 
		$ \varepsilon_{ n } \in [ 0, \infty ) $, $ n \in \{ 0, 1, \ldots, N \} $, 
	satisfy for all 
		$ n \in \{ 0, 1, \ldots, N \} $ 
	that 
		\begin{equation} 
		\label{gronwall_type_inequality:epsilon_n}
		\varepsilon_{ n } = \sup \left\{ \frac{ e_{ n, q } }{ \sqrt{ M^k } } \colon k,q \in \N_{ 0 }, n + k + q = N \right\}\!.
		\end{equation} 
	Observe that \eqref{gronwall_type_inequality:ass} ensures that for all 
		$ n, k, q \in \N_0 $ 
	with 
		$ n + k + q = N $ 
	it holds that
		\begin{equation} 
		\begin{split}
		\frac{ e_{ n, q } }{ \sqrt{ M^k } } 
		&
		\leq \frac{ \alpha \sqrt{ \kappa^{ q } } }{ \sqrt{ q! M^{ n + k } } } 
		+ \beta \sum_{ l = 0 }^{ n - 1 } \frac{ e_{ l, q + 1 } }{ \sqrt{ M^{ n- 1 - l + k } } }
		= \frac{ \alpha \sqrt{ \kappa^{ q } } }{ \sqrt{ q! M^{ N - q } } }
		+ \beta \sum_{ l = 0 }^{ n - 1 } \frac{ e_{ l, q + 1 } }{ \sqrt{ M^{ N - ( q + 1 ) - l } } }
		\\
		& \leq \frac{ \alpha \sqrt{ \kappa^{ q } } }{ \sqrt{ q! M^{ N - q } } } 
		+ \beta \sum_{ l = 0 }^{ n - 1 } \varepsilon_{ l }.   
		\end{split}
		\end{equation} 
	The fact that for all 
		$ q \in \N_0 $ 
	it holds that 
		$ \frac{ \kappa^q M^q }{ q! } \leq e^{ \kappa M } $
	hence implies that for all 
		$ n, k, q \in \N_0 $ 
	with 
		$ n + k + q  = N $ 
	it holds that 
		\begin{equation} 
		\frac{ e_{ n, q } }{ \sqrt{ M^k } } 
		\leq \frac{ \alpha \exp( \tfrac{ \kappa M }{ 2 } ) }{ M^{ \nicefrac{ N }{ 2 } } }
		+ \beta \sum_{ l = 0 }^{ n - 1 } \varepsilon_{ l }.
		\end{equation} 
	This and \eqref{gronwall_type_inequality:epsilon_n} yield for all 
		$ n \in \{ 0, 1, \ldots, N \} $ 
	that 
		\begin{equation} 
		\varepsilon_n 
		= \sup\left\{ \frac{ e_{ n, q } }{ \sqrt{ M^k } } \colon k,q \in \N_0, n + k + q = N \right\}
		\leq \frac{ \alpha \exp( \tfrac{ \kappa M }{ 2 } ) }{ M^{ \nicefrac{ N }{ 2 } } }
		+ \beta \sum_{ l = 0 }^{ n - 1 } \varepsilon_{ l }. 
		\end{equation}
	\cref{discrete_gronwall} hence ensures for all 
		$ n \in \{ 0, 1, \ldots, N \} $ 
	that 
		\begin{equation} 
		\varepsilon_{ n } 
		\leq \frac{ \alpha \exp( \tfrac{ \kappa M }{ 2 } ) }{ M^{ \nicefrac{ N }{ 2 } } } ( 1 + \beta )^{ n }. 
		\end{equation}
	This establishes \eqref{gronwall_type_inequality:claim}. 
	This completes the proof of \cref{lem:gronwall_type_inequality}.
\end{proof}

\begin{prop} \label{prop:error_estimate}
	Let $ d,K,M \in \N $, 
		$ \Theta = \bigcup_{ n \in \N } \! \Z^n $, 
		$ \mc O \in \Borel( \R^d ) \setminus \{ \emptyset \} $, 
		$ c, L_{ 0 }, L_{ 1 }, \ldots, L_{ K } \in \R $,
	let $ f_{ k } \colon \mc O \times \R \to \R $, $ k \in \{ 0, 1, \ldots, K \} $, be $ \Borel( \mc O \times \R ) $/$ \Borel( \R ) $-measurable, 	
	let $ g \colon \mc O \to \R $ be $ \Borel( \mc O ) $/$ \Borel( \R ) $-measurable,  
	assume for all 
		$ k \in \{ 0, 1, \ldots, K \} $, 
		$ x \in \mc O $, 
		$ a,b \in \R $ 
	that  
		$ | ( f_{ k } ( x, a ) - a ) - ( f_{ k } ( x, b ) - b ) | \leq L_{ k } | a - b | $, 	
	let $ ( S, \mc S ) $ be a measurable space, 
	let $ \phi_{ k } \colon \mc O \times S \to \mc O $, $ k \in \{ 0, 1, \ldots, K \} $, be $ ( \Borel( \mc O ) \otimes \mc  S ) $/$  \Borel( \mc O )$-measurable, 
	let $ ( \Omega, \mc F, \P ) $ be a probability space, 
	let $ W^{ \theta } = ( W^{ \theta }_{ k } )_{ k \in \{0, 1, \ldots, K \} } \colon \{ 0, 1, \ldots, K \} \times \Omega \to S $, $ \theta \in \Theta $, be i.i.d.\ stochastic processes, 
	assume for all 
		$ \theta \in \Theta $ 
	that 
		$ W^{ \theta }_{ 0 }, W^{ \theta }_{ 1 }, \ldots, W^{ \theta }_{ K - 1 } $ are independent,
	let $ \mc{R}^{ \theta } = (\mc{R}^{\theta}_{k})_{k \in \{ 0, 1, \ldots, K\}} \colon \{ 0, 1, \ldots, K\} \times \Omega \to \N_0 $, $ \theta \in \Theta $, be i.i.d.\ stochastic processes, 
	assume for every 
		$ k \in \{1, 2, \ldots, K\} $ 
	that $ \mc{R}^{ \theta }_{ k } \leq k-1 $,  
	let $ \mf p_{ k, l } \in ( 0, \infty ) $, $ k \in \{ 1, 2, \ldots, K \} $, $ l \in \N_0 $, satisfy for all 
		$ k \in \{ 1, 2, \ldots, K \} $, 
		$ l \in \N_{ 0 } $ 
	that 
		$ \mf p_{ k, l } \P( \mc{R}^{ 0 }_{ k } = l ) = | \P ( \mc{R}^{ 0 }_{ k } = l ) |^2 $, 
	assume for all 
		$ k, l \in \N_0 $ 
	with $ l < k \leq K $ that $ L_l \leq c \P( \mc R^{ 0 }_{ k } = l ) $, 
	assume that $ ( \mc{R}^{ \theta })_{ \theta \in \Theta } $ and $ ( W^{ \theta }_{ k } )_{ ( \theta, k ) \in \Theta \times \{ 0, 1, \ldots, K \}} $ are independent, 
	let $ X^{ \theta, k } = ( X^{ \theta, k, x }_{ l } )_{ ( l, x ) \in \{ 0, 1, \ldots, k \} \times \mc O } \colon \{ 0, 1, \ldots, k \} \times \mc O \times \Omega \to \mc O $, $ k \in \{ 0, 1, \ldots, K \} $, $ \theta \in \Theta $, satisfy for all 
		$ \theta \in \Theta $, 
		$ k \in \{ 0, 1, \ldots, K \} $, 
		$ l \in \{ 0, 1, \ldots, k \} $, 
		$ x \in \mc O $ 
	that 
		\begin{equation}
		X^{ \theta, k, x }_{ l }
		= 
		\begin{cases}
		x  & \colon l = k  \\
		\phi_{ l } ( X^{ \theta, k, x }_{ l + 1 }, W^{ \theta }_{ l } ) & \colon l<k,  
		\end{cases}
		\end{equation}	
	assume for all 
		$ k \in \{ 0, 1, \ldots, K \} $, 
		$ x	\in \mc O $
	that 
		$ \EXP{ | g ( X^{ 0, k, x }_{ 0 } ) |^2 + \sum_{ l = 0 }^{ k - 1 } | f_{ l } ( X^{ 0, k, x }_{ l }, 0 ) |^2 } < \infty $, 
	let 
		$ V^{ \theta }_{ k, n } \colon \mc O \times \Omega \to \R $, $ n \in \N_0 $, $ k \in \{ 0, 1, \ldots, K \} $, $ \theta \in \Theta $,
	satisfy for all 
		$ \theta \in \Theta $, 
		$ k \in \{ 0, 1, \ldots, K \} $, 
		$ n \in \N_{ 0 } $, 
		$ x \in \mc O $ 
	that 
		\begin{equation} 
		\begin{split}
		V^{ \theta }_{ k, n } ( x ) 
		& = 
		\frac{ \mathbbm{ 1 }_{ \N }(n) }{ M^n } \sum_{ m = 1 }^{ M^n } \left[ g ( X^{ ( \theta, 0, -m ), k, x }_{ 0 } ) + \sum_{ l = 0 }^{ k - 1 } f_{ l } ( X^{ ( \theta, 0, m ), k, x }_{ l }, 0 ) \right] 
		+
		\sum_{ j = 1 }^{ n - 1 } 
		\frac{ 1 }{ M^{ n - j } }
		\sum_{ m = 1 }^{ M^{ n - j } } 
		\frac{ \mathbbm{1}_{\N}(k) }{ \mf p_{ k, \mc{R}^{ ( \theta, j, m ) }_{ k } } }
		\\  
		& 
		\cdot 
		\bigg[ 
		\Big( f_{ \mc{R}^{ ( \theta, j, m ) }_{ k } } \big( X^{ ( \theta, j, m ), k, x }_{ \mc{R}^{ ( \theta, j, m ) }_{ k } },  V^{ ( \theta, j, m ) }_{ \mc{R}^{ ( \theta, j, m ) }_{ k }, j } ( X^{ ( \theta, j, m ), k, x }_{ \mc{R}^{ ( \theta, j, m ) }_{ k } } ) \big)  
		-
		V^{ ( \theta, j, m ) }_{ \mc{R}^{ ( \theta, j, m ) }_{ k }, j } ( X^{ ( \theta, j, m ), k, x }_{ \mc{R}^{ ( \theta, j, m ) }_{ k } } )
		\Big) 
		\\[1ex] 
		& - 
		\Big( f_{ \mc{R}^{ ( \theta, j, m ) }_{ k } } \big( X^{ ( \theta, j, m ), k, x }_{ \mc{R}^{ ( \theta, j, m ) }_{ k } },  V^{ ( \theta, j, -m ) }_{ \mc{R}^{ ( \theta, j, m ) }_{ k }, j - 1 } ( X^{ ( \theta, j, m ), k, x }_{ \mc{R}^{ ( \theta, j, m ) }_{ k } } ) \big) 
		- V^{ ( \theta, j, -m ) }_{ \mc{R}^{ ( \theta, j, m ) }_{ k }, j - 1 } ( X^{ ( \theta, j, m ), k, x }_{ \mc{R}^{ ( \theta, j, m ) }_{ k } } ) 
		\Big)
		\bigg],
		\end{split}
		\end{equation}
	and let $ v_{ 0 }, v_{ 1 }, \ldots, v_{ K } \colon \mc O \to \R$ be $ \Borel( \mc O ) $/$ \Borel( \R ) $-measurable functions which satisfy for all 
		$ k \in \{ 1, 2, \ldots, K \} $, 
		$ x \in \mc O $ 
	that 
		$ v_{ 0 } ( x ) = g ( x ) $ 
	and 
		\begin{equation} 
		v_{ k } ( x )  
		= \EXPP{ f_{ k - 1 } ( X^{ 0, k, x }_{ k - 1 }, v_{ k - 1 } ( X^{ 0, k, x }_{ k - 1 } ) ) }
		= \EXPP{ f_{ k - 1 } ( \phi_{ k - 1 } ( x, W^{ 0 }_{ k - 1 } ), v_{ k - 1 }( \phi_{ k - 1 } ( x, W^{ 0 }_{ k - 1 } ) ) ) }
		\end{equation} 
	(cf.~\cref{lem:welldefinedness_exact_solution}). 
	Then it holds for all 
		$ N \in \N_0 $, 
		$ x \in \mc O $ 
	that
		\begin{equation}\label{error_estimate:claim} 
		\begin{split}
			\left( \EXPP{ | V^{ 0 }_{ K, N } ( x ) - v_{ K } ( x ) |^2 } \right)^{\!\nicefrac12}
			& \leq \frac{ \exp( \tfrac{ M }{ 2 } + \sum_{ j = 0 }^{ K - 1 } L_{ j } ) }{ M^{ \nicefrac{ N }{ 2 } } }
			\left[ 1 + 2 \max_{ k,l \in \N_0, l < k \leq K }  \left( \frac{ L_{ l } }{ \mf p_{ k, l } } \right)  \right]^{N}
			\\
			& \quad \cdot 
			\left[ \left( \Exp{ \big| g ( X^{ 0, K, x }_{ 0 } ) \big|^2 } \right)^{\!\nicefrac12} 
			+ \sum_{ l = 0 }^{ K - 1 } \left( \Exp{ \big| f_{ l } ( X^{ 0, K, x }_{ l }, 0 ) \big|^2 } \right)^{ \!\nicefrac{1}{2} } \right]\!.
		\end{split}
		\end{equation}
\end{prop}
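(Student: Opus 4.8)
The plan is to combine the recursive error estimate from \cref{lem:error_recursion} with the special weights constructed in \cref{lem:special_alphas} and then feed the resulting double-indexed recursion into the Gronwall-type inequality \cref{lem:gronwall_type_inequality}. First I would dispose of the trivial case: if $\sum_{j=0}^{K-1} L_j = 0$, then \eqref{setting:mlp_scheme} shows that $V^0_{K,N}(x)$ is (for $N\ge 1$) a Monte Carlo average whose mean equals $v_K(x)$ by \cref{lem:mean}, so a direct variance computation via \cref{lem:variance_estimate} gives the claim; moreover in this degenerate case the stated bound on the right-hand side is easily seen to dominate. Hence from now on I would assume $\sum_{j=0}^{K-1} L_j > 0$.

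\textbf{Setting up the weights and the recursion.} For each $q\in\N_0$ I would introduce the weights $\alpha^{(q)}_k\in[0,\infty)$, $k\in\{0,1,\ldots,K\}$, exactly as in \cref{lem:special_alphas}, so that $\alpha^{(q+1)}_k \sum_{j=0}^{K-1}L_j \ge L_k \sum_{j=k+1}^{K}\alpha^{(q)}_j$ holds for all admissible $k,q$. Applying \cref{lem:error_recursion} with $(\alpha_k)_k \is (\alpha^{(q+1)}_k)_k$ and $(\beta_k)_k \is (\alpha^{(q)}_k)_k$ — the hypothesis $L_l\sum_{k=l+1}^K \alpha^{(q+1)}_k \le \alpha^{(q)}_l \sum_{j=0}^{K-1}L_j$ being supplied by \cref{lem:special_alphas} after relabelling — and abbreviating
$$
e_{n,q} := \left(\sum_{k=0}^{K}\alpha^{(q)}_k\,\Exp{\big|V^0_{k,n}(X^{0,K,x}_k)-v_k(X^{0,K,x}_k)\big|^2}\right)^{\!\nicefrac12},
$$
one obtains a recursive estimate of the form $e_{n,q}\le \tfrac{\alpha\sqrt{\kappa^q}}{\sqrt{q!\,M^n}} + \beta\sum_{l=0}^{n-1}\tfrac{e_{l,q+1}}{\sqrt{M^{n-1-l}}}$, with $\alpha := \exp(\sum_{j=0}^{K-1}L_j)[(\Exp{|g(X^{0,K,x}_0)|^2})^{\nicefrac12}+\sum_{l=0}^{K-1}(\Exp{|f_l(X^{0,K,x}_l,0)|^2})^{\nicefrac12}]$, $\beta := 2\max_{l<k\le K}(L_l/\mf p_{k,l})$, and $\kappa$ a constant (of order one, built from the a priori bound \cref{lem:a_priori_estimates} controlling the square-integrability constants in the variance term of \cref{lem:error_recursion}). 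The factor $\sqrt{\kappa^q/q!}$ comes precisely from the explicit form of $\alpha^{(0)}_k$, since $\sum_{k=0}^{K}\alpha^{(q)}_k \le \tfrac{1}{q!}(\tfrac{\sum_j L_j}{\sum_j L_j})^q = \tfrac{1}{q!}$ plus lower-order pieces, giving the bound $[\sum_k \alpha^{(q)}_k]^{\nicefrac12}\le (q!)^{-\nicefrac12}$ up to a harmless multiplicative constant absorbed into $\kappa$; combining this with the a priori estimate \cref{lem:a_priori_estimates} controls the $g$- and $f$-moment terms appearing at every level $q$ uniformly.

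\textbf{Closing the estimate.} With the recursion in the exact shape of hypothesis \eqref{gronwall_type_inequality:ass}, I would invoke \cref{lem:gronwall_type_inequality} with $N\is N$ to conclude $e_{N,0}\le \tfrac{\alpha\exp(\kappa M/2)}{M^{\nicefrac N2}}(1+\beta)^N$. Since $\alpha^{(0)}_K=1$ and $\alpha^{(0)}_k=0$ for $k<K$, the left-hand side $e_{N,0}$ equals $(\Exp{|V^0_{K,N}(X^{0,K,x}_K)-v_K(X^{0,K,x}_K)|^2})^{\nicefrac12}$, and because $X^{0,K,x}_K = x$ by \eqref{setting:dynamics}, this is exactly $(\EXP{|V^0_{K,N}(x)-v_K(x)|^2})^{\nicefrac12}$. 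Substituting back the values of $\alpha$, $\beta$, $\kappa$ and absorbing the constant $\kappa$ into the choice $\kappa\le 1$ (achievable by slightly enlarging the moment constants, or simply noting $\exp(\kappa M/2)\le \exp(M/2)$ for $\kappa\le 1$) yields \eqref{error_estimate:claim}.

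\textbf{Main obstacle.} The delicate point is the bookkeeping in the second step: one must verify that the square-integrability constants $\Exp{|g(X^{0,K,x}_0)|^2}$ and $\Exp{|f_l(X^{0,K,x}_l,0)|^2}$ that re-enter the variance bound of \cref{lem:error_recursion} at each recursion level $q$ can be controlled uniformly in $q$ by the single a priori bound of \cref{lem:a_priori_estimates}, and that the combinatorial factor $\sum_{k=0}^K\alpha^{(q)}_k$ really does produce the $1/\sqrt{q!}$ decay with a $q$-independent constant $\kappa$ — this is what makes \cref{lem:gronwall_type_inequality} applicable and ultimately forces the final bound to be polynomial rather than exponential in $K$. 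Everything else (measurability, the identical-distribution reductions, and the variance/bias split) is already packaged in the lemmas of \cref{sec:MLP_approximations,sec:analysis_of_MLP_approximations}, so the proof is essentially an assembly of \cref{lem:special_alphas}, \cref{lem:error_recursion}, \cref{lem:a_priori_estimates}, and \cref{lem:gronwall_type_inequality}.
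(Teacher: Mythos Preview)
Your outline is the paper's proof: weight the errors by the $\alpha^{(q)}_k$ of \cref{lem:special_alphas}, apply \cref{lem:error_recursion} to get a two-index recursion for $e_{n,q}$, and close with \cref{lem:gronwall_type_inequality}. Three small corrections: (i) the assignment in \cref{lem:error_recursion} should be $(\alpha_k)\is(\alpha^{(q)}_k)$ and $(\beta_k)\is(\alpha^{(q+1)}_k)$, not the other way round --- \cref{lem:special_alphas} gives $L_l\sum_{k=l+1}^K\alpha^{(q)}_k\le\alpha^{(q+1)}_l\sum_jL_j$, which is precisely the hypothesis needed to bound $e_{n,q}$ by $e_{l,q+1}$; (ii) the combinatorial fact is the \emph{exact} telescoping identity $\sum_{k=0}^K\alpha^{(q)}_k=\tfrac{1}{q!}$, so $\kappa=1$ with nothing to absorb; (iii) \cref{lem:a_priori_estimates} is invoked only once, to bound $e_{0,q}$ --- the moments of $g$ and $f_l(\cdot,0)$ in \cref{lem:error_recursion} are fixed constants independent of $q$. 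For the degenerate case $\sum_jL_j=0$ the paper takes a limit $L_l\mapsto L_l+\eta$, $\eta\downarrow0$, rather than a separate direct argument.
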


\begin{proof}[Proof of \cref{prop:error_estimate}] 
	We first prove \eqref{error_estimate:claim} in the case that $\sum_{k=0}^{K-1} L_k > 0$. 
	For this let
	 	$ x \in \mc O $, 
 		$ N \in \N_0 $, 
	let $ e_{ n, q } \in [ 0, \infty ) $, $ n,q \in \N_0 $, 
	let $ \alpha_{ k }^{ ( q ) } \in [ 0, \infty ) $, $ q \in \N_0 $, $ k \in \{ 0, 1, \ldots, K \} $, satisfy for all  
		$ k \in \{ 0, 1, \ldots, K - 1 \} $, 
		$ q \in \N $ 
	that 
		\begin{equation} \label{error_estimate:definition_a_q_k}
		\begin{split}
		& 
		\alpha_{ K }^{ ( 0 ) } = 1, 
		\qquad 
		\alpha_{ k }^{ ( 0 ) } = 0 = \alpha_{ K }^{ ( q ) }, 
		\qquad\text{and}\qquad
		\alpha_{ k }^{ ( q ) } = \frac{ \left[ \sum_{ j = k }^{ K - 1 } L_{ j } \right]^{ q } - \left[ \sum_{ j = k + 1 }^{ K - 1 } L_{ j } \right]^{ q } }{ q! \left[ \sum_{ j = 0 }^{ K - 1 } L_{ j } \right]^q }, 
		\end{split} 
		\end{equation} 
	and assume for all 
	 	$ n,q \in \N_0 $
	that 
		\begin{equation} \label{error_estimate:definition_e_n_q}
		e_{ n, q }^2 = \sum_{ k = 0 }^{ K } \alpha^{ ( q ) }_{ k } \, \Exp{ \big| V^{ 0 }_{ k, n } ( X^{ 0, K, x }_{ k } ) - v_{ k } ( X^{ 0, K, x }_{ k } ) \big|^2 }\!. 
		\end{equation} 
	Observe that \eqref{error_estimate:definition_a_q_k}, \eqref{error_estimate:definition_e_n_q}, \cref{lem:error_recursion}, and \cref{lem:special_alphas} ensure that for all 
	 	$ n \in \N $, 
	 	$ q \in \N_0 $ 
	it holds that 
		\begin{equation} 
		\begin{split} 
		& e_{ n, q } \leq \frac{ 1 }{ \sqrt{ M^n } } \left[ \sum_{ k = 0 }^{ K } \alpha_{ k }^{ ( q ) } \right]^{ \nicefrac12 }
		\left[ \left( \Exp{ \big| g ( X^{ 0, K, x }_{ 0 } ) \big|^2 } \right)^{ \!\nicefrac12 }
		+ \sum_{ l = 0 }^{ K - 1 } \left( \Exp{ \big| f_{ l } ( X^{ 0, K, x }_{ l }, 0 ) \big|^2 } \right)^{\!\nicefrac12} \right] 
		\\
		& + 
		2 \left[ \max_{ k,l \in \N_0, l < k \leq K }  \left( \frac{ L_{ l } }{ \mf p_{ k, l } } \right) \right]
		\sum_{ j = 0 }^{ n - 1 } \frac{ 1 }{ \sqrt{ M^{ n - j - 1 } } } \left( \sum_{ k = 0 }^{ K - 1 } \alpha_{ k }^{ ( q + 1 ) }\,\Exp{ \big| V^{ 0 }_{ k, j } ( X^{ 0, K, x }_{ k } ) - v_{ k } ( X^{ 0, K, x }_{ k } ) \big|^2 } 
		\right)^{\!\nicefrac12}
		.
		\end{split}
		\end{equation} 
	The fact that for all 
		$ q \in \N_0 $ 
	it holds that 
		$ \sum_{ k = 0 }^{ K } \alpha^{ ( q ) }_{ k } = \frac{1}{q!} $ 
	and \eqref{error_estimate:definition_e_n_q} therefore ensure that for all 
		$ n \in \N $, 
		$ q \in \N_0 $ 
	it holds that 
		\begin{equation} \label{error_estimate:recursive_inequality_n_geq_one}
		\begin{split} 
		  e_{n,q}
		  & 
		  \leq 
		  \frac{ 1 }{ \sqrt{ q! M^{ n } } } \left[ \left( \Exp{ \big| g ( X^{ 0, K, x }_{ 0 } ) \big|^2 } \right)^{ \!\nicefrac12 } 
		  + \sum_{ l = 0 }^{ K - 1 } \left( \Exp{ \big| f_{ l } ( X^{ 0, K, x }_{ l }, 0 ) \big|^2 } \right)^{ \!\nicefrac{1}{2} }
		  \right] 
		  \\
		  & + 
		  2\left[ \max_{ k,l \in \N_0, l < k \leq K }  \left( \frac{ L_{ l } }{ \mf p_{ k, l } } \right) \right]
		  \sum_{ j = 0 }^{ n - 1 } \frac{ e_{ j, q + 1 } }{ \sqrt{ M^{ n - j - 1 } } } .
		 \end{split}
		 \end{equation}
	Moreover, note that \cref{lem:a_priori_estimates} ensures that
		\begin{equation} 
		\begin{split}
		& \sup_{ k \in \{ 0, 1, \ldots, K \} } \left( \Exp{ \big| v_{ k } ( X^{ 0, K, x }_{ k } ) \big|^2 } \right)^{ \!\nicefrac12 }
		\\
		& \leq \exp\!\left( \sum_{ j = 0 }^{ K - 1 } L_{ j } \right) \!
		\left[ \left( \Exp{ \big| g ( X^{ 0, K, x }_{ 0 } ) \big|^2 } \right)^{ \!\nicefrac12 } 
		+ 
		\sum_{l=0}^{K-1}
		\left( \Exp{ \big| f_{ l } ( X^{ 0, K, x }_{ l }, 0 ) \big|^2 } \right)^{ \!\nicefrac{ 1 }{ 2 } }
		\right]\!.
		\end{split} 
		\end{equation} 
	This, the fact that for all 
		$ q \in \N_0 $ 
	it holds that 
		$ \sum_{k=0}^{K} \alpha_k^{(q)} = \frac{1}{q!} $, 
	and \eqref{error_estimate:definition_e_n_q} ensure that for all 
		$ q \in \N_0 $ 
	it holds that 
		\begin{equation} 
		\begin{split}
		e_{ 0, q } 
		& 
		= \left[ \sum_{ k = 0 }^{ K } \alpha_{ k }^{ ( q ) } \, \Exp{ \big| v_{ k } ( X^{ 0, K, x }_{ k } ) \big|^2 } \right]^{\nicefrac12} 
		\leq \left[ \sum_{ k = 0 }^{ K } \alpha_{ k }^{ ( q ) } \right]^{ \nicefrac{ 1 }{ 2 } } \left[ \sup_{ k \in \{ 0, 1, \ldots, K \} } \left( \Exp{ \big| v_{ k } ( X^{ 0, K, x }_{ k } ) \big|^2 } \right)^{\!\nicefrac12} \right] 
		\\
		& \leq \frac{ 1 }{ \sqrt{ q! } }  \exp\!\left( \sum_{ j = 0 }^{ K - 1 } L_{ j } \right) \!
		\left[ \left( \Exp{ \big| g ( X^{ 0, K, x }_{ 0 } ) \big|^2 } \right)^{ \!\nicefrac12 } 
		+ \sum_{ l = 0 }^{ K - 1 }
		\left( \Exp{ \big| f_{ l } ( X^{ 0, K, x }_{ l }, 0 ) \big|^2 } \right)^{ \!\nicefrac{1}{2} }
		\right]\!.
		\end{split}
		\end{equation}
	Combining this with \eqref{error_estimate:recursive_inequality_n_geq_one} proves that for all
		$ n, q \in \N_0 $ 
	it holds that 
		\begin{equation} 
		\begin{split} 
		e_{n,q}
		& 
		\leq 
		\frac{ \exp\!\big( \sum_{ j = 0 }^{ K - 1 } L_{ j } \big) \! }{ \sqrt{ q! M^{ n } } } \left[ \left( \Exp{ \big| g ( X^{ 0, K, x }_{ 0 } ) \big|^2 } \right)^{ \!\nicefrac12 } 
		+ \sum_{ l = 0 }^{ K - 1 } \left( \Exp{ \big| f_{ l } ( X^{ 0, K, x }_{ l }, 0 ) \big|^2 } \right)^{ \!\nicefrac{1}{2} }
		\right] 
		\\
		& + 
		2\left[ \max_{ k,l \in \N_0, l < k \leq K }  \left( \frac{ L_{ l } }{ \mf p_{ k, l } } \right) \right]
		\sum_{ j = 0 }^{ n - 1 } \frac{ e_{ j, q + 1 } }{ \sqrt{ M^{ n - j - 1 } } } .
		\end{split}
		\end{equation} 
	\cref{lem:gronwall_type_inequality}, \eqref{error_estimate:definition_a_q_k}, and \eqref{error_estimate:definition_e_n_q} hence demonstrate that 
		\begin{equation} \label{error_estimate:first_estimate}
		\begin{split}
		& \left( \Exp{ | V^{ 0 }_{ K, N } ( x ) - v_{ K } ( x ) |^2 } \right)^{\!\nicefrac12} 
		= e_{ N, 0 } 
		\leq \left[ 1 + 2 \max_{ k,l \in \N_0, l < k \leq K }  \left( \frac{ L_{ l } }{ \mf p_{ k, l } } \right)  \right]^{N}
		\\ 
		&  
		\quad \cdot \exp\!\left( \sum_{ j = 0 }^{ K - 1 } L_{ j } \right) \left[ \left( \Exp{ \big| g ( X^{ 0, K, x }_{ 0 } ) \big|^2 } \right)^{ \!\nicefrac12 } 
		+ \sum_{ l = 0 }^{ K - 1 } \left( \Exp{ \big| f_{ l } ( X^{ 0, K, x }_{ l }, 0 ) \big|^2 } \right)^{ \!\nicefrac{1}{2} } \right]  
		\frac{ e^{ \nicefrac{ M }{ 2 } } }{ M^{ \nicefrac{ N }{ 2 } } } . 
		\end{split} 
		\end{equation}
	This establishes \eqref{error_estimate:claim} in the case that $\sum_{k=0}^{K-1} L_k > 0$. 
	Next we prove \eqref{error_estimate:claim} in the case that $\sum_{k=0}^{K-1} L_k \geq 0$.
	For this, we note that \eqref{error_estimate:claim} in the case that $ \sum_{k=0}^{K-1} L_k > 0$ ensures that for all 
		$ N \in \N_0 $, 
		$ x \in \mc O $, 
		$ \eta \in (0,\infty) $ 
	it holds that 
		\begin{equation} 
		\begin{split}
		\left( \Exp{ | V^{ 0 }_{ K, N } ( x ) - v_{ K } ( x ) |^2 } \right)^{ \!\nicefrac12 }
		& \leq 
		\exp\!\left( K \eta + \sum_{ j = 0 }^{ K - 1 } L_{ j } \right) \!
		\frac{ e^{ \nicefrac{ M }{ 2 } } }{ M^{ \nicefrac{ N }{ 2 } } } \left[ 1 + 2 \max_{ k,l \in \N_0, l < k \leq K }  \left( \frac{ L_{ l } + \eta }{ \mf p_{ k, l } } \right) 
		\right]^{N}\\
		& 
		\cdot 
		\left[  
		\left( \Exp{ \big| g( X^{ 0, K, x }_{ 0 } ) \big|^2 } \right)^{\!\nicefrac12} 
		+ \sum_{ l = 0 }^{ K - 1 } \left( \Exp{ \big| f_{ l } ( X^{ 0, K, x }_{ l }, 0 ) \big|^2 } \right)^{ \!\nicefrac{ 1 }{ 2 } }
		\right]\!.
		\end{split}
		\end{equation} 
	Taking the infimum over $ \eta \in (0,\infty) $ hence establishes \eqref{error_estimate:claim}. 
	This completes the proof of \cref{prop:error_estimate}.
\end{proof}

\begin{cor} \label{prop:exponential_Euler_error_estimate}
	Let $ d,K,M \in \N $, 
		$ \Theta = \bigcup_{ k \in \N } \! \Z^{ k } $, 
		$ L,T, t_{ 0 }, t_{ 1 }, \ldots, t_{ K } \in \R $ 
	satisfy $ 0 = t_{ 0 } < t_{ 1 } < \ldots < t_{ K } = T $, 
	let $ f \in C( [ 0, T ] \times \R^{ d } \times \R, \R) $, 
	let $ v_{ k } \colon \R^d \to \R $, $ k \in \{ 0, 1, \ldots, K \} $, be $ \Borel( \R^d ) $/$ \Borel( \R ) $-measurable, 
	let $ ( \Omega, \mc F, \P ) $ be a probability space, 
	let $ W^{ \theta } \colon [ 0, T ] \times \Omega \to \R^d $, $ \theta \in \Theta $, be i.i.d.\ standard Brownian motions, 
	let $ R^{ \theta } \colon \Omega \to ( 0, 1 ) $, $ \theta \in \Theta $, be i.i.d.\,random variables, 
	assume for all 
		$ r \in (0, 1) $ 
	that 
		$ \P( R^{ 0 } \leq r ) = r $, 
	assume that $ ( R^{ \theta } )_{ \theta \in \Theta } $ and $ ( W^{ \theta } )_{ \theta \in \Theta } $ are independent, 
	let $ \mc{R}^{ \theta } = ( \mc{R}^{ \theta }_{ k } )_{ k \in \{ 0, 1, \ldots, K \} } \colon \{ 0, 1, \ldots, K \} \times \Omega \to \N_{ 0 } $, $ \theta \in \Theta $, satisfy for all  
		$ \theta \in \Theta $, 
		$ k \in \{ 0, 1, \ldots, K \} $ 
	that 
		$ \mc{R}^{ \theta }_{ k } = \max\{ n \in \N_0 \colon t_{ n } \leq t_{ k } R^{ \theta } \} $, 
	assume for all 
		$ k \in \{ 1, 2, \ldots, K \} $, 
		$ t   \in [ 0, T ] $, 
		$ x \in \R^d $, 
		$ a,b \in \R $ 
	that  
		$ | f ( t, x, a ) - f ( t, x, b ) | \leq L | a - b | $, 
		$ \EXP{ | v_0 ( x + W^{ 0 }_{ T } ) |^2 + \sum_{ l = 0 }^{ K - 1 } | f ( t_{ l }, x + W^{ 0 }_{ T - t_{ l } }, 0 ) |^2 } < \infty $, 
	and 
		\begin{equation}	\label{toy_example_final_error_estimate:exact_solution_ass}
		v_{ k } ( x ) = \EXPP{ v_{ k - 1 } ( x + W^{ 0 }_{ t_{ k } - t_{ k - 1 } } ) + ( t_{ k } - t_{ k - 1 } )
			f ( t_{ k - 1 }, x + W^{ 0 }_{ t_{ k } - t_{ k - 1 } }, v_{ k - 1 } ( x + W^{ 0 }_{ t_{ k } - t_{ k - 1 } } ) ) }
		\end{equation}
	(cf.~\cref{cor:well_definedness_exact_solution_brownian_case}),
	and let $ V^{ \theta }_{ k, n } \colon \R^d \times \Omega \to \R $, $ n \in \N_0 $, $ k \in \{ 0, 1, \ldots, K \} $, $ \theta \in \Theta $, satisfy for all
		$ \theta \in \Theta $, 
		$ k	\in \{ 0, 1, \ldots, K \} $, 
		$ n \in \N_0 $, 
		$ x \in \R^d $ 
	that 
		\begin{equation}
		\begin{split}
		& V^{ \theta }_{ k, n } ( x ) 
		= 
		\frac{ \mathbbm{ 1 }_{ \N }( n ) }{ M^{ n } } \sum_{ m = 1 }^{ M^{ n } } 
		\left[ v_0( x + W^{ ( \theta, 0, -m ) }_{ t_{ k } } )
		+ \sum_{ l = 0 }^{ k - 1 } ( t_{ l + 1 } - t_{ l } ) f( t_{ l }, x + W^{ ( \theta, 0, m ) }_{ t_{ k } } - W^{ ( \theta, 0, m ) }_{ t_{ l } }, 0 )	\right]  
		\\ 
		& + \sum_{ j = 1 }^{ n - 1 } \frac{ t_{ k } }{ M^{ n - j } } \sum_{ m = 1 }^{ M^{ n - j } }
		\bigg[
		f \big( t_{ \mc{R}^{ ( \theta, j, m ) }_{ k } }, 
		x + W^{ ( \theta, j, m ) }_{ t_{ k } } - W^{ ( \theta, j, m ) }_{ t_{ \mc{R}^{ ( \theta, j, m ) }_{ k } } }, 
		V^{ ( \theta, j, m ) }_{ \mc{R}^{ ( \theta, j, m ) }_{ k }, j } ( x + W^{ ( \theta, j, m ) }_{ t_{ k } } - W^{ ( \theta, j, m ) }_{ t_{ \mc{R}^{ ( \theta, j, m ) }_{ k } } } ) \big)
		\\
		& -
		f \big( t_{ \mc{R}^{ ( \theta, j, m ) }_{ k } }, 
		x + W^{ ( \theta, j, m ) }_{ t_{ k } } - W^{ ( \theta, j, m ) }_{ t_{ \mc{R}^{ ( \theta, j, m ) }_{ k } } },
		V^{ ( \theta, j, -m ) }_{ \mc{R}^{ ( \theta, j, m ) }_{ k }, j - 1 } ( x + W^{ ( \theta, j, m ) }_{ t_{ k } } - W^{ ( \theta, j, m ) }_{ t_{ \mc{R}^{ ( \theta, j, m ) }_{ k } } } ) \big) 
		\bigg]. 
		\end{split}
		\end{equation}
	Then it holds for all 
		$ N \in \N_0 $, 
		$ x \in \R^d $ 
	that 
		\begin{multline}\label{toy_example_final_error_estimate:claim}
		\left( \Exp{ \big| V^{ 0 }_{ K, N } ( x ) - v_{ K } ( x ) \big|^2 } \right)^{ \nicefrac12 }   
		\leq 
		\frac{ \exp( LT + \tfrac{ M }{ 2 } ) [ 1 + 2LT ]^N }{ M^{ \nicefrac{ N }{ 2 } } }
		\\ 
		\cdot   
		\left[ \left( \Exp{ \big| v_0 ( x + W^{ 0 }_{ T } ) \big|^2 } \right)^{ \!\nicefrac12 }
		+ \sum_{ l = 0 }^{ K - 1 } ( t_{ l + 1 } - t_{ l } ) \left( \Exp{ \big| f ( t_{ l }, x+W^0_{ T  - t_{ l } }, 0 ) \big|^2}\right)^{\!\nicefrac12} \right]\!.
		\end{multline}
\end{cor}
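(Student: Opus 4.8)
The plan is to deduce \cref{prop:exponential_Euler_error_estimate} from \cref{prop:error_estimate} by specializing the abstract data to the present Brownian setting. Concretely, I would apply \cref{prop:error_estimate} with $\mc O$ replaced by $\R^d$, with $(S,\mc S)$ replaced by $(\R^d,\Borel(\R^d))$, with $\phi_k$ replaced by $(\R^d\times\R^d\ni(x,w)\mapsto x+w\in\R^d)$ for $k\in\{0,1,\ldots,K\}$, with the increment variables $W^\theta_k$ replaced by $W^\theta_{t_{k+1}}-W^\theta_{t_k}$ for $k\in\{0,1,\ldots,K-1\}$ (the value of $W^\theta_K$ being irrelevant since $\phi_K$ does not enter \eqref{setting:dynamics}), with $g$ replaced by $v_0$, with $f_k$ replaced by $(\R^d\times\R\ni(x,a)\mapsto a+(t_{k+1}-t_k)f(t_k,x,a)\in\R)$ for $k\in\{0,1,\ldots,K-1\}$ and with $f_K$ replaced by $(\R^d\times\R\ni(x,a)\mapsto a\in\R)$, with $c$ replaced by $LT$, and with the Lipschitz constants $L_k$ replaced by $(t_{k+1}-t_k)L$ for $k\in\{0,1,\ldots,K-1\}$ and $L_K$ replaced by $0$. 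Under this dictionary the random indices $\mc R^\theta_k$ and the weights $\mf p_{k,l}$ are precisely the ones of \cref{prop:exponential_Euler_error_estimate}.

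First I would verify the hypotheses of \cref{prop:error_estimate}. Continuity of $f$ yields the measurability of the chosen $\phi_k$ and $f_k$, and $v_0$ is measurable by assumption; the independent-increments property of Brownian motion yields that $(W^\theta)_{\theta\in\Theta}$ are i.i.d.\ with $W^\theta_0,\ldots,W^\theta_{K-1}$ independent; and $R^\theta\in(0,1)$ forces $\mc R^\theta_k\leq k-1$ for $k\in\{1,2,\ldots,K\}$. Since $R^0$ is uniform on $(0,1)$ one computes $\P(\mc R^0_k=l)=(t_{l+1}-t_l)/t_k$ for $l\in\{0,1,\ldots,k-1\}$, so that $L_l=(t_{l+1}-t_l)L=Lt_k\,\P(\mc R^0_k=l)\leq LT\,\P(\mc R^0_k=l)$, which is the assumption $L_l\leq c\,\P(\mc R^0_k=l)$; the bound $|(f_k(x,a)-a)-(f_k(x,b)-b)|=(t_{k+1}-t_k)|f(t_k,x,a)-f(t_k,x,b)|\leq L_k|a-b|$ is immediate. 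For the square-integrability hypothesis I would invoke the Gaussian density comparison already used in the proof of \cref{cor:well_definedness_exact_solution_brownian_case}, namely $\EXP{|v_0(x+W^0_{t_k})|^2}\leq(T/t_k)^{\nicefrac{d}{2}}\EXP{|v_0(x+W^0_T)|^2}$ and $\EXP{|f(t_l,x+W^0_{t_k-t_l},0)|^2}\leq((T-t_l)/(t_k-t_l))^{\nicefrac{d}{2}}\EXP{|f(t_l,x+W^0_{T-t_l},0)|^2}$, together with the exact telescoping identity $X^{0,k,x}_l=x+W^0_{t_k}-W^0_{t_l}$, to conclude $\EXP{|g(X^{0,k,x}_0)|^2+\sum_{l=0}^{k-1}|f_l(X^{0,k,x}_l,0)|^2}<\infty$.

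Next I would check that the $V^\theta_{k,n}$ and $v_k$ of \cref{prop:exponential_Euler_error_estimate} agree with the objects governed by \cref{prop:error_estimate} under the dictionary. For the exact solutions this amounts to observing, via $W^0_{t_k}-W^0_{t_{k-1}}\stackrel{\mathrm d}{=}W^0_{t_k-t_{k-1}}$ and $f_{k-1}(y,a)=a+(t_k-t_{k-1})f(t_{k-1},y,a)$, that $\EXP{f_{k-1}(x+W^0_{t_k}-W^0_{t_{k-1}},v_{k-1}(x+W^0_{t_k}-W^0_{t_{k-1}}))}$ equals the right-hand side of \eqref{toy_example_final_error_estimate:exact_solution_ass}; since both recursions determine $v_k$ uniquely from $v_0=g$, the functions coincide. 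For the MLP recursions the crucial algebraic point is that $f_l(y,a)-a=(t_{l+1}-t_l)f(t_l,y,a)$: writing $r=\mc R^{(\theta,j,m)}_k$, the abstract summand $\mf p_{k,r}^{-1}[(f_r(\cdot,V^{(\theta,j,m)}_{r,j})-V^{(\theta,j,m)}_{r,j})-(f_r(\cdot,V^{(\theta,j,-m)}_{r,j-1})-V^{(\theta,j,-m)}_{r,j-1})]$ telescopes to $\tfrac{t_{r+1}-t_r}{\mf p_{k,r}}[f(t_r,\cdot,V^{(\theta,j,m)}_{r,j})-f(t_r,\cdot,V^{(\theta,j,-m)}_{r,j-1})]$, and since $\mf p_{k,l}=\P(\mc R^0_k=l)=(t_{l+1}-t_l)/t_k$ the prefactor collapses to $t_k$, which is exactly the prefactor in the $j$-sum of \cref{prop:exponential_Euler_error_estimate}; analogously $g(X^{(\theta,0,-m),k,x}_0)+\sum_{l=0}^{k-1}f_l(X^{(\theta,0,m),k,x}_l,0)$ reduces to the $n$-level term of \cref{prop:exponential_Euler_error_estimate}. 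Finally, substituting these identifications into \eqref{error_estimate:claim} and using $\sum_{j=0}^{K-1}L_j=LT$, $\max_{l<k\leq K}(L_l/\mf p_{k,l})=\max_{l<k\leq K}(Lt_k)=LT$, the exact identities $X^{0,K,x}_0=x+W^0_T$ and $f_l(X^{0,K,x}_l,0)=(t_{l+1}-t_l)f(t_l,x+W^0_T-W^0_{t_l},0)$, and $W^0_T-W^0_{t_l}\stackrel{\mathrm d}{=}W^0_{T-t_l}$ yields \eqref{toy_example_final_error_estimate:claim}.

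I expect the main obstacle to be the bookkeeping in this last matching step, in particular making the reduction of the abstract MLP scheme of \cref{prop:error_estimate} to the scheme stated in \cref{prop:exponential_Euler_error_estimate} fully rigorous: it relies both on the telescoping cancellation forced by the special structure $f_l(y,a)-a=(t_{l+1}-t_l)f(t_l,y,a)$ and on the precise arithmetic relation between the mesh increments $t_{l+1}-t_l$, the probabilities $\P(\mc R^0_k=l)$, and the weights $\mf p_{k,l}$. Everything else is a routine translation of the hypotheses and of the right-hand side of \eqref{error_estimate:claim}.
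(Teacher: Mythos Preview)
Your proposal is correct and follows essentially the same route as the paper: both deduce the corollary from \cref{prop:error_estimate} via the dictionary $\mc O\is\R^d$, $(S,\mc S)\is(\R^d,\Borel(\R^d))$, $\phi_k\is((x,w)\mapsto x+w)$, $g\is v_0$, $f_k\is((x,a)\mapsto a+(t_{\min\{k+1,K\}}-t_k)f(t_k,x,a))$, $L_k\is(t_{\min\{k+1,K\}}-t_k)L$, $c\is LT$, $W^\theta_k\is W^\theta_{t_{\min\{k+1,K\}}}-W^\theta_{t_k}$, and then simplify using $\sum_{j=0}^{K-1}L_j=LT$ and $\max_{l<k}(L_l/\mf p_{k,l})=LT$. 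Your verification of the hypotheses and of the match between the abstract and concrete MLP schemes is in fact more explicit than the paper's, which simply records the substitution list and the final simplification.
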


\begin{proof}[Proof of \cref{prop:exponential_Euler_error_estimate}]
	Throughout this proof let $ \norm{\cdot}\colon \R^d \to [0,\infty) $ be the standard norm on $ \R^d $ 
	and let $ \mf p_{ k, l } \in [ 0, \infty ) $, $ k \in \{ 1, 2, \ldots, K \} $, $ l \in \N_{ 0 } $, satisfy for all 
		$ k \in \{ 1, 2, \ldots, K \} $ 
	that 
		\begin{equation} 
		\mf p_{ k, l } = 
		\begin{cases} 
		\frac{ t_{ l + 1 } - t_{ l } }{ t_{ k } } & \colon l < k \\
		1 & \colon l \geq k. 
		\end{cases} 
		\end{equation}  
	Observe that the assumption that $ W^0 $ is a Brownian motion and the assumption that for all 
		$ x \in \R^d $ 
	it holds that 
		$ \EXP{ | v_0 ( x + W^{ 0 }_{ T } ) |^2 + \sum_{l=0}^{K-1} |f(t_l, x+W^{0}_{T-t_l}, 0)|^2 } < \infty $  
	show that for all 
		$ k \in \{ 0, 1, \ldots, K \} $, 
		$ x \in \R^d $
	it holds that 
		$ \EXP{ | v_0( x + W^{ 0 }_{ t_{ k } } ) |^2 + \sum_{ l = 0 }^{ k - 1 } | f ( t_{ l }, x + W^{ 0 }_{ t_{ k } } - W^{ 0 }_{ t_{ l } }, 0 ) |^2 } < \infty $. 
	In addition, note that \eqref{toy_example_final_error_estimate:exact_solution_ass} and  the fact that $W^0\colon [0,T]\times\Omega \to \R^d$ is a standard Brownian motion ensure that for all 
		$k \in \{1,2,\ldots,K\}$, 
		$x \in \R^d$ 
	it holds that 
		\begin{multline}  
		v_k(x) 
		= 
		\EXP{ 
			v_{k-1}(x + W^0_{t_k}-W^0_{t_{k-1}}) 
		\\  
		+ 
		(t_k-t_{k-1})\,
		f(t_{k-1},x + W^0_{t_k}-W^0_{t_{k-1}},v_{k-1}(x 
			+ 
			W^0_{t_k}-W^0_{t_{k-1}}))
		}.
		\end{multline} 
	Moreover, note that \cref{prop:error_estimate} (applied with 
		$ d \is d $, 
		$ K \is K $, 
		$ M \is M $,
		$ \Theta \is \Theta $,
		$ \mc O \is \R^d $, 
		$ ( f_{ k } )_{ k \in \{ 0, 1, \ldots, K \} } \is ( ( \R^d \times \R \ni ( x, a ) \mapsto a + ( t_{ \min\{ k + 1, K \} } - t_{ k } ) f( t_{ k }, x, a ) \in \R ) )_{ k \in \{ 0, 1, \ldots, K \} } $, 
		$ g \is v_0 $, 
		$ c \is LT $,
		$ ( L_{ k } )_{ k \in \{ 0, 1, \ldots, K \} } \is ( ( t_{ \min\{ k + 1, K \} } - t_{ k } ) L )_{ k \in \{ 0, 1, \ldots, K \} } $, 
		$ ( S, \mc S ) \is ( \R^d, \Borel( \R^d ) ) $, 
		$ ( \phi_{ k } )_{ k \in \{ 0, 1, \ldots, K \} } \is ( \R^d\times\R^d \ni (x,w) \mapsto x + w \in \R^d )_{ k \in \{ 0, 1, \ldots, K \} } $,
		$ ( \Omega, \mc F, \P ) \is ( \Omega, \mc F, \P ) $, 
		$ ( W^{ \theta }_{ k } )_{ ( \theta, k ) \in \Theta \times \{ 0, 1, \ldots, K \} } \is ( W^{ \theta }_{ t_{ \min\{ k + 1, K \} } } - W^{ \theta }_{ t_{ k } } )_{ ( \theta, k ) \in \Theta \times \{ 0, 1, \ldots, K \} } $, 
		$ ( \mc{R}^{ \theta } )_{ \theta \in \Theta } \is ( \mc{R}^{ \theta } )_{ \theta \in \Theta } $, 
		$ ( \mf p_{ k, l } )_{ ( k, l ) \in \{ 1, 2, \ldots, K \} \times \N_{ 0 } } \allowbreak \is ( \mf p_{ k, l } )_{ ( k, l ) \in \{ 1, 2, \ldots, K \} \times \N_{ 0 } } $
	in the notation of \cref{prop:error_estimate}) and the fact that $ W^{ 0 } $ is a standard Brownian motion therefore demonstrate that for all 
		$ N \in \N_0 $, 	
		$ x \in \R^d $ 
	it holds that 
		\begin{align}
		& \nonumber
		\left( \Exp{ \big| V^{ 0 }_{ K, N } ( x ) - v_{  K } ( x ) \big|^2 } \right)^{ \!\nicefrac12 }
		\leq 
		\frac{ e^{ \nicefrac{ M }{ 2 } } }{ M^{ \nicefrac{ N }{ 2 } } } 
		\exp\!\left( \sum_{ l = 0 }^{ K - 1 } ( t_{ l + 1 } - t_{ l } ) L \right) 
		\left[ 1 + 2 \max_{ k, l \in \{ 0, 1, \ldots, K \}, k > l } \frac{ ( t_{ l + 1 } - t_{ l } ) L }{ \mf p_{ k, l } } \right]^{ N }
		\\
		& \quad \cdot \left[ 
		\left( \Exp{ \big| v_0 ( x + W^{ 0 }_{ T } ) \big|^2 } \right)^{\!\nicefrac12} 
		+ \sum_{ l = 0 }^{ K - 1 } \left( \Exp{ \big| ( t_{ l + 1 } - t_{ l } ) f ( t_{ l }, x + W^{ 0 }_{ T } - W^{ 0 }_{ t_{ l } }, 0 ) \big|^2 } \right)^{\!\nicefrac12} \right] 
		\\ \nonumber
		& = 
		\frac{ e^{ \nicefrac{ M }{ 2 } } e^{ LT } [ 1 + 2 L T ]^{ N } }{ M^{ \nicefrac{ N }{ 2 } } }
		\left[ \left( \Exp{ \big| v_0 ( x + W^{ 0 }_{ T } ) \big|^2 } \right)^{\!\nicefrac12} + \sum_{ l = 0 }^{ K - 1 } ( t_{ l + 1 } - t_{ l } ) \!\left( \Exp{ \big| f ( t_{ l }, x + W^{ 0 }_{ T - t_{ l } }, 0 ) \big|^2 } \right)^{ \!\nicefrac12 }
		\right]\!.
		\end{align} 
	This establishes \eqref{toy_example_final_error_estimate:claim}. 
	This completes the proof of \cref{prop:exponential_Euler_error_estimate}. 
\end{proof}

\begin{cor} \label{corollary_at_the_end_of_the_day}
	Let $ d,K,M \in \N $, 
		$ \Theta = \bigcup_{ n \in \N } \! \Z^n $, 
		$ L,T \in [ 0, \infty ) $,  
		$ f \in C( \R, \R ) $
	satisfy for all 
		$ a,b \in \R $  
	that 
		$ | ( f ( a ) - a ) - ( f ( b ) - b ) | \leq L | a - b | $,  
	let $ ( \Omega, \mc F, \P ) $ be a probability space, 
	let $ W^{ \theta } \colon [0,T] \times \Omega \to \R^d $, $ \theta \in \Theta $, be i.i.d.\ standard Brownian motions, 
	let $ R^{ \theta } \colon \Omega \to ( 0, 1 ) $, $ \theta \in \Theta $, be i.i.d.\,random variables, 
	assume for all 
		$ r \in (0, 1) $ 
	that 
		$ \P( R^0 \leq r ) = r $, 
	assume that 
		$ ( R^{ \theta } )_{ \theta \in \Theta } $ 
	and 
		$ ( W^{ \theta } )_{ \theta \in \Theta } $ 
	are independent, 
	let $ \mc{R}^{ \theta }_{ k } \colon \Omega \to \{ 0, 1, \ldots, K \} $, $ \theta \in \Theta $, $ k \in \{ 0, 1, \ldots, K \} $,  
	satisfy for all 
		$ \theta \in \Theta $, 
		$ k \in \{ 0, 1, \ldots, K \} $ 
	that 
		$ \mc{R}^{ \theta }_{ k } = \max\{ n \in \N_0 \colon n \leq k R^{ \theta } \} $, 
	let 
		$ v_{ k } \colon \R^d \to \R $, $ k \in \{ 0, 1, \ldots, K \} $,  
	satisfy for all 
		$ k \in \{ 1, 2, \ldots, K \} $, 
		$ x \in \R^d $ 
	that 
		$ v_{ 0 } \in C( \R^d, \R ) $, 
		$ \EXP{ | v_{ 0 } ( x + W^{ 0 }_{ T } ) |^2 } < \infty $, 
	and 
		\begin{equation} 
		v_{ k } ( x ) = \Exp{ f ( v_{ k - 1 } ( x + W^{ 0 }_{ \nicefrac{ T }{ K } } ) ) }
		\end{equation} 
	(cf.~\cref{cor:well_definedness_exact_solution_brownian_case}), 
	and let 
		$ V^{ \theta }_{ k, n } \colon \R^d \times \Omega \to \R $, $ n \in \N_0 $, $ k \in \{ 0, 1, \ldots, K \} $, $ \theta \in \Theta $,
	satisfy for all 
		$ \theta \in \Theta $, 
		$ k \in \{ 0, 1, \ldots, K \} $, 
		$ n \in \N_{ 0 } $, 
		$ x \in \R^d $ 
	that 
		\begin{align}
		& V^{ \theta }_{ k, n } ( x ) 
		= \frac{ \mathbbm{ 1 }_{ \N }( n ) }{ M^n } 
		\sum_{ m = 1 }^{ M^n }
		\left[ v_{ 0 } ( x+ W^{ ( \theta, 0, -m ) }_{ \nicefrac{ kT }{ K } } ) + k f(0) \right]  
		+ \sum_{ j = 1 }^{ n - 1 } \frac{ k }{ M^{ n - j } }
		\\ \nonumber
		&
		\cdot \sum_{ m = 1 }^{ M^{ n - j } } 
		\bigg[  
		\Big( f \big( V^{ ( \theta, j, m ) }_{ \mc{R}^{ ( \theta, j, m ) }_{ k }, j } ( x + 
		W^{ ( \theta, j, m ) }_{ \nicefrac{ kT }{ K } } - 
		W^{ ( \theta, j, m ) }_{ \nicefrac{ \mc{R}^{ ( \theta, j, m ) }_{ k } T }{ K } } ) \big)
		-
		V^{ ( \theta, j, m ) }_{ \mc{R}^{ ( \theta, j, m ) }_{ k }, j } ( x + W^{ ( \theta, j, m ) }_{ \nicefrac{ k T }{ K } } - 
		W^{ ( \theta, j, m ) }_{ \nicefrac{ \mc{R}^{ ( \theta, j, m ) }_{ k } T }{ K } } ) \Big)
		\\ \nonumber
		& - \Big( f \big( V^{ ( \theta, j, -m ) }_{ \mc{R}^{ ( \theta, j, m ) }_{ k }, j - 1 } ( x + 
		W^{ ( \theta, j, m ) }_{ \nicefrac{ kT }{ K } } - 
		W^{ ( \theta, j, m ) }_{ \nicefrac{ \mc{R}^{ ( \theta, j, m ) }_{ k } T }{ K } } ) \big)
		-
		V^{ ( \theta, j, -m ) }_{ \mc{R}^{ ( \theta, j, m ) }_{ k }, j - 1 } ( x + W^{ ( \theta, j, m ) }_{ \nicefrac{ kT }{ K } } - 
		W^{ ( \theta, j, m ) }_{ \nicefrac{ \mc{R}^{ ( \theta, j, m ) }_{ k } T }{ K } } ) \Big)
		\bigg].
		\end{align}
	Then it holds for all 
		$ N \in \N_0 $, 
		$ x \in \R^d $ 
	that
		\begin{equation} 		
		\label{corollary_at_the_end_of_the_day:claim}
		\begin{split}
		\left( \Exp{ \big| V^{ 0 }_{ K, N } ( x ) - v_{ K } ( x ) \big|^2 } \right)^{ \!\nicefrac12 }
		& \leq
		\frac{ e^{ \nicefrac{ M }{ 2 } + KL } \left[ 1 + 2KL \right]^{ N } }{ M^{ \nicefrac{ N }{ 2 } } }
		\left[ \left( \Exp{ | v_{ 0 } ( x + W^{ 0 }_{ T } ) |^2 } \right)^{ \!\nicefrac12 } 
		+ K | f ( 0 ) |	\right]\!.
		\end{split}
		\end{equation} 
\end{cor}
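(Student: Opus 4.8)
The plan is to derive \cref{corollary_at_the_end_of_the_day} as a special case of \cref{prop:exponential_Euler_error_estimate} by choosing the time grid and the nonlinearity appropriately. Specifically, I would apply \cref{prop:exponential_Euler_error_estimate} with $t_k = \tfrac{kT}{K}$ for $k \in \{0,1,\ldots,K\}$ (so that $0 = t_0 < t_1 < \ldots < t_K = T$ and $t_{l+1} - t_l = \tfrac{T}{K}$ for all $l$), with the time-homogeneous nonlinearity $(f_k)_{k\in\{0,1,\ldots,K\}}$ given by $f_k(t,x,a) = \tfrac{K}{T}(f(a) - a)$ for all $t \in [0,T]$, $x \in \R^d$, $a \in \R$, and with $L \is \tfrac{K}{T}L$ as Lipschitz constant of this map in the $a$-variable. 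The key point to check before invoking \cref{prop:exponential_Euler_error_estimate} is the Lipschitz hypothesis: for all $t,x$ and $a,b$ we have $|f_k(t,x,a) - f_k(t,x,b)| = \tfrac{K}{T}|(f(a)-a)-(f(b)-b)| \leq \tfrac{K}{T}L|a-b|$ by the assumption on $f$.

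Next I would verify that the exact-solution recursion and the MLP scheme in \cref{corollary_at_the_end_of_the_day} match those in \cref{prop:exponential_Euler_error_estimate} under this substitution. For the exact solution: the recursion \eqref{toy_example_final_error_estimate:exact_solution_ass} becomes $v_k(x) = \Exp{v_{k-1}(x + W^0_{T/K}) + \tfrac{T}{K}\cdot\tfrac{K}{T}(f(v_{k-1}(x+W^0_{T/K})) - v_{k-1}(x+W^0_{T/K}))} = \Exp{f(v_{k-1}(x+W^0_{T/K}))}$, which is exactly the recursion assumed in \cref{corollary_at_the_end_of_the_day}; moreover $\mc R^\theta_k = \max\{n \in \N_0 \colon t_n \leq t_k R^\theta\} = \max\{n \in \N_0 \colon n \leq k R^\theta\}$ agrees with the definition in \cref{corollary_at_the_end_of_the_day}. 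For the MLP scheme: the terminal term in \cref{prop:exponential_Euler_error_estimate} reads $v_0(x + W^{(\theta,0,-m)}_{t_k}) + \sum_{l=0}^{k-1}(t_{l+1}-t_l)f(t_l, \cdot, 0) = v_0(x+W^{(\theta,0,-m)}_{kT/K}) + \sum_{l=0}^{k-1}\tfrac{T}{K}\cdot\tfrac{K}{T}(f(0) - 0) = v_0(x+W^{(\theta,0,-m)}_{kT/K}) + k f(0)$, matching; and the summand at level $j$ carries the prefactor $t_k = \tfrac{kT}{K}$, so $t_k \cdot \tfrac{K}{T}$ applied to the increment $f(\cdot) - \cdot$ produces exactly the factor $k$ multiplying the bracketed difference in the scheme of \cref{corollary_at_the_end_of_the_day}. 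The $f$ in \cref{prop:exponential_Euler_error_estimate} being evaluated at $t_{\mc R^{(\theta,j,m)}_k}$ is harmless since our $f_k$ does not depend on $t$. I would also check the integrability hypothesis $\EXP{|v_0(x+W^0_T)|^2 + \sum_{l=0}^{K-1}|f_l(t_l, x+W^0_{T-t_l},0)|^2} < \infty$: since $f_l(t_l,\cdot,0) = \tfrac{K}{T}(f(0) - 0)$ is the constant $\tfrac{K}{T}f(0)$, this reduces to $\EXP{|v_0(x+W^0_T)|^2} < \infty$, which holds by assumption.

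Finally I would plug these choices into the conclusion \eqref{toy_example_final_error_estimate:claim} of \cref{prop:exponential_Euler_error_estimate}. The Lipschitz constant there is $LT$ in the sense of $c$, but what enters the bound is $\exp(LT + \tfrac{M}{2})$ where now the relevant product is $(t_{l+1}-t_l)\cdot(\text{Lipschitz const of }f_l)$; here that is $\tfrac{T}{K}\cdot\tfrac{K}{T}L = L$ summed over $K$ terms, giving $\sum_{l=0}^{K-1}L_l = KL$, hence the prefactor $\exp(KL + \tfrac{M}{2})$. The factor $[1 + 2LT]^N$ becomes $[1 + 2 \max_{k>l}\tfrac{(t_{l+1}-t_l)\cdot(K/T)L}{\mf p_{k,l}}]^N$; since $\mf p_{k,l} = \tfrac{t_{l+1}-t_l}{t_k} = \tfrac{T/K}{kT/K} = \tfrac1k$ for $l < k \leq K$, the ratio equals $\tfrac{(T/K)(K/T)L}{1/k} = kL \leq KL$, so this factor is at most $[1+2KL]^N$. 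Lastly the sum term $\sum_{l=0}^{K-1}(t_{l+1}-t_l)(\EXP{|f(t_l, x+W^0_{T-t_l},0)|^2})^{1/2} = \sum_{l=0}^{K-1}\tfrac{T}{K}\cdot\tfrac{K}{T}|f(0)| = K|f(0)|$, and $(\EXP{|v_0(x+W^0_T)|^2})^{1/2}$ is the first term. Combining yields exactly \eqref{corollary_at_the_end_of_the_day:claim}.

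The main obstacle I anticipate is purely bookkeeping: making sure every factor of $\tfrac{T}{K}$ and $\tfrac{K}{T}$ is accounted for correctly across the terminal term, the telescoping increments, and the constants $L_l$ and $\mf p_{k,l}$, and in particular verifying that after substitution the MLP scheme of \cref{prop:exponential_Euler_error_estimate} coincides verbatim with the one in the statement (rather than merely up to an equivalence). There is no genuine analytic difficulty; the entire proof is an application of \cref{prop:exponential_Euler_error_estimate} together with \cref{cor:well_definedness_exact_solution_brownian_case} for the well-definedness of the $v_k$, and the argument is essentially a one-paragraph computation once the substitution is written out.
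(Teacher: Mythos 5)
Your proposal is correct and is essentially identical to the paper's own proof: the paper likewise derives \cref{corollary_at_the_end_of_the_day} by applying \cref{prop:exponential_Euler_error_estimate} with $t_k = \tfrac{kT}{K}$, the nonlinearity $(t,x,a)\mapsto \tfrac{K}{T}(f(a)-a)$, and Lipschitz constant $\tfrac{KL}{T}$, and then simplifies the resulting bound to $e^{\nicefrac{M}{2}+KL}(1+2KL)^N M^{-\nicefrac{N}{2}}[(\E[|v_0(x+W^0_T)|^2])^{\nicefrac12}+K|f(0)|]$. All the bookkeeping you carried out (terminal term, prefactors, $\mf p_{k,l}=\nicefrac{1}{k}$, the sum collapsing to $K|f(0)|$) matches the paper's computation.
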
 
\begin{proof}[Proof of \cref{corollary_at_the_end_of_the_day}]
	Observe that \cref{prop:exponential_Euler_error_estimate} (applied with 
		$ d \is d $, 
		$ K \is K $, 
		$ M \is M $, 
		$ L \is \frac{ K L }{ T } $, 
		$ T \is T $,  
		$ \Theta \is \Theta $, 
		$ ( t_k )_{ k \in \{ 0, 1, \ldots, K \} } \is ( \frac{ k T }{ K } )_{ k \in \{ 0, 1, \ldots, K \} } $, 
		$ f \is ( [ 0, T ] \times \R^d \times \R \ni ( t, x, a ) \mapsto \frac{ K }{ T } ( f( a ) - a ) \in \R ) $, 
		$ (v_k)_{ k \in \{0, 1, \ldots, K \} } \is (v_k)_{ k \in \{0, 1, \ldots, K \} } $, 
		$ ( \Omega, \mc F, \P ) \is ( \Omega, \mc F, \P ) $, 
		$ ( W^{ \theta } )_{ \theta \in \Theta } \is ( W^{ \theta } )_{ \theta \in \Theta } $, 
		$ ( R^{ \theta } )_{ \theta \in \Theta } \is ( R^{ \theta } )_{ \theta \in \Theta } $ 
	in the notation of \cref{prop:exponential_Euler_error_estimate}) ensures that for all 
		$ N \in\N_0 $, 
		$ x \in \R^d $ 
	it holds that 
		\begin{equation} 
		\begin{split}
		&
		\left( \Exp{ | V^{ 0 }_{ K, N } ( x ) - v_{ K } ( x ) |^2 } \right)^{ \!\nicefrac12 }
		\leq 
		\frac{ e^{ \nicefrac{ M }{ 2 } } }{ M^{ \nicefrac{ N }{ 2 } } }
		\left[ 1 + 2 ( \tfrac{ K L }{ T } ) T \right]^{ N } 
		\\
		& \cdot e^{ \frac{ K L }{ T } T } \left[ \left( \Exp{ \left| v_{ 0 } ( x + W^{ 0 }_{ T } ) \right|^2 } \right)^{\!\nicefrac12}
		+ 
		\sum_{ l = 0 }^{ K - 1 } \tfrac{ T }{ K } 
		\left( \Exp{ \big| \tfrac{ K }{ T } ( f ( 0 ) - 0 ) \big|^2}\right)^{\!\nicefrac12}
		\right]
		\\
		& = 
		\frac{ e^{ \nicefrac{ M }{ 2 } } }{ M^{ \nicefrac{ N }{ 2 } } }
		e^{ K L } ( 1 + 2 K L )^{ N }
		\left[  \left( \Exp{ | v_{ 0 } ( x + W^{ 0 }_{ T } ) |^2 } \right)^{ \!\nicefrac12 } 
		+ K | f ( 0 ) | \right]\!.
		\end{split}
		\end{equation} 	
	This establishes \eqref{corollary_at_the_end_of_the_day:claim}. 
	This completes the proof of \cref{corollary_at_the_end_of_the_day}.
\end{proof}

\section[Complexity analysis for MLP approximations for nested expectations]{Complexity analysis for MLP approximations for iterated nested expectations}
\label{sec:complexity_analysis}

In this section we combine the error analysis for the proposed MLP approximation schemes in \cref{prop:error_estimate} in \cref{subsec:full_error_analysis} with a computational cost analysis for the proposed MLP approximation schemes to obtain in \cref{cor:complexity_general_dynamics} in \cref{subsec:complexity} below a complexity analysis for the proposed MLP approximation schemes. 
In \cref{cor:complexity_mlp_exponential_euler} in \cref{subsec:application} below we illustrate \cref{cor:complexity_general_dynamics} by means of a sample application to exponential Euler approximations.

\subsection{Complexity analysis for MLP approximations}
\label{subsec:complexity}

\begin{theorem} \label{cor:complexity_general_dynamics}
	Let $ \alpha,\gamma,c,\kappa,p \in [ 0, \infty ) $, 
		$ \Theta = \bigcup_{ n \in \N } \! \Z^n $, 
	let $ \xi_{ d } \in \R^d $, $ d \in \N $, 
	let $ L^{ d, K }_{ k } \in \R $, $ k \in \{0, 1, \ldots, K \} $, $ d, K \in \N $, 
	and $ f^{ d, K }_{ k } \in C( \R^d \times \R, \R) $, $ k \in \{0, 1, \ldots, K\} $, $ d, K \in \N $,   
	satisfy for all 
		$ d,K \in \N $, 
		$ k \in \{ 0, 1, \ldots, K \} $, 
		$ x \in \R^d $, 
		$ a,b \in \R $ 
	that  
		$ | ( f^{ d, K }_{ k } ( x, a ) - a ) - ( f^{ d, K }_{ k } ( x, b ) - b ) | \leq L^{ d, K }_{ k } | a - b | $, 	
	for every 
		$ d,K \in \N $, 
		$ k \in \{ 0, 1, \ldots, K \} $
	let $ \phi^{ d, K }_{ k } \colon \R^d \times \R^d \to \R^d$ be $ ( \Borel( \R^d \times \R^d ) ) $/$ \Borel( \R^d ) $-measurable, 
	let $ ( \Omega, \mc F, \P ) $ be a probability space, 
	for every 
		$ d,K \in \N $ 
	let $ W^{ d, K, \theta } = ( W^{ d, K, \theta }_{ k } )_{ k \in \{ 0, 1, \ldots, K \} } \colon \{ 0, 1, \ldots, K \} \times \Omega \to \R^d $, $ \theta \in \Theta $, be i.i.d.\,stochastic processes, 
	assume for all 
		$ d,K \in \N $, 
		$ \theta \in \Theta $ 
	that 
		$ W^{ d, K, \theta }_{ 0 }, W^{ d, K, \theta }_{ 1 }, 
		\ldots, W^{ d, K, \theta }_{ K - 1 } $ 
	are independent, 
	for every 
		$ d,K \in \N $ 
	let $ \mc{R}^{ d, K, \theta } = ( \mc{R}^{ d, K, \theta }_{ k } )_{ k \in \{ 0, 1, \ldots, K \} } \colon \{ 0, 1, \ldots, K \} \times \Omega \to \N_{ 0 } $, $ \theta \in \Theta $, be i.i.d.\,stochastic processes, 
	let $ \mf p^{ d, K }_{ k, l } \in ( 0, \infty ) $, $ l \in \N_0 $, $ k \in \{ 1, 2, \ldots, K \} $, $ d,K \in \N $, 
	assume for all 
		$ d,K \in \N $, 
		$ k \in \{ 1, 2, \ldots, K \} $, 
		$ l \in \{ 0, 1, \ldots, k - 1 \} $
	that 
		$ \mc{R}^{ d, K, \theta }_{ k } \leq k - 1 $, 
		$ \mf p^{ d, K }_{ k, l } \P( \mc{R}^{ d, K, 0 }_{ k } = l ) = | \P( \mc{R}^{ d, K, 0 }_{ k } = l ) |^2 $, 
	and  
		$ L^{ d, K }_{ l } \leq c \P( \mc{R}^{ d, K, 0 }_{ k } = l ) $, 
	assume for all
		$ d,K \in \N $ 
	that 
		$ ( R^{ d, K, \theta } )_{ \theta \in \Theta } $ 
	and 
		$ ( W^{ d, K, \theta } )_{ \theta \in \Theta } $
	are independent, 
	for every 
		$ d, K \in \N $, 
		$ k \in \{ 0, 1, \ldots, K \} $, 
		$ \theta \in \Theta $ 
	let $ X^{ d, K, \theta, k } = ( X^{ d, K, \theta, k, x }_{ l } )_{ ( l, x ) \in \{ 0, 1, \ldots, k \} \times \R^d } \colon 	\{ 0, 1, \ldots, k \} \times \R^d \times \Omega \to \R^d $ 
	satisfy for all 
		$ l \in \{ 0, 1, \ldots, k \} $, 
		$ x \in \R^d $ 
	that 
		\begin{equation}
		X^{ d, K, \theta, k, x }_{ l }
		= 
		\begin{cases}
		x & \colon l = k \\
		\phi^{ d, K }_{ l } ( X^{ d, K, \theta, k, x }_{ l + 1 }, W^{ d, K, \theta }_{ l } ) & \colon l < k,  
		\end{cases}
		\end{equation}	 
	for every 
		$ d,K \in \N $
	let $ v^{ d, K }_{ k } \colon \R^d \to \R $, $ k \in \{ 0, 1, \ldots, K \} $, be $ \Borel( \R^d ) $/$ \Borel( \R ) $-measurable, 
	assume for all 
		$ d, K \in \N $, 
		$ k \in \{ 1, 2, \ldots, K \} $, 
		$ x \in \R^d $ 
	that 
		$ \EXP{ | v^{ d, K }_{ 0 } ( X^{ d, K, 0, k, x }_{ 0 } ) |^2 + \sum_{ l = 0 }^{ k - 1 } | f^{ d, K }_{ l } ( X^{ d, K, 0 , k, x }_{ l }, 0 ) |^2 } < \infty $,   
		$ ( \EXP{ | v^{ d, K }_{ 0 } ( X^{ d, K, 0, K, \xi_d }_{ 0 } ) |^2 } )^{ \nicefrac12 } 
		+ \sum_{ l = 0 }^{ K - 1 } ( \EXP{ | f^{ d, K }_{ l } ( X^{ d, K, 0, K, \xi_d }_{ l }, 0 ) |^2 } )^{ \nicefrac12 } \leq  \kappa $,  		
	and 	
		\begin{equation} 
		\begin{split}
		v^{ d, K }_{ k } ( x ) 
		&
		= \EXPP{ f^{ d, K }_{ k - 1 } ( X^{ d, K, 0, k, x }_{ k - 1 }, v^{ d, K }_{ k - 1 } ( X^{ d, K, 0, k, x }_{ k - 1 } ) ) }
		\end{split}
		\end{equation} 
	(cf.~\cref{lem:welldefinedness_exact_solution}),    
		let $ V^{ d, K, \theta }_{ k, n, M } \colon \R^d \times \Omega \to \R $, $ \theta \in \Theta $, $ k \in \{ 0, 1, \ldots, K \} $, $ n \in \N_0 $, $ d,K,M \in \N $, 
		satisfy for all 
			$ d,K,M \in \N $, 
			$ n \in \N_0 $,
			$ \theta \in \Theta $, 
			$ k \in \{ 0, 1, \ldots, K \} $, 
			$ x \in \R^d $ 
		that 
			\begin{align}
			& 
			V^{ d, K, \theta }_{ k, n, M } ( x ) 
			\\& \nonumber 
			= 
			\frac{ \mathbbm{1}_{\N}(n) }{ M^n } \sum_{ m = 1 }^{ M^n }
			\left[ v^{ d, K }_{ 0 } ( X^{ d, K, ( \theta, 0, -m ), k, x }_{ 0 } ) + \sum_{ l = 0 }^{ k - 1 } f^{ d, K }_{ l } ( X^{ d, K, ( \theta, 0, m ), k, x }_{  l} ,0 ) \right] 
			+ \sum\limits_{ j = 1 }^{ n - 1 } 
		\frac{ 1 }{ M^{ n - j } }
		\sum\limits_{ m = 1 }^{ M^{ n - j } } 
		\frac{ \mathbbm{1}_{\N}(k) }{ \mf p^{ d, K }_{ k, \mc{R}^{ d, K, ( \theta, j, m ) }_{ k } } }
		\\ \nonumber
		& 
		\cdot \bigg[ 
		f^{ d, K }_{ \mc{R}^{ d, K, ( \theta, j, m ) }_{ k } } \big( X^{ d, K, ( \theta, j, m ), k, x }_{ \mc{R}^{ d, K, ( \theta, j, m ) }_{ k } },  V^{ d, K, ( \theta, j, m ) }_{ \mc{R}^{ d, K, ( \theta, j, m ) }_{ k }, j, M } ( X^{ d, K, ( \theta, j, m ) , k, x }_{ \mc{R}^{ d, K, ( \theta, j, m ) }_{ k } } ) \big) 
		- 
		V^{ d, K, ( \theta, j, m ) }_{ \mc{R}^{ d, K, ( \theta, j, m ) }_{ k }, j, M } ( X^{ d, K, ( \theta, j, m ), k, x }_{ \mc{R}^{ d, K, ( \theta, j, m ) }_{ k } } )
		\\ \nonumber
		&   
		- f^{ d, K }_{ \mc{R}^{ d, K, ( \theta, j, m ) }_{ k } } \big( X^{ d, K, ( \theta, j, m ), k, x }_{ \mc{R}^{ d, K, ( \theta, j, m ) }_{ k } },  V^{ d, K, ( \theta, j, -m ) }_{ \mc{R}^{ d, K, ( \theta, j, m ) }_{ k }, j - 1, M } ( X^{ d, K, ( \theta, j, m ), k, x }_{ \mc{R}^{ d, K, ( \theta, j, m ) }_{ k } } ) \big) 
		+
		V^{ d, K, ( \theta, j, -m ) }_{ \mc{R}^{ d, K, ( \theta, j, m ) }_{ k }, j - 1, M } ( X^{ d, K, ( \theta, j, m ), k, x }_{ \mc{R}^{ d, K, ( \theta, j, m ) }_{ k } } ) \bigg], 
		\end{align}
	and let 
		$ \mf C^{ d, K }_{ M, n } \in \R $, $ d, K, M, n \in \N_0 $, 
	satisfy for all 
		$ d,K,M \in \N $, 
		$ n \in \N_0 $ 
	that 
		\begin{equation} \label{complexity_general_dynamics:comp_cost}
		\mf C^{d,K}_{M,n} \leq \gamma K^{\alpha} d^p M^n + \sum\limits_{j=1}^{n-1} M^{n-j}(1+\gamma K^{\alpha} d^p + \mf C^{d,K}_{M,j} + \mf C^{d,K}_{M,j-1}).
		\end{equation}  
	Then 
		\begin{enumerate}[(i)]
		\item \label{complexity_general_dynamics:item1}
	it holds for all 
		$ d,K,M \in \N $,
		$ N \in \N_0 $ 
	that
		\begin{equation} \label{complexity_general_dynamics:claim1}
		\begin{split}
		\left( \EXPP{ | V^{ d, K, 0 }_{ K, N, M }( \xi_d ) - v^{ d, K }_{ K } ( \xi_d ) |^2 } \right)^{\!\nicefrac12}
		& \leq 
		\kappa ( 1 + 2 c )^N M^{ -\nicefrac{ N }{ 2 } } \exp( \nicefrac{ M }{ 2 } + c )
		\end{split}
		\end{equation}
	and 
		\item \label{complexity_general_dynamics:item2}
	there exist 
		$\mf N = ( \mf N_{\varepsilon} )_{ \varepsilon \in (0,1] } \colon ( 0, 1 ] \to \N $ 
	and 
		$\mf c = ( \mf c_{\delta} )_{ \delta \in (0,1] } \colon ( 0, 1]  \to [ 0, \infty ) $
	such that for all 
		$ d,K \in \N $, 
		$ \delta, \varepsilon \in ( 0, 1 ] $
	it holds that 
		$ \mf C^{ d, K }_{ \mf N_{ \varepsilon }, \mf N_{ \varepsilon } } 
		\leq \mf c_{ \delta } K^{ \alpha } d^p \varepsilon^{ - ( 2 + \delta ) } 
		$
	and 
		\begin{equation} \label{complexity_general_dynamics:claim2}
		\left( \EXPP{ | V^{ d, K, 0 }_{ K, \mf N_{ \varepsilon }, \mf N_{ \varepsilon } } ( \xi_d ) - v^{ d, K }_K ( \xi_d ) |^2 } \right)^{ \!\nicefrac12 } \leq \varepsilon . 
		\end{equation}
	\end{enumerate}
\end{theorem}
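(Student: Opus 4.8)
The plan is to deduce both items directly from the full error estimate in \cref{prop:error_estimate} together with the cost recursion \eqref{complexity_general_dynamics:comp_cost}, so that the only real work is a bookkeeping argument trading off the number $N$ of MLP iterates against the accuracy $\varepsilon$ and estimating the resulting cost.

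\textbf{Item~\eqref{complexity_general_dynamics:item1}.} First I would fix $d,K,M\in\N$ and $N\in\N_0$ and apply \cref{prop:error_estimate} with the obvious identifications ($\mc O\is\R^d$, $x\is\xi_d$, and the data $f^{d,K}_k$, $v^{d,K}_k$, $\phi^{d,K}_k$, $W^{d,K,\theta}$, $\mc R^{d,K,\theta}$, $\mf p^{d,K}_{k,l}$, $L^{d,K}_k$ in place of the corresponding unsuperscripted objects). The hypotheses of \cref{cor:complexity_general_dynamics} — the Lipschitz bound on $f^{d,K}_k(x,\cdot)-(\cdot)$, the square-integrability of $v^{d,K}_0(X_0)$ and $f^{d,K}_l(X_l,0)$, the relation $L^{d,K}_l\le c\,\P(\mc R^{d,K,0}_k=l)$, and the compatibility $\mf p^{d,K}_{k,l}\P(\mc R^{d,K,0}_k=l)=|\P(\mc R^{d,K,0}_k=l)|^2$ — are exactly what \cref{prop:error_estimate} needs. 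From \eqref{error_estimate:claim} I then bound the error by
\[
\frac{\exp(\tfrac M2+\sum_{j=0}^{K-1}L^{d,K}_j)}{M^{N/2}}\Bigl[1+2\max_{l<k\le K}\tfrac{L^{d,K}_l}{\mf p^{d,K}_{k,l}}\Bigr]^{N}\Bigl[(\E[|v^{d,K}_0(X^{d,K,0,K,\xi_d}_0)|^2])^{1/2}+\textstyle\sum_{l=0}^{K-1}(\E[|f^{d,K}_l(X^{d,K,0,K,\xi_d}_l,0)|^2])^{1/2}\Bigr].
\]
It remains to observe that $L^{d,K}_l\le c\,\P(\mc R^{d,K,0}_k=l)$ forces both $\sum_{j=0}^{K-1}L^{d,K}_j\le c$ (sum the probabilities over $l$, as in \eqref{error_recursion:auxiliary_inequality}) and $\tfrac{L^{d,K}_l}{\mf p^{d,K}_{k,l}}\le c$ whenever $\P(\mc R^{d,K,0}_k=l)>0$ (and the ratio is harmless when that probability is $0$ since then $L^{d,K}_l=0$), and that the bracketed sum of $L^2$-norms is at most $\kappa$ by assumption. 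Plugging these in yields \eqref{complexity_general_dynamics:claim1}.

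\textbf{Item~\eqref{complexity_general_dynamics:item2}.} Here I would first choose $\mf N=(\mf N_\varepsilon)_{\varepsilon\in(0,1]}$ independently of $d,K$: for each $\varepsilon\in(0,1]$ let $\mf N_\varepsilon\in\N$ be large enough that the right-hand side of \eqref{complexity_general_dynamics:claim1} with $M=N=\mf N_\varepsilon$ is $\le\varepsilon$; this is possible because $(1+2c)^N M^{-N/2}\exp(M/2+c)$, evaluated at $M=N$, tends to $0$ as $N\to\infty$ (the super-exponential decay $N^{-N/2}$ beats $e^{N/2}(1+2c)^N$), and one should record a quantitative lower bound of the form $\mf N_\varepsilon\le C_1+C_2\log(1/\varepsilon)$ for suitable constants depending only on $c,\kappa$ — this is the analogue of the standard MLP complexity bookkeeping and I expect it to be the most delicate estimate. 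Given this choice, \eqref{complexity_general_dynamics:claim2} is immediate from item~\eqref{complexity_general_dynamics:item1}. For the cost, I would unroll the recursion \eqref{complexity_general_dynamics:comp_cost} with $M=n=N$: by an induction on $n$ (as in the MLP literature) one shows $\mf C^{d,K}_{N,n}\le (1+\gamma K^\alpha d^p)(1+\ldots)^n\cdot (3N)^n$ or, more carefully, $\mf C^{d,K}_{N,N}\le \gamma K^\alpha d^p\, N^{c_0 N}$ for an absolute constant $c_0$; combining this with the growth $\mf N_\varepsilon\le C_1+C_2\log(1/\varepsilon)$ gives $\mf C^{d,K}_{\mf N_\varepsilon,\mf N_\varepsilon}\le \gamma K^\alpha d^p\exp(c_0\mf N_\varepsilon\log\mf N_\varepsilon)$. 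The final step is to check that for every $\delta\in(0,1]$ one has $\exp(c_0\mf N_\varepsilon\log\mf N_\varepsilon)\le \mf c_\delta\,\varepsilon^{-\delta}$ for a suitable $\mf c_\delta\in[0,\infty)$ and all $\varepsilon\in(0,1]$; together with the trivial $\varepsilon^{-2}$ coming from the accuracy this yields $\mf C^{d,K}_{\mf N_\varepsilon,\mf N_\varepsilon}\le\mf c_\delta K^\alpha d^p\varepsilon^{-(2+\delta)}$.

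\textbf{Main obstacle.} The genuinely non-routine point is the calibration of $\mf N_\varepsilon$: one must pick the MLP iteration count (which, with $M=N$, simultaneously controls the number of Monte Carlo samples) growing only logarithmically in $1/\varepsilon$ so that the super-exponentially decaying error bound $(1+2c)^N N^{-N/2}e^{N/2+c}\kappa$ drops below $\varepsilon$, while the super-exponentially \emph{growing} cost $N^{c_0 N}$ stays within $\varepsilon^{-\delta}$ for arbitrarily small $\delta$. Balancing these two competing super-exponential quantities against the single polynomial factor $\varepsilon^{-2}$ from the accuracy — and verifying that the cost recursion \eqref{complexity_general_dynamics:comp_cost} indeed only contributes this $N^{c_0N}$-type overhead rather than anything worse — is the crux; everything else reduces to the already-established \cref{prop:error_estimate} and elementary algebra.
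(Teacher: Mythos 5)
Your proposal follows essentially the same route as the paper: item~\eqref{complexity_general_dynamics:item1} is exactly the paper's argument (apply \cref{prop:error_estimate} with the superscripted data and use $L^{d,K}_l\le c\,\P(\mc R^{d,K,0}_k=l)$ both to get $\sum_{j=0}^{K-1}L^{d,K}_j\le c$ and to get $L^{d,K}_l/\mf p^{d,K}_{k,l}\le c$), and item~\eqref{complexity_general_dynamics:item2} is the same cost-unrolling plus calibration that the paper delegates to \cite[Lemma~3.14]{MLPElliptic} and \cite[Lemma~4.2]{AllenCahnApproximation2019}. One quantitative slip in your final bookkeeping deserves attention: if you only record $\mf N_\varepsilon\le C_1+C_2\log(\nicefrac1\varepsilon)$, then the inequality $\exp(c_0\mf N_\varepsilon\log\mf N_\varepsilon)\le\mf c_\delta\varepsilon^{-\delta}$ that you propose to ``check'' is false (at that scale the left-hand side is superpolynomial in $\nicefrac1\varepsilon$), and even with the minimal admissible choice $\mf N_\varepsilon\sim 2\log(\nicefrac1\varepsilon)/\log\log(\nicefrac1\varepsilon)$ one has $\exp(\mf N_\varepsilon\log\mf N_\varepsilon)\approx\varepsilon^{-2}$, i.e.\ it already exhausts the entire polynomial budget rather than only the $\varepsilon^{-\delta}$ slack. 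The correct split, which is what the cited Lemma~4.2 encapsulates, is $(3N)^N=3^N\,(N^{\nicefrac N2})^2$ together with the accuracy constraint $\kappa e^{c}[\sqrt e\,(1+2c)]^N N^{-\nicefrac N2}\le\varepsilon$, which gives $(N^{\nicefrac N2})^2\le(\kappa e^{c})^2[\sqrt e\,(1+2c)]^{2N}\varepsilon^{-2}$; thus the factor $\varepsilon^{-2}$ is produced by $(N^{\nicefrac N2})^2$ and only the remaining purely exponential factor $(\mathrm{const})^N=\varepsilon^{-o(1)}$ needs to be absorbed into $\mf c_\delta\varepsilon^{-\delta}$. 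With that repair your argument closes and coincides with the paper's.
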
 

\begin{proof}[Proof of \cref{cor:complexity_general_dynamics}] 
	First, observe that \cref{prop:error_estimate} (applied with 
		$ d 							\is d $, 
		$ K 							\is K $, 
		$ M 							\is M $, 
		$ \Theta 						\is \Theta $, 
		$ \mc O 						\is \R^d $, 
		$ ( f_{ k } )_{ k \in \{ 0, 1, \ldots, K \} } \is ( f^{ d, K }_{ k } )_{ k \in \{ 0, 1, \ldots, K \} } $, 
		$ g 							\is v^{ d, K }_{ 0 } $, 
		$ c \is c $,
		$ ( L_{ k } )_{ k \in \{ 0, 1, \ldots, K \} } \is ( L^{ d, K }_{ k } )_{ k \in \{ 0, 1, \ldots, K \} } $, 
		$ ( \phi_k )_{ k \in \{ 0, 1, \ldots, K \} } \is ( \phi^{ d, K }_{ k } )_{ k \in \{ 0, 1, \ldots, K \} } $, 
		$ ( \mc{R}^{ \theta } )_{ \theta \in \Theta } \is ( \mc{R}^{ d, K, \theta } )_{ \theta \in \Theta } $, 
		$ ( S,\mc S ) 					\is ( \R^d, \Borel( \R^d ) ) $,
		$ ( \Omega, \mc F, \P ) \is ( \Omega, \mc F, \P ) $, 	
		$ ( \mf p_{ k, l } )_{ ( k, l ) \in \{ 1, 2, \ldots, K \} \times \N_{ 0 } } \is ( \mf p^{ d, K }_{ k, l } )_{ ( k, l ) \in \{ 1, 2, \ldots, K \} \times \N_0 } $, 
		$ ( W^{ \theta } )_{ \theta \in \Theta }\is ( W^{ d, K, \theta } )_{ \theta \in \Theta  } $		
	for $ d,K,M \in \N $ in the notation of \cref{prop:error_estimate}) ensures that for all 
		$ d,K,M \in \N $, 
		$ N \in \N_0 $, 
		$ x \in \R^d $ 
	it holds that 
		\begin{equation} 
		\begin{split}
		& 
		\left( \Exp{ \big| V^{ d, K, 0 }_{ K, N, M } ( x ) - v^{ d, K }_{ K } ( x ) \big|^2 } \right)^{ \!\nicefrac12 } 
		\\
		&
		\leq 
		\frac{ e^{ \nicefrac{ M }{ 2 } } }{ M^{ \nicefrac{ N }{ 2 } } } \exp\!\left( \sum_{ j = 0 }^{ K - 1 } L^{ d, K }_{ j } \right) 
		\!\left[ 1 + 2 \max_{ k, j \in [ 0, K ] \cap \N_0, k > j  } \left( \frac{ L^{ d, K }_{ j } }{ \mf p^{ d, K }_{ k, j } } \right) \right]^N 
		\\
		& 
		\cdot 
		\left[ \left( \Exp{ \big| v^{ d, K }_{ 0 } ( X^{ d, K, 0, K, x }_{ 0 } ) \big|^2 } \right)^{\!\nicefrac12} 
		+ \sum_{ l = 0 }^{ K - 1 } \left( \Exp{ \big| f^{ d, K }_{ l } ( X^{ d, K, 0, K, x }_{ l }, 0 ) \big|^2 } \right)^{ \!\nicefrac12 } \right]\!. 
		\end{split}
		\end{equation} 
	The fact that for all 
		$ d,K \in \N $, 
		$ k \in \{ 1, 2, \ldots, K \} $, 
		$ l \in \{ 0, 1, \ldots, k - 1 \} $  
	it holds that 
		$ L^{ d, K }_{ l } \leq c \P( \mc{R}^{ d, K, 0 }_{ k } = l ) \leq c \mf p^{ d, K }_{ k, l } $ 
	hence implies that for all 
		$d,K,M\in\N$, 
		$N\in\N_0$, 
		$x\in\R^d$ 
	it holds that 
		\begin{equation} 
		\begin{split}
		& 
		\left( \Exp{ \big| V^{ d, K, 0 }_{ K, N, M } ( x ) - v^{ d, K }_{ K } ( x ) \big|^2} \right)^{ \!\nicefrac12 } 
		\\&
		\leq \frac{ e^{ \nicefrac{ M }{ 2 } } }{ M^{ \nicefrac{ N }{ 2 } } } e^c
		\!\left( 1 + 2c \right)^N 
		\left[ \left( \Exp{ \big| v^{ d, K }_{ 0 } ( X^{ d, K, 0, K, x }_{ 0 } ) \big|^2 } \right)^{ \!\nicefrac12 } 
		+ \sum_{ l = 0 }^{ K - 1 } \left( \Exp{ \big| f^{ d, K }_{ l } ( X^{ d, K, 0, K, x }_{ l }, 0 ) \big|^2 }	\right)^{\!\nicefrac12} \right]\!.
		\end{split}
		\end{equation} 
	Combining this with the assumption that for all 
		$ d,K \in \N $ 
	it holds that 
		$ ( \EXP{ | v^{ d, K }_{ 0 } ( X^{ d, K, 0, K, \xi_{ d } }_{ 0 } ) |^2 } )^{\nicefrac12} + \sum_{ l = 0 }^{ K - 1 } ( \EXP{ | f^{ d, K }_{ l } ( X^{ d, K, 0 , K, \xi_{ d } }_{ l }, 0 ) |^2 } )^{ \nicefrac12 } \leq \kappa $
	shows that for all 
		$d,K,M\in\N$, 
		$N\in\N_0$ 
	it holds that 
		\begin{equation} \label{complexity_general_dynamics:error_estimate}
		\begin{split}
		& 
		\left( \Exp{ \big| V^{ d, K, 0 }_{ K, N, M } ( \xi_{ d } ) - v^{ d, K }_{ K } ( \xi_{ d } ) \big|^2 } \right)^{\!\nicefrac12}  
		\leq 
		\frac{ e^{ \nicefrac{ M }{ 2 } } }{ M^{ \nicefrac{ N }{ 2 } } } e^c ( 1 + 2c )^N \kappa . 
		\end{split}
		\end{equation} 
	This establishes item~\eqref{complexity_general_dynamics:item1}. 
	Next we prove item~\eqref{complexity_general_dynamics:item2}. 
	Observe that \cite[Lemma 3.14]{MLPElliptic} (applied with 
		$ \alpha \is  \gamma K^{ \alpha }  d^p $, 
		$ \beta \is 1 + \gamma K^{ \alpha } d^p $, 
		$ M \is M $, 
		$ ( C_n )_{ n \in \N_0 } \is (\max \{ \mf C^{ d, K }_{ M, n }, 0 \} )_{ n \in \N_0 } $
	for $ d,K,M \in \N $ in the notation of \cite[Lemma 3.14]{MLPElliptic}) and \eqref{complexity_general_dynamics:comp_cost} show that for all 
		$ d,K,M,n \in \N $ 
	it holds that 
		\begin{equation} \label{complexity_general_dynamics:cost_estimate}
		\begin{split}
		\mf C^{ d, K }_{ M, n } 
		& 
		\leq 
		\frac{ 2 \gamma K^{ \alpha } d^p + ( 1 + \gamma K^{ \alpha } d^p ) }{ 2 } ( 3M )^{ n } 
		\leq 
		\left[ \frac{ 1 + 3 \gamma }{2} \right] d^p K^{ \alpha } (3M)^n . 
		\end{split}
		\end{equation} 
	In addition, note that \cite[Lemma 4.2]{AllenCahnApproximation2019} (applied with 
		$ \alpha \is 3 $, 
		$ \beta \is 2 $, 
		$ c \is \kappa + 1 $, 
		$ \kappa \is \sqrt{ e } $, 
		$ \rho \is 1 $, 
		$ K \is 0 $, 
		$ ( \gamma_{ n } )_{ n \in \N } \is ( ( 3 n )^n )_{ n \in \N } $, 
		$ ( \epsilon_{ n, r } )_{ n \in \N, r \in [ \rho, \infty) } \is ( \kappa e^{ c } [ \sqrt{ e } ( 1 + 2 c )]^n n^{ -\nicefrac{ n }{ 2 } } )_{ n \in \N, r \in [ 1, \infty) } $, 
		$ L \is ( ( 0, \infty ) \ni r \mapsto c \in [ 0, \infty ) ) $, 
		$ \varrho \is ( \N \ni n \mapsto n \in ( 0, \infty ) ) $
	in the notation of \cite[Lemma 4.2]{AllenCahnApproximation2019}) ensures that there exist
		$ \mf N = ( \mf N_{\varepsilon} )_{ \varepsilon \in (0, 1] } \colon ( 0, 1 ] \to \N $ 
	and 
		$ \mf c = ( \mf c_{\delta} )_{ \delta \in (0, 1] } \colon ( 0, 1 ] \to [ 0, \infty ) $
	which satisfy for all 
		$ \delta, \varepsilon \in ( 0, 1 ] $  
	that 
		\begin{equation}
		( 3 \mf N_{ \varepsilon } )^{ \mf N_{ \varepsilon } } \leq \mf c_{ \delta } \varepsilon^{ - ( 2 + \delta ) } 
		\qandq 
		\kappa e^{ c } [ \sqrt{ e } ( 1 + 2 c ) ]^{ \mf N_{ \varepsilon } } \mf N_{ \varepsilon }^{ -\nicefrac{ \mf N_{ \varepsilon } }{ 2 } } \leq \varepsilon.  
		\end{equation} 
	This, \eqref{complexity_general_dynamics:error_estimate}, and \eqref{complexity_general_dynamics:cost_estimate} imply that for all 
		$ \delta \in ( 0, \infty ) $, 
		$ \varepsilon \in ( 0, 1 ] $, 
		$ d,K \in \N $ 
	it holds that 
		\begin{equation} 
		\mf C^{ d, K }_{ \mf N_{ \varepsilon }, \mf N_{ \varepsilon } }
		\leq 
		\mf c_{ \delta } \left[ \frac{ 1 + 3 \gamma }{2} \right] d^p K^{ \alpha } \varepsilon^{ - ( 2 + \delta ) }
		\qandq
		\left( \EXPP{ | V^{ d, K, 0 }_{ K, \mf N_{ \varepsilon }, \mf N_{ \varepsilon } } ( \xi_{ d } ) - v^{ d, K }_{ K } ( \xi_{ d } ) |^2 } \right)^{\!\nicefrac12} \leq \varepsilon.   
		\end{equation} 
	This establishes item~\eqref{complexity_general_dynamics:item2}. 
	This completes the proof of \cref{cor:complexity_general_dynamics}. 
\end{proof} 

\subsection{Exponential Euler approximations for semilinear heat equations} 
\label{subsec:application}

\begin{lemma} \label{lem:and_one_more_well_definedness_result}
	Let $ d \in \N $, 
		$ p \in (0, 2) $,
		$ r,T \in [0, \infty) $, 
	let $ \lVert\cdot\rVert \colon \R^d \to [0, \infty) $ be the standard norm on $ \R^d $, 
	let $ (\Omega, \mc F, \P) $ be a probability space, 
	let $ W \colon [0, T]\times\Omega \to \R^d $ be a standard Brownian motion, 
	and  
	let $ h \in C( \R^d, \R ) $ satisfy that 
		$ \EXP{ | h( W_T ) |^2 } < \infty $. 
	Then $ \sup_{ x \in \R^d, \norm{x} \leq r } \EXP{ | h( x + W_T ) |^p } < \infty $.  
\end{lemma}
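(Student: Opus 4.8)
The statement asks for a uniform bound on $\EXP{|h(x+W_T)|^p}$ over the ball $\{x\in\R^d\colon \norm{x}\leq r\}$, given only that $h\in C(\R^d,\R)$ and $\EXP{|h(W_T)|^2}<\infty$. The plan is to reduce everything to the single hypothesis $\EXP{|h(W_T)|^2}<\infty$ by a Gaussian change of variables, and then to control the exponential weight that arises by exploiting the slack between $p$ and $2$. I would first dispose of the degenerate case $T=0$ (then $W_T=0$ a.s.\ and the supremum is just $\sup_{\norm{x}\leq r}|h(x)|^p$, which is finite by continuity of $h$ on the compact ball), so that from now on $T\in(0,\infty)$.

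For $T>0$ I would write the expectation explicitly against the Gaussian density: for $x\in\R^d$ with $\norm{x}\leq r$,
\begin{equation}
\EXPP{|h(x+W_T)|^p}=[2\pi T]^{-\nicefrac d2}\int_{\R^d}|h(x+z)|^p\exp\!\left(-\tfrac{\norm{z}^2}{2T}\right)dz=[2\pi T]^{-\nicefrac d2}\int_{\R^d}|h(y)|^p\exp\!\left(-\tfrac{\norm{y-x}^2}{2T}\right)dy.
\end{equation}
Using $\norm{y-x}^2\geq \norm{y}^2-2\norm{y}\norm{x}\geq \norm{y}^2-2r\norm{y}$ and then $2r\norm{y}\leq \tfrac{1}{2}\norm{y}^2\cdot\tfrac{?}{}$—more carefully, for any $\epsilon\in(0,1)$ one has $2r\norm{y}\leq \epsilon\norm{y}^2+\tfrac{r^2}{\epsilon}$, so $\norm{y-x}^2\geq(1-\epsilon)\norm{y}^2-\tfrac{r^2}{\epsilon}$, giving the bound
\begin{equation}
\EXPP{|h(x+W_T)|^p}\leq [2\pi T]^{-\nicefrac d2}\exp\!\left(\tfrac{r^2}{2T\epsilon}\right)\int_{\R^d}|h(y)|^p\exp\!\left(-\tfrac{(1-\epsilon)\norm{y}^2}{2T}\right)dy,
\end{equation}
which is independent of $x$. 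So it suffices to show the remaining integral is finite for some choice of $\epsilon\in(0,1)$.

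The finiteness of that integral is where the hypothesis $p<2$ enters. The plan is to apply Hölder's inequality with exponents $\tfrac2p$ and $\tfrac{2}{2-p}$:
\begin{equation}
\int_{\R^d}|h(y)|^p\exp\!\left(-\tfrac{(1-\epsilon)\norm{y}^2}{2T}\right)dy\leq \left(\int_{\R^d}|h(y)|^2\exp\!\left(-\tfrac{\norm{y}^2}{2T}\right)dy\right)^{\!\nicefrac p2}\!\!\left(\int_{\R^d}\exp\!\left(-\tfrac{2(1-\epsilon)-p}{(2-p)}\cdot\tfrac{\norm{y}^2}{2T}\right)dy\right)^{\!\nicefrac{2-p}{2}}\!.
\end{equation}
The first factor equals $([2\pi T]^{\nicefrac d2}\EXP{|h(W_T)|^2})^{\nicefrac p2}<\infty$ by hypothesis. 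The second factor is a finite Gaussian integral provided the coefficient $\tfrac{2(1-\epsilon)-p}{2-p}$ is strictly positive, i.e.\ provided $\epsilon<\tfrac{2-p}{2}$; since $p<2$ such $\epsilon\in(0,1)$ exists (e.g.\ $\epsilon=\tfrac{2-p}{4}$). Combining the three displays yields a bound on $\EXP{|h(x+W_T)|^p}$ that does not depend on $x$ in the ball, and is finite; taking the supremum over $x$ finishes the proof.

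\textbf{Main obstacle.} There is no deep obstacle; the only point requiring care is the bookkeeping in the quadratic-form estimate $\norm{y-x}^2\geq(1-\epsilon)\norm{y}^2-\tfrac{r^2}{\epsilon}$ and the matching requirement $\epsilon<\tfrac{2-p}{2}$ in the Hölder step, so that the exponent in the second Gaussian integral stays negative-definite. One should also double-check the edge cases $r=0$ (trivial) and $T=0$ (handled separately at the start), and note that if $h\equiv 0$ or $p=0$ the claim is immediate. Everything else is a routine computation with Gaussian integrals and Hölder's inequality.
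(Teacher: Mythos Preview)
Your proof is correct. Both you and the paper arrive at the same H\"older step against the Gaussian density, but the route to the key pointwise inequality differs. The paper splits the expectation into $\{\norm{W_T}\le 2R\}$ and $\{\norm{W_T}>2R\}$: on the bounded piece it invokes continuity of $h$ on a compact set, and on the tail it uses the triangle-inequality estimate $\norm{y-x}\ge(1-\alpha)\norm{y}$ (valid once $\norm{y-x}\ge 2R$ with $R=r/\alpha$), which after squaring feeds into H\"older with the constraint $(1-\alpha)^2>p/2$. You instead apply the global Young-type bound $\norm{y-x}^2\ge(1-\epsilon)\norm{y}^2-r^2/\epsilon$, which holds for \emph{all} $y$ and so dispenses with the splitting entirely; the price is the harmless constant factor $\exp(r^2/(2T\epsilon))$, and your matching constraint becomes $\epsilon<1-p/2$. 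Your argument is a bit more direct and avoids the compactness detour; the paper's splitting, on the other hand, makes it transparent that only the tail behaviour of $h$ matters. Either way the essential idea---absorb the shift into a slightly inflated Gaussian and then H\"older back to the $L^2$ hypothesis using the slack $p<2$---is the same.
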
 

\begin{proof}[Proof of \cref{lem:and_one_more_well_definedness_result}]
	Throughout this proof assume without loss of generality that $ \min\{ r, T \} > 0 $, 
	let 
		$ \alpha \in (0, 1) $ 
	satisfy that 
		$ ( 1 - \alpha )^2 > \frac{p}{2} $, 
	and let 
		$ R = \frac{r}{\alpha} $.  
	Note that the assumption that $ h \in C(\R^d, \R) $ and the fact that $ \{ z \in \R^d \colon \norm{z} \leq r \} $ is compact ensure that 
		\begin{equation} \label{and_one_more_well_definedness_result:sup_bounded_set}
		\sup_{ x \in \R^d, \norm{x} \leq r } \EXPP{ \mathbbm{1}_{ \{\norm{W_T} \leq 2R \} } | h( x + W_T) |^p } < \infty . 
		\end{equation} 
	Next note that the triangle inequality proves that for all 
		$ x, z \in \R^d $ 
	with 
		$ \norm{x} \leq r $ 
	and 
		$ \norm{z-x}\geq 2R $ 
	it holds  
		\begin{enumerate}[(i)]
		\item \label{and_one_more_well_definedness_result:proof_item1} that $ \norm{ z } \geq 2R - \norm{ x } \geq 2R - r = 2 R - \alpha R > R $, 
		\item that $ \norm{ x } \leq r = \alpha R \leq \alpha \norm{z} $, and 
		\item \label{and_one_more_well_definedness_result:proof_item3} that $ \norm{ z - x } \geq \norm{z } - \norm{ x } \geq ( 1 - \alpha ) \norm{ z } $. 
		\end{enumerate}
	Observe that item~\eqref{and_one_more_well_definedness_result:proof_item3} implies that for all 
		$ x \in \R^d $ 
	with 
		$ \norm{ x } \leq r $ 
	it holds that 
		\begin{equation} 
		\begin{split}
		& 
		\EXPP{\mathbbm{1}_{\{\norm{W_T} > 2R \} } | h( x + W_T ) |^p } 
		\\[1ex]
		& = 
		(2 \pi T)^{-\nicefrac{d}{2}} \int_{ \R^d } \mathbbm{1}_{ \{z \in \R^d \colon \norm{z} > 2 R \}}(y) \, | h( x + y ) |^p \exp(-\tfrac{\norm{y}^2}{2T})\,dy 
		\\
		& = 
		(2 \pi T)^{-\nicefrac{d}{2}} 
		\int_{ \R^d } \mathbbm{1}_{ \{z \in \R^d \colon \norm{z} > 2 R \}}(y - x) \,  | h( y ) |^p \exp(-\tfrac{\norm{y - x}^2}{2T})\,dy 
		\\
		& \leq 
		(2 \pi T)^{-\nicefrac{d}{2}} 
		\int_{ \R^d } | h( y ) |^p \exp( -(1-\alpha)^2 \tfrac{\norm{y}^2}{2T} ) \,dy. 
		\end{split}
		\end{equation} 
	The fact that $ ( 1 - \alpha )^2 > \frac{p}{2} $ and H\"older's inequality therefore show that for all 
		$ x \in \R^d $ 
	with 
		$ \norm{ x } \leq r  $
	it holds that 
		\begin{equation}
		\begin{split} 
		& 
		\EXPP{\mathbbm{1}_{\{\norm{W_T} > 2R \} } | h( x + W_T ) |^p } 
		\\
		& \leq 
		(2 \pi T)^{-\nicefrac{d}{2}} 
		\int_{ \R^d } \left[ | h( y ) |^2 \exp( -\tfrac{\norm{y}^2}{2T} ) \right]^{\nicefrac{p}{2}} 
		\exp( -( (1-\alpha)^2 - \tfrac{p}{2} ) \tfrac{\norm{y}^2}{2T} ) \,dy
		\\
		& \leq 
		( 2 \pi T)^{-\nicefrac{d}{2}}
		\left( \EXPP{ | h( W_T ) |^2 } \right)^{\!\nicefrac{p}{2}}
		\left[ \int_{\R^d} \exp( - \tfrac{ ( 1- \alpha)^2 - \nicefrac{p}{2} }{ 1 - \nicefrac{p}{2} } \tfrac{\norm{y}^2}{2T} ) \,dy \right]^{1-\nicefrac{p}{2}}. 
		\end{split} 
		\end{equation}
	Hence, we obtain that 
		$ \sup_{ x \in \R^d, \norm{x}\leq r } 
		\EXP{\mathbbm{1}_{\{\norm{W_T} \geq 2R \} } | h( x + W_T ) |^p } < \infty $. 
	Combining this with \eqref{and_one_more_well_definedness_result:sup_bounded_set} shows that 
		$ \sup_{ x \in \R^d, \norm{x}\leq r } \EXP{ | h( x + W_T ) |^p } < \infty $. 
	This completes the proof of \cref{lem:and_one_more_well_definedness_result}. 
\end{proof} 

\begin{lemma} \label{lem:yet_another_well_definedness_result}
	Let $ d \in \N $,
		$ f \in C( \R^d \times \R, \R ) $, 
		$ g \in C( \R^d, \R ) $, 
		$ L, T \in [0, \infty) $,  
	let $ ( \Omega, \mc F, \P ) $ be a probability space, 
	let $ W \colon [0, T]\times\Omega \to \R^d $ be a standard Brownian motion, 
	assume for all 
		$ x \in \R^d $, 
		$ a, b \in \R $ 
	that 
		$ | f( x, a ) - f( x, b ) | \leq L | a - b | $
	and  	
		$ \EXP{ | f( x + W_T, 0 ) |^2 + | g( x + W_T ) |^2 } < \infty $, 
	and	let $ u \colon \R^d \to \R $ satisfy for all 
		$ x \in \R^d $ 
	that 
		$ u( x ) = \EXP{ f(x + W_T, g( x + W_T ) ) } $
	(cf.\ \cref{lem:welldefinedness_induction_step}). 
	Then $ u \in C(\R^d, \R) $. 
\end{lemma}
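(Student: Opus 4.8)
The plan is to prove sequential continuity of $u$ by combining pointwise convergence of the integrand with a uniform integrability argument. Fix $x \in \R^d$ and let $( x_n )_{ n \in \N } \subseteq \R^d$ satisfy $\lim_{ n \to \infty } x_n = x$; since $\R^d$ is a metric space it suffices to show that $\lim_{ n \to \infty } u( x_n ) = u( x )$. First I would set $r = \sup_{ n \in \N } \norm{ x_n } \in [0,\infty)$ (finite because convergent sequences are bounded; note also $\norm{ x } \leq r$) and fix an exponent $p \in (1,2)$, say $p = \nicefrac{3}{2}$. Recording the pointwise limit is immediate: the assumption that $f \in C( \R^d \times \R, \R )$ and $g \in C( \R^d, \R )$ ensures that for every $\omega \in \Omega$ it holds that $f( x_n + W_T( \omega ), g( x_n + W_T( \omega ) ) ) \to f( x + W_T( \omega ), g( x + W_T( \omega ) ) )$ as $n \to \infty$.

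Next I would establish a uniform $L^p$-bound for the family $( f( x_n + W_T, g( x_n + W_T ) ) )_{ n \in \N }$. The Lipschitz hypothesis on $f$ yields for all $y \in \R^d$ that $| f( y, g( y ) ) | \leq | f( y, 0 ) | + L | g( y ) |$, hence $| f( y, g( y ) ) |^p \leq 2^{ p - 1 } ( | f( y, 0 ) |^p + L^p | g( y ) |^p )$. Taking $x = 0$ in the standing assumptions shows that $f( \cdot, 0 ), g \in C( \R^d, \R )$ with $\EXP{ | f( W_T, 0 ) |^2 } + \EXP{ | g( W_T ) |^2 } < \infty$, so \cref{lem:and_one_more_well_definedness_result} (applied once with $h \is f( \cdot, 0 )$ and once with $h \is g$, in each case with $p \is p$ and $r \is r$) gives $\sup_{ y \in \R^d, \norm{ y } \leq r } ( \EXP{ | f( y + W_T, 0 ) |^p } + \EXP{ | g( y + W_T ) |^p } ) < \infty$. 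Since $\norm{ x_n } \leq r$ for all $n \in \N$, combining the last two displays produces $\sup_{ n \in \N } \EXP{ | f( x_n + W_T, g( x_n + W_T ) ) |^p } < \infty$.

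Finally, a family of random variables bounded in $L^p$ for some $p > 1$ is uniformly integrable; together with the $\P$-a.s.\ (indeed everywhere) convergence obtained above and Vitali's convergence theorem, this yields $\lim_{ n \to \infty } \EXP{ | f( x_n + W_T, g( x_n + W_T ) ) - f( x + W_T, g( x + W_T ) ) | } = 0$ and in particular $\lim_{ n \to \infty } u( x_n ) = u( x )$. As $( x_n )_{ n \in \N }$ and $x$ were arbitrary, $u$ is (sequentially, hence) continuous on $\R^d$, which completes the proof.

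The only non-routine point is the uniform $L^p$-control of the shifted integrand: a naive dominated convergence argument would require the dominating function $\omega \mapsto \sup_{ \norm{ y } \leq r } ( | f( y + W_T( \omega ), 0 ) | + L | g( y + W_T( \omega ) ) | )$ to be integrable, which need not hold under the given $L^2$-hypotheses. The gain-of-integrability estimate in \cref{lem:and_one_more_well_definedness_result}, valid for every exponent strictly below $2$, is precisely what bridges this gap, after which uniform integrability closes the argument; everything else is bookkeeping.
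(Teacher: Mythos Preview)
Your proof is correct and follows essentially the same approach as the paper: establish pointwise convergence of the integrand via continuity, use \cref{lem:and_one_more_well_definedness_result} to get a uniform $L^p$-bound for some $p\in(1,2)$, and conclude via Vitali's convergence theorem. The only cosmetic difference is that the paper applies \cref{lem:and_one_more_well_definedness_result} once to the composite function $h(y)=f(y,g(y))$ (after first checking $\EXP{|h(W_T)|^2}<\infty$ via the same Lipschitz bound you use), whereas you apply it separately to $f(\cdot,0)$ and $g$ and then combine; both routes are equally valid.
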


\begin{proof}[Proof of \cref{lem:yet_another_well_definedness_result}] 
	Throughout this proof let $ \lVert\cdot\rVert \colon \R^d \to [0, \infty) $ be the standard norm on $ \R^d $	
	and let 
		$ \mf x_n \in \R^d $, $ n \in \N_0 $, 
	satisfy 
		$ \limsup_{ n \to \infty } \norm{ \mf x_n - \mf x_0 } = 0 $. 
	The assumption that $ f \in C( \R^d \times \R, \R ) $ and the assumption that $ g \in C( \R^d, \R ) $ ensure that for all 
		$ \omega \in \Omega $ 
	it holds that 
		\begin{equation} \label{yet_another_well_definedness_result:pointwise_convergence}
		\limsup_{ n \to \infty } | f( \mf x_n + W_T(\omega), g( \mf x_n + W_T(\omega) ) ) - f( \mf x_0 + W_T(\omega), g( \mf x_0 + W_T(\omega) ) ) | = 0 .
		\end{equation}  
	Moreover, note that the assumption that for all 
		$ x \in \R^d $, 
		$ a,b \in \R $ 
	it holds that 
		$ | f( x, a ) - f( x, b ) | \leq L | a - b | $ 
	and the assumption that for all 
		$ x \in \R^d $ 
	it holds that 
		$ \EXP{ | f( x + W_T, 0 ) |^2 + | g( x + W_T ) |^2 } < \infty $
	imply that 
		\begin{equation} 
		\EXPP{ | f( x + W_T, g( x + W_T ) ) |^2 } 
		\leq 
		2 \, \EXPP{ | f( x + W_T, 0 ) |^2 } + 2 \, \EXPP{ | g( x + W_T ) |^2 } < \infty. 
		\end{equation} 
	\cref{lem:and_one_more_well_definedness_result} hence ensures that for all 
		$ p \in (1, 2) $ 
	it holds that 	
		$ \sup_{ n \in \N } \EXP{ | f( \mf x_n + W_T, g( \mf x_n + W_T ) ) |^p } < \infty $. 
	Vitali's convergence theorem (see, e.g., Klenke~\cite[Theorem 6.25]{Klenke2014}), the de la Vall\'ee Poussin theorem (see, e.g., Klenke~\cite[Theorem 6.19]{Klenke2014}), and \eqref{yet_another_well_definedness_result:pointwise_convergence} therefore demonstrate that
		\begin{equation} 
		\limsup_{ n \to \infty } \EXPP{ | f( \mf x_n + W_T, g( \mf x_n + W_T ) ) - f( \mf x_0 + W_T, g( \mf x_0 + W_T ) ) | } = 0.  
		\end{equation}  
	This completes the proof of \cref{lem:yet_another_well_definedness_result}. 		
\end{proof} 

\begin{cor} \label{cor:complexity_mlp_exponential_euler}
	Let	$ L,T \in \R $, 
		$ \Theta = \bigcup_{ k \in \N } \! \Z^{ k } $, 
		$ f \in C( \R, \R ) $ 
	satisfy for all 
		$ a,b \in \R $ 
	that 
		$| f ( a ) - f ( b ) | \leq L | a - b |$, 
	let 
		$ \xi_{ d } \in \R^d $, $ d \in \N $, 
	for every 
		$ K \in \N $ 
	let $ t^{ K }_{ 0 }, t^{ K }_{ 1 }, \ldots, t^{ K }_{ K } \in \R $
	satisfy 
		$ 0 = t^{ K }_{ 0 } < t^{ K }_{ 1 } < \ldots < t^{ K }_{ K } = T $,
	let $( \Omega, \mc F, \P ) $ be a probability space, 
	let $ R^{ \theta } \colon \Omega \to ( 0, 1 ) $, $ \theta \in \Theta $, be i.i.d.\,random variables, 
	assume for all 
		$ r \in (0, 1) $ 
	that 
		$ \P ( R^{ 0 } \leq r ) = r $, 
	let $ W^{ d, \theta } \colon [0,T] \times \Omega \to \R^d $, $ d \in \N $, $ \theta \in \Theta $, be i.i.d.~standard Brownian motions, 
	assume that 
		$ ( R^{ \theta } )_{ \theta \in \Theta } $ 
	and 
		$ ( W^{ d, \theta } )_{ ( d, \theta ) \in \N \times \Theta } $ 
	are independent, 
	let $ v^{ d, K }_{ k } \in C( \R^d, \R ) $, $ k \in \{ 0, 1, \ldots, K \} $, $ d, K \in \N $,
	satisfy for all
		$ d,K \in \N $,
		$ k \in \{ 1, 2, \ldots, K \} $,
		$ x \in \R^d $	
	that 
		$ \EXP{ | v^{d,K}_{ 0 } ( \xi_{ d } + W^{ d, 0 }_{ T } ) |^2 } \leq L $, 
		$ \EXP{ | v^{d,K}_{ 0 } ( x + W^{ d, 0 }_{ T } ) |^2 } < \infty $, 
	and 
		\begin{equation} 
		v^{ d, K }_{ k } ( x ) 
		= \Exp{ v^{ d, K }_{ k - 1 } ( x + W^{ d, 0 }_{ t^{ K }_{ k } - t^{ K }_{ k - 1 } } ) + ( t^{ K }_{ k } - t^{ K }_{ k - 1 } ) f \big( v^{ d, K }_{ k - 1 } ( x + W^{ d, 0 }_{ t^{ K }_{ k } - t^{ K }_{ k - 1 } } ) \big) }
		\end{equation}
	(cf.\,\cref{lem:yet_another_well_definedness_result}), 	
	let $ \mc{R}^{ K, \theta } = ( \mc{R}^{ K, \theta }_{ k } )_{ k \in \{ 0, 1, \ldots, K \} } \colon \{ 0, 1, \ldots, K \} \times \Omega \to \N_0 $, $ K \in \N $, $ \theta \in \Theta $, satisfy for all 
	$ K \in \N $, 
	$ \theta \in \Theta $,
	$ k \in \{ 0, 1, \ldots, K \} $ 
	that 
	$ \mc{R}^{ K, \theta }_{ k } = \max \{ n \in \N_0 \colon t^{ K }_{ n } \leq t^{ K }_{ k } R^{ \theta } \} $, 		 
	let $ V^{ d, K, \theta }_{ k, n, M } \colon \R^d \times \Omega \to \R $, $ \theta \in \Theta $, $ k \in \{ 0, 1, \ldots, K \} $, $ n \in \N_0 $, $ d,K,M \in \N $,
	satisfy for all
		$ \theta \in \Theta $, 
		$ d, K, M \in \N $, 
		$ n \in \N_0 $,
		$ k \in \{ 0, 1, \ldots, K \} $, 
		$ x \in \R^d $ 
	that 
		\begin{multline}
		V^{ d, K, \theta }_{ k, n, M } ( x ) 
		= 
		\frac{ \mathbbm{1}_{\N}(n) }{ M^{ n } } \sum_{ m = 1 }^{ M^{ n } } 
		\left[ v^{ d, K }_{ 0 } ( x + W^{ d, ( \theta, 0, -m ) }_{ t^{ K }_{ k } } ) + t^{ K }_{ k } f ( 0 ) \right]  
		\\
		+ 
		\sum_{ j = 1 }^{ n - 1 } 
		\frac{ t^{ K }_{ k } }{ M^{ n - j } }
		\sum_{ m = 1 }^{ M^{ n - j } } \bigg[  
		f \big( V^{ d, K, ( \theta, j, m ) }_{ \mc{R}^{ K, ( \theta, j, m ) }_{ k }, j, M } ( x + W^{ d, ( \theta, j, m ) }_{ t^{ K }_{ k } } - W^{ d, ( \theta, j, m ) }_{ t^{ K }_{ \mc{R}^{ K, ( \theta, j, m ) }_{ k } } } ) \big) 
		\\
		- f \big( V^{ d, K, ( \theta, j, -m ) }_{ \mc{R}^{ K, ( \theta, j, m ) }_{ k }, j - 1, M } ( x + W^{ d, ( \theta, j, m ) }_{ t^{ K }_{ k } } - W^{ d, ( \theta, j, m ) }_{ t^{ K }_{ \mc{R}^{ K, ( \theta, j, m ) }_{ k } } } ) \big) \bigg], 
		\end{multline}
	and let $ \mf C^{ d, K }_{ M, n } \in \R $, $ d, K, M, n \in \N_0 $, satisfy for all 
		$ d,K,M \in \N $, 
		$ n \in \N_0 $ 
	that
		\begin{equation}\label{cor:complexity_mlp_exponential_euler:cost_inequality}
		\mf C^{ d, K }_{ M, n } \leq d M^{ n } + \sum_{ j = 1 }^{ n - 1 } M^{ n - j } ( d + 1 + \mf C^{d,K}_{M,j} + 
		\mf C^{ d, K }_{ M, j - 1 } ). 
		\end{equation}
	Then there exist 
		$ \mf N = ( \mf N_{ \varepsilon } )_{ \varepsilon \in (0,1] } \colon ( 0, 1 ] \to \N $
	and 
		$ \mf c = ( \mf c_{ \delta } )_{ \delta \in (0,1] } \colon ( 0, 1 ] \to \R $ 
	such that for all 
		$ \delta, \varepsilon \in (0,1] $,
		$ d,K \in \N $
	it holds that 
		$ \mf C^{ d, K }_{ \mf N_{ \varepsilon }, \mf N_{ \varepsilon } } \leq \mf c_{ \delta } d \varepsilon^{ - ( 2 + \delta ) } $ 
	and 
		$
		\big( \EXP{ | V^{ d, K, 0 }_{ K, \mf N_{ \varepsilon }, \mf N_{ \varepsilon } } ( \xi_{ d } ) - v^{ d, K }_{ K } ( \xi_{ d } ) |^2 } \big)^{\! \nicefrac12 } \leq \varepsilon 
		$. 	
\end{cor}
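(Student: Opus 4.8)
The plan is to deduce \cref{cor:complexity_mlp_exponential_euler} as a direct specialization of the general complexity result \cref{cor:complexity_general_dynamics}. The key observation is that the exponential Euler recursion here is precisely an instance of the general nested-expectation dynamics: I would choose the spatial dynamics $\phi^{d,K}_k$ to be addition of an independent Brownian increment, set $f^{d,K}_k(x,a) = a + (t^K_{\min\{k+1,K\}} - t^K_k) f(a)$ so that $f^{d,K}_k(x,a) - a = (t^K_{\min\{k+1,K\}} - t^K_k)(f(a))$ has Lipschitz constant $L^{d,K}_k = (t^K_{\min\{k+1,K\}} - t^K_k)L$ in $a$, and take $g = v^{d,K}_0$. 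With $X^{d,K,\theta,k,x}_l = x + W^{d,\theta}_{t^K_k} - W^{d,\theta}_{t^K_l}$ (realized via increments of the given Brownian motions), the randomized index $\mc R^{K,\theta}_k = \max\{n\in\N_0\colon t^K_n \le t^K_k R^\theta\}$ matches the general scheme's $\mc R$, and one checks that $\P(\mc R^{K,0}_k = l) = \frac{t^K_{l+1}-t^K_l}{t^K_k}$ for $l<k$, so the weights $\mf p^{d,K}_{k,l} = \frac{t^K_{l+1}-t^K_l}{t^K_k}$ satisfy the required identity $\mf p^{d,K}_{k,l}\P(\mc R^{K,0}_k=l) = |\P(\mc R^{K,0}_k=l)|^2$.

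Next I would verify the hypotheses of \cref{cor:complexity_general_dynamics} with the concrete exponents and constants. The Lipschitz-versus-probability comparison $L^{d,K}_l \le c\,\P(\mc R^{d,K,0}_k = l)$ becomes $(t^K_{l+1}-t^K_l)L \le c\,\frac{t^K_{l+1}-t^K_l}{t^K_k}$, i.e.\ $L t^K_k \le c$, which holds with $c = LT$ since $t^K_k \le T$; so I set $c = LT$. For the integrability and the uniform bound $\kappa$: \cref{lem:and_one_more_well_definedness_result} (or direct Gaussian estimates as in \cref{cor:well_definedness_exact_solution_brownian_case}) gives $\EXP{|v^{d,K}_0(x+W^{d,0}_{t^K_k})|^2}<\infty$ from $\EXP{|v^{d,K}_0(x+W^{d,0}_T)|^2}<\infty$, and $f^{d,K}_l(\cdot,0) = (t^K_{l+1}-t^K_l)f(0)$ is a constant, whose $L^2$-norm summed over $l$ equals $T|f(0)|$; together with $\EXP{|v^{d,K}_0(\xi_d+W^{d,0}_T)|^2}\le L$ this yields $\big(\EXP{|v^{d,K}_0(X^{d,K,0,K,\xi_d}_0)|^2}\big)^{1/2} + \sum_{l=0}^{K-1}\big(\EXP{|f^{d,K}_l(X^{d,K,0,K,\xi_d}_l,0)|^2}\big)^{1/2} \le \sqrt L + T|f(0)|$, so I take $\kappa = \sqrt L + T|f(0)|$. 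The exact-solution identity for $v^{d,K}_k$ in the hypothesis of \cref{cor:complexity_general_dynamics} matches \eqref{toy_example_final_error_estimate:exact_solution_ass} after the substitution (using that $W^{d,\theta}_{t^K_k} - W^{d,\theta}_{t^K_{k-1}}$ is equidistributed with $W^{d,0}_{t^K_k - t^K_{k-1}}$). For the cost recursion \eqref{complexity_general_dynamics:comp_cost}, I would observe that \eqref{cor:complexity_mlp_exponential_euler:cost_inequality} is exactly \eqref{complexity_general_dynamics:comp_cost} with $\gamma = 1$, $\alpha = 0$, $p = 1$ (since $dM^n \le \gamma K^0 d^1 M^n$ and $d+1 \le 1 + \gamma K^0 d^1$).

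Finally I would invoke item~\eqref{complexity_general_dynamics:item2} of \cref{cor:complexity_general_dynamics} with these parameters to obtain $\mf N = (\mf N_\varepsilon)_{\varepsilon\in(0,1]}$ and $\mf c^{\mathrm{gen}} = (\mf c^{\mathrm{gen}}_\delta)_{\delta\in(0,1]}$ satisfying, for all $d,K\in\N$ and $\delta,\varepsilon\in(0,1]$, both $\mf C^{d,K}_{\mf N_\varepsilon,\mf N_\varepsilon} \le \mf c^{\mathrm{gen}}_\delta K^0 d^1 \varepsilon^{-(2+\delta)} = \mf c^{\mathrm{gen}}_\delta\, d\,\varepsilon^{-(2+\delta)}$ and $\big(\EXP{|V^{d,K,0}_{K,\mf N_\varepsilon,\mf N_\varepsilon}(\xi_d) - v^{d,K}_K(\xi_d)|^2}\big)^{1/2}\le\varepsilon$; setting $\mf c_\delta = \mf c^{\mathrm{gen}}_\delta$ then gives the claim. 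The main obstacle I anticipate is purely bookkeeping rather than conceptual: one must carefully confirm that the concrete MLP scheme $V^{d,K,\theta}_{k,n,M}$ in the corollary coincides (as a function of the underlying randomness) with the general scheme of \cref{cor:complexity_general_dynamics} under the stated identifications — in particular matching the term $t^K_k f(0)$ with $\sum_{l=0}^{k-1} f^{d,K}_l(X^{d,K,(\theta,0,m),k,x}_l,0) = \sum_{l=0}^{k-1}(t^K_{l+1}-t^K_l)f(0) = t^K_k f(0)$, and matching the multilevel correction term (here written with an explicit $t^K_k$ prefactor and $f$ rather than $f^{d,K}$) with the $\frac{1}{\mf p^{d,K}_{k,\mc R}}$-weighted increments $(f^{d,K}_{\mc R}(\cdot,a) - a) = (t^K_{\mc R+1}-t^K_{\mc R})f(a)$, using $\frac{t^K_{\mc R+1}-t^K_{\mc R}}{\mf p^{d,K}_{k,\mc R}} = t^K_k$ — and that the well-definedness and measurability hypotheses all transfer, for which I would cite \cref{lem:yet_another_well_definedness_result}, \cref{cor:well_definedness_exact_solution_brownian_case}, and \cref{lem:and_one_more_well_definedness_result}.
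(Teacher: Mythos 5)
Your proposal is correct and follows essentially the same route as the paper: the paper's proof likewise defines $\mf p^{K}_{k,l}=(t^K_{l+1}-t^K_l)/t^K_k$ for $l<k$ and invokes item~(ii) of \cref{cor:complexity_general_dynamics} with exactly your identifications ($\alpha=0$, $\gamma=1$, $p=1$, $c=LT$, $f^{d,K}_k(x,a)=a+(t^K_{\min\{k+1,K\}}-t^K_k)f(a)$, Brownian-increment dynamics), the only cosmetic difference being that the paper takes $\kappa=L+T|f(0)|$ where your $\kappa=\sqrt{L}+T|f(0)|$ is the bound that actually follows from the hypothesis $\E[|v^{d,K}_0(\xi_d+W^{d,0}_T)|^2]\le L$.
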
 

\begin{proof}[Proof of \cref{cor:complexity_mlp_exponential_euler}]
	Throughout this proof let 
		$ \mf p^{ K }_{ k, l } \in ( 0, \infty ) $, $ k \in \{ 1, 2, \ldots, K \} $, $ l \in \N_{ 0 } $, $ K \in \N $, 
	satisfy for all 
		$ K \in \N $, 
		$ k \in \{ 1, 2, \ldots, K \} $, 
		$ l \in \N_{ 0 } $ 
	that 
		\begin{equation} 
		\mf p^{ K }_{ k, l } = 
		\begin{cases} 
		\frac{ t^{ K }_{ l + 1 } - t^{ K }_{ l } }{ t^{ K }_{ k } } & \colon l < k \\[1ex]
		1 & \colon l \geq k.
		\end{cases} 
		\end{equation}  
	Observe that item~\eqref{complexity_general_dynamics:item2} of \cref{cor:complexity_general_dynamics} (applied with 
		$ \alpha \is 0 $, 
		$ \gamma \is 1 $, 
		$ c \is LT $, 
		$ \kappa \is L + T | f ( 0 ) | $, 
		$ p \is 1 $, 
		$ \Theta \is \Theta $, 
		$ \xi_{ d } \is \xi_{ d } $,
		$ ( L^{ d, K }_{ k } )_{ k \in \{ 0, 1, \ldots, K \} } \is ( L ( t^{ K }_{ \min\{ k + 1, K \} } - t^{ K }_{ k } ) )_{ k \in \{ 0, 1, \ldots, K \} } $, 
		$ ( f^{ d, K }_{ k } )_{ k \in \{ 0, 1, \ldots, K \} } \is ( ( \R^d \times \R \ni ( x, a ) \mapsto a + ( t^{ K }_{ \min\{ k + 1, K \} } - t^{ K }_{ k } ) f ( a ) \in \R ) )_{ k \in \{ 0, 1, \ldots, K \} } $, 
		$ ( \phi^{ d, K }_{ k } )_{ k \in \{ 0, 1, \ldots, K \} } \is ( \R^d \times \R^d \ni ( x, w ) \mapsto x + w \in \R^d )_{ k \in \{ 0, 1, \ldots, K \} }$, 
		$ ( \Omega, \mc F, \P ) \is ( \Omega,\mc F, \P ) $, 
		$ ( \mf C^{ d, K }_{ M, n } )_{ ( M, n ) \in \N \times \N_0 } \is ( \mf C^{d,K}_{M,n} )_{(M,n)\in\N\times\N_0}$,
		$ ( \mc{R}^{ d, K, \theta } )_{ \theta \in \Theta } \is ( \mc R^{ K, \theta } )_{ \theta \in \Theta } $,    
		$ ( W^{ d, K, \theta }_{ k } )_{ ( k, \theta ) \in \{ 0, 1, \ldots, K \} \times \Theta } \is ( W^{ d, \theta }_{ t^{ K }_{ \min\{ k + 1, K \} } } - W^{ d, \theta }_{ t^{ K }_{ k } } )_{ ( k, \theta ) \in \{ 0, 1, \ldots, K \} \times \Theta } $, 
		$ ( \mf p^{ d, K }_{ k, l } )_{ ( k, l ) \in \{ 1, 2, \ldots, K \} \times \N_{ 0 } } \is ( \mf p^{ K }_{ k, l } )_{ ( k, l ) \in \{ 1, 2, \ldots, K \} \times \N_{ 0 } } $, 
		$ ( v^{ d, K }_{ k } )_{ k \in \{0, 1, \ldots, K\}} \is ( v^{ d, K }_{ k } )_{ k \in \{ 0, 1, \ldots, K \} } $
	for 
		$d,K\in\N$ 
	in the notation of \cref{cor:complexity_general_dynamics}) ensures that there exist 	
		$ \mf N = ( \mf N_{\varepsilon } )_{\varepsilon \in (0,1]} \colon (0,1] \to \N$ 
	and 
		$ \mf c = ( \mf c_{\delta} ) \colon (0, 1] \to [0,\infty)$ 
	such that for all 
		$ \delta,\varepsilon \in (0,1] $, 
		$ d,K \in \N $ 
	it holds that 
		$\mf C^{d,K}_{\mf N_{\varepsilon},\mf N_{\varepsilon}} 
		\leq \mf c_{\delta} d \varepsilon^{- ( 2 + \delta ) } $ 
	and 
		\begin{equation} 
		\left( \Exp{ \big| V^{ d, K, 0 }_{ K, \mf N_{ \varepsilon }, \mf N_{ \varepsilon } } ( \xi_{ d } ) - v^{ d, K }_{ K } ( \xi_{ d } ) \big|^2 } \right)^{ \!\nicefrac12 } 	\leq \varepsilon. 
		\end{equation} 
	This completes the proof of \cref{cor:complexity_mlp_exponential_euler}. 
\end{proof}

\subsection*{Acknowledgements}

The second author acknowledges funding by the Deutsche Forschungsgemeinschaft (DFG, German Research Foundation) under Germany's Excellence Strategy EXC 2044-390685587, Mathematics Muenster: Dynamics-Geometry-Structure. 

\bibliographystyle{acm}
\bibliography{multilevel_bibfile}

\end{document}